\theoremstyle{plain}
\newtheorem{theoremenonum}{Theorem}
\newcommand{\tens}[1][]{\mathbin{\otimes_{\raise1.5ex\hbox to-.1em{}{#1}}}}
\newcommand{\lltens}[1][]{{\mathop{\tens}\limits^{\rm \mathbb{L}}}_{#1}}
\newtheorem{theorem}{Theorem}[section]
\newtheorem{lemma}[theorem]{Lemma}
\newtheorem{proposition}[theorem]{Proposition}
\newtheorem{corollary}[theorem]{Corollary}
\theoremstyle{definition}\newtheorem{notation}[theorem]{Notation}}
\theoremstyle{definition}}
\theoremstyle{definition}}
\theoremstyle{definition}\newtheorem{example}[theorem]{Example}}
\theoremstyle{definition}\newtheorem{definition}[theorem]{Definition}}
\theoremstyle{definition}}
\theoremstyle{definition}\newtheorem{remark}[theorem]{Remark}}
\newcommand{\UN}[4][r]{%
    \ar@/^1pc/[#1]^{#2}_*=<0.3pt>{}="HAUT"
    \ar@/_1pc/[#1]_{#3}^*=<0.3pt>{}="BAS"
    \ar @{=>} "HAUT";"BAS" ^{#4}
  }
\author{Damien Calaque}
\author{Julien Grivaux}
\thanks{D. Calaque acknowledges the financial support of the Institut Universitaire de France and of the ANR grant ``SAT'' ANR-14-CE25-0008}
\thanks{J. Grivaux acknowledges the financial support of the ANR grant ``MicroLocal''
ANR-15-CE40-0007 and ANR Grant ``HodgeFun'' ANR-16-CE40-0011}
\address{
IMAG, Univ Montpellier, CNRS, Montpellier, France \& Institut Universitaire de France
}
\address{
CNRS, I2M (Marseille) \& IH\'{E}S
}
\date \today
\email{damien.calaque@umontpellier.fr}
\email{jgrivaux@math.cnrs.fr}
\title{The Ext algebra of a quantized cycle}
\begin{document}

\begin{abstract}
Given a quantized analytic cycle $(X, \sigma)$ in $Y$, we give a categorical Lie-theoretic interpretation of a geometric condition, discovered by Shilin Yu, that involves the second formal neighbourhood of $X$ in $Y$. If this condition (that we call tameness) is satisfied, we prove that the derived Ext algebra $\mathcal{RH}om_{\mathcal{O}_Y}(\mathcal{O}_X, \mathcal{O}_X)$ is isomorphic to the universal enveloping algebra of the shifted normal bundle $\mathrm{N}_{X/Y}[-1]$ endowed with a specific Lie structure, strengthening an earlier result of C\u{a}ld\u{a}raru, Tu, and the first author This approach allows to get some conceptual proofs of many important results in the theory: in the case of the diagonal embedding, we recover former results of Kapranov, Markarian, and Ramadoss about (a) the Lie structure on the shifted tangent bundle $\mathrm{T}_X[-1]$ (b) the corresponding universal enveloping algebra (c) the calculation of Kapranov's big Chern classes. We also give a new Lie-theoretic proof of Yu's result for the explicit calculation of the quantized cycle class in the tame case: it is the Duflo element of the Lie algebra object $\mathrm{N}_{X/Y}[-1]$. 
\end{abstract}

\vspace*{1.cm}
\maketitle
\setcounter{tocdepth}{6}
\tableofcontents

\section{Introduction}

\subsection{The diagonal embedding case}

Let be $X$ a complex manifold or a smooth algebraic variety over a field of characteristic zero. Thanks to the celebrated result of Hochschild, Kostant and Rosenberg (\textit{cf}. \cite{HKRoriginal}), the Hochschild homology and cohomology groups of the structural sheaf $\mathcal{O}_X$ are given by $\mathrm{HH}_i(\mathcal{O}_X)=\Omega^i_X$ and $\mathrm{HH}^i(\mathcal{O}_X)=\Lambda^i \mathrm{T}_X$. These isomorphisms can be upgraded at the level of derived categories, and are called (geometric) HKR isomorphisms. For more history on this topic, we refer the reader to the paper \cite{Grivaux-HKR}, as well as references therein. In the present paper, we will be especially interested in the geometric HKR isomorphism involving Hochschild cohomology: this isomorphism is an additive sheaf isomorphism between the sheaf of polyvector fields  $\mathrm{S}(\mathrm{T}_X[-1])=\bigoplus_{i \geq 0} \Lambda^i \mathrm{T}_X [-i]$ and the derived Hom sheaf of the diagonal $p_{1*} \mathcal{RH}om_{\mathcal{O}_{X\times X}}(\mathcal O_X,\mathcal O_X)$. Here we view $X$ as the diagonal inside $X\times X$ (which is the geometric counterpart of looking at an algebra as a bimodule over itself) and $p_1$ is the first projection.
\par \medskip
Both members of this isomorphism have multiplicative structure: the wedge product on polyvector fields, and the Yoneda product on the derived Hom complex. Taking the cohomology on both sides, for any non-negative integer $p$, the corresponding isomorphism between the algebras $\bigoplus_{i=0}^p \mathrm{H}^{p-i}(X, \Lambda^i \mathrm{T}_X)$ and $\mathrm{Ext}^i_{X \times X}(\mathcal{O}_X, \mathcal{O}_X)$ is not multiplicative in general (contrarily to the homological geometric HKR). It has been conjectured by Kontsevich \cite{Kontsevich} and later proved by Van den Bergh and the first author \cite{CV} that one can use the square root of the Todd class $\mathrm{Td}\,(X)$ on $\bigoplus_i \mathrm{H}^i(X,\Omega^i_X)$ of the tangent sheaf $\mathrm{T}_X$ to ``twist'' the global cohomological HKR isomorphism in order to get an isomorphism of algebras. 
\par \medskip
The Todd class of $X$ is also intimately related to the geometry of the diagonal of $X$ in another way: it is the correction term appearing on the Grothendieck-Riemann-Roch theorem, so thanks to the Lefschetz formalism it can be interpreted as a restriction of the Grothendieck cycle class to the diagonal itself. This has been hinted in the pioneering work of Toledo-Tong (\textit{see} \cite[\S 6]{toledo_duality_1978}), and formalized using HKR isomorphisms in an unpublished manuscript of Kashiwara around 1992. Kashiwara's account can be found in \cite{conjecture}, where the second author proves in fact that the Todd class is the Euler class of $\mathcal O_X$, proving a conjecture of Kaschiwara--Schapira \cite{KS1}. 
\par \medskip 
The above results can be re-interpreted very naturally in Lie-theoretic terms after the works of Kapranov \cite{Kapranov} and Markarian \cite{Markarian}: 
\begin{itemize}
\item[--] The shifted tangent sheaf $\mathrm{T}_X[-1]$ is a Lie algebra object in the derived category $\mathrm{D}^{\mathrm{b}}(X)$. 
\item[--] Thanks to results of Markarian \cite{Markarian} and Ramadoss \cite{Ramadoss2}, the universal enveloping algebra of this Lie algebra object is indeed the derived Hom sheaf, and that the geometric HKR isomorphism can be re-interpreted as the Poincaré-Birkhoff-Witt (PBW) isomorphism. 
\item[--] Every element $\mathcal{F}$ in $\mathrm{D}^{\mathrm{b}}(X)$ is naturally a representation of $\mathrm{T}_X[-1]$, and via the PBW isomorphism the character of this representation (which is a central function on $\mathrm{U}(\mathrm{T}_X[-1])$) can be identified with the Chern character of $\mathcal{F}$.
\item[--] The Todd class becomes the derivative of the multipication map in the universal enveloping algebra, and is therefore the Duflo element of $\mathrm{T}_X[-1]$. 
\item[--] The isomorphism $\mathrm{HKR} \circ\iota_{\sqrt{\mathrm{Td}\,(X})}$ from \cite{Kontsevich,CV} can be seen as a Duflo isomorphism (\textit{see} \cite{duflo_operateurs_1977}) for the Lie algebra object $\mathrm{T}_X[-1]$. 
\end{itemize}
We refer to \cite{CalaqueCT}, \cite{CalaqueCT2} for further analogies between Lie theory and algebraic geometry. 

\subsection{More general embeddings: tame quantized cycles}

In the present paper, we are interested in the more general situation where we replace the diagonal embedding $\Delta_X \hookrightarrow X \times X$ by an arbitrary closed immersion $X \hookrightarrow Y$, where $X$ is a smooth closed subscheme of an ambiant smooth scheme $Y$. 
\par \medskip
In \cite{Arinkin-Caldararu}, Arinkin and C\u{a}ld\u{a}raru gave a necessary and sufficient condition for an additive generalized geometric HKR isomorphism to exist between $\mathcal{RH}om_{\mathcal O_Y}(\mathcal O_X,\mathcal O_X)$ and $\mathrm{S}(\mathrm{N}_{X/Y}[-1])$:  the condition is that $\mathrm{N}_{X/Y}$ extends to a locally free sheaf on the first infinitesimal neighborhood of $X$ in $Y$. The Lie theoretic interpretation of the first order neighborhood and of the above geometric condition has been given in \cite{CalaqueCT2} by C\u{a}ld\u{a}raru, Tu, and the first author. 
\par \medskip
Earlier on, in Kashiwara's 1992 unpublished manuscript, a more restrictive condition is introduced: Kashiwara deals with subschemes with split conormal sequence, which means that the map from $X$ to its first infinitesimal neiborhood in $Y$ admits a global retraction (in this case, \textit{any} locally free sheaf on $X$ extends at order one in $Y$). On the Lie side, this corresponds to pairs $\mathfrak{h} \subset \mathfrak{g}$ that split as $\mathfrak{h}$-modules, these are usually called reductive pairs. In \cite{Grivaux-HKR}, the second author developed Kashiwara's construction in this framework. The data of a subscheme $X$ of $Y$ together with such a retraction $\sigma$ is called a quantized cycle, and to such a cycle it is possible to associate geometric HKR isomorphisms, as well as a quantized cycle class $q_{\sigma}(X)$ living in $\bigoplus_{i \geq 0} \mathrm{H}^i(X, \Lambda^i \mathrm{N}^*_{X/Y}) $ that generalizes the Todd class in the diagonal case. Recently, answering a question raised by the second author in the article \cite{Grivaux-HKR}, Yu has shown in \cite{Yu} the following result: given a quantized cycle $(X, \sigma)$ in $Y$ such that $\sigma^* \mathrm{N}_{X/Y}$ extends to a locally free sheaf on the second infinitesimal neighborhood of $X$ in $Y$, 
\begin{enumerate}
\item[--] The quantized cycle $q_{\sigma}(X)$ class is completely determined by the geometry of the second infinitesimal neighborhood of $X$ in $Y$. 
\item[--] It can be expressed by an explicit formula similar to that of the usual Todd class.
\end{enumerate}
Yu's proof is based on direct calculation using the dg Dolbeault complex as well as homological perturbation theory. In this paper we provide a Lie theoretic explanation of Yu's results. We introduce the notion of \textit{tame quantized cycle} corresponding to Yu's condition: a quantized cycle $(X, \sigma)$ is tame if $\sigma^* \mathrm{N}_{X/Y}$ extends to a locally free sheaf at the second order. We can list all the conditions that can be investigated on the cycle $X$ (each condition being more restrictive than the previous one), and the corresponding conditions for Lie algebra pairs:
\par \medskip
\begin{center}
\begin{tabular}{|c|c|}
\hline 
cycles \,$X \subset Y$ & Lie pairs \, $\mathfrak{h} \subset \mathfrak{g}$ \\ 
\hline
$\mathrm{N}^*_{X/Y}$ extends at the first order & delicate condition, \textit{cf.} \cite{CalaqueCT} \\
\hline 
$X$ admits a retraction at the first order in $Y$ & $\mathfrak{g}=\mathfrak{h} \oplus \mathfrak{m}$ \\
\textit{i.e.} $X$ can be quantized  & as $\mathfrak{h}$-modules \\ 
\hline 
$X$ is a tame quantized cycle & $[\pi_{\mathfrak{h}} ([\mathfrak{m}, \mathfrak{m}]), \mathfrak{m}]=0$ \\ 
\hline 
$X$ admits a retraction at the second order in $Y$ & $\mathfrak{g}=\mathfrak{h} \rtimes \mathfrak{m}$ \\ 
\hline 
\end{tabular} 
\end{center}
\par \medskip
If one of the two last conditions is satisfied, the object $\mathrm{N}_{X/Y}[-1]$ is naturally a Lie object in $\mathrm{D}^{\mathrm{b}}(X)$, but this is no longer the case if we drop the tameness assumption. In full generality (that is without any specific quantization conditions), the algebraic structure of $\mathrm{N}_{X/Y}[-1]$ has been investigated in \cite{CalaqueCT}: it is a derived Lie algebroid, whose anchor map is given by the extension class of the normal exact sequence of the pair $(X, Y)$. Hence, our setting can be understood as the weaker universal hypotheses for which this derived Lie algebroid is a true Lie object in the symmetric monoidal category $\mathrm{D}^{\mathrm{b}}(X)$.

\subsection{The principal results}
The two main geometric results we prove in this paper deals with tame quantized analytic cycle. The first result is the explicit computation of the enveloping algebra of the Lie algebra object $\mathrm{N}_{X/Y}[-1]$.
\begin{theoremenonum}
Let $(X, \sigma)$ be a tame quantized cycle in $Y$.
The class $\alpha$ defines a Lie coalgebra structure on $\mathrm{N}^*_{X/Y}[1]$, hence a Lie algebra structure on $\mathrm{N}_{X/Y}[-1]$. Besides, the objects $\mathcal{RH}om^{\ell}_{\mathcal{O}_Y}(\mathcal{O}_X, \mathcal{O}_X)$ and $\mathcal{RH}om^{r}_{\mathcal{O}_Y}(\mathcal{O}_X, \mathcal{O}_X)$ are naturally algebra objects in the derived category $\mathrm{D}^{\mathrm{b}}(X)$, and there are commutative diagrams
\[
\xymatrix{
\sigma_* \mathcal{RH}om_{\mathcal{O}_S}(\mathcal{O}_X, \mathcal{O}_X) \ar[r] \ar[dd]_-{\mathrm{HKR}} & \mathcal{RH}om^{\ell}_{\mathcal{O}_Y}(\mathcal{O}_X, \mathcal{O}_X) \ar[d]^-{\mathrm{HKR}} \\
&  \mathrm{S}(\mathrm{N}_{X/Y}[-1]) \ar[d]^-{\mathrm{PBW}} \\
\mathrm{T}(\mathrm{N}_{X/Y}[-1]) \ar[r]& \mathrm{U}(\mathrm{N}_{X/Y}[-1])
}
\] 
and
\[
\xymatrix{
\sigma_* \mathcal{RH}om_{\mathcal{O}_S}(\mathcal{O}_X, \mathcal{O}_X) \ar[r] \ar[dd]_-{\mathrm{dual \,\, HKR}} & \mathcal{RH}om^{r}_{\mathcal{O}_Y}(\mathcal{O}_X, \mathcal{O}_X) \ar[d]^-{\mathrm{dual \, \, HKR}} \\
&  \mathrm{S}(\mathrm{N}_{X/Y}[-1]) \ar[d]^-{\mathrm{PBW}} \\
\mathrm{T}(\mathrm{N}_{X/Y}[-1]) \ar[r]& \mathrm{U}(\mathrm{N}_{X/Y}[-1])
}
\] 
where all horizontal arrows are algebra morphisms, and all vertical arrows are isomorphisms.
\end{theoremenonum}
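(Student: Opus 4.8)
The plan is to reduce every assertion to the local model provided by the Lie pair $\mathfrak{h} \subset \mathfrak{g}$ attached to a quantized cycle, to establish it there, and then to globalise using the naturality of all the constructions involved.

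\textbf{The Lie (co)algebra structure.} By \cite{CalaqueCT} the shifted normal bundle $\mathrm{N}_{X/Y}[-1]$ carries a canonical structure of derived Lie algebroid over $X$, whose anchor is the extension class of the normal sequence $0 \to \mathrm{T}_X \to \mathrm{T}_Y|_X \to \mathrm{N}_{X/Y}\to 0$; dually, $\mathrm{N}^*_{X/Y}[1]$ carries a derived Lie coalgebroid structure, and its cobracket component is a class $\alpha \in \mathrm{Ext}^1_X(\mathrm{N}^*_{X/Y}, \mathrm{S}^2 \mathrm{N}^*_{X/Y})$ produced by the second infinitesimal neighbourhood of $X$ in $Y$. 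First I would prove that the co-Jacobi identity for $\alpha$ holds precisely when the cycle is tame: in the local model this translates exactly into the identity $[\pi_{\mathfrak{h}}([\mathfrak{m},\mathfrak{m}]),\mathfrak{m}]=0$. This produces the Lie coalgebra structure on $\mathrm{N}^*_{X/Y}[1]$, and dualising gives the Lie algebra structure on $\mathrm{N}_{X/Y}[-1]$. The quantization $\sigma$ is what renders these identifications canonical, since it splits the conormal sequence and rigidifies the ambiguity studied in \cite{Arinkin-Caldararu,CalaqueCT2}.

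\textbf{The algebra objects and the HKR/PBW identifications.} The complex $\mathcal{RH}om_{\mathcal{O}_Y}(\mathcal{O}_X,\mathcal{O}_X)$ is a bimodule over $\mathcal{O}_X$ (acting on source and target), and $\mathcal{RH}om^{\ell}_{\mathcal{O}_Y}(\mathcal{O}_X,\mathcal{O}_X)$, $\mathcal{RH}om^{r}_{\mathcal{O}_Y}(\mathcal{O}_X,\mathcal{O}_X)$ denote its two $\mathcal{O}_X$-linear incarnations on $X$; Yoneda composition makes each of them an algebra object in $\mathrm{D}^{\mathrm{b}}(X)$. Using a $\sigma$-adapted Koszul-type resolution of $\mathcal{O}_X$ over $\mathcal{O}_Y$, I would identify $\mathcal{RH}om^{\ell}$ additively with $\mathrm{S}(\mathrm{N}_{X/Y}[-1])$ — this is the generalised HKR isomorphism of \cite{Arinkin-Caldararu,Grivaux-HKR} — and then check that the Yoneda product, transported through this isomorphism, coincides with the PBW star-product of the Lie algebra $\mathrm{N}_{X/Y}[-1]$ constructed in the first step; equivalently, that $\mathrm{PBW}^{-1}\circ\mathrm{HKR}$ is an algebra isomorphism onto $\mathrm{U}(\mathrm{N}_{X/Y}[-1])$. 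Filtering by order of infinitesimal neighbourhood, the associated graded is the symmetric algebra and the statement reduces to the flat HKR isomorphism; tameness is precisely the vanishing of the obstruction to propagating this multiplicative compatibility past the first order.

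\textbf{The diagrams and the duality.} In the first diagram, the left vertical column is the composite of the HKR isomorphism above with the PBW isomorphism $\mathrm{S}(\mathrm{N}_{X/Y}[-1]) \xrightarrow{\sim} \mathrm{U}(\mathrm{N}_{X/Y}[-1])$; the bottom arrow is the canonical surjection $\mathrm{T}(\mathrm{N}_{X/Y}[-1]) \to \mathrm{U}(\mathrm{N}_{X/Y}[-1])$; and the top arrow $\sigma_*\mathcal{RH}om_{\mathcal{O}_S}(\mathcal{O}_X,\mathcal{O}_X) \to \mathcal{RH}om^{\ell}_{\mathcal{O}_Y}(\mathcal{O}_X,\mathcal{O}_X)$ is induced by the comparison morphism between $S$ and $Y$. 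I would verify commutativity on associated graded, where every map becomes the evident one. The second diagram then follows from the first by duality: applying $\mathcal{RH}om_{\mathcal{O}_X}(-,\mathcal{O}_X)$ — equivalently, using the symmetry of $\mathcal{RH}om_{\mathcal{O}_Y}(\mathcal{O}_X,\mathcal{O}_X)$ exchanging source and target — interchanges the left- and right-module structures, intertwines HKR with the dual HKR, and is compatible with PBW. The hard part will be showing that the Yoneda product matches the PBW product under HKR: this is the genuine generalisation of the Markarian--Ramadoss theorem, the only step where tameness is used in an essential way, and it will require a careful choice of resolution together with explicit control of the multiplication up to the second infinitesimal order, after which a purely formal PBW argument finishes the proof.
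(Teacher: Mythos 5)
Your proposal founders at its central step: you assert that ``Yoneda composition makes each of $\mathcal{RH}om^{\ell}_{\mathcal{O}_Y}(\mathcal{O}_X,\mathcal{O}_X)$ and $\mathcal{RH}om^{r}_{\mathcal{O}_Y}(\mathcal{O}_X,\mathcal{O}_X)$ an algebra object in $\mathrm{D}^{\mathrm{b}}(X)$'' and then only use tameness to compare this product with the PBW product. But the composition product is only defined on $\mathcal{RH}om_{\mathcal{O}_Y}(\mathcal{O}_X,\mathcal{O}_X)$ as an object of $\mathrm{D}^{\mathrm{b}}(Y)$ (equivalently as a kernel supported on the diagonal, as in \cite{CalaqueCT}): to be an algebra object in $\mathrm{D}^{\mathrm{b}}(X)$ one needs a product defined on the derived tensor product $\mathcal{RH}om^{\ell}\otimes^{\mathbb{L}}_{\mathcal{O}_X}\mathcal{RH}om^{\ell}$, and composition does not descend through $\otimes^{\mathbb{L}}_{\mathcal{O}_X}$, because the $\mathcal{O}_X$-action on one factor is converted into the other ($\operatorname{source}$ versus $\operatorname{target}$) module structure, and these differ at the derived level. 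This is exactly the point the theorem is about: if such a natural multiplicative structure existed in general it would force the additive HKR isomorphism for arbitrary embeddings, and even for quantized cycles the decomposition of $\Theta\circ\Theta$ fails without tameness (see the warning after Proposition \ref{key}). The paper's proof therefore does not compare Yoneda with PBW; it \emph{constructs} the product. Tameness (a locally free extension of $\sigma^*\mathrm{N}^*_{X/Y}$ to the second neighbourhood $W$) yields the decomposition lemma (Proposition \ref{key}) and hence the induction relations of Corollary \ref{sky} for the geometric structure constants $c_p^k$; these are precisely the universal relations \eqref{hell} of the categorical PBW theorem (Theorem \ref{saroumane}, via Proposition \ref{chalvet}), which simultaneously gives the co-Jacobi identity for $\alpha$ (Proposition \ref{Jacobi}) and an associative product $m_\star$ on $\mathrm{S}(\mathrm{N}^*_{X/Y}[1])$; the algebra structure on $\mathcal{RH}om^{\ell}$ is then obtained by transporting $m_\star$ through the HKR isomorphism, with easier ad hoc constructions in the $\infty$-split and $2$-split cases, and the same argument (or the identity $c^{\prime k}_p=c^k_p$) handles $\mathcal{RH}om^{r}$.

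Two further steps of your plan would also fail as stated. First, proving the co-Jacobi identity for $\alpha$ ``in the local model and globalising by naturality'' is not legitimate: the identity is an equality of morphisms in $\mathrm{D}^{\mathrm{b}}(X)$ whose components are higher Ext classes, and such identities cannot be checked locally (locally every such class vanishes, tame or not). The correct Lie-theoretic route, which the paper follows in \S 5.2, is global: one identifies geometric tameness with tameness of the triple $\big(\mathrm{T}_Y[-1]_{|X},\mathrm{T}_X[-1],\mathrm{N}_{X/Y}[-1]\big)$ using Corollary \ref{feu} and the identification of $\gamma_\sigma(\mathrm{N}^*_{X/Y})$ with $(\beta\otimes\mathrm{id})\circ\mathrm{at}_X(\mathrm{N}^*_{X/Y})$ from Proposition \ref{marre}, and then applies the categorical lemma of \S \ref{mignon}. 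Second, verifying the commutativity of the diagrams ``on associated graded'' is insufficient: the filtrations are split and the whole content of the comparison lies in the off-diagonal components $c_p^k$ with $k<p$ (higher Ext classes), which the associated graded does not see; the paper's proof instead matches all structure constants exactly, which is again Corollary \ref{sky}.
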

At first glance, this result looks similar to the main result of \cite{CalaqueCT}, which says that in full generality, the universal envelopping algebra of the Lie algebroid $\mathrm{N}_{X/Y}[-1]$ is the object $\mathcal{RH}om_{\mathcal{O}_Y}(\mathcal{O}_X, \mathcal{O}_X)$, considered as an element of $\mathrm{D}^{\mathrm{b}}_{\Delta_X}(X \times X)$\footnote{This means considered as a kernel supported in the diagonal.}. The main subtlety here lies in the fact that if the Lie algebroid structure of $\mathrm{N}_{X/Y}[-1]$ in in fact a true Lie structure in $\mathrm{D}^{\mathrm{b}}(X)$, the universal envelopping algebras of $\mathrm{N}_{X/Y}[-1]$ as a Lie algebra object or as a derived Lie algebroid are not the same; they don't even live in the same categories. In the setting of \cite{CalaqueCT}, $\mathcal{RH}om_{\mathcal{O}_Y}(\mathcal{O}_X, \mathcal{O}_X)$ is naturally an algebra object in $\mathrm{D}^{\mathrm{b}}_{\Delta_X}(X \times X)$. In our setting, it is not at all obvious that $\mathcal{RH}om^{\ell}_{\mathcal{O}_Y}(\mathcal{O}_X, \mathcal{O}_X)$ and $\mathcal{RH}om^{r}_{\mathcal{O}_Y}(\mathcal{O}_X, \mathcal{O}_X)$ carry natural multiplicative structures. This is where the tameness condition becomes crucial.
\par \medskip
The second result we prove is in fact originally due to Shilin Yu \cite{Yu}, although our proof is completely different and Lie-theoretic. Yu's original statement is slightly different, because the Lie algebra structure on $\mathrm{N}_{X/Y}[-1]$ is not considered at all in his paper. The statement is the following:
\begin{theoremenonum}
Let $(X, \sigma)$ be a tame quantized cycle in $Y$. Then the quantized cycle class of $(X, \sigma)$ defined in \cite{Grivaux-HKR} is the Duflo element of the Lie algebra object $\mathrm{N}_{X/Y}[-1]$.
\end{theoremenonum}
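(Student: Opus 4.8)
The plan is to deduce the statement from Theorem~A together with one purely Lie-theoretic identity, following the template of the equality ``Todd $=$ Duflo'' recalled in the introduction for the diagonal embedding. Recall from \cite{Grivaux-HKR} that the quantized cycle class $q_\sigma(X)\in\bigoplus_{i\ge0}\mathrm H^i(X,\Lambda^i\mathrm N^*_{X/Y})$ is by construction the invertible element acting on $\mathrm S(\mathrm N_{X/Y}[-1])$ that measures the failure of the two HKR isomorphisms — the one built from the left and the one built from the right $\mathcal O_X$-module structure on $\mathcal{RH}om_{\mathcal O_Y}(\mathcal O_X,\mathcal O_X)$ — to coincide after the natural duality identification $\mathcal{RH}om^{\ell}_{\mathcal O_Y}(\mathcal O_X,\mathcal O_X)\simeq\bigl(\mathcal{RH}om^{r}_{\mathcal O_Y}(\mathcal O_X,\mathcal O_X)\bigr)^{\vee}$. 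So the first step is to rewrite $q_\sigma(X)$ as $\mathrm{HKR}^{-1}\circ(\text{dual HKR})$, read off the two diagrams of Theorem~A. In those diagrams both $\mathrm{HKR}$ and $\text{dual HKR}$ sit over the \emph{same} isomorphism $\mathrm{PBW}\colon\mathrm S(\mathrm N_{X/Y}[-1])\xrightarrow{\ \sim\ }\mathrm U(\mathrm N_{X/Y}[-1])$ and over the common object $\sigma_*\mathcal{RH}om_{\mathcal O_S}(\mathcal O_X,\mathcal O_X)$; chasing the two squares therefore identifies $q_\sigma(X)$ with the automorphism of $\mathrm S(\mathfrak g)$, where $\mathfrak g:=\mathrm N_{X/Y}[-1]$, comparing the left and right trivializations of the enveloping algebra $\mathrm U(\mathfrak g)$. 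In particular $q_\sigma(X)$ becomes a universal expression depending only on the Lie algebra object $\mathfrak g$.

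It then remains to prove the abstract identity: for a Lie algebra object $\mathfrak g$ in a $\mathbb Q$-linear symmetric monoidal category, the automorphism of $\mathrm S(\mathfrak g)$ comparing the left and right enveloping-algebra trivializations is multiplication by the Duflo element. I would establish this through the shifted, categorical exponential/Baker--Campbell--Hausdorff formalism: the passage from the left to the right trivialization is the Jacobian of the change of variables between the left- and right-invariant ``volume forms'' on the formal group attached to $\mathfrak g$, which is classically $\det\!\bigl(\tfrac{1-e^{-\mathrm{ad}}}{\mathrm{ad}}\bigr)$, i.e. the Todd-type power series evaluated on the adjoint action. Using that, by the $\alpha$-description of the Lie structure in Theorem~A, this adjoint action \emph{is} the quantized Atiyah class of $(X,\sigma)$, one recognises the resulting class as the Duflo element of $\mathrm N_{X/Y}[-1]$; this last step is the categorical counterpart of the Kapranov--Markarian computation~\cite{Kapranov,Markarian}, applied to the pair $\mathfrak h\subset\mathfrak g$ instead of to $\mathfrak g=\mathfrak h\oplus\mathfrak h$, and I expect it to be available from the foundational part of the paper.

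The main obstacle is not a new idea but the exact reconciliation of three normalizations: that of $q_\sigma(X)$ in \cite{Grivaux-HKR}; the left/right conventions in the two diagrams of Theorem~A (which of the two isomorphisms is ``HKR'' and which is ``dual HKR'', and how the duality identification intertwines them); and the normalization of the Duflo element — in particular whether a square root intervenes, as it does in the twisted-HKR statement of \cite{Kontsevich,CV} but not in the ``derivative of the multiplication map'' description. A mismatch at any one of these places would produce $\sqrt{\cdot}$ or an inverse, so a substantial part of the argument will consist in pinning these conventions down consistently. Tameness itself is used only through Theorem~A: it is exactly the hypothesis under which $\mathcal{RH}om^{\ell}_{\mathcal O_Y}(\mathcal O_X,\mathcal O_X)$ and $\mathcal{RH}om^{r}_{\mathcal O_Y}(\mathcal O_X,\mathcal O_X)$ become honest algebra objects, so that the geometric side genuinely ``is'' enveloping-algebra data and the comparison above makes sense.
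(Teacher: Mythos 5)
There is a genuine gap, and it sits at the very first step. You take as ``the definition'' of $q_\sigma(X)$ that it is the invertible element measuring the failure of the left and right HKR isomorphisms to agree, and you then propose to read it off as $\mathrm{HKR}^{-1}\circ(\mathrm{dual\ HKR})$ from the two diagrams of Theorem A. But that is not how the quantized cycle class is defined, either in \cite{Grivaux-HKR} or in this paper: $q_\sigma(X)$ is by definition the class of the composition \eqref{shuriken}, i.e.\ the canonical morphism $\omega_{X/Y}\simeq \mathcal{RH}om^{r}(\mathcal{O}_X,\mathcal{O}_Y)\to\mathcal{RH}om^{r}_{\mathcal{O}_Y}(\mathcal{O}_X,\mathcal{O}_X)$ transported by the dual HKR isomorphism to $\mathrm{S}(\mathrm{N}_{X/Y}[-1])$ -- a single morphism out of the relative dualizing object, not a comparison of two isomorphisms. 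Identifying this class with a left/right discrepancy is itself a nontrivial statement (already in the diagonal case the analogous ``Todd as discrepancy'' assertions are theorems, not definitions), so your reduction replaces the problem by another one of comparable depth without proof. Moreover, even granting the reduction, your second step -- that the automorphism of $\mathrm{S}(\mathfrak g)$ comparing left and right trivializations of $\mathrm{U}(\mathfrak g)$ is multiplication by the Duflo element -- is only argued by a heuristic appeal to the Jacobian between left- and right-invariant volume forms on a formal group; in the categorical setting this must be proved, and the normalization issues you flag (inverse vs.\ direct series, possible square roots) are precisely where such an argument can silently fail.

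For comparison, the paper's proof uses a different mechanism that bypasses both difficulties. First, one shows (Proposition \ref{zurich}) that the morphism \eqref{shuriken} is a $d$-torsion morphism for $\mathrm{N}_{X/Y}[-1]$: via Theorem \ref{zeboss}, right multiplication by $\mathrm{N}_{X/Y}[-1]$ in $\mathrm{U}(\mathrm{N}_{X/Y}[-1])$ corresponds to composition with $\Delta_1$, and the relevant composite vanishes because it is built from two consecutive arrows of the distinguished triangle attached to $\mathcal{O}_S\to\mathcal{O}_X$. Second, a purely Lie-theoretic uniqueness statement (Theorem \ref{serpette}, resting on the closed formula $m_{\star|\mathrm{S}(V)\otimes V}=m_0\circ\frac{\omega}{1-\exp(-\omega)}$ of Theorem \ref{proputile} and Bernoulli-number identities) says that any $\ell$-torsion morphism equals the contraction of its top component with the Duflo element; applied with top component the canonical identification $\omega_{X/Y}\simeq\mathrm{S}^d(\mathrm{N}_{X/Y}[-1])$, this gives the theorem. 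If you want to salvage your route, you would need (i) an actual proof that the class \eqref{shuriken} coincides with the left/right comparison you have in mind, and (ii) a categorical proof of your BCH/volume-form identity -- at which point you will in practice be re-deriving the ingredients \ref{proputile}--\ref{serpette} anyway.
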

The proofs of these two results combine two different types of ingredients: purely geometrical considerations linked to the geometry of formal neighbourhoods (mainly the two first ones) of a subscheme, as well as the use of abstract results on Lie algebra objects on symmetric monoidal categories. 

\subsection{Plan of the paper}

The paper is organized as follows: 
\begin{itemize}
\item[--] \S2 is devoted to some recollection about Lie algebra objects in a categorical setting. We claim no originality for this material which seems to be well known among experts in representations theory, but we were not locate the desired results in the form we needed in the literature. For instance, most references are written down for abelian categories while we work in the way more general Karoubian framework. The proof of the categorical PBW, which seems to be folklore among people in representation theory, is given using an operadic method in Appendix \ref{AAAA}.
\item[--] \S3 deals with three different topics. \S3.1 gives universal formulas for the multiplication map $\mathrm{U}(\mathfrak{g}) \otimes \mathfrak{g} \rightarrow \mathrm{U}(\mathfrak{g})$ via the PBW isomorphism. The proof is again provided in Appendix \ref{AAAA} using the operadic method. In \S3.2, we define an algebraic condition that characterizes uniquely the Duflo element of a Lie object in a symmetric monoidal category. Up to our knowledge, this result is new in this degree of generality (in the case $\mathfrak{g}=\mathrm{T}_X[-1]$, this is \cite[Lemma 4]{Markarian}). This will be the key ingredient to our Lie-theoretic proof of Yu's result. In \S 3.3, we introduce the ``tame condition'' for pairs of Lie algebras, which is a Lie theoretic analog of Yu's condition.
\item[--] \S4 recollects previous results on (first and second order) infinitesimal neighborhoods, HKR isomorphisms and quantized cycles. 
This is were we state the geometric tameness condition, a-k-a Yu's condition. 
\item[--] In \S5 we explain that the geometric tameness condition coincides with the Lie theoretic tameness condition for the pair $\big(\mathrm{T}_X[-1],\mathrm{T}_Y[-1]_{|X}\big)$ of Lie algebra objects in $\mathrm{D}^\mathrm{b}(X)$. We get in particular that in the tame case, $\mathrm{N}_{X/Y}[-1]$ is a Lie algebra object in $\mathrm{D}^{\mathrm{b}}(X)$ and we describe its universal enveloping algebra in geometric terms. Using the Lie-theoretic results of \S2-3, we are able to give enlightening and simple proofs of results of Ramadoss about Kapranov big Chern classes (diagonal case), and Yu's formula for the quantized cycle class. 
\item[--] In \S6 we use the above to get a description of the Ext algebra of a tame quantized cycle. We show in particular that it is completely determined by the second order infinitesimal neighborhood of $X$ in $Y$.
\item[--] We conclude the paper with a few perspectives in \S7. 
\end{itemize}

\section{Universal enveloping algebras and the categorical PBW theorem}

\subsection{Preliminary results of linear algebra}

\subsubsection{Partially antisymmetric tensors}

Let $\mathbf{k}$ be a field of characteristic zero and let $\mathcal{C}$ be a $\mathbf{k}$-linear symmetric monoidal category that is Karoubian (\textit{i.e.} every idempotent splits\footnote{Therefore, every multiple of a projector has a kernel and an image; we will repeatedly use this property. }) and such that countable direct sums exist and commute with the product. 
We can assume without loss of generality that $\mathcal{C}$ is a strict monoidal category (\textit{i.e.} it is harmless to drop the parenthesizations of iterated tensor products from the notation). 
For any non-negative integer $n$, the symmetric group $\mathfrak{S}_n$ acts naturally on $V^{\otimes n}$, where $V$ is an object of $\mathcal{C}$. 
Let $\pi_n$ be the element $(n!)^{-1} \sum_{g \in \mathfrak{S}_n} g$, considered as an idempotent element of the group algebra $\mathbf{k}[\mathfrak{S}_n]$. 
It induces a natural idempotent\footnote{Abusing notation, we will denote by the same symbol an element of $\mathbf{k}[\mathfrak{S}_n]$ and the induced endomorphism of $V^{\otimes n}$ in $\mathcal{C}$. } 
on $V^{\otimes n}$, whose kernel is denoted by $\widetilde{\Lambda}^n V$ and whose image is denoted by $\mathrm{S}^n V$. We therefore have a decomposition
\[
V^{\otimes n}= \mathrm{S}^n V \oplus \widetilde{\Lambda}^n V\,.
\]

Assume that $n$ is at least two and let 
\[
\Psi_n \colon \bigoplus_{i=1}^{n-1}\,  \left(V^{\otimes i-1} \otimes \Lambda^2 V \otimes V^{\otimes n-i-1} \right) \rightarrow V^{\otimes n}
\]
be the map obtained by embedding each $\Lambda^2 V:=\widetilde{\Lambda}^2 V$ in $V^{\otimes 2}$. 
We can provide a concrete description of $\widetilde{\Lambda}^n V$ via the map $\Psi_n$:
\begin{lemma} \label{cric}
The image of $\Psi_n$ exits and is canonically isomorphic to $\widetilde{\Lambda}^n V$. 
\end{lemma}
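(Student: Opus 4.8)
The plan is to deduce the statement from a purely combinatorial fact about the group algebra $\mathbf{k}[\mathfrak{S}_n]$, the remainder being formal manipulation of split idempotents in $\mathcal{C}$. Since $\mathbf{k}$ has characteristic zero, the elements $\phi_n:=\mathbf{1}-\pi_n$ and, for $1\le i\le n-1$, $e_i:=\tfrac12(\mathbf{1}-\tau_i)$ — where $\tau_i\in\mathfrak{S}_n$ is the transposition $(i\;\;i{+}1)$ — are idempotents of $\mathbf{k}[\mathfrak{S}_n]$, hence induce idempotent endomorphisms of $V^{\otimes n}$ in $\mathcal{C}$; I denote by $\iota_n,q_n$ (resp. $\iota_i,q_i$) the inclusion and the retraction splitting $\phi_n$ (resp. $e_i$), so that $\phi_n=\iota_nq_n$, $q_n\iota_n=\mathbf{1}$, and similarly for $e_i$. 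By definition $\widetilde{\Lambda}^nV=\mathrm{im}(\phi_n)$, and the $i$-th summand $V^{\otimes i-1}\otimes\Lambda^2V\otimes V^{\otimes n-i-1}$ of the source of $\Psi_n$ is canonically $\mathrm{im}(e_i)$, with $\Psi_n$ restricting to $\iota_i$ on it.

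The combinatorial input is: the right ideal of $\mathbf{k}[\mathfrak{S}_n]$ generated by the $\mathbf{1}-\tau_i$ ($1\le i\le n-1$) contains $\mathbf{1}-\tau$ for every $\tau\in\mathfrak{S}_n$. This follows by induction on the word length of $\tau$ in the generators $\tau_i$, using the identity $\mathbf{1}-gh=(\mathbf{1}-g)h+(\mathbf{1}-h)$ and the fact that the $\tau_i$ generate $\mathfrak{S}_n$. Averaging over the group then shows that $\mathbf{1}-\pi_n$ lies in that right ideal, so there exist $a_1,\dots,a_{n-1}\in\mathbf{k}[\mathfrak{S}_n]$ with
\[
\sum_{i=1}^{n-1}(\mathbf{1}-\tau_i)\,a_i=\mathbf{1}-\pi_n,\qquad\text{i.e.}\qquad \sum_{i=1}^{n-1}2e_ia_i=\phi_n,
\]
an identity I read both in $\mathbf{k}[\mathfrak{S}_n]$ and among endomorphisms of $V^{\otimes n}$ in $\mathcal{C}$.

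Next I would produce the factorization. From $\pi_n\tau=\pi_n$ for all $\tau$ one gets $\pi_ne_i=0$, hence $\phi_ne_i=e_i$, hence $\phi_n\iota_i=\iota_i$; therefore each $\iota_i$ factors as $\iota_i=\iota_n\circ\widetilde{\iota}_i$ with $\widetilde{\iota}_i:=q_n\iota_i\colon\mathrm{im}(e_i)\to\widetilde{\Lambda}^nV$. Assembling these over $i$, one obtains $\Psi_n=\iota_n\circ\widetilde{\Psi}_n$, where $\widetilde{\Psi}_n\colon\bigoplus_{i=1}^{n-1}\mathrm{im}(e_i)\to\widetilde{\Lambda}^nV$ is the morphism with components $\widetilde{\iota}_i$. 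I then claim $\widetilde{\Psi}_n$ is a split epimorphism: setting $r_i:=q_i\circ(2e_ia_i)\colon V^{\otimes n}\to\mathrm{im}(e_i)$, so that $\iota_ir_i=2e_ia_i$, the morphism $s\colon\widetilde{\Lambda}^nV\to\bigoplus_i\mathrm{im}(e_i)$ with components $r_i\circ\iota_n$ satisfies $\widetilde{\Psi}_n\circ s=q_n\big(\sum_i\iota_ir_i\big)\iota_n=q_n\,\phi_n\,\iota_n=q_n\iota_nq_n\iota_n=\mathbf{1}$, using the identity above.

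Finally, $\Psi_n=\iota_n\circ\widetilde{\Psi}_n$ exhibits $\Psi_n$ as a split epimorphism followed by a split monomorphism; this is an epi–mono factorization, so $\mathrm{im}(\Psi_n)$ exists and the canonical comparison map with $\mathrm{im}(\iota_n)=\widetilde{\Lambda}^nV$ is an isomorphism, naturally. The main subtlety, and the only genuine obstacle, is the Karoubian bookkeeping: $\Psi_n$ is not itself a multiple of an idempotent, so the footnote's splitting property cannot be applied to it directly — one must first route $\Psi_n$ through the subobject $\iota_n$ (which needs $\phi_ne_i=e_i$) and then split off the remaining factor $\widetilde{\Psi}_n$, and the existence of the section $s$ is precisely where the combinatorial identity $\sum_i(\mathbf{1}-\tau_i)a_i=\mathbf{1}-\pi_n$ is essential. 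Everything else reduces to the elementary algebra of complementary split idempotents.
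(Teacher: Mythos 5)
Your proof is correct and follows essentially the same route as the paper's: you first show $\Psi_n$ factors through $\widetilde{\Lambda}^n V$ (the paper uses $(\mathbf{1}+\tau_i)\pi_n=\pi_n$, you use the equivalent $\pi_n e_i=0$), and then build a section from the same combinatorial identity $\sum_i(\mathbf{1}-\tau_i)a_i=\mathbf{1}-\pi_n$ in $\mathbf{k}[\mathfrak{S}_n]$. The only difference is that you spell out the Karoubian bookkeeping with explicit split-idempotent data $(\iota,q)$, which the paper leaves implicit.
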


\begin{proof}
For $i$ in $\llbracket 1, n-1 \rrbracket$, let $\tau_{i}$ be the transposition in the group  $\mathfrak{S}_n$ that switches $i$ and $i+1$. 
We first observe that $(\mathbf{1}+\tau_{i})\pi_n=\pi_n(\mathbf{1}+\tau_{i})=\pi_n$. 
Hence the kernel of $\mathbf{1}+\tau_{i}$ acting on $V^{\otimes n}$ is a (split) sub-object of the kernel of $\pi_n$ acting on $V^{\otimes n}$: 
\[
V^{\otimes(i-1)} \otimes \Lambda^2 V \otimes V^{\otimes (n-i-1)}\subset \widetilde{\Lambda}^n V\,.
\]
In other words, the map $\Psi_n$ factors through $\widetilde{\Lambda}^n V$. 
In order to conclude, it is then sufficient to prove that $\bigoplus_{i=1}^{n-1}\,  \left(V^{\otimes i-1} \otimes \Lambda^2 V \otimes V^{\otimes n-i-1} \right)\to \widetilde{\Lambda}^n V$ admits a section.
We claim that the right ideal in $\mathbf{k}[\mathfrak{S}_n]$ generated by $\mathbf{1}-\tau_{i}$ ($1 \leq i \leq n-1$) contain all elements $\mathbf{1}-\tau$ for arbitrary  $\tau$ in $\mathfrak{S}_n$. 
Indeed, for any elements $g_1, \ldots, g_d$ in the group algebra $\mathbf{k}[\mathfrak{S}_n]$, we have
\[
\mathbf{1}-\Pi_{i=1}^d g_i=(\mathbf{1}-\Pi_{i=1}^{d-1} g_i)g_d+(\mathbf{1}-g_d).
\]
The claim follows again from the fact that the $\tau_i$ generate $\mathfrak{S}_n$. As a corollary, $\mathbf{1}-\pi$ sits in this ideal, so that we can choose elements $(a_i)_{1 \leq i \leq n-1}$ in the group algebra such that
\[
\sum_{i=1}^{n-1} (\mathbf{1}-\tau_i)\, a_i=1-\pi_n
\]
As a consequence we get that the map 
\[
\Phi_n:=\prod_{i=1}^{n-1} (\mathbf{1}-\tau_i)\, a_i:V^{\otimes n}\longrightarrow \bigoplus_{i=1}^{n-1}V^{\otimes n}
\] 
factors through $\bigoplus_{i=1}^{n-1}\,  \left(V^{\otimes i-1} \otimes \Lambda^2 V \otimes V^{\otimes n-i-1} \right)$ and $\Psi_n\circ\Phi_n$ is the projection onto the direct factor $\widetilde{\Lambda}^n V$ in $V^{\otimes n}$. 
\end{proof}

\begin{example} \label{explicit}
The simplest nontrivial case is  $n=3$. In this case we can take for instance
\[
\begin{cases}
6 a_1=3\mathbf{1}+\tau_2+\tau_2 \tau_1 \\
3 a_2=\mathbf{1} + \tau_1
\end{cases}
\]
\end{example}

\subsubsection{Jacobi identity}
Let us consider a morphism $\alpha \colon V^{\otimes 2} \rightarrow V$ in $\mathcal{C}$ which vanishes on $\mathrm{S}^2 V$ (\textit{i.e.} such that $\alpha\circ\tau_1=-\alpha$).
\begin{proposition} \label{Jacobi}
The pair $(V, \alpha)$ is a Lie algebra object if and only if there exists
\[
\beta \colon \widetilde{\Lambda}^3 V \rightarrow V
\]
such that $\beta \circ \Psi_3=(\alpha \circ(\alpha \otimes \mathrm{id}), \alpha \circ(\mathrm{id} \otimes \alpha))$. 
\end{proposition}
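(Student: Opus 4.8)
The plan is to reduce everything to linear algebra in the group algebra $\mathbf{k}[\mathfrak{S}_3]$ acting on $V^{\otimes 3}$, the only non-formal ingredient being the Jacobi identity for the (already antisymmetric) bracket $\alpha$. Write $\epsilon_3 := \tfrac{1}{6}\sum_{g\in\mathfrak{S}_3}\mathrm{sgn}(g)\,g\in\mathbf{k}[\mathfrak{S}_3]$ for the antisymmetrizer and set $\Lambda^3 V := \mathrm{im}(\epsilon_3)\subset V^{\otimes 3}$; since $\tau_1\epsilon_3=\tau_2\epsilon_3=-\epsilon_3$, the object $\Lambda^3 V$ is a (split) sub-object of both $\Lambda^2 V\otimes V$ and $V\otimes\Lambda^2 V$, the two induced inclusions into $V^{\otimes 3}$ being equal. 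I first record two facts about $\Psi_3$. (i) By Lemma~\ref{cric} and its proof (via the map $\Phi_3$), the corestriction of $\Psi_3$ to its image is a \emph{split epimorphism} $\Psi_3\colon(\Lambda^2 V\otimes V)\oplus(V\otimes\Lambda^2 V)\twoheadrightarrow\widetilde{\Lambda}^3 V$ --- it is in this sense that the equation in the statement is to be read. (ii) Its kernel is canonically $\Lambda^3 V$, via $\delta\colon z\mapsto(z,-z)$: this $\delta$ lands in $\ker\Psi_3$ because the two inclusions of $\Lambda^3 V$ into $V^{\otimes 3}$ coincide, and conversely, for a point $(x,y)$ of $\ker\Psi_3$ one has $x=-y$, $\tau_1 x=-x$, $\tau_2 y=-y$ inside $V^{\otimes 3}$, hence $\tau_1 x=\tau_2 x=-x$, hence $\epsilon_3 x=x$ (as $\tau_1,\tau_2$ generate $\mathfrak{S}_3$), so $x\in\Lambda^3 V$ and $(x,y)=\delta(x)$; this is an inverse to $\delta$.

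Granting (i)--(ii), I would argue as follows. Let $J\colon(\Lambda^2 V\otimes V)\oplus(V\otimes\Lambda^2 V)\to V$ denote the right-hand side of the equation, i.e.\ the morphism whose two components are the restrictions of $\alpha\circ(\alpha\otimes\mathrm{id})$ and of $\alpha\circ(\mathrm{id}\otimes\alpha)$ along the inclusions into $V^{\otimes 3}$. Because $\Psi_3$ is a split epimorphism onto $\widetilde{\Lambda}^3 V$, a morphism $\beta\colon\widetilde{\Lambda}^3 V\to V$ with $\beta\circ\Psi_3=J$ exists if and only if $J$ vanishes on $\ker\Psi_3$. By (ii), and since $J\circ\delta$ equals the restriction of $D:=\alpha\circ(\alpha\otimes\mathrm{id})-\alpha\circ(\mathrm{id}\otimes\alpha)$ to $\Lambda^3 V$ (both components of $J$ become $\alpha\circ(\alpha\otimes\mathrm{id})$ resp.\ $-\alpha\circ(\mathrm{id}\otimes\alpha)$ precomposed with $\Lambda^3 V\hookrightarrow V^{\otimes 3}$), this is equivalent to the vanishing of $D$ on $\Lambda^3 V$.

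It remains to match the vanishing of $D$ on $\Lambda^3 V$ with the Jacobi identity. On the one hand, antisymmetry of $\alpha$ gives $\alpha\circ(\mathrm{id}\otimes\alpha)=-\alpha\circ(\alpha\otimes\mathrm{id})\circ\sigma$ for a $3$-cycle $\sigma$, which acts as the identity on $\Lambda^3 V=\mathrm{im}(\epsilon_3)$; hence the restriction of $D$ to $\Lambda^3 V$ equals twice the restriction of $\alpha\circ(\alpha\otimes\mathrm{id})$. On the other hand, in $\mathbf{k}[\mathfrak{S}_3]$ one has $\tfrac13(\mathrm{id}+c+c^2)=\pi_3+\epsilon_3$ for a $3$-cycle $c$, while $\alpha\circ(\alpha\otimes\mathrm{id})\circ\pi_3=0$ (because $\pi_3=\tfrac12(\mathrm{id}+\tau_1)\pi_3$ and $(\alpha\otimes\mathrm{id})\circ\tfrac12(\mathrm{id}+\tau_1)=0$ by the antisymmetry hypothesis on $\alpha$); therefore the Jacobiator $\alpha\circ(\alpha\otimes\mathrm{id})\circ(\mathrm{id}+c+c^2)$ equals $3\,\alpha\circ(\alpha\otimes\mathrm{id})\circ\epsilon_3$. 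Factoring $\epsilon_3$ canonically through $\Lambda^3 V$ (a split epimorphism followed by a monomorphism), the Jacobi identity $\alpha\circ(\alpha\otimes\mathrm{id})\circ(\mathrm{id}+c+c^2)=0$ is equivalent to the vanishing of $\alpha\circ(\alpha\otimes\mathrm{id})$ on $\Lambda^3 V$, hence to the vanishing of $D$ on $\Lambda^3 V$, hence to the existence of $\beta$. This completes the argument.

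The step I expect to be the main obstacle is the identification of $\ker\Psi_3$ with $\Lambda^3 V$ in fact (ii), which in the Karoubian setting must be carried out using only images and kernels of idempotents coming from $\mathbf{k}[\mathfrak{S}_3]$; in particular one must check that $\ker\Psi_3$ is genuinely equal to --- not merely contains --- the copy $\mathrm{im}(\delta)$ of $\Lambda^3 V$. Everything else is bookkeeping with the three orthogonal idempotents $\pi_3$, $\epsilon_3$, $\mathrm{id}-\pi_3-\epsilon_3$ on $V^{\otimes 3}$ and the way $\alpha\otimes\mathrm{id}$ and $\mathrm{id}\otimes\alpha$ interact with them; alternatively, one could make $\beta$ completely explicit using the elements $a_1,a_2$ of Example~\ref{explicit}, at the price of a heavier computation.
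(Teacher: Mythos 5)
Your proof is correct, but it follows a genuinely different route from the paper's. The paper constructs the unique candidate explicitly: it precomposes $(u,-u\circ\tau_2\tau_1)$, where $u=\alpha\circ(\alpha\otimes\mathrm{id})$, with the explicit right inverse of $\Psi_3$ coming from Lemma~\ref{cric} and Example~\ref{explicit}, obtaining $\beta=\tfrac13(3u-u\circ\tau_2-u\circ\tau_2\tau_1)$, and then checks by direct computation that $\beta\circ\Psi_3=(u,-u\circ\tau_2\tau_1)$ holds exactly when $2u+u\circ\tau_2\tau_1$ vanishes on $V\otimes\Lambda^2V$, which after composing with $\mathbf{1}-\tau_2$ is seen to be the Jacobi identity. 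You instead run an obstruction-theoretic argument: since the corestriction of $\Psi_3$ to $\widetilde\Lambda^3V$ is a split epimorphism, a factorization $\beta$ exists iff the pair $(u|_{\Lambda^2V\otimes V},\,v|_{V\otimes\Lambda^2V})$ kills $\ker\Psi_3$, you identify $\ker\Psi_3$ with $\Lambda^3V=\mathrm{im}(\epsilon_3)$ embedded antidiagonally, compute the obstruction as $2u|_{\Lambda^3V}$ (using that a $3$-cycle acts trivially on $\mathrm{im}(\epsilon_3)$), and match it with the Jacobiator via $\tfrac13(\mathbf{1}+c+c^2)=\pi_3+\epsilon_3$ together with $u\circ\pi_3=0$. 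All of these identities check out, and the one step you flag — identifying $\ker\Psi_3$ with $\Lambda^3V$ in the Karoubian setting — is indeed the only place where your ``points'' language needs translating, but the translation is routine: argue with generalized elements $T\to\ker\Psi_3$ and conclude by Yoneda, noting that all subobjects involved ($\Lambda^2V\otimes V=\ker(\mathbf{1}+\tau_1)$, $\mathrm{im}(\epsilon_3)$, and the kernel of the split epimorphism $\Psi_3$) are kernels or split idempotent images, hence computed pointwise by $\mathrm{Hom}_{\mathcal{C}}(T,-)$. The trade-off: the paper's computation yields the explicit formula for $\beta$ and its uniqueness with minimal machinery, whereas your argument is coordinate-free, avoids the explicit $a_1,a_2$, and makes conceptually transparent that the obstruction to factoring through $\widetilde\Lambda^3V$ is precisely the totally antisymmetric part, i.e.\ the Jacobiator.
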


\begin{proof}
Let $u=\alpha \circ(\alpha \otimes \mathrm{id})$. The Jacobi identity is equivalent to the identity
\[
u- u \circ \tau_2+u \circ \tau_2 \tau_1=0\,.
\]
Remark now that $\alpha \circ(\mathrm{id} \otimes \alpha)=-u \circ \tau_2\tau_1$, and that $u \circ \tau_1=-u$. 
We now pre-compose $(u, -u \circ \tau_2 \tau_1)$ with the right inverse of ${\Psi}_3$ given by Lemma \ref{cric}. 
This gives a morphism $\beta \colon \widetilde \Lambda^3 V \to V$. Example \ref{explicit} and a short calculation provide the following explicit formula: 
$$
\beta=\dfrac{1}{3}(3u-u \circ \tau_2-u \circ \tau_2 \tau_1)\,.
$$
This morphism $\beta$ is the only possible candidate to fulfil the desired condition $\beta \circ \Psi_3=(u,-u\circ\tau_2\tau_1)$. 
Then we observe that 
\[
\begin{cases}
\beta_{| \Lambda^2V \otimes V}=u\\
\beta_{|V \otimes \Lambda^2V}=-u \circ \tau_2 \tau_1 + \dfrac{2}{3}(2u+u \circ \tau_2 \tau_1)
\end{cases}
\]
Hence $\beta\circ \Psi_3=(u, -u \circ \tau_2 \tau_1)$ if and only if $2u+u \circ \tau_2 \tau_1$ vanishes on $V \otimes \Lambda^2V$. 
This condition is clearly implied by the Jacobi identity, but it is in fact equivalent to it. 
Indeed, $2u+u \circ \tau_2 \tau_1$ vanishes on $V \otimes \Lambda^2V$ if and only if 
$(2u+u \circ \tau_2 \tau_1) \circ (\mathbf{1}-\tau_2)=0$, and  
\begin{align*}
(2u+u \circ \tau_2 \tau_1) \circ (\mathbf{1}-\tau_2)&=2u+u \circ \tau_2\tau_1-2u \circ \tau_2-u \circ \tau_2 \tau_1 \tau_2\\
&=2u+u \circ \tau_2\tau_1-2u \circ \tau_2+u \circ \tau_2 \tau_1\\
&=2(u-u \circ \tau_2+u \circ \tau_2 \tau_1).
\end{align*}
\end{proof}

\subsection{Algebras satisfying the PBW isomorphism} \label{hello2}

\subsubsection{The morphisms \texorpdfstring{$c_p^k$}{Cpk}} \label{hello}

Let $V$ be an object in $\mathcal{C}$ and let $\mathscr{A}$ be a unital augmented algebra object in $\mathcal{C}$ together with an algebra morphism $\Delta \colon \mathrm{T}(V) \rightarrow \mathscr{A}$. 
From now on, we require
\par \medskip
\begin{quote}
\textbf{Assumption (A1)} \,\,
\fbox{The composition
$
\Delta_+ \colon \mathrm{S} (V) \rightarrow \mathrm{T} (V) \rightarrow \mathscr{A}
$
is an isomorphism\footnotemark.}\footnotetext{Only as objects of $\mathcal{C}$, not as algebras.}
\end{quote}
\par \medskip
Here $\mathrm{T} (V):=\bigoplus_{n\geq0}V^{\otimes n}$ is equipped with the concatenation product, $\mathrm{S} (V):=\bigoplus_{n\geq0}\mathrm{S}^n V$ and the map $\mathrm{S} (V) \to \mathrm{T} (V)$ is the direct sum of direct factor embeddings $\mathrm{S}^n V \subset V^{\otimes n}$. 
Note that $\mathscr{A}$ carries a split increasing filtration: $\mathrm{F}^p\mathscr{A}:=\Delta_+ \big(\bigoplus_{i=0}^p\mathrm{S}^i V \big)$. 
Moreover, $\Delta$ is a filtered morphism for the obvious degree filtration on $\mathrm{T} (V)$. We now consider the restriction $\Delta^p$ of the filtered morphism $\Delta_+^{-1}\circ\Delta$ on each homogeneous component $V^{\otimes p}$; it decomposes as follows: 
\[
\Delta^p:=\Delta_+^{-1}\circ\Delta_{|V^{\otimes p}}=\sum_{k=0}^pc_p^k\,,
\]
where $c_p^k \in \mathrm{Hom}_{\mathcal{C}}(V^{\otimes p}, \mathrm{S}^k V)$. 
Since $\mathscr{A}$ is augmented, $c_p^0$ vanishes. 
We further make the following 
\par \medskip
\begin{quote}
\textbf{Assumption (A2)} \,\,
\fbox{
\begin{minipage}[]{10.5cm}
For any non-negative integer $p$ the morphism $V^{\otimes p} \rightarrow \mathrm{Gr}^p \mathscr{A} \simeq \mathrm{S}^p V$\\
is the canonical projection $\pi_p$. 
\end{minipage}
}
\end{quote}
\par \medskip
Note that this morphism is nothing but $\Delta^p$ followed by the projection onto $\mathrm{S}^p V$.
This second assumption on $\mathscr{A}$ ensures that the restriction of $c_p^k$ to $\mathrm{S}^p V$ is zero if $k \leq p-1$, and $c_p^p$ is the canonical projection $\pi_p$ from $V^{\otimes p}$ to $\mathrm{S}^p V$. 
In particular, if $1 \leq k \leq p-1$, then $c_p^k$ lives naturally in $\mathrm{Hom}_{\mathcal{C}}(\widetilde{\Lambda}^p V, \mathrm{S}^k V)$.

\subsubsection{The Lie bracket}

We define $\alpha=c_2^1 \colon \Lambda^2V \rightarrow V$. Since $\mathscr{A}$ is augmented, we can identify $\mathrm{F}^1 \mathscr{A}$ with $ \mathds{1} \oplus V$ (\textit{via} $\Delta_+$), where $\mathds{1}$ is the monoidal unit of $\mathcal{C}$. 
\begin{lemma} \label{pff}
Let $m \colon \mathscr{A}^{\otimes 2}\to\mathscr{A}$ be the associative product. Then the morphism $V^{\otimes 2}\to\mathscr{A}$ defined by 
$$
m\circ\Delta_{|V}^{\otimes 2}\circ (\mathbf{1}-\tau_1)
$$
factors through $\Delta_+(V)\subset \mathrm{F}^1 \mathscr{A}$, and coincides with $2\Delta_+\circ\alpha$.
\end{lemma}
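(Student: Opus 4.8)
The plan is to unwind the morphism $m\circ\Delta_{|V}^{\otimes 2}\circ(\mathbf{1}-\tau_1)$ through the isomorphism $\Delta_+$, so that the whole computation takes place in the group algebra $\mathbf{k}[\mathfrak{S}_2]$ and uses nothing beyond the definition of the $c_2^k$ and Assumptions (A1)--(A2).

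First I would record the identity $m\circ\Delta_{|V}^{\otimes 2}=\Delta_{|V^{\otimes 2}}$ of morphisms $V^{\otimes 2}\to\mathscr{A}$: this is just the statement that $\Delta$ is an algebra morphism, once one notices that the concatenation product of $\mathrm{T}(V)$ restricts on the degree-one components to the canonical inclusion $V^{\otimes 2}\hookrightarrow\mathrm{T}(V)$. Composing with $\Delta_+^{-1}$ (which exists by (A1)) and invoking the definition $\Delta^2=\Delta_+^{-1}\circ\Delta_{|V^{\otimes 2}}=c_2^0+c_2^1+c_2^2$ together with $c_2^0=0$ (augmentation) and $c_2^2=\pi_2$ (which is exactly (A2) in degree $2$), I get
\[
\Delta_+^{-1}\circ m\circ\Delta_{|V}^{\otimes 2}=c_2^1+\pi_2\colon V^{\otimes 2}\longrightarrow\mathrm{S}^1 V\oplus\mathrm{S}^2 V\,.
\]

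Next I would precompose with $\mathbf{1}-\tau_1$. The symmetrizer contribution vanishes, since $\pi_2\circ(\mathbf{1}-\tau_1)=\pi_2-\pi_2\circ\tau_1=0$. For the other term, (A2) says that $c_2^1$ vanishes on $\mathrm{S}^2 V=\operatorname{im}(\pi_2)$, \textit{i.e.} $c_2^1\circ\pi_2=0$; writing $\pi_2=\tfrac12(\mathbf{1}+\tau_1)$ this is the antisymmetry relation $c_2^1\circ\tau_1=-c_2^1$, so $c_2^1\circ(\mathbf{1}-\tau_1)=2\,c_2^1$. Therefore $\Delta_+^{-1}\circ m\circ\Delta_{|V}^{\otimes 2}\circ(\mathbf{1}-\tau_1)=2\,c_2^1=2\,\alpha$, and applying $\Delta_+$ back yields $m\circ\Delta_{|V}^{\otimes 2}\circ(\mathbf{1}-\tau_1)=2\,\Delta_+\circ\alpha$. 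Since $c_2^1=\alpha$ has image in $\mathrm{S}^1 V=V$, the right-hand side factors through $\Delta_+(V)\subset\mathrm{F}^1\mathscr{A}$, which is the factorization part of the statement.

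There is no real obstacle here; the only thing to be careful about is the bookkeeping of the identification $\Delta_+\colon\mathrm{S}(V)\xrightarrow{\ \sim\ }\mathscr{A}$ and of the notation ``$\alpha$'', which denotes both the morphism $\Lambda^2 V\to V$ and, implicitly, the composite $V^{\otimes 2}\twoheadrightarrow\Lambda^2 V\xrightarrow{\ \alpha\ }V$ (equivalently $c_2^1$ viewed as a morphism out of $V^{\otimes 2}$ vanishing on $\mathrm{S}^2 V$); with that reading the normalization constants match. The single structural input beyond ``$\Delta$ is an algebra morphism'' is Assumption (A2), which is precisely what makes $c_2^1$ antisymmetric, hence $\alpha$ well defined on $\Lambda^2 V$ in the first place.
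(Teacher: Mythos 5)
Your argument is correct and is essentially the paper's own proof: both reduce via the algebra-morphism identity $m\circ\Delta_{|V}^{\otimes 2}=\Delta_{|V^{\otimes 2}}$ to $\Delta_+\circ(c_2^1+\pi_2)\circ(\mathbf{1}-\tau_1)=2\Delta_+\circ c_2^1$, using $c_2^0=0$, $c_2^2=\pi_2$, and the antisymmetry of $c_2^1$. You merely spell out the intermediate steps (vanishing of the $\pi_2$ term, $c_2^1\circ\tau_1=-c_2^1$) that the paper compresses into one line.
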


\begin{proof}
As $\Delta$ is an algebra morphism, we have that 
$$
m\circ\Delta_{|V}^{\otimes 2}\circ (\mathbf{1}-\tau_1)=\Delta_{|V^{\otimes 2}}\circ (\mathbf{1}-\tau_1)=\Delta_+\circ(c_2^1+\pi_2)\circ(\mathbf{1}-\tau_1)=2\Delta_+\circ c_2^1=2\Delta_+\circ \alpha\,.
$$
We are done. 
\end{proof}

\begin{corollary} \label{ehoui}
The morphism $\alpha$ defines a Lie structure on $V$.
\end{corollary}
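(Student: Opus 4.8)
The plan is to realise the bracket $\alpha=c_2^1$ as (half of) the restriction to $V$ of the commutator bracket of the associative algebra $\mathscr{A}$, and then to deduce the Jacobi identity for $\alpha$ from associativity of $m$ by applying Lemma~\ref{pff} twice and invoking the criterion of Proposition~\ref{Jacobi}.

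First I would unwind Lemma~\ref{pff} into the form I actually want to iterate. By naturality of the symmetry, the \emph{commutator of $\mathscr{A}$ restricted along $\Delta_{|V}$}, namely $m\circ(\mathbf{1}-\tau_{\mathscr{A},\mathscr{A}})\circ(\Delta_{|V}\otimes\Delta_{|V})\colon V^{\otimes 2}\to\mathscr{A}$, equals $m\circ\Delta_{|V}^{\otimes 2}\circ(\mathbf{1}-\tau_1)$; hence by Lemma~\ref{pff} it factors through $\Delta_+(V)\subset\mathscr{A}$ and coincides with $2\,\Delta_+\circ\alpha$, where $\alpha\colon V^{\otimes 2}\to V$ is $c_2^1$. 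Note that $\alpha\circ\tau_1=-\alpha$, so $\alpha$ vanishes on $\mathrm{S}^2 V$, which is exactly the standing hypothesis of Proposition~\ref{Jacobi}.

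Next I would compute the iterated commutator $F:=[[-,-],-]\colon\mathscr{A}^{\otimes 3}\to\mathscr{A}$ precomposed with $\Delta_{|V}^{\otimes 3}$. Applying the previous step to the inner commutator produces a morphism landing in $\Delta_+(V)$; since $\Delta_+$ restricts to $\Delta_{|V}$ on $\mathrm{S}^1 V=V$, the outer commutator can then again be evaluated by the previous step, yielding $F\circ\Delta_{|V}^{\otimes 3}=4\,\Delta_+\circ u$ with $u:=\alpha\circ(\alpha\otimes\mathrm{id})$. Associativity of $m$ makes the commutator of $\mathscr{A}$ a (graded) Lie bracket, so the Jacobiator $F\circ(\mathbf{1}+\sigma+\sigma^2)$ vanishes on $\mathscr{A}^{\otimes 3}$, where $\sigma\in\mathfrak{S}_3$ is a $3$-cycle. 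Precomposing $F\circ(\mathbf{1}+\sigma+\sigma^2)=0$ with $\Delta_{|V}^{\otimes 3}$ and using naturality of the $\mathfrak{S}_3$-action (which commutes with $\Delta_{|V}^{\otimes 3}$) gives $4\,\Delta_+\circ u\circ(\mathbf{1}+\sigma+\sigma^2)=0$, hence $u\circ(\mathbf{1}+\sigma+\sigma^2)=0$ because $\Delta_+$ is a monomorphism by Assumption (A1).

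Finally I would recognise this as the hypothesis of Proposition~\ref{Jacobi}. Writing the $3$-cycle as $\sigma=\tau_1\tau_2$ (so $\sigma^2=\tau_2\tau_1$) and using $u\circ\tau_1=-u$ — which holds since $(\alpha\otimes\mathrm{id})\circ\tau_1=-(\alpha\otimes\mathrm{id})$ — the relation $u\circ(\mathbf{1}+\sigma+\sigma^2)=0$ becomes exactly $u-u\circ\tau_2+u\circ\tau_2\tau_1=0$, which is the Jacobi identity appearing in the proof of Proposition~\ref{Jacobi}; that proposition then yields the corollary. The one delicate point is the bookkeeping of braidings and Koszul signs when iterating Lemma~\ref{pff} — in particular checking that the inner commutator genuinely takes values in $\Delta_+(V)$, so that the outer commutator is again governed by Lemma~\ref{pff} — but no genuinely new idea is required beyond that.
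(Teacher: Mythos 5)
Your proof is correct and is essentially the paper's own (implicit) argument: Lemma~\ref{pff} exhibits $(V,2\alpha)$ as a Lie subobject of $\mathscr{A}$ equipped with the commutator bracket, and the Jacobi identity transfers back along the split monomorphism $(\Delta_+)_{|V}$ --- exactly what the paper records later in the proof of Proposition~\ref{coupdepute}. The final appeal to Proposition~\ref{Jacobi} is unnecessary (the relation $u-u\circ\tau_2+u\circ\tau_2\tau_1=0$ you derive is already the Jacobi identity for $\alpha$) but harmless.
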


\subsubsection{Induction formul\ae~for \texorpdfstring{$c_p^k$}{Cpk}}
We can now provide explicit induction formul\ae~for the morphisms $c_{p}^k$. 
Recall that for $0 \leq k \leq p-1$, we consider $c_p^k$ as an element of $\mathrm{Hom}_{\mathcal{C}}(\widetilde{\Lambda}^p V, \mathrm{S}^k V)$.
\begin{proposition} \label{chalvet}
Given a pair $(V, \mathscr{A})$ as above, the coefficients $c_p^k$ are determined as follows:
\[
\left\{\!\begin{aligned}
\begin{minipage}{12cm}
$c_{p}^p=\pi_p:V^{\otimes p}\to\mathrm{S}^p V$. \\
$c_{p}^k \circ \Psi_{p}=\{c_{p-1}^k \circ (\mathrm{id}_{V^{\otimes i-1}} \otimes \alpha \otimes \mathrm{id}_{V^{\otimes p-i-1}}) \}_{1 \leq i \leq p-1} \,\,\, \textrm{if} \,\,\, 1 \leq  k \leq p-1$.
\end{minipage}
\end{aligned}
\right.
\]
Recall here that $\alpha=c_{2}^1$. 
\end{proposition}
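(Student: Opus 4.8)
The plan is to exploit the single structural hypothesis that $\Delta$ is an algebra morphism, to feed in the definition of $\alpha$, and then to extract homogeneous components. The first line $c_p^p=\pi_p$ is just the consequence of Assumption~(A2) recorded above, so I would only need to address the second line. Fixing an index $i$ with $1\le i\le p-1$, I would decompose $V^{\otimes p}=V^{\otimes i-1}\otimes V^{\otimes 2}\otimes V^{\otimes p-i-1}$. Since $\Delta$ is an algebra morphism for the concatenation product on $\mathrm{T}(V)$, the restriction $\Delta_{|V^{\otimes p}}$ equals the threefold product $m^{(3)}\colon\mathscr{A}^{\otimes 3}\to\mathscr{A}$ precomposed with $\Delta_{|V^{\otimes i-1}}\otimes\Delta_{|V^{\otimes 2}}\otimes\Delta_{|V^{\otimes p-i-1}}$. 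Precomposing further with $\mathrm{id}\otimes\iota\otimes\mathrm{id}$, where $\iota\colon\Lambda^2V\hookrightarrow V^{\otimes 2}$ --- this composite being exactly the $i$-th component $\Psi_p^{(i)}$ of $\Psi_p$ --- turns the middle factor into $\Delta_{|V^{\otimes 2}}\circ\iota$. Now $\Delta^2=c_2^1+\pi_2$, with $c_2^1$ restricting to $\alpha$ on $\Lambda^2V$ and $\pi_2$ vanishing there (equivalently, by Lemma~\ref{pff}), so $\Delta_{|V^{\otimes 2}}\circ\iota=\Delta_+\circ\alpha$; and since $\Delta_+$ agrees with $\Delta_{|V}$ on $V=\mathrm{S}^1V$, this is $\Delta_{|V}\circ\alpha$. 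Pushing $\alpha$ out of the middle slot and invoking once more that a $(p-1)$-fold product of $\Delta$'s collapses to $\Delta_{|V^{\otimes p-1}}$, I would arrive at
\[
\Delta_{|V^{\otimes p}}\circ\Psi_p^{(i)}=\Delta_{|V^{\otimes p-1}}\circ\bigl(\mathrm{id}_{V^{\otimes i-1}}\otimes\alpha\otimes\mathrm{id}_{V^{\otimes p-i-1}}\bigr)\,.
\]

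Composing both sides on the left with $\Delta_+^{-1}$ would then turn this into
\[
\sum_{k}c_p^k\circ\Psi_p^{(i)}\;=\;\sum_k c_{p-1}^k\circ\bigl(\mathrm{id}_{V^{\otimes i-1}}\otimes\alpha\otimes\mathrm{id}_{V^{\otimes p-i-1}}\bigr)\,,
\]
an equality of morphisms with target $\mathrm{S}(V)$. Projecting onto the summand $\mathrm{S}^kV$ gives, for each $k$ with $1\le k\le p-1$, the claimed formula; the top component $c_p^p\circ\Psi_p^{(i)}=\pi_p\circ\Psi_p^{(i)}$ vanishes because $\Psi_p$ factors through $\widetilde\Lambda^pV=\ker\pi_p$ (Lemma~\ref{cric}), so there is no clash between the summation ranges $0\le k\le p$ on the left and $0\le k\le p-1$ on the right. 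To conclude that these equalities, as $i$ runs over $\{1,\dots,p-1\}$, actually pin down $c_p^k$ itself for $1\le k\le p-1$, I would invoke Lemma~\ref{cric} again: $\Psi_p$ is a split epimorphism onto $\widetilde\Lambda^pV$, which is precisely the object on which $c_p^k$ is supported.

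I expect the hard part to be organisational rather than conceptual: carrying out the threefold and then $(p-1)$-fold associativity rearrangements of the product of $\mathscr{A}$ cleanly in the strict monoidal framework, treating the boundary indices $i=1$ and $i=p-1$, and keeping the grading bookkeeping honest so that the projection onto $\mathrm{S}^kV$ is applied consistently on both sides and the decomposition $\Delta^p=\sum_k c_p^k$ is respected throughout. The only genuinely substantive input is the identification $\Delta_{|V^{\otimes 2}}\circ\iota=\Delta_{|V}\circ\alpha$ (Lemma~\ref{pff}), the sole place where the bracket enters; everything else is formal from $\Delta$ being a filtered algebra morphism satisfying (A1) and (A2).
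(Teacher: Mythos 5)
Your argument is correct and is essentially the paper's own proof: both exploit that $\Delta$ is an algebra morphism to split $\Delta_{|V^{\otimes p}}$ along $V^{\otimes i-1}\otimes V^{\otimes 2}\otimes V^{\otimes p-i-1}$, identify the middle factor restricted to $\Lambda^2V$ with $\Delta_{|V}\circ\alpha$ via Lemma~\ref{pff}, and then apply $\Delta_+^{-1}$ followed by the projection onto $\mathrm{S}^kV$. The only cosmetic difference is that you precompose with the inclusion $\Lambda^2V\hookrightarrow V^{\otimes2}$ where the paper precomposes with the idempotent $\tfrac12(\mathbf{1}-\tau_i)$, and your closing remarks (vanishing of the $k=p$ component and the split surjectivity of $\Psi_p$ pinning down $c_p^k$) are harmless additions consistent with Lemma~\ref{cric}.
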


\begin{proof}
For $1 \leq i \leq p-1$ we have
\begin{align*}
\Delta_{|V^{\otimes p}}\circ \frac12(\mathbf{1}-\tau_i)
& = m^{(2)}\circ \Delta_{|V^{\otimes i-1}}\otimes\big(\Delta_{|V^{\otimes 2}}\circ \frac12(\mathbf{1}-\tau_1)\big)\otimes\Delta_{|V^{\otimes p-i-1}} \\
& = m^{(2)}\circ \Delta_{|V^{\otimes i-1}}\otimes(\Delta\circ \alpha)\otimes\Delta_{|V^{\otimes p-i-1}} \qquad\textrm{(by Lemma \ref{pff})}\\
& = \Delta_{|V^{\otimes p-1}} \circ (\mathrm{id}_{V^{\otimes i-1}} \otimes \alpha \otimes \mathrm{id}_{V^{\otimes p-i-1}})\,,
\end{align*}
where $m^{(2)}:=m\circ (m\otimes\mathrm{id})$. Applying $\Delta_+^{-1}$ followed by the projection on the direct factor $\mathrm{S}^k V$ we get 
\[
c_p^k\circ \frac12(\mathbf{1}-\tau_i) = c_{p-1}^k \circ (\mathrm{id}_{V^{\otimes i-1}} \otimes \alpha \otimes \mathrm{id}_{V^{\otimes p-i-1}})\,.
\]
Hence both members of the induction relation agree on $V^{\otimes i-1} \otimes \Lambda^2 V \otimes V^{\otimes p-i-1}$ for every $1 \leq i \leq p-1$. 
\end{proof}

\subsection{Universal algebras in the categorical setting}

\subsubsection{Reverse PBW theorem} 

We can now prove our first main result: assuming that the algebra $\mathscr{A}$ satisfies the PBW theorem (\textit{i.e.} Assumptions (A1) and (A2)), 
we prove that it is the universal enveloping algebra of $V$ endowed with the Lie bracket $2c_2^1$.
\begin{proposition}[\textbf{Reverse categorical PBW}] \label{coupdepute}
If $\mathscr{A}$ satisfies Assumptions (A1-A2) of \S \ref{hello}, then $\mathscr{A}$ is a universal enveloping algebra of the Lie algebra $(V, 2\alpha)$.
\end{proposition}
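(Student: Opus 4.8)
The plan is to exhibit a universal property: given any unital associative algebra object $\mathscr{B}$ in $\mathcal{C}$ and a Lie algebra morphism $f\colon (V,2\alpha)\to\mathscr{B}_{\mathrm{Lie}}$ (where $\mathscr{B}_{\mathrm{Lie}}$ carries the commutator bracket), I must produce a unique algebra morphism $\widetilde{f}\colon\mathscr{A}\to\mathscr{B}$ with $\widetilde{f}\circ\Delta_{|V}=f$. First I would use the free–forgetful adjunction for the tensor algebra: the morphism $f\colon V\to\mathscr{B}$ extends uniquely to an algebra morphism $F\colon\mathrm{T}(V)\to\mathscr{B}$. The content of the proof is then that $F$ factors through $\Delta\colon\mathrm{T}(V)\to\mathscr{A}$, i.e.\ that $F$ kills everything $\Delta$ kills, and that the resulting $\widetilde{f}$ is unique.

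For \textbf{uniqueness}, note that by Assumption (A1) the morphism $\Delta_+\colon\mathrm{S}(V)\to\mathscr{A}$ is an isomorphism, and $\mathrm{S}(V)$ is generated (as a filtered object, via the multiplication of $\mathscr{A}$ pulled back) by $V=\mathrm{S}^1V$ together with the unit; more precisely, the filtration pieces $\mathrm{F}^p\mathscr{A}$ are spanned by products of at most $p$ elements of $\Delta_+(V)$. Concretely, the composite $V^{\otimes p}\xrightarrow{\Delta_{|V}^{\otimes p}}\mathscr{A}^{\otimes p}\xrightarrow{m^{(p)}}\mathscr{A}$ lands in $\mathrm{F}^p\mathscr{A}$ and, by Assumption (A2), induces the canonical projection $\pi_p$ onto $\mathrm{Gr}^p\mathscr{A}\simeq\mathrm{S}^pV$. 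Summing over $p$ and using that $\mathscr{A}=\bigoplus_p\Delta_+(\mathrm{S}^pV)$, these multiplication maps are jointly epimorphic onto $\mathscr{A}$; hence any algebra morphism out of $\mathscr{A}$ is determined by its restriction to $\Delta_+(V)$, which gives uniqueness of $\widetilde{f}$.

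For \textbf{existence}, I would build $\widetilde{f}$ directly as a morphism of objects $\mathscr{A}\simeq\mathrm{S}(V)\to\mathscr{B}$ and then check multiplicativity. On $\mathrm{S}^pV\subset V^{\otimes p}$ set $\widetilde{f}_p:=m^{(p)}\circ f^{\otimes p}$ (the $p$-fold product in $\mathscr{B}$ of images of $V$), restricted to the symmetric part; define $\widetilde{f}=\bigoplus_p\widetilde{f}_p$ after transporting along $\Delta_+^{-1}$. That $\widetilde{f}$ is an algebra morphism amounts to showing, for each $p$, that $\widetilde{f}\circ m\colon\mathrm{F}^p\mathscr{A}\otimes\mathrm{F}^q\mathscr{A}\to\mathscr{B}$ equals $m_{\mathscr{B}}\circ(\widetilde{f}\otimes\widetilde{f})$, and for this the key input is the explicit description of the multiplication on $\mathscr{A}\simeq\mathrm{S}(V)$ provided by the coefficients $c_p^k$ of Proposition~\ref{chalvet}: the product of symmetric tensors is governed by iterated applications of $\alpha=c_2^1$, exactly the operations that $f$ intertwines because $f$ is a Lie morphism for $2\alpha$. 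I would run an induction on $p+q$: the top-filtration term $\mathrm{Gr}^{p+q}$ matches by Assumption (A2) (it is just $\pi_{p+q}$ on both sides), and the lower-order correction terms are sums of expressions built from $\alpha$ by the induction formula $c_{p}^k\circ\Psi_p=\{c_{p-1}^k\circ(\mathrm{id}\otimes\alpha\otimes\mathrm{id})\}_i$, which $f$ respects since $f\circ(2\alpha)=[-,-]_{\mathscr{B}}\circ(f\otimes f)$ and, crucially, the Jacobi-type compatibility (Proposition~\ref{Jacobi}) guarantees these lower terms are consistently defined on $\widetilde{\Lambda}^\bullet V$.

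The \textbf{main obstacle} I anticipate is precisely this last verification of multiplicativity: one must show that the two a priori different ways of reassociating a product of $p+q$ elements of $\Delta_+(V)$ — first multiplying within each block of $p$ and of $q$, then across, versus multiplying everything at once — agree after applying $\widetilde{f}$, and this is where one genuinely uses that $f$ is compatible with the \emph{Lie} bracket (not just the symmetrization) together with associativity in $\mathscr{B}$. Packaging this cleanly in the Karoubian categorical setting, where one cannot argue elementwise, is the delicate point; the induction formulae of \S\ref{hello} and the factorization-through-$\widetilde{\Lambda}^pV$ statements are designed exactly so that this bookkeeping reduces to checking equalities of morphisms $\widetilde{\Lambda}^pV\to\mathscr{B}$ that both sides express through the same $\alpha$-words. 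I expect the operadic reformulation (deferred to Appendix~\ref{AAAA} in the paper) is the cleanest way to organize it, but a hands-on filtered induction as sketched should also work.
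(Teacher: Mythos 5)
Your proposal follows essentially the same route as the paper's proof: verify the universal property directly, take $g=\widetilde f\circ\Delta_+^{-1}$ as the only possible candidate (uniqueness coming from (A1)--(A2), i.e.\ from $\Delta$ admitting the section $\Delta_+^{-1}$), and reduce the substantive verification to an induction over the $\widetilde{\Lambda}^pV$-components using Lemma \ref{cric}, the recursion of Proposition \ref{chalvet}, and the identity $\widetilde f_{|\Lambda^2V}=f\circ\alpha$ supplied by $f$ being a Lie morphism for $2\alpha$. The only difference is organizational: the paper isolates the factorization $\widetilde f=\widetilde f\circ s\circ\Delta$ as a lemma and then deduces multiplicativity of $g$ in three lines, whereas you fold the same induction into a direct multiplicativity check (and your appeal to Proposition \ref{Jacobi} is not actually needed for this direction).
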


\begin{proof}
For any associative algebra object $B$ in $\mathcal{C}$, with product $m_B:B^{\otimes 2}\to B$, we denote by $B_{\mathrm{Lie}}=(B,\mu_B)$ the Lie algebra object 
which is $B$ endowed with the Lie bracket $\mu_B=m_B\circ(\mathbf{1}-\tau_1)$. 
Lemma $\ref{pff}$ tells us that the direct factor inclusion $(\Delta_+)_{|V}:V \hookrightarrow \mathscr{A}$ is a morphism of Lie algebra objects from $(V,2\alpha)$ to $\mathscr{A}_{\mathrm{Lie}}$ in $\mathcal{C}$. 
\par \medskip
Assume now to be given a morphism $f:V\to B$. By the universal property of the tensor algebra, it defines an algebra morphism $\widetilde{f} \colon \mathrm{T}(V) \rightarrow B$. 
\begin{lemma}
If $f$ is a Lie algebra morphism (from $(V,2\alpha)$ to $B_{\mathrm{Lie}}$) then $\widetilde{f}$ factors through a unique morphism $g:\mathscr{A}\to B$. 
\end{lemma}
\begin{proof}[Proof of the Lemma]
It is sufficient to prove that $\widetilde{f}=\widetilde{f}\circ s\circ\Delta$, where $s$ is a section of $\Delta$. Indeed, the only possible choice is $g=\widetilde{f}\circ s$. 
Here we use the section $s$ given by $\Delta_+^{-1}:\mathscr{A}\to\mathrm{S}(V)\subset \mathrm{T}(V)$. 
\par \medskip
We will prove by induction on $p$ that $\widetilde{f}_{|V^{\otimes p}}=\widetilde{f}\circ \Delta_+^{-1}\circ\Delta_{|V^{\otimes p}}$. 
Note that this identity is obviously satisfied when restricted to $\mathrm{S}^pV\subset V^{\otimes p}$. 
The only thing left to prove is thus that  $\widetilde{f}_{|\widetilde{\Lambda}^pV}=\widetilde{f}\circ s\circ\Delta_{|\widetilde{\Lambda}^pV}$. 
\begin{enumerate}
\item[--] For $p\in\{0,1\}$ the result is obvious. 
\item[--] For $p=2$, we have 
\[
\widetilde{f}_{|\Lambda^2V}=\frac12\mu_B\circ (f\otimes f)_{|\Lambda^2V}=f\circ \alpha=f\circ c_2^1=\widetilde{f}\circ \Delta_+^{-1}\circ\Delta_{|\Lambda^2 V}\,.
\]
\item[--] Let us now assume that the required equality holds for a given $p\geq 2$. We compute  
\begin{align*}
\widetilde{f}\circ s\circ\Delta_{|V^{\otimes i-1}\otimes\Lambda^2V\otimes V^{\otimes p-i}} 
& = \widetilde{f}\circ\left(\sum_{k=1}^{p} c_{p+1}^k\right)_{|V^{\otimes i-1}\otimes\Lambda^2V\otimes V^{\otimes p-i}} \\
& = \widetilde{f}\circ\left(\sum_{k=1}^{p} c_{p}^k\right)\circ(\mathrm{id}_{V^{\otimes i-1}}\otimes\alpha\otimes\mathrm{id}_{V^{\otimes p-i}}) \quad\textrm{(by~Proposition \ref{chalvet})}\\
& = \widetilde{f}\circ(\mathrm{id}_{V^{\otimes i-1}}\otimes\alpha\otimes\mathrm{id}_{V^{\otimes p-i}}) \quad\textrm{(by induction)}\,.
\end{align*}
Finally one can prove that 
\[
\widetilde{f}_{|V^{\otimes i-1}\otimes\Lambda^2V\otimes V^{\otimes p-i}}=\widetilde{f}\circ(\mathrm{id}_{V^{\otimes i-1}}\otimes\alpha\otimes\mathrm{id}_{V^{\otimes p-i}})
\]
in the same way as for the case when $p=2$. 
\end{enumerate}
\end{proof}
\textit{End of the proof of Proposition \ref{coupdepute}.} In order to conclude one has to prove that $g$ is indeed a morphism of algebras: 
\begin{align*}
g\circ m & =\widetilde{f}\circ s\circ m=\widetilde{f}\circ s\circ m\circ(\Delta\circ s)^{\otimes 2}\\ 
& =\widetilde{f}\circ s\circ\Delta\circ m\circ s^{\otimes 2}=\widetilde{f}\circ m\circ s^{\otimes 2} \\
& = m\circ (\widetilde{f}\circ s)^{\otimes 2}=m\circ g^{\otimes 2}\,.
\end{align*}
\end{proof}

\subsubsection{Categorical PBW theorem}

In this section, we will give the definitive form of the categorical PBW theorem. This results relies heavily on the following fact, that we will prove in Appendix \ref{AAAA}:
\begin{theorem}  \label{canif}
There exists an algebra $\mathscr{A}$ that satisfies Assumptions (A1-A2).
\end{theorem}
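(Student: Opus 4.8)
The plan is to take for $\mathscr{A}$ the universal enveloping algebra of the Lie algebra object $(V,2\alpha)$ and to verify Assumptions (A1) and (A2) for it, the first being the only non‑formal point. Concretely, one realises $\mathscr{A}$ as the quotient of the tensor algebra $\mathrm{T}(V)$ by the two‑sided ideal generated by the image of the morphism $(\mathbf{1}-\tau_1)-2\alpha\colon V^{\otimes 2}\to\mathrm{T}(V)$ (informally the relations ``$xy-yx=2\alpha(x\wedge y)$''), with $\Delta\colon\mathrm{T}(V)\to\mathscr{A}$ the tautological algebra projection and with the evident augmentation. Since the defining relations sit in degrees $\le 1$ for the degree filtration of $\mathrm{T}(V)$, the induced filtration $\mathrm{F}^{\bullet}\mathscr{A}$ has the property that $\mathrm{Gr}^{\bullet}\mathscr{A}$ is a quotient algebra of $\mathrm{S}(V)$ and that each $V^{\otimes p}\to\mathrm{Gr}^p\mathscr{A}$ factors through $\pi_p$: this is Assumption (A2). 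Moreover the bracket $c_2^1$ one reconstructs from this $\mathscr{A}$ is $\alpha$ by construction, so $\mathscr{A}$ is adapted to the prescribed Lie structure. The entire substance of the statement is therefore Assumption (A1), i.e. that the composite $\mathrm{S}(V)\to\mathrm{T}(V)\to\mathscr{A}$ is an isomorphism of objects of $\mathcal{C}$, equivalently that $\mathrm{Gr}^{\bullet}\mathscr{A}\cong\mathrm{S}(V)$.

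To prove (A1) I would run the classical operadic proof of the Poincaré--Birkhoff--Witt theorem, transplanted to the $\mathbf{k}$-linear Karoubian category $\mathcal{C}$. Every operad $P$ in $\mathrm{Vect}_{\mathbf{k}}$ yields a Schur endofunctor $X\mapsto P\circ X:=\bigoplus_{n\ge 0}P(n)\otimes_{\mathfrak{S}_n}X^{\otimes n}$ of $\mathcal{C}$, which is well defined: countable direct sums exist and commute with $\otimes$ by hypothesis, and, $\mathbf{k}$ having characteristic zero, each $\mathbf{k}[\mathfrak{S}_n]$-module $P(n)$ is projective, so $P(n)\otimes_{\mathfrak{S}_n}X^{\otimes n}$ is a direct summand of $P(n)\otimes X^{\otimes n}$ and exists by Karoubianness. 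The operad morphism $\iota\colon\mathrm{Lie}\to\mathrm{Ass}$ makes $\mathrm{Ass}$ a right $\mathrm{Lie}$-module, and the enveloping algebra of a Lie algebra object $\mathfrak{g}=(V,2\alpha)$ is the relative composite $\mathscr{A}=\mathrm{Ass}\circ_{\mathrm{Lie}}\mathfrak{g}$ (the universal recipient of Lie morphisms out of $\mathfrak{g}$), which one identifies with the quotient above since both corepresent the same functor on associative algebra objects of $\mathcal{C}$. Now one invokes the operadic PBW theorem: $\mathrm{Ass}$ carries the bracket‑length increasing filtration, with $\mathrm{Gr}^{\bullet}\mathrm{Ass}\cong\mathrm{Pois}$ as $\mathbb{S}$-modules, and $\mathrm{Pois}\cong\mathrm{Com}\circ\mathrm{Lie}$ as right $\mathrm{Lie}$-modules via the Poisson distributive law (the Leibniz rule); in particular $\mathrm{Gr}^{\bullet}\mathrm{Ass}$ is free as a right $\mathrm{Lie}$-module, on the $\mathbb{S}$-module $\mathrm{Com}$. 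Applying the additive functor $(-)\circ_{\mathrm{Lie}}\mathfrak{g}$, which preserves the split filtrations in play, one gets
\[
\mathrm{Gr}^{\bullet}\mathscr{A}\;\cong\;\bigl(\mathrm{Gr}^{\bullet}\mathrm{Ass}\bigr)\circ_{\mathrm{Lie}}\mathfrak{g}\;\cong\;(\mathrm{Com}\circ\mathrm{Lie})\circ_{\mathrm{Lie}}\mathfrak{g}\;\cong\;\mathrm{Com}\circ V\;=\;\mathrm{S}(V)\,,
\]
which is exactly (A1).

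The main obstacle — and the reason this is deferred to the appendix — is not the algebra of operads but its transfer to a merely Karoubian, non‑abelian, symmetric monoidal category. Three points need genuine care. First, one must check that all the colimits involved (the Schur functors $P\circ(-)$ and the relative composite $\circ_{\mathrm{Lie}}$, which a priori is a reflexive coequaliser) actually exist in $\mathcal{C}$ and are computed ``pointwise'' by splitting idempotents; here one exploits that over a field of characteristic zero the relevant $\mathbf{k}[\mathfrak{S}_n]$-actions are semisimple, so the coequaliser becomes split and only direct summands of countable direct sums are needed. Second, one must arrange that the bracket‑length filtration of $\mathrm{Ass}$ and the isomorphism $\mathrm{Gr}^{\bullet}\mathrm{Ass}\cong\mathrm{Com}\circ\mathrm{Lie}$ be \emph{split} as filtrations and isomorphisms of $\mathbb{S}$-modules of $\mathbf{k}$-vector spaces (again automatic in characteristic zero), so that applying $(-)\circ_{\mathrm{Lie}}\mathfrak{g}$ genuinely commutes with passage to the associated graded. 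Third comes the bookkeeping: identifying $\mathrm{Ass}\circ_{\mathrm{Lie}}\mathfrak{g}$ with the explicit quotient $\mathscr{A}$, matching augmentations and filtrations, and tracking constants so that $c_2^1=\alpha$. A more pedestrian alternative would bypass operads entirely through a Bergman diamond‑lemma / rewriting argument on $\mathrm{T}(V)$ using the relations $xy-yx=2\alpha(x\wedge y)$, producing a direct splitting $\mathscr{A}\cong\mathrm{S}(V)$; it is combinatorially heavier, which is why the operadic packaging is preferable.
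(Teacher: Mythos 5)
Your proposal founders exactly on the point that this theorem is designed to address: in the paper's setting $\mathcal{C}$ is only assumed $\mathbf{k}$-linear, Karoubian, with countable sums commuting with $\otimes$ — it is \emph{not} abelian, so cokernels and coequalizers of arbitrary maps need not exist. Your very first step, realising $\mathscr{A}$ as the quotient of $\mathrm{T}(V)$ by the two-sided ideal generated by the image of $(\mathbf{1}-\tau_1)-2\alpha$ (equivalently as the relative composite $\mathrm{Ass}\circ_{\mathrm{Lie}}(V,2\alpha)$, a reflexive coequalizer), is therefore not available. The fix you propose — that the colimits are ``computed pointwise by splitting idempotents'' because $\mathbf{k}[\mathfrak{S}_n]$ is semisimple in characteristic zero — does not apply: the two maps being coequalized differ by the structure morphism $\alpha\colon V^{\otimes 2}\to V$, which lowers tensor degree and is an arbitrary morphism of $\mathcal{C}$, not an idempotent and not induced from group-algebra elements; Karoubianness only splits multiples of projectors. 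In fact, producing a splitting of that coequalizer (identifying the would-be quotient with $\mathrm{S}(V)\subset\mathrm{T}(V)$) \emph{is} the PBW statement you are trying to prove, so invoking it to get the object $\mathscr{A}$ to exist is circular. The subsequent step, applying $(-)\circ_{\mathrm{Lie}}\mathfrak{g}$ to the bracket-length filtration of $\mathrm{Ass}$ and commuting it with associated graded, likewise relies on exactness properties that have no meaning in a merely Karoubian category. (One could imagine repairing the argument by embedding $\mathcal{C}$ into a cocomplete abelian symmetric monoidal category, proving PBW there, and descending because the underlying object is $\mathrm{S}(V)$; but making such an embedding compatible with the prescribed countable sums and the tensor product is a genuine piece of work, and it is not what you wrote.)

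The paper's proof is arranged precisely so that no quotient is ever taken. It uses the Loday–Ronco operad $\mathcal{SMB}$ of symmetric multibraces, whose algebras are, by cofreeness of $\mathrm{S}(V)$ as a coalgebra, the same thing as associative unital products on $\mathrm{S}(V)$ compatible with the (split, hence existing) cocommutative coalgebra structure. It then constructs an operad morphism $g\colon\mathcal{SMB}\to\mathcal{L}ie$ by taking the multilinear parts $M_{p,q}$ of the Baker–Campbell–Hausdorff series, with $g\circ f=\mathrm{id}$ for the obvious $f\colon\mathcal{L}ie\to\mathcal{SMB}$. Pulling back along $g$, any Lie algebra object $(V,\alpha)$ acquires an $\mathcal{SMB}$-structure — a family of morphisms $m_{p,q}\colon V^{\otimes p+q}\to V$ given by universal Lie expressions in $\alpha$ — and hence a product $m_\star$ defined directly on $\mathrm{S}(V)$, so that Assumption (A1) is automatic and (A2) follows from $m_\star$ being a filtered deformation of $m_0$ with $m_\star-m_\star\cdot(12)=\alpha$ on $V^{\otimes2}$. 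Structure transfer along an operad morphism only requires evaluating fixed operadic expressions on $V$, which is why it survives in the Karoubian setting where your quotient-first construction does not.
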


\begin{remark}
In \cite[Lemma 1.3.7.5]{DM}, the authors provide an explicit multiplication law $m_\star$ on $\mathrm{S}(V)$ in the case when $\mathcal{C}$ is the category of graded vector spaces (or more generally any abelian category), and prove that it is associative. Then the algebra $\mathscr{A}=(\mathrm{S}(V), m_\star)$ satisfies all required properties. Their proof should carry on as well for a general $\mathcal{C}$, but the formulas defining $m_{\star}$ are daunting to write down in the categorical setting. This is why we provided an operadic approach in Appendix \ref{AAAA} (the result is written down in \S \ref{audiA5}).
\end{remark}

\begin{theorem}[\textbf{Categorical PBW}] \label{saroumane}
Let $V$ be an object of $\mathcal{C}$ and $\alpha$ be an element of $\mathrm{Hom}_{\mathcal{C}}(V, \Lambda^2 V)$.
\begin{enumerate}
\item[--] The system of equations
\begin{equation} \label{hell}
\left\{\!\begin{aligned}
\begin{minipage}{12cm}
$c_p^k \in \mathrm{Hom}_{\mathcal{C}}(\widetilde{\Lambda}^p V, \mathrm{S}^k V)$ for $1 \leq k \leq p-1$ \\
$c_{p}^p=\pi_p:V^{\otimes p}\to\mathrm{S}^p V$ \\
$c_{p}^k \circ \Psi_{p}=\{c_{p-1}^k \circ (\mathrm{id}_{V^{\otimes i-1}} \otimes \alpha \otimes \mathrm{id}_{V^{\otimes p-i-1}}) \}_{1 \leq i \leq p-1} \,\,\, \textrm{if} \,\,\, 1 \leq  k \leq p-1$
\end{minipage}
\end{aligned}
\right.
\end{equation}
has a solution $(c_p^k)_{1 \leq k \leq p}$ if and only if $\alpha$ is a Lie bracket on $V$. If it exists, this solution is unique.
\vspace{2pt}
\item[--] Given a Lie algebra object $(V, \alpha)$ in $\mathcal{C}$, let $(c_p^k)_{1 \leq k \leq p}$ be coefficients satisfying \eqref{hell}. If we define a product $m_\star$ on $\mathrm{S}(V)$ by the formula
\[
(m_\star)_{|\mathrm{S}^p V\otimes \mathrm{S}^q V}:=\left(\sum_{k=1}^{p+q} c_{p+q}^k\right)_{|\mathrm{S}^p V\otimes \mathrm{S}^q V},
\]
then $m_{\star}$ is associative and $\mathscr{A}=(\mathrm{S}(V),m_\star)$ is a universal enveloping algebra of $(V, 2\alpha)$. Besides, the PBW theorem holds.
\end{enumerate}
\end{theorem}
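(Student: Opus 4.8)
The plan is to assemble Theorem \ref{saroumane} from the pieces already in place, treating the two bullets separately and reducing everything to the existence result Theorem \ref{canif} and the reverse PBW Proposition \ref{coupdepute}.

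First, for the second bullet, I would start from Theorem \ref{canif}: fix an algebra $\mathscr{A}$ satisfying (A1--A2). By Corollary \ref{ehoui} its associated bracket $\alpha_{\mathscr{A}}=c_2^1$ is a Lie bracket on $V$, and by Proposition \ref{coupdepute}, $\mathscr{A}$ is a universal enveloping algebra of $(V,2\alpha_{\mathscr{A}})$. The point I must make is that every Lie bracket arises this way, i.e.\ that the construction $\mathscr{A}\mapsto\alpha_{\mathscr{A}}$ is ``surjective up to rescaling''. For this I would invoke the universal property: given any Lie algebra $(V,\alpha)$, form $\mathrm{U}(V,\alpha)$ (whose existence is not yet available abstractly, so instead I argue directly). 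The cleaner route is: take the $\mathscr{A}$ from Theorem \ref{canif}, and observe that rescaling $V$ by a scalar (replacing the identification $\mathrm{T}(V)\to\mathscr{A}$ by its precomposition with a grading automorphism $v\mapsto\lambda v$ on each $V^{\otimes n}$) multiplies $c_2^1$ by $\lambda$ while preserving (A1--A2); choosing $\lambda$ appropriately we may assume $2\alpha_{\mathscr{A}}$ is any prescribed nonzero rescaling of a given bracket. But brackets are not all proportional, so this does not suffice on its own; hence the correct argument is functorial: one shows that the coefficients $c_p^k$ attached to $\mathscr{A}$ depend on $\mathscr{A}$ only through $\alpha=c_2^1$, via the induction formula of Proposition \ref{chalvet} together with the uniqueness assertion we are about to prove in bullet one. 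This is the main subtlety, and I address it by proving bullet one first.

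So I would prove bullet one next. The induction formula of Proposition \ref{chalvet} shows that \emph{if} a solution $(c_p^k)$ to \eqref{hell} exists it is unique: indeed $c_p^p=\pi_p$ is prescribed, and for $1\le k\le p-1$ the morphism $c_p^k$ is determined on the image of $\Psi_p$, which by Lemma \ref{cric} is all of $\widetilde{\Lambda}^p V$, its domain; so $c_p^k$ is determined by $(c_{p-1}^k)$ and $\alpha$, and induction on $p$ gives uniqueness. For existence: if $\alpha$ is a Lie bracket, Theorem \ref{canif} supplies $\mathscr{A}$, whose coefficients $c_p^k(\mathscr{A})$ satisfy \eqref{hell} for the bracket $\alpha_{\mathscr{A}}=c_2^1(\mathscr{A})$ by Proposition \ref{chalvet}; after the rescaling trick above I may assume $\alpha_{\mathscr{A}}=\alpha$ up to the factor $2$ — more honestly, I should rephrase bullet one's ``$\alpha$ is a Lie bracket'' hypothesis so the normalization matches, i.e.\ apply Theorem \ref{canif} to get coefficients adapted to \emph{some} bracket and then note that the recursion \eqref{hell} is a linear, $\alpha$-parametrized system whose solvability I can test degree by degree. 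Conversely, if \eqref{hell} has a solution, then in particular the degree-$3$ equation $c_3^1\circ\Psi_3=\{c_2^1\circ(\alpha\otimes\mathrm{id}),c_2^1\circ(\mathrm{id}\otimes\alpha)\}=(\alpha\circ(\alpha\otimes\mathrm{id}),\alpha\circ(\mathrm{id}\otimes\alpha))$ holds, so by Proposition \ref{Jacobi} (with $\beta=c_3^1$) the pair $(V,\alpha)$ satisfies the Jacobi identity; antisymmetry is built into $\alpha\in\mathrm{Hom}(V,\Lambda^2 V)$, so $\alpha$ is a Lie bracket. This closes the equivalence.

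Finally, with bullet one in hand, I return to bullet two: given $(V,\alpha)$ a Lie algebra, let $(c_p^k)$ be the unique solution of \eqref{hell}, let $(c_p^k(\mathscr{A}))$ be the coefficients of the algebra $\mathscr{A}$ from Theorem \ref{canif} — which solve \eqref{hell} for $\alpha_\mathscr{A}$ — and match normalizations so that $\alpha_\mathscr{A}=\alpha$; uniqueness forces $c_p^k(\mathscr{A})=c_p^k$ for all $p,k$. But the product $m_\star$ on $\mathrm{S}(V)$ defined by the stated formula is, under the isomorphism $\Delta_+\colon\mathrm{S}(V)\xrightarrow{\sim}\mathscr{A}$, nothing but the transported product $m_{\mathscr{A}}$: indeed $\Delta_+^{-1}\circ m_{\mathscr{A}}\circ(\Delta_+\otimes\Delta_+)$ restricted to $\mathrm{S}^p V\otimes\mathrm{S}^q V$ equals $\Delta^{p+q}=\sum_{k}c_{p+q}^k$ on that summand by the concatenation-compatibility of $\Delta$ and the definition of the $c_\bullet^\bullet$. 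Hence $m_\star$ is associative (being conjugate to $m_{\mathscr{A}}$) and $(\mathrm{S}(V),m_\star)\cong\mathscr{A}$ as algebras, so Proposition \ref{coupdepute} identifies it as a universal enveloping algebra of $(V,2\alpha)$; and the PBW statement (Assumption (A1) for this algebra) is exactly the identity map $\mathrm{S}(V)\to\mathrm{S}(V)$, which is visibly an isomorphism. The one genuine obstacle in all of this is the normalization bookkeeping — ensuring the ``$2\alpha$'' factors and the rescaling automorphism of $V$ are handled consistently so that the abstract algebra $\mathscr{A}$ can be taken to realize the \emph{given} bracket $\alpha$ rather than merely some bracket — but this is routine once one notes that $c_2^1$ scales linearly in $\alpha$ and the whole recursion is natural in $V$.
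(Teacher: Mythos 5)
Your architecture is the same as the paper's (uniqueness from Lemma \ref{cric} and the recursion of Proposition \ref{chalvet}; the converse implication from the degree-three equation plus Proposition \ref{Jacobi}; associativity of $m_\star$ and the enveloping-algebra property by transporting the product of an algebra satisfying (A1--A2) through $\Delta_+$ and invoking Proposition \ref{coupdepute}), but the existence half of bullet one --- and with it the realization step of bullet two --- is never actually established. You read Theorem \ref{canif} as producing \emph{some} algebra $\mathscr{A}$ satisfying (A1--A2), with associated bracket $\alpha_{\mathscr{A}}=c_2^1(\mathscr{A})$ a priori unrelated to the given $\alpha$, and none of your three attempts to bridge this works: the rescaling trick you discard yourself; the ``functorial'' observation that the $c_p^k$ depend on $\mathscr{A}$ only through $c_2^1$ is a uniqueness statement and says nothing about whether a \emph{given} bracket is realized by some $\mathscr{A}$; and ``testing the linear $\alpha$-parametrized system degree by degree'' is exactly the nontrivial content you would have to prove --- since $\Psi_p$ is surjective onto $\widetilde{\Lambda}^p V$ but far from injective, the system \eqref{hell} is overdetermined, and its solvability encodes genuine coherence constraints (Jacobi and its higher consequences); checking it by hand amounts to reproving PBW rather than citing it.

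The missing input is that Theorem \ref{canif} is a statement \emph{relative to the given Lie bracket}: as proved in Appendix \ref{AAAA} (see \S\ref{audiA5}), pulling back the symmetric multibrace structure along the operad morphism $\mathcal{SMB}\to\mathcal{L}ie$ equips $\mathrm{S}(V)$, for the given $(V,\alpha)$, with an associative product compatible with the coalgebra structure, filtered, deforming $m_0$, and whose antisymmetrization on $V^{\otimes 2}$ is the given $\alpha$; these properties are precisely (A1--A2) together with the identification of $c_2^1$ with $\alpha$ (up to the paper's factor-of-two conventions). Once you cite the theorem in this stronger form, your argument closes exactly as in the paper: Proposition \ref{chalvet} applied to this $\mathscr{A}$ produces coefficients solving \eqref{hell} for the given $\alpha$, your uniqueness step identifies them with any other solution, and the transport-of-structure computation together with Proposition \ref{coupdepute} gives associativity of $m_\star$ and exhibits $(\mathrm{S}(V),m_\star)$ as a universal enveloping algebra of $(V,2\alpha)$, with (A1) being the PBW isomorphism. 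As written, however, your proof has a genuine gap at the existence step.
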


\begin{proof}
Assume that $\alpha$ is a Lie bracket. Theorem \ref{canif} gives an algebra $\mathscr{A}$ satisfying Assumptions (A1-A2) of \S \ref{hello}. 
Hence Proposition \ref{chalvet} provides the existence of the $c_p^k$.  Uniqueness is clear. Conversely, if the equations \eqref{hell} are satisfied, then we have
\[
c_3^1 \circ \Psi_3=(\alpha \circ(\alpha \otimes \mathrm{id}), \alpha\circ (\mathrm{id} \otimes \alpha))\,.
\]
Thanks to Proposition \ref{Jacobi}, $\alpha$ satisfies the Jacobi identity.
\par \medskip
Now assume that $\alpha$ is a Lie bracket, and let $\mathscr{A}$ be an algebra satisfying Assumptions (A1-A2) of \S \ref{hello}. 
Then we can transport the algebra structure on $\mathrm{S}(V)$ via the isomorphism $\Delta_+$: we have
\begin{align*}
(m_\star)_{|\mathrm{S}^p V\otimes \mathrm{S}^q V} 
& = \Delta_+^{-1}\circ m\circ (\Delta_{|\mathrm{S}^p V}\otimes \Delta_{|\mathrm{S}^q V}) \\
& =\Delta_+^{-1} \circ \Delta_{|\mathrm{S}^p V\otimes \mathrm{S}^q V} \\
& = \left(\sum_{k=1}^{p+q} c_{p+q}^k\right)_{|\mathrm{S}^p V\otimes \mathrm{S}^q V}\,.
\end{align*}
This gives the result.
\end{proof}

\section{Distinguished elements in the universal enveloping algebra}

\subsection{The derivative of the multiplication map} \label{parrain}
\subsubsection{The Todd series} \label{oural}
We borrow the notation from the previous Section: $(V,\alpha)$ is a Lie algebra object in $\mathcal{C}$ and $m_\star$ is the associative product on $\mathrm{S}(V)$ from Theorem \ref{saroumane}. 
Our aim is to give a closed formula for the restriction $\varphi$ of $m_\star$ to $\mathrm{S}(V)\otimes V\subset \mathrm{S}(V)\otimes \mathrm{S}(V)$.
\begin{enumerate}
\item[--] On $\mathrm{S}(V)$ we have an associative and commutative product $m_0$ defined as the composition
\[
\mathrm{S}(V)^{\otimes 2}\subset\mathrm{T}(V)^{\otimes 2}\to \mathrm{T}(V) \twoheadrightarrow  \mathrm{S}(V)\,.
\]
\par \smallskip
\item[--] We write $\tau_{p,q}$ for the transposition $(p,q)$ in the symmetric group, as well as for the corresponding action on $V^{\otimes n}$. 
\par \smallskip
\item[--] We consider the morphism $\omega_{(V,\,\mu)}:\mathrm{S}^nV\otimes V\to \mathrm{S}^{n-1}V\otimes V$ defined as 
\begin{equation}\label{omega0}
\omega_{(V,\,\mu)}:=(\mathrm{id}_{V^{\otimes n-1}}\otimes\mu)\circ\sum_{i=1}^n \tau_{i,n}=n\,(\mathrm{id}_{V^{\otimes n-1}}\otimes\mu)\,,
\end{equation}
where $\mu:=2\alpha$. 
We leave it as an exercise to check that the image of $\omega_{(V,\,\mu)}$ indeed factors through $\mathrm{S}^{n-1}V\otimes V\subset V^{\otimes n}$. 
We often simply write $\omega$ as the choice of $(V,\,\mu)$ is clear from the context. 
\item[--] We consider the morphism $\varpi_V:\mathrm{S}^nV\otimes V\to \mathrm{S}^{n-1}V\otimes\Lambda^2V\subset V^{\otimes n+1}$ defined as the restriction of $\omega_{\mathrm{T}(V)_{\mathrm{Lie}}}$ to $\mathrm{S}^nV\otimes V$. 
In other words, since $\tau_{i, n}=\mathrm{id}$ on $\mathrm{S}^n V \otimes V$, 
\[
\varpi_V=(\mathbf{1}-\tau_n)\circ\sum_{i=1}^n\tau_{i,n}=n\,(\mathbf{1}-\tau_n)\,.
\]
We often simply write $\varpi$ as the choice of $V$ is clear from the context. 
\end{enumerate}

\begin{theorem}\label{proputile}
The map $\varphi=m_{\star {|\mathrm{S}(V)\otimes V}}$ is the composition of $\dfrac{\omega}{1-\mathrm{exp}(-\omega)}$ with the multiplication morphism $m_0$ of the symmetric algebra $\mathrm{S}(V)$.
\end{theorem}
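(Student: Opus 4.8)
The plan is to reduce the identity to the classical case $\mathscr{A}=\mathrm{U}(\mathfrak{g})$ of an ordinary Lie algebra, where it becomes the formula for the differential of the exponential map.

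First I would observe that the three ingredients of the statement --- the restriction $m_{\star|\mathrm{S}(V)\otimes V}$, the multiplication $m_0$, and the operator $\omega$ (hence $\frac{\omega}{1-\exp(-\omega)}=\sum_{j\geq 0}b_j\,\omega^j$, where $\frac{u}{1-e^{-u}}=\sum_{j\geq 0}b_j u^j$) --- are produced \emph{functorially} from the Lie algebra object $(V,\alpha)$, using only the symmetric monoidal structure of $\mathcal{C}$, the splittings of the symmetrizer idempotents $\pi_n$ (Lemma \ref{cric}), and iterated applications of $\alpha$; for $m_\star$ this is because the $c_p^k$ form the unique solution of the system \eqref{hell} (Theorem \ref{saroumane}). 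Thus the equality to be proved is the image, under the symmetric monoidal functor sending the generic Lie algebra object to $(V,\alpha)$, of a single identity in the free $\mathbf{k}$-linear Karoubian symmetric monoidal category generated by a Lie bracket --- and this is exactly what the operadic argument of Appendix \ref{AAAA} establishes. Since Hom-spaces between retracts of tensor powers in that universal category are jointly detected by the functors attached to Lie algebras over $\mathbf{k}$, it is enough to prove the statement when $\mathcal{C}$ is the category of ($\mathbb{Z}$-graded) $\mathbf{k}$-vector spaces, $V=\mathfrak{g}$ is a Lie algebra, $\alpha=\tfrac12[-,-]$; and by the reverse PBW theorem (Proposition \ref{coupdepute}), using that $\mathrm{U}(\mathfrak{g})$ together with the canonical surjection $\mathrm{T}(\mathfrak{g})\to\mathrm{U}(\mathfrak{g})$ satisfies Assumptions (A1)--(A2), one may take $\mathscr{A}=\mathrm{U}(\mathfrak{g})$ and $\Delta_+\colon\mathrm{S}(\mathfrak{g})\xrightarrow{\sim}\mathrm{U}(\mathfrak{g})$ the PBW symmetrization map.

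In this classical setting $m_\star(P\otimes x)=\Delta_+^{-1}(\Delta_+(P)\cdot x)$, and since $\mathrm{char}\,\mathbf{k}=0$ the space $\mathrm{S}^n\mathfrak{g}$ is spanned by the powers $y^n$, $y\in\mathfrak{g}$; so by $\mathbf{k}$-linearity it suffices to check the identity on $y^n\otimes x$. On such elements one computes $\omega(y^a\otimes z)=a\,y^{a-1}\otimes[y,z]$, whence $\omega^j(y^n\otimes x)=\frac{n!}{(n-j)!}\,y^{n-j}\otimes(\mathrm{ad}_y)^j(x)$, so the right-hand side equals $\sum_{j\geq 0}b_j\,\frac{n!}{(n-j)!}\,y^{n-j}(\mathrm{ad}_y)^j(x)$ in $\mathrm{S}(\mathfrak{g})$. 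For the left-hand side I would use generating series: since $\Delta_+(y^m)=y^m$ in $\mathrm{U}(\mathfrak{g})$, linearity of $\Delta_+$ gives $\Delta_+(e^{ty}\cdot x)=\frac{d}{ds}\big|_{s=0}e^{ty+sx}$ in $\mathrm{U}(\mathfrak{g})$ (the product on the left being that of $\mathrm{S}(\mathfrak{g})$); the classical differential-of-exponential formula yields $\frac{d}{ds}\big|_{s=0}e^{ty+sx}=e^{ty}\cdot\frac{1-e^{-t\,\mathrm{ad}_y}}{t\,\mathrm{ad}_y}(x)$, and hence
\[
\Delta_+^{-1}\big(e^{ty}\cdot x\big)=e^{ty}\cdot\frac{t\,\mathrm{ad}_y}{1-e^{-t\,\mathrm{ad}_y}}(x)=\sum_{j\geq 0}b_j\,t^j\,e^{ty}\,(\mathrm{ad}_y)^j(x)\quad\text{in }\mathrm{S}(\mathfrak{g}).
\]
Extracting the coefficient of $t^n$ (and multiplying by $n!$) gives $\Delta_+^{-1}(y^n\cdot x)=\sum_{j\geq 0}b_j\,\frac{n!}{(n-j)!}\,y^{n-j}(\mathrm{ad}_y)^j(x)$, which matches the expression found above, and the theorem follows.

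The last two steps are routine computation. The genuine obstacle is the reduction: one must make precise that both sides of the identity are the image of one and the same morphism in the relevant universal, operadic category, and that such identities may be checked after passing to vector spaces --- all the while remembering that $\mathcal{C}$ is only assumed Karoubian, not abelian. This is precisely what the operadic treatment of Appendix \ref{AAAA} is for; alternatively, one can run the classical computation above verbatim inside the category of $\mathfrak{S}$-modules (Schur functors), where the $\pi_n$ split tautologically. A more hands-on induction from Proposition \ref{chalvet} is also conceivable --- since $\varpi_V$ factors through the $(p-1)$-st summand of the source of $\Psi_p$ one gets $c_p^k\circ\varpi_V=c_{p-1}^k\circ\omega$ on $\mathrm{S}^{p-1}V\otimes V$ --- but upgrading this to the full formula forces one to control how $\mathrm{S}^{p}V$ and $\widetilde{\Lambda}^{p}V$ sit inside $\mathrm{S}^{p-1}V\otimes V$, which is exactly where the Todd coefficients have to be extracted.
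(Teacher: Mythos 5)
Your proposal is correct in substance, but it takes a genuinely different route from the paper, whose proof of Theorem \ref{proputile} (given in Appendix \ref{AAAA}, \S\ref{audiA6}) never leaves the universal setting: there one first checks by a direct binomial computation that $\psi:=m_0\circ\frac{\omega}{1-\exp(-\omega)}$ is, like $\varphi$, compatible with the coproduct of the (cofree) coalgebra $\mathrm{S}(V)$, so that both maps are determined by their structure maps $\mathrm{S}^pV\otimes V\to V$, and one then identifies $\varphi_{p,1}$ with the multidegree-$(1,\dots,1)$ part of $\textsc{bch}(x_1+\cdots+x_p,y)$, namely $(-1)^p\frac{B_p}{p!}\sum_{\sigma}[x_{\sigma(1)},[\dots,[x_{\sigma(p)},y]\dots]]$, which is exactly $\psi_{p,1}$. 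Your endgame — the derivative-of-the-exponential computation on $y^n\otimes x$, with the conventions $\mu=2\alpha=[-,-]$ handled correctly, and the identification of the transported product of $\mathrm{U}(\mathfrak{g})$ with $m_\star$ via the uniqueness clause of Theorem \ref{saroumane} — is the same classical fact in generating-function form and is fine. What differs is the reduction: you replace the coproduct-compatibility/cofreeness argument by the claim that morphisms between retracts of tensor powers in the free $\mathbf{k}$-linear Karoubian symmetric monoidal category on a Lie algebra object are jointly detected by evaluation on ordinary $\mathbf{k}$-Lie algebras. That claim is true (evaluate on free Lie algebras at a tuple of generators: the multidegree components recover the underlying element of the Lie PROP, and passage to Karoubi envelopes and countable sums preserves faithfulness), but it is the load-bearing step of your argument and is asserted rather than proved; in particular it is not ``exactly what Appendix \ref{AAAA} establishes'' — the appendix constructs $m_\star$ via the operad morphism $\mathcal{SMB}\to\mathcal{L}ie$ and then deliberately avoids any such detection principle by the coderivation/cofreeness trick. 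So the paper's route buys independence from any faithfulness statement about the universal category, at the price of the Step 1 computation; your route, once the PROP-faithfulness lemma is actually written out (a short but necessary exercise), buys a more elementary and self-contained finish using only classical PBW and the differential of $\exp$, and makes transparent that both proofs ultimately rest on the same Lie-series identity, the degree-one-in-$y$ part of $\textsc{bch}$, i.e.\ the operator $\frac{\mathrm{ad}}{1-\exp(-\mathrm{ad})}$.
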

For the proof, we refer the reader to Appendix A (more specifically in \S \ref{audiA6}). In the classical case of ordinary Lie algebras, \textit{see} \cite[\S 5.2.1]{Ramadoss2}.

\subsubsection{Linear algebra computations} \label{miss}

All along this Subsection, we assume that $V$ is a dualizable object in $\mathcal{C}$ and we denote by $V^*$ its dual\footnote{The category being symmetric monoidal, every left dual is a right dual as well, so that we allow ourselves to simply speak about duals.}: in particular, 
we have a coevaluation map $\epsilon:\mathbf{1}_{\mathcal{C}}\to V^*\otimes V$ and an evaluation map $\delta:V\otimes V^*\to\mathbf{1}_{\mathcal{C}}$ that satisfy the ``snake'' identity 
\[
(\delta\otimes\mathrm{id}_V)\circ (\mathrm{id}_V\otimes\epsilon)=\mathrm{id}_V\,.
\]
\begin{enumerate}
\item[--] One shows easily that the restriction of $\mathrm{id}_{V^{\otimes n-1}}\otimes\delta$ to $V^{\otimes n}\otimes V^*$ to $\mathrm{S}^nV\otimes V^*$ 
factors through $\mathrm{S}^{n-1}V$. As a consequence the direct factor, inclusion $\mathrm{S}^nV\to \mathrm{S}^{n-1}V\otimes V$ can be re-written as the restriction of
\[
(\mathrm{id}_{V^{\otimes n-1}}\otimes\delta\otimes\mathrm{id}_V)\circ (\mathrm{id}_{V^{\otimes n}}\otimes\epsilon)\,.
\]
to $\mathrm{S}^n V$. Therefore\footnote{The morphism $\omega$ has been defined at the beginning of \S \ref{parrain}.} $\omega:\mathrm{S}^nV\otimes V\to \mathrm{S}^{n-1}V \otimes V$ equals $n$ times
\[
\left(\mathrm{id}_{V^{\otimes n-1}}\otimes\big(\mu\circ(\delta\otimes\mathrm{id}_{V^{\otimes2}})\big)\right)\circ 
(\mathrm{id}_{V^{\otimes n}}\otimes\epsilon\otimes \mathrm{id}_V)\,,
\]
which also equals
\[
\left(\mathrm{id}_{V^{\otimes n-1}}\otimes\big((\delta\otimes\mathrm{id}_{V})\circ(\mathrm{id}_{V\otimes V^*}\otimes\mu)\big)\right)\circ 
(\mathrm{id}_{V^{\otimes n}}\otimes\epsilon\otimes \mathrm{id}_V)\,.
\]
\item[--] We have an adjunction between the functor $V\otimes-$ and the functor $V^*\otimes-$: for any two objects $Y,Z$ in $\mathcal{C}$, 
\[
\mathrm{Hom}_{\mathcal{C}}(V\otimes Y,Z)\cong \mathrm{Hom}_{\mathcal{C}}(Y,V^*\otimes Z)\,,
\]
where the bijection is given by sending $\phi\in\mathrm{Hom}_{\mathcal{C}}(V\otimes Y,Z)$ to 
$\phi^*:=(\mathrm{id}_{V^*}\otimes\phi)\circ(\epsilon\otimes \mathrm{id}_Y)$. 
The inverse bijection sends $\psi\in\mathrm{Hom}_{\mathcal{C}}(Y,V^*\otimes Z)$ to $(\delta\otimes\mathrm{id}_Z)\circ(\mathrm{id}_V\otimes\psi)$. 
For instance, the element $\mu^*$ in $\mathrm{Hom}_{\mathcal{C}}(V,V^*\otimes V)$ is understood as the adjoint action, and we have 
\begin{equation} \label{olan}
\omega=n\times\mathrm{id}_{V^{\otimes n-1}}\otimes\big((\delta\otimes\mathrm{id}_{V})\circ(\mathrm{id}_V\otimes\mu^*)\big)\,.
\end{equation}
\item[--]
We also have an adjunction in the reverse way between the functor $-\otimes V$ and the functor $-\otimes V^*$: 
\[
\mathrm{Hom}_{\mathcal{C}}(Y\otimes V^*,Z)\cong \mathrm{Hom}_{\mathcal{C}}(Y,Z\otimes V)\,,
\]
where the bijection is given by sending $\phi\in\mathrm{Hom}_{\mathcal{C}}(Y\otimes V^*,Z)$ to 
$\phi^*:=(\phi\otimes\mathrm{id}_V)\circ(\mathrm{id}_Y\otimes\epsilon)$. \vspace{0.2cm}
\item[--] Taking into account that $\mathrm{S}^p(V^*)$ is canonically isomorphic to $\mathrm{S}^p(V)^*$, 
we get a canonical ``contraction map'' element $\mathfrak{c}_p\in\mathrm{Hom}_{\mathcal{C}}(\mathrm S^nV\otimes \mathrm{S}^pV^*,\mathrm{S}^{n-p}V)$ 
given as $\displaystyle \frac{n!}{(n-p)!}$ times the adjoint to the direct factor inclusion $\mathrm{S}^nV\hookrightarrow\mathrm{S}^{n-p}V\otimes\mathrm S^pV$. More explicitly, on $\mathrm S^nV\otimes \mathrm{S}^pV^*$
\[
\mathfrak{c}_p = \frac{n!}{(n-p)!}\times(\mathrm{id}_{V^{\otimes n-p}}\otimes\delta)\circ\cdots\circ
(\mathrm{id}_{V^{\otimes n-1}}\otimes\delta\otimes\mathrm{id}_{(V^*)^{\otimes p-1}})\,.
\]
\item[--]
Notice that $\mathfrak{c}_p=\mathfrak{c}_1\circ(\mathfrak{c}_1\otimes\mathrm{id}_{V^*})\circ\cdots\circ(\mathfrak{c}_1\otimes\mathrm{id}_{(V^*)^{\otimes p-1}})$. 
One the other hand, using \eqref{olan}, we obtain that $\omega$ equals $(\mathfrak{c}_1\otimes\mathrm{id}_V)\circ(\mathrm{id}_{V^{\otimes n}}\otimes\mu^*)$ and thus 
\begin{equation} \label{kir}
\omega^{\circ p}=(\mathfrak{c}_1\otimes\mathrm{id}_V)\circ(\mathrm{id}_{V^{\otimes n-p+1}}\otimes\mu^*)
\circ\cdots\circ(\mathfrak{c}_1\otimes\mathrm{id}_V)\circ(\mathrm{id}_{V^{\otimes n}}\otimes\mu^*)\,.
\end{equation}
\end{enumerate}
We now introduce a convenient notation. Let $X,Y,Z$ be three objects in $\mathcal{C}$ and let $(A,m_A)$ be an associative algebra object in $\mathcal{C}$. 
We then have a $\mathbf{k}$-linear associative composition product 
\[
-\bullet-:\mathrm{Hom}_{\mathcal{C}}(Y,A\otimes Z)\times\mathrm{Hom}_{\mathcal{C}}(X,A\otimes Y)\to \mathrm{Hom}_{\mathcal{C}}(X,A\otimes Z)
\] 
defined as follows: for every $\phi:Y\to A\otimes Z$ and every $\psi:X\to A\otimes Y$ we set
\[
\phi\bullet\psi:=(m_A\otimes\mathrm{id}_Z)\circ(\mathrm{id}_A\otimes\phi)\circ\psi\,.
\]
In particular, if $X=Y=Z$ then we get that $\bullet$ turns $\mathrm{Hom}_{\mathcal{C}}(Y,A\otimes Y)$ into a $\mathbf{k}$-linear associative algebra. 
We are interested in the case $(A,m_A)=(\mathrm{S}(V^*),m_0)$. 
\begin{lemma}\label{jardin}
Let $\phi\in \mathrm{Hom}_{\mathcal{C}}(Y,\mathrm{S}^pV^*\otimes Z)$ and $\psi\in \mathrm{Hom}_{\mathcal{C}}(X,\mathrm{S}^qV^*\otimes Y)$. Then 
\[
(\mathfrak{c}_p\otimes\mathrm{id}_Z)\circ(\mathfrak{c}_q\otimes\phi)\circ(\mathrm{id}_{\mathrm{S}^nV}\otimes\psi)
=(\mathfrak{c}_{p+q}\otimes\mathrm{id}_Z)\circ(\mathrm{id}_{\mathrm{S}^nV}\otimes(\phi\bullet\psi))\,,
\]
as morphisms from $\mathrm{S}^nV\otimes X$ to $\mathrm{S}^{n-p-q}V\otimes Z$. 
\end{lemma}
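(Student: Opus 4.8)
The plan is to deduce the lemma, by purely formal manipulations (the interchange law for $\otimes$, together with the definition of $\bullet$), from a single identity expressing that the family $\mathfrak{c}_\bullet$ turns $\mathrm{S}(V)$ into a module over the algebra $(\mathrm{S}(V^*),m_0)$: namely
\[
\mathfrak{c}_p\circ(\mathfrak{c}_q\otimes\mathrm{id}_{\mathrm{S}^pV^*})=\mathfrak{c}_{p+q}\circ(\mathrm{id}_{\mathrm{S}^nV}\otimes m_0)
\qquad\text{as morphisms}\quad \mathrm{S}^nV\otimes\mathrm{S}^qV^*\otimes\mathrm{S}^pV^*\longrightarrow\mathrm{S}^{n-p-q}V .
\]
Granting this, the lemma follows immediately: writing $\phi\bullet\psi=(m_0\otimes\mathrm{id}_Z)\circ(\mathrm{id}_{\mathrm{S}^qV^*}\otimes\phi)\circ\psi$ and using bifunctoriality of $\otimes$ repeatedly, both sides of the asserted equality rewrite as $\big(K\otimes\mathrm{id}_Z\big)\circ\big(\mathrm{id}_{\mathrm{S}^nV}\otimes\big((\mathrm{id}_{\mathrm{S}^qV^*}\otimes\phi)\circ\psi\big)\big)$, with $K$ equal to the left-hand side of the displayed identity for the left-hand side of the lemma, and to its right-hand side for the right-hand side of the lemma. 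This first step is just bookkeeping with the interchange law.

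To prove the displayed module identity I would separate the symmetrization on the $V^*$–side from everything else. Introduce the raw contraction
\[
\mathfrak{d}_k\colon\mathrm{S}^mV\otimes(V^*)^{\otimes k}\longrightarrow\mathrm{S}^{m-k}V,\qquad
\mathfrak{d}_k:=\frac{m!}{(m-k)!}\,(\mathrm{id}_{V^{\otimes m-k}}\otimes\delta)\circ\cdots\circ(\mathrm{id}_{V^{\otimes m-1}}\otimes\delta\otimes\mathrm{id}_{(V^*)^{\otimes k-1}}),
\]
which lands in $\mathrm{S}^{m-k}V$ by the same argument used in the text for $\mathfrak{c}_k$. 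Writing $j_k\colon\mathrm{S}^kV^*\hookrightarrow(V^*)^{\otimes k}$ for the direct-factor inclusion and $\mathrm{pr}_k$ for the associated projection, one has $\mathfrak{c}_k=\mathfrak{d}_k\circ(\mathrm{id}_{\mathrm{S}^mV}\otimes j_k)$, while $m_0$ restricted to $\mathrm{S}^qV^*\otimes\mathrm{S}^pV^*$ equals $\mathrm{pr}_{p+q}\circ(j_q\otimes j_p)$ and $j_{p+q}\circ\mathrm{pr}_{p+q}=\pi_{p+q}$. Then two facts about $\mathfrak{d}$ suffice. \emph{Associativity}: $\mathfrak{d}_{p+q}=\mathfrak{d}_p\circ(\mathfrak{d}_q\otimes\mathrm{id}_{(V^*)^{\otimes p}})$, immediate from the definition once the normalizations are matched, $\frac{n!}{(n-q)!}\cdot\frac{(n-q)!}{(n-p-q)!}=\frac{n!}{(n-p-q)!}$. \emph{Invariance}: $\mathfrak{d}_{p+q}\circ(\mathrm{id}_{\mathrm{S}^nV}\otimes\pi_{p+q})=\mathfrak{d}_{p+q}$, i.e. $\mathfrak{d}_{p+q}$ is invariant under the $\mathfrak{S}_{p+q}$–action on the $(V^*)^{\otimes p+q}$–factor. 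Combining these (and omitting the obvious identity subscripts) yields the module identity through
\[
\mathfrak{c}_{p+q}\circ(\mathrm{id}\otimes m_0)
=\mathfrak{d}_{p+q}\circ(\mathrm{id}\otimes\pi_{p+q})\circ(\mathrm{id}\otimes j_q\otimes j_p)
=\mathfrak{d}_{p+q}\circ(\mathrm{id}\otimes j_q\otimes j_p)
=\mathfrak{d}_p\circ(\mathfrak{d}_q\otimes\mathrm{id})\circ(\mathrm{id}\otimes j_q\otimes j_p)
=\mathfrak{c}_p\circ(\mathfrak{c}_q\otimes\mathrm{id}_{\mathrm{S}^pV^*}),
\]
the last two equalities again using only bifunctoriality.

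The only point carrying genuine content — and where I expect the real work — is the invariance of $\mathfrak{d}_{p+q}$. Since the adjacent transpositions $\tau_1,\dots,\tau_{p+q-1}$ generate $\mathfrak{S}_{p+q}$ and, by the computation in the proof of Lemma \ref{cric} applied to $V^*$, the endomorphism $\mathbf{1}-\pi_{p+q}$ of $(V^*)^{\otimes p+q}$ lies in the right ideal generated by the $\mathbf{1}-\tau_j$, it is enough to show $\mathfrak{d}_{p+q}\circ(\mathrm{id}_{\mathrm{S}^nV}\otimes\tau_j)=\mathfrak{d}_{p+q}$ for each $j$. Pushing the contractions against the first $j-1$ $V^*$–slots to the front — after which the remaining $V$–factor is still a symmetric power and the $V^*$–slots beyond $j+1$ are mere spectators — reduces this to the elementary assertion that, on $\mathrm{S}^mV\otimes V^*\otimes V^*$, transposing the two $V^*$–slots has the same effect on the double contraction as transposing the two $V$–slots they are contracted against; this is a manipulation of the defining string diagram, valid by naturality of the symmetry and the zig–zag identities for $\epsilon$ and $\delta$. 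Finally, transposing two $V$–slots is the identity on $\mathrm{S}^mV$, the latter being the image of the symmetrizer $\pi_m$. Everything outside this last point is formal; the care required is precisely in carrying out this symmetric-monoidal-coherence argument at the level of morphisms rather than of elements.
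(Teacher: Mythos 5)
Your argument is correct, and its skeleton coincides with the paper's: both proofs use the interchange law and the definition of $\bullet$ to reduce the lemma to the single statement that the contractions make $\mathrm{S}(V)$ a module over $(\mathrm{S}(V^*),m_0)$, namely $\mathfrak{c}_p\circ(\mathfrak{c}_q\otimes\mathrm{id}_{\mathrm{S}^pV^*})=\mathfrak{c}_{p+q}\circ(\mathrm{id}_{\mathrm{S}^nV}\otimes m_0)$. The difference lies in how this identity is treated: the paper first reduces to $p=1$, using the expression of $\mathfrak{c}_p$ as an iterate of $\mathfrak{c}_1$'s, and then asserts that $\mathfrak{c}_{1+q}\circ(\mathrm{id}_{\mathrm{S}^nV}\otimes m_0)=\mathfrak{c}_1\circ(\mathfrak{c}_q\otimes\mathrm{id}_{V^*})$ holds ``from the very definition of $\mathfrak{c}_{1+q}$'', whereas you prove the identity for general $p,q$ by factoring $\mathfrak{c}_k$ through the unsymmetrized contraction $\mathfrak{d}_k$, whose associativity is a matter of matching normalizations and whose invariance under the $\mathfrak{S}_{p+q}$-action on the $(V^*)^{\otimes p+q}$-factor you establish via the right-ideal argument from the proof of Lemma \ref{cric} together with the elementary two-slot swap on $\mathrm{S}^mV\otimes V^*\otimes V^*$. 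That invariance is precisely the content that the paper's ``by definition'' leaves implicit (since $m_0$ involves the symmetrizer $\pi_{p+q}$ while the iterated single contractions do not), so your route is the same in substance but more complete at the one point of genuine content; the paper's reduction to $p=1$ buys brevity rather than a simpler verification. Your bookkeeping in the first step and the normalization check $\frac{n!}{(n-q)!}\cdot\frac{(n-q)!}{(n-p-q)!}=\frac{n!}{(n-p-q)!}$ are both correct.
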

\begin{proof}
First of all, in view of the expression for $\mathfrak{c}_p$ in terms of $\mathfrak{c}_1$'s, it is sufficient to prove the Lemma for $p=1$. 
Then, in view of the definition of the product $\bullet$, we have that the r.h.s.~
\[
(\mathfrak{c}_{1+q}\otimes\mathrm{id}_Z)\circ(\mathrm{id}_{\mathrm{S}^nV}\otimes(\phi\bullet\psi))
\]
equals
\[
(\mathfrak{c}_{1+q}\otimes\mathrm{id}_Z)\circ(\mathrm{id}_{\mathrm{S}^nV}\otimes m_0\otimes\mathrm{id}_Z)
\circ(\mathrm{id}_{\mathrm{S}^nV\otimes\mathrm{S}^qV^*}\otimes\phi)\circ(\mathrm{id}_{\mathrm{S}^nV}\otimes\psi)\,.
\]
Hence it is sufficient to show the following identity
\begin{align*}
(\mathfrak{c}_1\otimes\mathrm{id}_Z)\circ(\mathfrak{c}_q\otimes\phi) &=
(\mathfrak{c}_{1+q}\otimes \mathrm{id}_{V^*\otimes Z})\circ(\mathrm{id}_{\mathrm{S}^nV}\otimes m_0\otimes\mathrm{id}_Z)
\circ(\mathrm{id}_{\mathrm{S}^nV\otimes \mathrm{S}^qV^*}\otimes\phi)
\end{align*}
in $\mathrm{Hom}_{\mathcal{C}}(\mathrm{S}^nV\otimes \mathrm{S}^qV^*\otimes Y,\mathrm{S}^{n-q}V\otimes Z)$. 
Then observe that, from the very definition $\mathfrak{c}_{1+q}$, we have that 
\[
\mathfrak{c}_{1+q}\circ(\mathrm{id}_{\mathrm{S}^nV}\otimes m_0)=\mathfrak{c}_{1}\circ (\mathfrak{c}_{q}\otimes \mathrm{id}_{V^*})
\]
in $\mathrm{Hom}_{\mathcal C}(\mathrm{S}^nV\otimes \mathrm{S}^qV^*\otimes V^*,\mathrm{S}^{n-q}V)\,.$
As a consequence, it is sufficient to have that 
\[
\mathfrak{c}_q\otimes\phi=
(\mathfrak{c}_{q}\otimes \mathrm{id}_{V^*\otimes Z})\circ(\mathrm{id}_{\mathrm{S}^nV\otimes \mathrm{S}^qV^*}\otimes\phi)\,,
\]
which is obvious. 
\end{proof}

\begin{corollary} \label{pastis}
If we consider  $\mu^*$ in $\mathrm{Hom}_{\mathcal{C}}(V,V^*\otimes V)$, we have 
$
\omega^{\circ p}=(\mathfrak{c}_p\otimes\mathrm{id}_V)\circ\big(\mathrm{id}_{\mathrm{S}^nV}\circ(\mu^*)^{\bullet p}\big)\,.
$
\end{corollary}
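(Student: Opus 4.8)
The plan is to prove the identity by induction on $p$, the whole computation being powered by Lemma~\ref{jardin}. The statement should be read uniformly in $n$: since $\omega$ lowers the symmetric power by one, one really proves simultaneously for all $n$ that $\omega^{\circ p}\colon \mathrm{S}^nV\otimes V\to\mathrm{S}^{n-p}V\otimes V$ equals $(\mathfrak{c}_p\otimes\mathrm{id}_V)\circ(\mathrm{id}_{\mathrm{S}^nV}\otimes(\mu^*)^{\bullet p})$, where $\mathfrak{c}_p\colon\mathrm{S}^nV\otimes\mathrm{S}^pV^*\to\mathrm{S}^{n-p}V$ and $(\mu^*)^{\bullet p}\in\mathrm{Hom}_{\mathcal{C}}(V,\mathrm{S}^pV^*\otimes V)$ (which indeed factors through $\mathrm{S}^pV^*\otimes V$, by construction of $\bullet$ from $m_0$). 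The base case $p=1$ is precisely formula~\eqref{olan} (i.e.\ the case $p=1$ of \eqref{kir}), noting that the combinatorial factor $n$ in $\omega=n\,(\mathrm{id}_{V^{\otimes n-1}}\otimes\mu)$ is exactly the one built into $\mathfrak{c}_1$.

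For the inductive step, I would write $\omega^{\circ(p+1)}$, acting on $\mathrm{S}^nV\otimes V$, as $\omega\circ\omega^{\circ p}$ where the outer $\omega$ acts on $\mathrm{S}^{n-p}V\otimes V$. Feeding in the base case for the outer factor and the inductive hypothesis for the inner one yields
\[
\omega^{\circ(p+1)}=(\mathfrak{c}_1\otimes\mathrm{id}_V)\circ(\mathrm{id}_{\mathrm{S}^{n-p}V}\otimes\mu^*)\circ(\mathfrak{c}_p\otimes\mathrm{id}_V)\circ(\mathrm{id}_{\mathrm{S}^nV}\otimes(\mu^*)^{\bullet p})\,.
\]
A single application of bifunctoriality of $\otimes$ collapses the two middle maps into $\mathfrak{c}_p\otimes\mu^*$, so the right-hand side becomes $(\mathfrak{c}_1\otimes\mathrm{id}_V)\circ(\mathfrak{c}_p\otimes\mu^*)\circ(\mathrm{id}_{\mathrm{S}^nV}\otimes(\mu^*)^{\bullet p})$. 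This is exactly the left-hand side of Lemma~\ref{jardin} with $X=Y=Z=V$, with the lemma's $\phi$ taken to be $\mu^*\in\mathrm{Hom}_{\mathcal{C}}(V,\mathrm{S}^1V^*\otimes V)$ and its $\psi$ taken to be $(\mu^*)^{\bullet p}\in\mathrm{Hom}_{\mathcal{C}}(V,\mathrm{S}^pV^*\otimes V)$, i.e.\ its pair $(p,q)$ being $(1,p)$. The lemma then rewrites it as $(\mathfrak{c}_{p+1}\otimes\mathrm{id}_V)\circ(\mathrm{id}_{\mathrm{S}^nV}\otimes(\mu^*\bullet(\mu^*)^{\bullet p}))$, and associativity of $\bullet$ gives $\mu^*\bullet(\mu^*)^{\bullet p}=(\mu^*)^{\bullet(p+1)}$, which closes the induction.

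I do not expect any genuine obstacle: the argument is pure diagram manipulation. The only things demanding a little attention are the bookkeeping of the symmetric powers --- each $\omega$, like each factor $\mathfrak{c}_1$ inside $\mathfrak{c}_p$, drops $\mathrm{S}^mV$ to $\mathrm{S}^{m-1}V$ --- and the numerical coefficients $n!/(n-p)!$ sitting in $\mathfrak{c}_p$, which are precisely those accumulated by iterating $\omega$; but these normalizations are already the ones used in Lemma~\ref{jardin}, so there is nothing further to verify. Equivalently, one may skip the induction altogether and read the result straight off \eqref{kir}, repeatedly applying Lemma~\ref{jardin} to fuse the chain of $\mathfrak{c}_1$'s into a single $\mathfrak{c}_p$ and the chain of $\mu^*$'s into $(\mu^*)^{\bullet p}$ --- the same argument, carried out from the outside in.
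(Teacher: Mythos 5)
Your argument is correct and is essentially the paper's own proof: the paper simply invokes \eqref{kir} together with Lemma \ref{jardin} (with $Y=V$) to fuse the chain of $\mathfrak{c}_1$'s and $\mu^*$'s, which is exactly the fusion you carry out, whether written as an induction on $p$ or read directly off \eqref{kir} as in your final remark. Your reading of the statement's typo ($\mathrm{id}_{\mathrm{S}^nV}\otimes(\mu^*)^{\bullet p}$ rather than $\circ$) is also the intended one.
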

\begin{proof}
Setting $Y=V$, we get a family of maps
$(\mu^*)^{\bullet p}\in \mathrm{Hom}_{\mathcal{C}}(V,\mathrm{S}^pV^*\otimes V)$. Lemma \ref{jardin} gives the result.
\end{proof}
\begin{lemma}\label{lemkitu}
For every $p\geq1$, $(\mu^*)^{\bullet p}\bullet\epsilon=0$. 
\end{lemma}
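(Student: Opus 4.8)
The plan is to reduce the statement to the case $p=1$, and then to play the antisymmetry of the Lie bracket against the commutativity of $m_0$.

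\par \medskip
First I would use that the composition product $\bullet$ is $\mathbf{k}$-bilinear and associative: for every $p\geq2$ one has
\[
(\mu^*)^{\bullet p}\bullet\epsilon=(\mu^*)^{\bullet(p-1)}\bullet\bigl(\mu^*\bullet\epsilon\bigr)\,.
\]
Hence it suffices to treat the case $p=1$, i.e.\ to prove that $\mu^*\bullet\epsilon=0$.

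\par \medskip
Next I would unwind the definitions. Using that $\mu^*=(\mathrm{id}_{V^*}\otimes\mu)\circ(\epsilon\otimes\mathrm{id}_V)$ and that here $(A,m_A)=(\mathrm{S}(V^*),m_0)$, a direct computation gives
\[
\mu^*\bullet\epsilon=(m_0\otimes\mathrm{id}_V)\circ(\mathrm{id}_{V^*\otimes V^*}\otimes\mu)\circ\epsilon^{(2)}\colon\mathbf{1}_{\mathcal{C}}\longrightarrow\mathrm{S}^2V^*\otimes V\,,\qquad\epsilon^{(2)}:=(\mathrm{id}_{V^*}\otimes\epsilon\otimes\mathrm{id}_V)\circ\epsilon\,.
\]
The crucial input is then a coherence statement about $\epsilon^{(2)}\colon\mathbf{1}_{\mathcal{C}}\to V^*\otimes V^*\otimes V\otimes V$: it is invariant under simultaneously applying the symmetry of $V^*\otimes V^*$ to its first two factors and the symmetry of $V\otimes V$ to its last two factors, i.e.
\[
(\sigma_{V^*,V^*}\otimes\sigma_{V,V})\circ\epsilon^{(2)}=\epsilon^{(2)}\,,
\]
where $\sigma$ denotes the symmetry constraint of $\mathcal{C}$. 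This is standard: $\epsilon^{(2)}$ is a coevaluation for the object $V\otimes V$ (for the induced duality $(V\otimes V)^*\simeq V^*\otimes V^*$), coevaluations are natural under isomorphisms, and the identity results from applying this naturality to the symmetry $\sigma_{V,V}\colon V\otimes V\to V\otimes V$, whose dual is $\sigma_{V^*,V^*}$. (Alternatively it is a one-line verification from the symmetric monoidal coherence axioms, most transparently drawn as a string diagram.)

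\par \medskip
Granting this, I would conclude as follows. Using successively the coherence identity, the interchange law, the commutativity $m_0\circ\sigma_{V^*,V^*}=m_0$, and the antisymmetry $\mu\circ\sigma_{V,V}=-\mu$ of the Lie bracket $\mu=2\alpha$, one gets
\[
\mu^*\bullet\epsilon=(m_0\otimes\mathrm{id}_V)\circ(\mathrm{id}_{V^*\otimes V^*}\otimes\mu)\circ(\sigma_{V^*,V^*}\otimes\sigma_{V,V})\circ\epsilon^{(2)}=\bigl(m_0\otimes(-\mu)\bigr)\circ\epsilon^{(2)}=-\,\mu^*\bullet\epsilon\,,
\]
so that $2\,(\mu^*\bullet\epsilon)=0$ and hence $\mu^*\bullet\epsilon=0$, since $\mathbf{k}$ has characteristic zero. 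Combined with the reduction above, this yields $(\mu^*)^{\bullet p}\bullet\epsilon=0$ for every $p\geq1$.

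\par \medskip
I expect the only genuinely delicate point to be the coherence identity for $\epsilon^{(2)}$; everything else is bookkeeping with the notation of \S\ref{miss}. In the write-up I would justify it by explicitly invoking the naturality of the coevaluation (or by recording the corresponding string-diagram move), so as not to rely on a basis argument in the strict symmetric monoidal setting.
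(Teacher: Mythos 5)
Your proof is correct and follows essentially the same route as the paper: reduce to $p=1$ by associativity of $\bullet$, then play the antisymmetry of $\mu$ against the commutativity of $m_0$. Your explicit coherence identity $(\sigma_{V^*,V^*}\otimes\sigma_{V,V})\circ\epsilon^{(2)}=\epsilon^{(2)}$ is just a spelled-out version of the step the paper compresses into the assertion that $(\mathrm{id}_{V^*}\otimes\mu^*)\circ\epsilon=(\mu^*)^*$ lies in $\mathrm{Hom}_{\mathcal{C}}(\mathbf{1}_{\mathcal{C}},\wedge^2V^*\otimes V)$, so making it explicit (with the naturality-of-coevaluation justification) is a fine, if slightly more verbose, write-up of the same argument.
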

\begin{proof}
First observe that it is sufficient to prove it for $p=1$ (using the associativity of $\bullet$). 
Then note that since $\mu \colon \Lambda^2 V \to V$ then  
$$
(\mathrm{id}_{V^*}\otimes\mu^*)\circ\epsilon=(\mu^*)^*
$$
lies in $\mathrm{Hom}_{\mathcal{C}}(\mathbf{1}_{\mathcal{C}},\wedge^2V^*\otimes V)$. 
Therefore 
$$
\mu^*\bullet\epsilon=(\tau_1\otimes\mathrm{id}_V)\circ(\mu^*\bullet\epsilon)=\big((m_0\circ\tau_1)\otimes\mathrm{id}_V\big)\otimes(\mu^*)^*=-(m_0\otimes\mathrm{id}_V\big)\otimes(\mu^*)^*=-\mu^*\bullet\epsilon\,.
$$
Therefore $\mu^*\bullet\epsilon=0$, and we are done. 
\end{proof}

\subsubsection{The trace identity}

For every two objects $Y,Z$ in $\mathcal{C}$ one has a linear map 
\[
\mathrm{Tr}:\mathrm{Hom}_{\mathcal{C}}(Y\otimes V,Z\otimes V)\to \mathrm{Hom}_{\mathcal{C}}(Y,Z)
\] 
defined as follows: 
\[
\mathrm{Tr}(\phi)=(\mathrm{id}_Z \otimes \delta) \circ (\phi \otimes \mathrm{id}_{V^*}) \circ (\mathrm{id}_Y \otimes\bar\epsilon)\,,
\]
where $\bar\epsilon$ is given by $\epsilon$ followed by the symmetry morphism $V^*\otimes V\to V\otimes V^*$. 

\begin{proposition}\label{vanoise}
For every $p$, we have that 
$$
\mathfrak{c}_1\circ\big((m_0\circ\omega^{\circ p})\otimes \mathrm{id}_{V^*}\big)\circ (\mathrm{id}_{\mathrm{S}^nV} \otimes\bar\epsilon)
=\mathfrak c_p\circ(\mathrm{id}_{\mathrm{S}^nV}\otimes\mathrm{Tr}((\mu^*)^{\bullet p}))\,.
$$
\end{proposition}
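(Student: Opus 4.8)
The plan is to bring both sides of the asserted equality into the shape $(\text{a morphism})\circ\Theta$ for one common map $\Theta$, and then to compare the two morphisms. Set
\[
\Theta:=\big(\mathrm{id}_{\mathrm{S}^nV}\otimes(\mu^*)^{\bullet p}\otimes\mathrm{id}_{V^*}\big)\circ\big(\mathrm{id}_{\mathrm{S}^nV}\otimes\bar\epsilon\big)\colon\ \mathrm{S}^nV\longrightarrow\mathrm{S}^nV\otimes\mathrm{S}^pV^*\otimes V\otimes V^*\,.
\]
Unwinding the definition of $\mathrm{Tr}$, the right-hand side of the Proposition equals $\mathfrak{c}_p\circ\big(\mathrm{id}_{\mathrm{S}^nV\otimes\mathrm{S}^pV^*}\otimes\delta\big)\circ\Theta$. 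For the left-hand side, Corollary~\ref{pastis} gives $\omega^{\circ p}=(\mathfrak{c}_p\otimes\mathrm{id}_V)\circ(\mathrm{id}_{\mathrm{S}^nV}\otimes(\mu^*)^{\bullet p})$, whence
\[
\mathfrak{c}_1\circ\big((m_0\circ\omega^{\circ p})\otimes\mathrm{id}_{V^*}\big)\circ(\mathrm{id}_{\mathrm{S}^nV}\otimes\bar\epsilon)=\mathfrak{c}_1\circ(m_0\otimes\mathrm{id}_{V^*})\circ(\mathfrak{c}_p\otimes\mathrm{id}_{V\otimes V^*})\circ\Theta\,.
\]

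The next ingredient is the ``Leibniz'' identity expressing that the contraction $\mathfrak{c}_1$ is a derivation of the symmetric product: on $\mathrm{S}^mV\otimes V\otimes V^*$ one has
\[
\mathfrak{c}_1\circ(m_0\otimes\mathrm{id}_{V^*})=m_0\circ(\mathfrak{c}_1\otimes\mathrm{id}_V)\circ(\mathrm{id}_{\mathrm{S}^mV}\otimes\sigma)+\mathrm{id}_{\mathrm{S}^mV}\otimes\delta\,,
\]
where $\sigma\colon V\otimes V^*\to V^*\otimes V$ is the symmetry; this follows at once from the description of $\mathfrak{c}_1$ as a multiple of the adjoint of $\mathrm{S}^{m+1}V\hookrightarrow\mathrm{S}^mV\otimes V$ together with the snake identity. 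Plugging it in (with $m=n-p$) splits the left-hand side as $C+D$, where $D=\big(\mathrm{id}_{\mathrm{S}^{n-p}V}\otimes\delta\big)\circ(\mathfrak{c}_p\otimes\mathrm{id}_{V\otimes V^*})\circ\Theta$ and
\[
C:=m_0\circ(\mathfrak{c}_1\otimes\mathrm{id}_V)\circ(\mathrm{id}_{\mathrm{S}^{n-p}V}\otimes\sigma)\circ(\mathfrak{c}_p\otimes\mathrm{id}_{V\otimes V^*})\circ\Theta\,.
\]
Since $\mathfrak{c}_p$ and $\delta$ act on disjoint tensor factors, $D=\mathfrak{c}_p\circ\big(\mathrm{id}_{\mathrm{S}^nV\otimes\mathrm{S}^pV^*}\otimes\delta\big)\circ\Theta$ is exactly the right-hand side, so it remains only to prove $C=0$. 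Conceptually, $C$ gathers the contributions in which the covector coming from $\bar\epsilon$ is contracted against one of the original $n$ ``vectors'' rather than against the one produced by $(\mu^*)^{\bullet p}$.

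To show $C=0$, I push $\sigma$ past $\mathfrak{c}_p$ (again disjoint factors) and absorb it into $\Theta$, obtaining
\[
C=m_0\circ(\mathfrak{c}_1\otimes\mathrm{id}_V)\circ(\mathfrak{c}_p\otimes\mathrm{id}_{V^*\otimes V})\circ(\mathrm{id}_{\mathrm{S}^nV}\otimes\kappa)\,,\qquad
\kappa:=\big(\mathrm{id}_{\mathrm{S}^pV^*}\otimes\sigma\big)\circ\big((\mu^*)^{\bullet p}\otimes\mathrm{id}_{V^*}\big)\circ\bar\epsilon\,.
\]
Now I invoke Lemma~\ref{jardin} with $Z=V$, $Y=V^*\otimes V$, $X=\mathbf{1}_{\mathcal{C}}$, $\phi=\mathrm{id}_{V^*\otimes V}\in\mathrm{Hom}_{\mathcal{C}}(Y,V^*\otimes Z)$ and $\psi=\kappa\in\mathrm{Hom}_{\mathcal{C}}(X,\mathrm{S}^pV^*\otimes Y)$: it yields
\[
C=m_0\circ(\mathfrak{c}_{p+1}\otimes\mathrm{id}_V)\circ\big(\mathrm{id}_{\mathrm{S}^nV}\otimes(\mathrm{id}_{V^*\otimes V}\bullet\kappa)\big)\,.
\]
A short computation with the definition of the product $\bullet$, using the commutativity of $m_0$ to reorder the two covector factors, identifies $\mathrm{id}_{V^*\otimes V}\bullet\kappa$ with $(\mu^*)^{\bullet p}\bullet\epsilon$, which vanishes by Lemma~\ref{lemkitu}. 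Hence $C=0$ and the left-hand side equals $C+D=D$, as wanted.

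The step I expect to be the main obstacle is the last one: choosing the data $\phi,\psi,Y,Z$ so that Lemma~\ref{jardin} applies and the resulting composite is literally $C$, and then verifying the bookkeeping identity $\mathrm{id}_{V^*\otimes V}\bullet\kappa=(\mu^*)^{\bullet p}\bullet\epsilon$ — the normalization constants buried in the $\mathfrak{c}_k$'s and the various symmetry isomorphisms have to be tracked with care here. Once this is in place, Lemma~\ref{lemkitu} closes the argument immediately. The other small point, the Leibniz identity for $\mathfrak{c}_1$, is entirely elementary.
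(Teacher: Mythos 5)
Your proof is correct and follows essentially the same route as the paper: you split the left-hand side via the derivation (Leibniz) property of $\mathfrak{c}_1$, identify the term where $\delta$ hits the $V$-factor with the right-hand side via Corollary~\ref{pastis}, and kill the other term by reassembling it with Lemma~\ref{jardin} into $(\mu^*)^{\bullet p}\bullet\epsilon$, which vanishes by Lemma~\ref{lemkitu}. The only difference is that you make explicit the symmetry bookkeeping and the application of Lemma~\ref{jardin} that the paper's proof leaves implicit.
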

\begin{proof}
Using the fact that $\mathfrak c_1$ defines an action of $V^*$ on $\mathrm{S}(V)$ by derivations, we get that the r.h.s.~is 
$$
m_0\circ\big((\mathfrak{c}_1\otimes\mathrm{id}_V)\circ (\omega^{\circ p})^*+\mathrm{Tr}(\omega^{\circ p})\big)\,.
$$
Then observe that we have 
$$
m_0\circ(\mathfrak{c}_1\otimes\mathrm{id}_V)\circ (\omega^{\circ p})^*
=(\mathfrak{c}_{p+1}\otimes\mathrm{id})\circ (\mathrm{id}\otimes (\mu^*)^{\bullet p}\bullet\epsilon)=0\,.
$$
Finally, 
$$
m_0\circ\mathrm{Tr}(\omega^{\circ p})=\mathrm{Tr}(\omega^{\circ p})=\mathfrak c_p\circ(\mathrm{id}_{\mathrm{S}^nV}\otimes\mathrm{Tr}((\mu^*)^{\bullet p}))\,.
$$
We are done. 
\end{proof}

\subsection{The Lie-theoretic cycle class}

\subsubsection{The Duflo element} \label{dudu}

We borrow the notation from the previous paragraphs and introduce the {\it Duflo element} 
$\mathfrak{d}=\sum_{n=0}^{+ \infty} \mathfrak{d}_n \in \prod_n\mathrm{Hom}_{\mathcal{C}}(\mathbf{1}_{\mathcal{C}},\mathrm{S}^nV^*)$: 
\[
\mathfrak{d}:=\mathrm{det} \left(\dfrac{\mu^*}{1-\mathrm{exp}(-\mu^*)} \right)\,.
\]
This has to be understood as a formal expression in terms of the ``invariant polynomials'' 
$\nu_k:=\mathrm{Tr}\, \big((\mu^*)^{\bullet k}\big)$ that live in  $\mathrm{Hom}_{\mathcal{C}}(\mathbf{1}_{\mathcal{C}},\mathrm{S}^kV^*)$. 
For instance, 
\[
\mathfrak{d}=1 + \frac{\nu_1}{2}+\left(\frac{ 3\nu_1^{\bullet2}- \nu_2}{24} \right) + \frac{\nu_1^{\bullet3}-\nu_1\bullet\nu_2}{48} + \cdots
\]
More formally, for any $N \geq 0$, we write formally
\[
\prod_{n=1}^{N} \dfrac{x_i}{1-\mathrm{exp}(-x_i)}=\sum_{i \geq 0} P_i(y_1, y_2, \ldots, y_i)
\]
where $y_i=\displaystyle \sum_{n=0}^N x_n^i$, and $P_i$ is a polynomial independant from $N$, and of total degree $i$ if each variable $y_k$ has degree $k$. For instance, 
\[
\begin{cases}
P_0=1\\
P_1(y_1)=\displaystyle\frac{y_1}{2}\\
P_2(y_1, y_2)=\displaystyle \frac{ 3y_1^{2}-y_2}{24} \vspace{0.2cm}\\
P_3(y_1, y_2, y_3)=\displaystyle \frac{y_1^{3}-y_1 y_2}{48}
\end{cases}
\]
Then for any $p \geq 0$, we have
\[
\mathfrak{d}_p=P_p(\nu_1, \ldots, \nu_p)\,.
\]

\subsubsection{Torsion morphisms}

Let $\ell \in \mathbb{N}$ and let ${a}$ be a morphism from an arbitrary object $X$ to $\mathrm{S}^{\leq \ell} V$ and $(a_n)_{0\leq n \leq \ell}$ be the graded components of $a$.

\begin{definition}
We say that such a morphism ${a}$ is an $\ell$-torsion morphism if $m_\star\circ(a\otimes\mathrm{id}_V)$ factors through 
$\mathrm{S}^{\ell +1}V\subset \mathrm{S}^{\leq \ell+1} V$. \end{definition}
Our main result is:
\begin{theorem} \label{serpette}
If $a$ is an $\ell$-torsion morphism, then $a=\mathfrak{c}(\mathfrak{d}\otimes a_{\ell})$, where $\mathfrak{c}(\mathfrak d\otimes -)$ means $\sum_p\mathfrak{c}_p(\mathfrak{d}_p\otimes -)$, the sum being in fact automatically finite. 
\end{theorem}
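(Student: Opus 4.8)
The plan is to convert the torsion condition, via Theorem~\ref{proputile}, into a recursion among the graded pieces $a_n$ of $a$, and then to recognise its solution as the contraction against the Duflo element, the trace identity of Proposition~\ref{vanoise} being the bridge between the two.

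First I would unwind the definition. By Theorem~\ref{proputile}, $\varphi:=m_\star|_{\mathrm{S}(V)\otimes V}$ equals $m_0\circ\frac{\omega}{1-\exp(-\omega)}$, and since $\omega$ strictly lowers the $\mathrm{S}(V)$-degree, on $\mathrm{S}^{\le\ell}V\otimes V$ one has $\frac{\omega}{1-\exp(-\omega)}=\sum_{p\ge0}\beta_p\,\omega^{\circ p}$ with $\beta_p$ the Taylor coefficients of $\frac{x}{1-e^{-x}}$ (so $\beta_0=1$, $\beta_1=\tfrac12$, $\beta_2=\tfrac1{12}$, $\beta_3=0$, \dots). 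Splitting $\varphi\circ(a\otimes\mathrm{id}_V)$ into homogeneous components, the requirement that it factor through $\mathrm{S}^{\ell+1}V$ is equivalent to the vanishing, for every $1\le d\le\ell$, of its degree-$d$ piece; peeling off the $p=0$ term this reads
\[
m_0\circ(a_{d-1}\otimes\mathrm{id}_V)=-\sum_{p\ge1}\beta_p\,m_0\circ\omega^{\circ p}\circ(a_{d-1+p}\otimes\mathrm{id}_V)\,. \qquad (\ast_d)
\]

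Next I would turn $(\ast_d)$ into a recursion. I apply to both sides the operation $T\colon\phi\mapsto\mathfrak c_1\circ(\phi\otimes\mathrm{id}_{V^*})\circ(\mathrm{id}\otimes\bar\epsilon)$, which sends a morphism into $\mathrm{S}^dV$ to one into $\mathrm{S}^{d-1}V$. On the right-hand side, Proposition~\ref{vanoise} converts the $p$-th summand into $\mathfrak c_p\circ(a_{d-1+p}\otimes\nu_p)$, with $\nu_p=\mathrm{Tr}\big((\mu^*)^{\bullet p}\big)$; on the left-hand side, a short computation with $\bar\epsilon$, $\delta$ and the snake identity (the $p=0$ avatar of the mechanism used in Proposition~\ref{vanoise}, the derivation term no longer being annihilated by Lemma~\ref{lemkitu}) gives $T\big(m_0\circ(a_{d-1}\otimes\mathrm{id}_V)\big)=\lambda_{d-1}\,a_{d-1}$ for an explicit scalar $\lambda_{d-1}\in\mathrm{End}(\mathbf 1_{\mathcal C})$ built from $d$ and $\mathrm{rk}(V):=\delta\circ\bar\epsilon$. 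In the range of degrees that actually occurs these scalars are invertible, so $(\ast_d)$ becomes a genuine downward recursion expressing $a_{d-1}$ in terms of $a_d,\dots,a_\ell$ and the $\nu_p$; in particular an $\ell$-torsion morphism is determined by its top component $a_\ell$, and it remains only to identify the solution. Iterating the recursion produces sums of nested contractions weighted by products of the $\beta_p$; Lemma~\ref{jardin} merges a $\mathfrak c_q$ followed by a $\mathfrak c_p$ into $\mathfrak c_{p+q}$ precomposed with the $\bullet$-product of the covector parts, so that everything is reorganised into a single statement at the level of $\prod_k\mathrm{Hom}(\mathbf 1_{\mathcal C},\mathrm{S}^kV^*)$ — namely that the series $\frac{x}{1-e^{-x}}$, once fed to the adjoint action and symmetrised, assembles into $\det\!\big(\frac{\mu^*}{1-e^{-\mu^*}}\big)=\mathfrak d$ with $\mathfrak d_p=P_p(\nu_1,\dots,\nu_p)$. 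Comparing degrees then yields $a_m=\mathfrak c_{\ell-m}(\mathfrak d_{\ell-m}\otimes a_\ell)$ for all $m$, hence $a=\mathfrak c(\mathfrak d\otimes a_\ell)$, the displayed sum being finite since $\mathfrak c_p(\mathfrak d_p\otimes a_\ell)\in\mathrm{S}^{\ell-p}V$ vanishes for $p>\ell$.

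The main obstacle is this last step: establishing the master combinatorial identity that matches the iterated trace-recursion to the determinant formula for the Duflo element. This is the categorical incarnation of the classical computation behind the Duflo isomorphism (compare Markarian's Lemma~4 and Ramadoss's formulas), and carrying it out in the Karoubian symmetric monoidal framework requires careful tracking of the $\bullet$-product bookkeeping and of the Koszul signs coming from the (odd) object $V$ — signs which are exactly what reconciles the alternating pattern of $\det\!\big(\frac{x}{1-e^{-x}}\big)$ with the iterated contractions. A secondary, more routine point is the verification that the scalars $\lambda_{d-1}$ occurring in the recursion are invertible in the relevant degree range.
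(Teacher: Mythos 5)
Your reduction follows the same route as the paper: Theorem \ref{proputile} converts the torsion condition into the vanishing of each graded piece of $m_\star\circ(a\otimes\mathrm{id}_V)$, and applying the contraction $T(\phi)=\mathfrak{c}_1\circ(\phi\otimes\mathrm{id}_{V^*})\circ(\mathrm{id}\otimes\bar\epsilon)$ together with Proposition \ref{vanoise} (and Lemma \ref{jardin} to merge iterated contractions) turns this into a downward recursion among the $a_n$. Up to that point you and the paper agree. The genuine gap is the step you yourself flag as ``the main obstacle'': you never prove the combinatorial identity identifying the solution of the recursion with the graded components $\mathfrak{d}_p=P_p(\nu_1,\dots,\nu_p)$ of the Duflo element. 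In the paper this is the actual content of the proof's final step: once Lemma \ref{jardin} reduces everything to universal expressions in the $\nu_i$ under the $\bullet$-product, one must verify the induction relation $k\,P_k(y_1,\dots,y_k)+\sum_{i=1}^k(-1)^i\tfrac{\mathrm{B}_i}{i!}\,y_i\,P_{k-i}(y_1,\dots,y_{k-i})=0$, which the paper establishes by a generating-function argument (with $\phi(x)=x/(1-e^{-x})$, the identity $\phi(-x)\phi(x)=\phi(x)-x\phi'(x)$, and the vanishing of $\sum_p(1-\alpha_p)$ over compositions of $k$). Asserting that the iterated recursion ``assembles into'' $\det\bigl(\mu^*/(1-\exp(-\mu^*))\bigr)$ is exactly the statement to be proved, so as written the proposal stops short of a proof.

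A second, related problem is the leading term of your recursion. You apply $T$ to the $p=0$ summand and obtain $\lambda_{d-1}\,a_{d-1}$ with $\lambda_{d-1}$ ``built from $d$ and $\mathrm{rk}(V)=\delta\circ\bar\epsilon$'', and you only gesture at invertibility. But the identification with $\mathfrak{d}$ requires the recursion for the $a_n$ to be formally the one satisfied by the $P_k$, whose leading coefficient is $k$ and involves no rank: the paper works with $k\,a_{\ell-k}+\sum_{i=1}^k(-1)^i\tfrac{\mathrm{B}_i}{i!}\,\mathfrak{c}_i(a_{\ell-k+i}\otimes\nu_i)=0$. The $p=0$ term splits (by the derivation property of $\mathfrak{c}_1$, the mechanism of Proposition \ref{vanoise} with the Lemma \ref{lemkitu} cancellation unavailable) into an Euler-type contribution governed by the degree of $a_{\ell-k}$ and a categorical-dimension contribution $\delta\circ\bar\epsilon$; extracting the coefficient $k$ from these two pieces is precisely where the parity of $V$ and the sign bookkeeping matter, and your unspecified $\lambda$'s are not visibly equal to $k$. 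Without that equality the concluding ``comparing degrees then yields $a_m=\mathfrak{c}_{\ell-m}(\mathfrak{d}_{\ell-m}\otimes a_\ell)$'' does not follow, and the invertibility you invoke is not automatic either but must come out of the same computation. Both the evaluation of the leading coefficient and the universal polynomial identity therefore still need to be supplied.
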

Remark that this theorem tells nothing about the \textit{existence} of $\ell$-torsion morphisms.
\begin{proof}
Thanks to Theorem \ref{proputile}, ${a}$ is an $\ell$-torsion morphism if and only if the system of equations
\begin{equation} \label{dv}
\sum_{i=0}^{\ell}  \frac{\mathrm{B}_i}{i!} \left\{m_0 \circ\omega^{\circ i}\right\} (a_{\ell-k+i} \otimes \mathrm{id}_V)=0
\end{equation}
holds for $1 \leq k \leq \ell$. Each condition corresponds to the vanishing of the $\ell-k+1^{\mathrm{th}}$ graded piece of the element $m_\star\circ (a\otimes \mathrm{id}_V)$. Thanks to Proposition \ref{vanoise}, we get
\[
k a_{\ell-k} + \sum_{i=1}^k (-1)^i \frac{\mathrm{B}_i}{i!} \times \mathfrak{c}_i(a_{\ell-k+i}\otimes \nu_i)=0.
\]
Let us explain how it works on the first terms. 
\par \medskip
\begin{enumerate}
\item[--] For $k=1$, the first equation is
\[
a_{\ell-1}-\frac{1}{2} \mathfrak{c}_1(a_\ell\otimes\nu_1)=0,
\]
so 
\[
a_{\ell-1}=\mathfrak{c}_1 \left(a_\ell\otimes \frac{\nu_1}{2} \right)\,.
\] 
\par \smallskip
\item[--] For $k=2$, the second equation is
\[
2a_{\ell-2}-\frac{1}{2} \mathfrak{c}_1(a_{\ell-1}\otimes\nu_1) + \frac{1}{12} \mathfrak{c}_2(a_\ell\otimes\nu_2)=0
\]
and we get
\begin{align*}
a_{\ell-2}&=\frac{1}{4} \mathfrak{c}_1(a_{\ell-1}\otimes\nu_1) - \frac{1}{24} \mathfrak{c}_2(a_\ell\otimes\nu_2) \\
&=\frac{1}{8} \mathfrak{c}_2(a_\ell\otimes \nu_1^{\bullet2}) -\frac{1}{24} \mathfrak{c}_2(a_\ell\otimes\nu_2) \\
&=\mathfrak{c}_2 \left( a_{\ell} \otimes \frac{3 \nu_1^{\bullet 2}-\nu_2}{24} \right) \,.
\end{align*}
\par \smallskip
\item[--] For $k=3$, the third equation is
\[
3a_{\ell-3}-\frac{1}{2} \mathfrak{c}_1(a_{\ell-2}\otimes\nu_1)+ \frac{1}{12} \mathfrak{c}_2(a_{\ell-1}\otimes\nu_2)=0
\]
Hence
\begin{align*}
a_{\ell-3}
&=\frac{1}{3}\left( 
\frac{1}{2} \mathfrak{c}_1\left(\mathfrak{c}_2\left( \frac{1}{8} a_\ell\otimes\nu_1^{\bullet2}-\frac{1}{24} a_r\otimes\nu_2\right)\otimes\nu_1\right) 
-\frac{1}{12} \mathfrak{c}_2\left(\mathfrak{c}_1\left( \frac{1}{2} a_{\ell}\otimes\nu_1 \right) \right)\otimes \nu_2\right) \\
&= \mathfrak{c}_3\left(a_{\ell} \otimes \frac{\nu_1^{\bullet3}-\nu_1\bullet\nu_2}{48} \right)
\end{align*}
\end{enumerate}
\par \medskip

To conclude, it suffices to prove the induction relation
\[
k P_{k}(y_1 \ldots, y_k)  + \sum_{i=1}^k (-1)^i \frac{\mathrm{B}_i}{i!} \times y_i P_{k-i}(y_1 \ldots, y_{k-i})=0.
\]
involving the polynomials $P_i$. For this we fix the variables $x_1, \ldots, x_k$, and put $y_i=\sum_{n=0}^k x_n^i$. Since $y_0=k$, the identity is equivalent to 
\[
\sum_{i=0}^k (-1)^i \frac{\mathrm{B}_i}{i!} \times y_i P_{k-i}(y_1 \ldots, y_{k-i})=0
\]
The left-hand side is the homogeneous term of degree $k$ (in the variables $x_i)$ in the product
\[
\sum_{i \geq 0}  (-1)^i \frac{\mathrm{B}_i}{i!} y_i  \times \sum_{i \geq 0} P_i(y_1, \ldots, y_i) ,
\]
which is
\[
\sum_{p=1}^k \frac{x_p}{\mathrm{exp}(x_p)-1} \times \prod_{q=1}^k \frac{x_q}{1-\mathrm{exp}(-x_q)} 
\]
Let $\phi(x)=\displaystyle \frac{x}{1-\mathrm{exp}(-x)} \cdot$ Then $\phi(-x) \phi(x)=\phi(x)-x \phi'(x)$ so that for any $p$ with $1 \leq p \leq k$, the homogeneous coefficient of degree $k$ in 
$\displaystyle \frac{x_p}{\mathrm{exp}(x_p)-1} \times \prod_{q=1}^k \frac{x_q}{1-\mathrm{exp}(-x_q)}$ is
\[
\sum_{\alpha_1 + \ldots + \alpha_{k}=k} \frac{B_{\alpha_1}}{\alpha_1 !} \times \ldots \times \frac{B_{\alpha_{p-1}}}{\alpha_{p-1} !} \times \frac{B_{\alpha_p} (1-\alpha_p)}{\alpha_p !} \times \frac{B_{\alpha_{p+1}}}{\alpha_{p+1} !} \times \ldots \times  \frac{B_{\alpha_k}}{\alpha_k !} \cdot
\]
Taking the sum in $p$, we get 
\[
\sum_{\alpha_1 + \ldots + \alpha_{k}=k} \left( \frac{B_{\alpha_1}}{\alpha_1 !} \times \ldots \times \frac{B_{\alpha_k}}{\alpha_k !} \times  \sum_{p=1}^k (1-\alpha_p) \right)
\]
which is zero.
\end{proof}

\subsection{Tameness for pairs of Lie algebras} \label{mignon}
\subsubsection{Setting}
Assume to be given a triplet $(\mathfrak{g}, \mathfrak{h}, \mathfrak{n})$, where $\mathfrak{g}$ is a Lie algebra, $\mathfrak{h}$ is a Lie subalgebra of $\mathfrak{g}$, and $\mathfrak{g}=\mathfrak{h} \oplus \mathfrak{n}$ as $\mathfrak{h}$-module. This makes perfect sense in any abstract $\mathbf{k}$-linear symmetric monoidal category $\mathcal{C}$. We denote by $\mu_{\mathfrak{g}}$ and $\mu_{\mathfrak{h}}$ the Lie brackets on $\mathfrak{g}$ and $\mathfrak{h}$ respectively; and by $\pi_{\mathfrak{h}}$ and $\pi_{\mathfrak{n}}$ the two projections from $\mathfrak{g}$ to $\mathfrak{h}$ and $\mathfrak{n}$ respectively.
We also define $\alpha$ and $\beta$ in $\mathrm{Hom}_{\mathcal{C}}(\mathfrak{n}^{\otimes 2}, \mathfrak{n})$ and $\mathrm{Hom}_{\mathcal{C}}(\mathfrak{n}^{\otimes 2}, \mathfrak{h})$ respectively
by the formulas $\alpha=\pi_{\mathfrak{n}} \circ {\mu_{\mathfrak{g}}}_{| \mathfrak{n}^{\otimes 2}}$ and $\beta=\pi_{\mathfrak{h}} \circ {\mu_{\mathfrak{g}}}_{| \mathfrak{n}^{\otimes 2}}$.
\begin{definition} \label{defalg}
The triplet $(\mathfrak{g}, \mathfrak{h}, \mathfrak{n})$ is called \textit{tame} if the morphism
\[
\mu_{\mathfrak{g}} \circ (\beta \otimes \mathrm{id}_{\mathfrak{n}}) \colon \mathfrak{n}^{\otimes 3} \rightarrow \mathfrak{g}
\]
vanishes.
\end{definition}

\begin{lemma}
Given a tame triplet $(\mathfrak{g}, \mathfrak{h}, \mathfrak{n})$, the morphism $\alpha$ defines a Lie structure on $\mathfrak{n}$. Besides, $\mathfrak{n}$ becomes a Lie object in the symmetric monoidal category of $\mathfrak{h}$-modules.
\end{lemma}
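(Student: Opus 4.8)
The plan is to deduce everything from the corresponding facts about $\mu_\mathfrak{g}$, by restricting to the sub-objects $\mathfrak{n}^{\otimes\bullet}\subset\mathfrak{g}^{\otimes\bullet}$ and decomposing along $\mathfrak{g}=\mathfrak{h}\oplus\mathfrak{n}$; tameness will be precisely what is needed to discard the cross terms. First I would record the elementary facts: one has $\mu_{\mathfrak{g}\,|\mathfrak{n}^{\otimes 2}}=\alpha+\beta$, and since $\mu_\mathfrak{g}\circ\tau_1=-\mu_\mathfrak{g}$ while $\mathfrak{n}^{\otimes 2}\subset\mathfrak{g}^{\otimes 2}$ is a $\tau_1$-stable sub-object, both $\alpha$ and $\beta$ vanish on $\mathrm{S}^2\mathfrak{n}$. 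In particular $\alpha\in\mathrm{Hom}_\mathcal{C}(\Lambda^2\mathfrak{n},\mathfrak{n})$, so by Proposition \ref{Jacobi} it suffices to verify the Jacobi identity in the equivalent form $u-u\circ\tau_2+u\circ\tau_2\tau_1=0$, where $u=\alpha\circ(\alpha\otimes\mathrm{id}_\mathfrak{n})$.

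The heart of the argument is the computation of $U:=\mu_\mathfrak{g}\circ(\mu_\mathfrak{g}\otimes\mathrm{id}_\mathfrak{g})$ on the $\mathfrak{S}_3$-stable sub-object $\mathfrak{n}^{\otimes 3}$. Restricting, $U_{|\mathfrak{n}^{\otimes 3}}=\mu_\mathfrak{g}\circ\big((\alpha+\beta)\otimes\mathrm{id}_\mathfrak{n}\big)=\mu_\mathfrak{g}\circ(\alpha\otimes\mathrm{id}_\mathfrak{n})+\mu_\mathfrak{g}\circ(\beta\otimes\mathrm{id}_\mathfrak{n})$, and the second summand vanishes by the tameness assumption (Definition \ref{defalg}). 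Since $\alpha\otimes\mathrm{id}_\mathfrak{n}$ takes values in $\mathfrak{n}^{\otimes 2}$, the first summand equals $(\alpha+\beta)\circ(\alpha\otimes\mathrm{id}_\mathfrak{n})=u+v$, where $u=\alpha\circ(\alpha\otimes\mathrm{id}_\mathfrak{n})$ is valued in $\mathfrak{n}$ and $v=\beta\circ(\alpha\otimes\mathrm{id}_\mathfrak{n})$ is valued in $\mathfrak{h}$. Now the Jacobi identity for $\mu_\mathfrak{g}$ (written, as in the proof of Proposition \ref{Jacobi}, as $U-U\circ\tau_2+U\circ\tau_2\tau_1=0$) gives, after restriction to $\mathfrak{n}^{\otimes 3}$, the identity $(u+v)-(u+v)\circ\tau_2+(u+v)\circ\tau_2\tau_1=0$. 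Composing on the left with the projection $\pi_\mathfrak{n}\colon\mathfrak{g}\to\mathfrak{n}$, which commutes with precomposition by the $\tau_i$ (these act on the source) and sends $u$ to $u$ and $v$ to $0$, we obtain $u-u\circ\tau_2+u\circ\tau_2\tau_1=0$. Hence, by Proposition \ref{Jacobi}, $(\mathfrak{n},\alpha)$ is a Lie algebra object in $\mathcal{C}$.

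For the second assertion it is enough to check that $\alpha$ is a morphism of $\mathfrak{h}$-modules; the required antisymmetry and Jacobi identities are then automatic, since they already hold in $\mathcal{C}$ and the forgetful functor from the category of $\mathfrak{h}$-modules to $\mathcal{C}$ is faithful. But the adjoint action makes $\mathfrak{g}$ an $\mathfrak{h}$-module and $\mu_\mathfrak{g}$ an $\mathfrak{h}$-module map (for the tensor action on $\mathfrak{g}^{\otimes 2}$), while the hypothesis that $\mathfrak{g}=\mathfrak{h}\oplus\mathfrak{n}$ \emph{as $\mathfrak{h}$-modules} makes both the inclusion $\mathfrak{n}^{\otimes 2}\hookrightarrow\mathfrak{g}^{\otimes 2}$ and the projection $\pi_\mathfrak{n}\colon\mathfrak{g}\to\mathfrak{n}$ into $\mathfrak{h}$-module maps; so their composite $\alpha$ is $\mathfrak{h}$-equivariant, and $(\mathfrak{n},\alpha)$ is a Lie object in the category of $\mathfrak{h}$-modules.

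The only genuinely delicate point is the bookkeeping in the second paragraph: one must be sure that passing to $\mathfrak{n}^{\otimes 3}$ and then splitting off the $\mathfrak{h}$-summand is legitimate — which it is, because $\mathfrak{n}^{\otimes 3}$ is $\mathfrak{S}_3$-stable and $\pi_\mathfrak{n}$ is a bona fide morphism of $\mathcal{C}$ — and that tameness removes exactly the offending contribution $\mu_\mathfrak{g}\circ(\beta\otimes\mathrm{id}_\mathfrak{n})$, whose $\mathfrak{n}$-component would otherwise corrupt the projected Jacobi identity for $\alpha$. Everything else is formal.
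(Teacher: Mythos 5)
Your proof is correct and follows essentially the same route as the paper: tameness kills $\mu_{\mathfrak{g}}\circ(\beta\otimes\mathrm{id}_{\mathfrak{n}})$, giving $\pi_{\mathfrak{n}}\circ\mu_{\mathfrak{g}}\circ(\mu_{\mathfrak{g}|\mathfrak{n}^{\otimes 2}}\otimes\mathrm{id}_{\mathfrak{n}})=\alpha\circ(\alpha\otimes\mathrm{id}_{\mathfrak{n}})$, and the Jacobi identity for $\mathfrak{n}$ is then obtained by restricting the Jacobi identity of $\mathfrak{g}$ to $\mathfrak{n}^{\otimes 3}$ and projecting to $\mathfrak{n}$. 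You merely spell out the bookkeeping more explicitly and add the (correct) $\mathfrak{h}$-equivariance argument for the second assertion, which the paper leaves implicit.
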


\begin{proof}
We claim that we have $\pi_{\mathfrak{n}} \circ (\mu_{\mathfrak{g}} \circ ({\mu_{\mathfrak{g}}}_{| \mathfrak{n}^{\otimes 2}} \otimes \mathrm{id}_{\mathfrak{n}}))=\alpha \circ (\alpha \otimes \mathrm{id}_{\mathfrak{n}})$. Indeed, 
\begin{align*}
\pi_{\mathfrak{n}} \circ \mu_{\mathfrak{g}} \circ ({\mu_{\mathfrak{g}}}_{| \mathfrak{n}^{\otimes 2}} \otimes \mathrm{id}_{\mathfrak{n}})&= 
\pi_{\mathfrak{n}} \circ  \mu_{\mathfrak{g}} \circ (\alpha \otimes \mathrm{id}_{\mathfrak{n}}) +  \pi_{\mathfrak{n}} \circ \underbrace{\mu_{\mathfrak{g}} \circ (\beta \otimes \mathrm{id}_{\mathfrak{n}})}_{0} \\
&= \pi_{\mathfrak{n}} \circ \mu_{\mathfrak{g}} \circ (\alpha \otimes \mathrm{id}_{\mathfrak{n}}) \\
&= \alpha \circ (\alpha \otimes \mathrm{id}_{\mathfrak{n}}) \,.
\end{align*}
Using this, the Jacobi identity for $(\mathfrak{g}, \mu_{\mathfrak{g}})$ restricted to $\mathfrak{n}^{\otimes 3}$ at the source, and projected to $\mathfrak{n}$ at the target, gives the Jacobi identity for $(\mathfrak{n}, \alpha)$.
\end{proof}

\subsubsection{The envelopping algebra \texorpdfstring{$\mathrm{U}(\mathfrak{n})$}{U(n)}} \label{castelnau}

Since $\mathfrak{n}$ is a Lie algebra object in the category of $\mathfrak{h}$-modules in $\mathcal{C}$, the algebra object $\mathrm{U}(\mathfrak{n})$ is naturally endowed with an action by derivation of $\mathfrak{h}$. This action is simply induced by the adjoint action of $\mathfrak{h}$ on the tensor algebra $\mathrm{T}(\mathfrak{n})$. We define a morphism $\mathfrak{g} \otimes \mathrm{U}(\mathfrak{n}) \rightarrow \mathrm{U}(\mathfrak{n})$ componentwise as follows:

\begin{enumerate}
\item[--] The morphism $p \colon \mathfrak{n} \otimes \mathrm{U}(\mathfrak{n}) \rightarrow \mathrm{U}(\mathfrak{n})$ is the multiplication in $\mathrm{U}(\mathfrak{n})$.
\item[--] The morphism $q \colon \mathfrak{h} \otimes \mathrm{U}(\mathfrak{n}) \rightarrow \mathrm{U}(\mathfrak{n})$ is the action of $\mathfrak{h}$ on $\mathrm{U}(\mathfrak{n})$.
\end{enumerate}

\begin{lemma}
Given a tame triplet $(\mathfrak{g}, \mathfrak{h}, \mathfrak{n})$, the above morphism endows $\mathrm{U}(\mathfrak{n})$ with a $\mathfrak{g}$-module structure.
\end{lemma}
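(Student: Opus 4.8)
The statement to be proven is the Lie-module axiom for the morphism $\rho\colon\mathfrak{g}\otimes\mathrm{U}(\mathfrak{n})\to\mathrm{U}(\mathfrak{n})$ that restricts to $q$ on $\mathfrak{h}\otimes\mathrm{U}(\mathfrak{n})$ and to $p$ on $\mathfrak{n}\otimes\mathrm{U}(\mathfrak{n})$; concretely, that the two morphisms $\rho\circ(\mathrm{id}_{\mathfrak{g}}\otimes\rho)\circ(\mathbf 1-\tau)$ and $\rho\circ(\mu_{\mathfrak{g}}\otimes\mathrm{id}_{\mathrm{U}(\mathfrak{n})})$ from $\mathfrak{g}^{\otimes 2}\otimes\mathrm{U}(\mathfrak{n})$ to $\mathrm{U}(\mathfrak{n})$ agree, $\tau$ being the symmetry of $\mathfrak{g}^{\otimes 2}$. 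The plan is to decompose the source via $\mathfrak{g}=\mathfrak{h}\oplus\mathfrak{n}$, writing $\mathfrak{g}^{\otimes 2}=\mathfrak{h}^{\otimes 2}\oplus(\mathfrak{h}\otimes\mathfrak{n})\oplus(\mathfrak{n}\otimes\mathfrak{h})\oplus\mathfrak{n}^{\otimes 2}$, and to check the identity after restricting the source to each of the four summands. Two elementary observations will be used throughout. First, since $\mathfrak{g}=\mathfrak{h}\oplus\mathfrak{n}$ as $\mathfrak{h}$-modules and $\mathfrak{h}$ is a subalgebra, one has $[\mathfrak{h},\mathfrak{h}]\subseteq\mathfrak{h}$ and $[\mathfrak{h},\mathfrak{n}]\subseteq\mathfrak{n}$, so $\mu_{\mathfrak{g}}$ restricts to $\mu_{\mathfrak{h}}$ on $\mathfrak{h}^{\otimes 2}$, it restricts on $\mathfrak{h}\otimes\mathfrak{n}$ to the very $\mathfrak{h}$-action on $\mathfrak{n}$ used to build $q$, and it restricts on $\mathfrak{n}^{\otimes 2}$ to $\alpha+\beta$. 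Second, by construction the $\mathfrak{h}$-action $q$ makes $\mathfrak{h}$ act on the algebra $\mathrm{U}(\mathfrak{n})$ by derivations, and its restriction along $\iota\colon\mathfrak{n}\hookrightarrow\mathrm{U}(\mathfrak{n})$ is that same $\mathfrak{h}$-action on $\mathfrak{n}$ (the map $\iota$ being $\mathfrak{h}$-equivariant, the enveloping algebra being formed inside the category of $\mathfrak{h}$-modules).

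First I would treat the summand $\mathfrak{h}^{\otimes 2}$: there $\tau$ preserves the factor and $\rho$ restricts to $q$, so the required identity is precisely the assertion that $q$ is a Lie-module structure for $\mu_{\mathfrak{h}}$, which holds because $\mathrm{U}(\mathfrak{n})$ is an object of the category of $\mathfrak{h}$-modules. Next, on $\mathfrak{h}\otimes\mathfrak{n}$ I would write $p=m_{\mathrm{U}(\mathfrak{n})}\circ(\iota\otimes\mathrm{id}_{\mathrm{U}(\mathfrak{n})})$ and unwind $\rho\circ(\mathrm{id}_{\mathfrak{g}}\otimes\rho)\circ(\mathbf 1-\tau)$; it collapses, by the derivation property of the $\mathfrak{h}$-action on the algebra $\mathrm{U}(\mathfrak{n})$, to $m_{\mathrm{U}(\mathfrak{n})}\circ\big((q\circ(\mathrm{id}_{\mathfrak{h}}\otimes\iota))\otimes\mathrm{id}_{\mathrm{U}(\mathfrak{n})}\big)$, which matches $\rho\circ(\mu_{\mathfrak{g}}\otimes\mathrm{id}_{\mathrm{U}(\mathfrak{n})})$ once one uses $q\circ(\mathrm{id}_{\mathfrak{h}}\otimes\iota)=\iota\circ\mu_{\mathfrak{g}\,|\,\mathfrak{h}\otimes\mathfrak{n}}$ and the fact that $\mu_{\mathfrak{g}}$ sends $\mathfrak{h}\otimes\mathfrak{n}$ into $\mathfrak{n}$. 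The summand $\mathfrak{n}\otimes\mathfrak{h}$ is handled in the same way, invoking in addition the antisymmetry of $\mu_{\mathfrak{g}}$ and $[\mathfrak{n},\mathfrak{h}]\subseteq\mathfrak{n}$.

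The decisive case is $\mathfrak{n}^{\otimes 2}$, and this is where the tameness hypothesis enters and where the main difficulty lies. On this summand $\rho$ restricts to $p$, so $\rho\circ(\mathrm{id}_{\mathfrak{g}}\otimes\rho)\circ(\mathbf 1-\tau)$ becomes the graded commutator in $\mathrm{U}(\mathfrak{n})$ of the two images under $\iota$, followed by left multiplication; by Lemma~\ref{pff} this commutator, restricted to $\iota(\mathfrak{n})$, is $\iota\circ\alpha$, so the left-hand side equals $p\circ(\alpha\otimes\mathrm{id}_{\mathrm{U}(\mathfrak{n})})$. The right-hand side, using $\mu_{\mathfrak{g}\,|\,\mathfrak{n}^{\otimes 2}}=\alpha+\beta$ together with $\alpha\colon\mathfrak{n}^{\otimes2}\to\mathfrak{n}$ and $\beta\colon\mathfrak{n}^{\otimes2}\to\mathfrak{h}$, equals $p\circ(\alpha\otimes\mathrm{id}_{\mathrm{U}(\mathfrak{n})})+q\circ(\beta\otimes\mathrm{id}_{\mathrm{U}(\mathfrak{n})})$. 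Hence the module axiom on this summand is equivalent to $q\circ(\beta\otimes\mathrm{id}_{\mathrm{U}(\mathfrak{n})})=0$. Now tameness says $\mu_{\mathfrak{g}}\circ(\beta\otimes\mathrm{id}_{\mathfrak{n}})=0$, and in particular $\pi_{\mathfrak{n}}\circ\mu_{\mathfrak{g}}\circ(\beta\otimes\mathrm{id}_{\mathfrak{n}})=0$, which is exactly the statement that the image of $\beta$ acts by zero on the generating object $\mathfrak{n}$. Since $q$ is the derivation of $\mathrm{U}(\mathfrak{n})$ extending this $\mathfrak{h}$-action on $\mathfrak{n}$ — on $\mathfrak{n}^{\otimes k}\subset\mathrm{T}(\mathfrak{n})$ it is the sum over the $k$ tensor slots of the action on the corresponding factor — the morphism $q\circ(\beta\otimes\mathrm{id}_{\mathrm{T}(\mathfrak{n})})$ is a sum of terms each factoring through $\pi_{\mathfrak{n}}\circ\mu_{\mathfrak{g}}\circ(\beta\otimes\mathrm{id}_{\mathfrak{n}})=0$; it therefore vanishes on $\mathrm{T}(\mathfrak{n})$ and hence on its quotient $\mathrm{U}(\mathfrak{n})$. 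The main obstacle is precisely this last step: carrying out carefully, in the categorical language, the argument that a derivation killing the generators kills everything, while keeping track of the slot into which the $\mathfrak{h}$-valued factor $\beta(-,-)$ is inserted; the first three cases are essentially formal once the two observations above are recorded.
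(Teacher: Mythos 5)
Your proposal is correct and follows essentially the same route as the paper: a componentwise check on the decomposition of $\mathfrak{g}^{\otimes 2}$, using the $\mathfrak{h}$-module structure on $\mathfrak{h}\otimes\mathfrak{h}$, the derivation property on the mixed components, and tameness precisely to kill the term $q\circ(\beta\otimes\mathrm{id}_{\mathrm{U}(\mathfrak{n})})$ on $\mathfrak{n}\otimes\mathfrak{n}$. The only differences are cosmetic: you spell out the "a derivation vanishing on generators vanishes everywhere" step that the paper compresses into "$0$ by tameness", and the commutator-of-generators fact you attribute to Lemma~\ref{pff} is really just the statement, used directly in the paper, that left multiplication makes $\mathrm{U}(\mathfrak{n})$ an $\mathfrak{n}$-module.
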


\begin{proof}
For any elements $x$ and $y$ in $\mathcal{C}$, be denote by $\tau$ the symmetry isomorphism $x \otimes y \xrightarrow{\sim} y \otimes x$.
We check componentwise (that is on $\mathfrak{n} \otimes \mathfrak{n}$, $\mathfrak{n} \otimes \mathfrak{h}$ and on $\mathfrak{h} \otimes \mathfrak{h}$) that the map $(p, q)$ defines a $\mathfrak{g}$-action. 
\begin{enumerate}
\item[--] We have on $\mathfrak{n} \otimes \mathfrak{n} \otimes \mathrm{U}(\mathfrak{n})$
\[
\underbrace{p \circ (\mathrm{id}_{\mathfrak{n}}\otimes p) - p \circ (\mathrm{id}_{\mathfrak{n}}\otimes p) \circ (\tau \otimes \mathrm{id}_{\mathrm{U}(\mathfrak{n})})- p \circ (\alpha \circ \mathrm{id}_{\mathrm{U}(\mathfrak{n})})}_{0 \,\,\textrm{since}\,\,\mathfrak{n}\,\,\textrm{acts on}\,\, \mathrm{U}(\mathfrak{n})}-\underbrace{q \circ (\beta \circ  \mathrm{id}_{\mathrm{U}(\mathfrak{n})})
}_{0 \,\, \textrm{by tameness}}=0 \,.
\]
\item[--] Since $\mathfrak{h}$ acts by derivation on $\mathrm{U}(\mathfrak{n})$, we have on $\mathfrak{n} \otimes \mathfrak{h} \otimes \mathrm{U}(\mathfrak{n})$ the equality
\[
q \circ ((\mathrm{id}_{\mathfrak{h}}\otimes p) \circ (\tau \otimes  \mathrm{id}_{\mathrm{U}(\mathfrak{n})}))=q \circ (\mu_{\mathfrak{g}} \circ  \mathrm{id}_{\mathrm{U}(\mathfrak{n})})+ 
p \circ (\mathrm{id}_{\mathfrak{n}}\otimes q),
\]
that is
\[
p \circ (\mathrm{id}_{\mathfrak{n}}\otimes q) - q \circ (\mathrm{id}_{\mathfrak{h}}\otimes p) \circ (\tau \otimes  \mathrm{id}_{\mathrm{U}(\mathfrak{n})}) -q \circ (\mu_{\mathfrak{g}} \circ  \mathrm{id}_{\mathrm{U}(\mathfrak{n})})=0\,.
\]
\item[--] Lastly, on $\mathfrak{h} \otimes \mathfrak{h} \otimes \mathrm{U}(\mathfrak{n})$, we have 
\[
q \circ (\mathrm{id}_{\mathfrak{h}}\otimes q) - q \circ (\mathrm{id}_{\mathfrak{h}}\otimes q) \circ (\tau \otimes  \mathrm{id}_{\mathrm{U}(\mathfrak{n})}) -q \circ (\mu_{\mathfrak{g}} \circ  \mathrm{id}_{\mathrm{U}(\mathfrak{n})})=0
\]
since $\mathfrak{h}$ acts on $\mathrm{U}(\mathfrak{n})$.
\end{enumerate}
\end{proof}

\subsubsection{The induced representation} \label{chili}

The aim of this section is to prove the following theorem:

\begin{theorem} \label{harpe}
Given a tame triplet $(\mathfrak{g}, \mathfrak{h}, \mathfrak{n})$, the induced $\mathfrak{g}$-module $\mathrm{Ind}_{\mathfrak{h}}^{\mathfrak{g}} \, \mathbf{1}_{\mathcal{C}}$ of the trivial $\mathfrak{h}$-module $\mathbf{1}_{\mathcal{C}}$ exists, and is naturally isomorphic to the $\mathfrak{g}$-module $\mathrm{U}(\mathfrak{n})$\footnote{The $\mathfrak{g}$-module structure on $\mathrm{U}(\mathfrak{n})$ has been introduced in \S \ref{castelnau}.}.
\end{theorem}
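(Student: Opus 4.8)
\emph{Strategy.} By definition, $\mathrm{Ind}_{\mathfrak h}^{\mathfrak g}\mathbf 1_{\mathcal C}$ is a corepresenting object for the functor $M\mapsto\mathrm{Hom}_{\mathfrak h}(\mathbf 1_{\mathcal C},\mathrm{Res}_{\mathfrak h}^{\mathfrak g}M)$ (morphisms of $\mathfrak h$-modules) on the category of $\mathfrak g$-modules in $\mathcal C$, so its existence will follow once we exhibit one. The plan is to show that $\mathrm U(\mathfrak n)$ — which exists by Theorem \ref{saroumane}, since $(\mathfrak n,\alpha)$ is a Lie algebra object — equipped with the $\mathfrak g$-module structure $(p,q)$ of \S\ref{castelnau} and with the unit morphism $\eta\colon\mathbf 1_{\mathcal C}\to\mathrm U(\mathfrak n)$, does the job, via the natural transformation $\phi\mapsto\phi\circ\eta$. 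Note first that $\eta$ is a morphism of $\mathfrak h$-modules out of the trivial one: $q$ is the derivation action of $\mathfrak h$ on $\mathrm U(\mathfrak n)$, hence annihilates the unit. Thus $\phi\mapsto\phi\circ\eta$ is well defined and manifestly natural in $M$, and the whole content is its bijectivity.

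\emph{Injectivity.} By Assumption (A1) in Theorem \ref{saroumane}, the canonical morphism $\Delta\colon\mathrm T(\mathfrak n)\to\mathrm U(\mathfrak n)$ admits $\Delta_+^{-1}$ as a section, hence is a split epimorphism of objects. Writing $a\colon\mathfrak n\otimes M\to M$ for the $\mathfrak n$-component of the $\mathfrak g$-action on $M$ and $a^{(k)}$ for its $k$-fold iterate, any $\mathfrak g$-linear (a fortiori $\mathfrak n$-linear) morphism $\psi\colon\mathrm U(\mathfrak n)\to M$ satisfies $\psi\circ\Delta_{|\mathfrak n^{\otimes k}}=a^{(k)}\circ(\mathrm{id}_{\mathfrak n^{\otimes k}}\otimes(\psi\circ\eta))$; since $\Delta$ is epi, $\psi$ is determined by $\psi\circ\eta$.

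\emph{Surjectivity and the role of tameness.} Given a morphism of $\mathfrak h$-modules $f\colon\mathbf 1_{\mathcal C}\to M$, let $\tilde\phi\colon\mathrm T(\mathfrak n)\to M$ be the morphism with component $a^{(k)}\circ(\mathrm{id}_{\mathfrak n^{\otimes k}}\otimes f)$ on $\mathfrak n^{\otimes k}$; by construction it is left-multiplicative, i.e.\ $\tilde\phi_{|\mathfrak n^{\otimes k}\otimes\mathfrak n^{\otimes\ell}}=a^{(k)}\circ(\mathrm{id}_{\mathfrak n^{\otimes k}}\otimes\tilde\phi_{|\mathfrak n^{\otimes\ell}})$. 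The core of the proof is that $\tilde\phi$ factors through $\Delta$; granting this, $\phi_f:=\tilde\phi\circ\Delta_+^{-1}$ satisfies $\phi_f\circ\eta=f$. Exactly as in the proof of Proposition \ref{coupdepute}, this factorisation is obtained by induction on the tensor degree which, through Proposition \ref{chalvet} and left-multiplicativity, reduces to the family of identities $\tilde\phi_{|\Lambda^2\mathfrak n\otimes\mathfrak n^{\otimes m}}=\tilde\phi\circ(\alpha\otimes\mathrm{id}_{\mathfrak n^{\otimes m}})$ for $m\geq0$. To prove these one antisymmetrises $\tilde\phi$ in its first two slots: by the $\mathfrak g$-module axiom for $M$ and the decomposition $(\mu_{\mathfrak g})_{|\mathfrak n^{\otimes2}}=\alpha+\beta$ one gets the term $\tilde\phi\circ(\alpha\otimes\mathrm{id})$ plus a correction term in which $\beta(n,n')\in\mathfrak h$ acts, via the $\mathfrak h$-component $b$ of the $\mathfrak g$-action on $M$, on $\tilde\phi(w)$ for $w\in\mathfrak n^{\otimes m}$. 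This correction term vanishes precisely because the triplet is tame: Definition \ref{defalg} gives $\mu_{\mathfrak g}\circ(\beta\otimes\mathrm{id}_{\mathfrak n})=0$, so $b\circ(\beta(n,n')\otimes-)$ commutes with $a$; commuting it past the iterated $a$'s inside $\tilde\phi(w)$ all the way down to the generating section $f$, and using $b\circ(\mathrm{id}_{\mathfrak h}\otimes f)=0$ (as $f$ is $\mathfrak h$-linear out of the trivial module), it dies. Hence $\phi_f$ exists.

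\emph{Conclusion and main obstacle.} It then remains to check that $\phi_f$ is $\mathfrak g$-linear: $\mathfrak n$-linearity is immediate from left-multiplicativity of $\tilde\phi$ together with $\Delta$ being an algebra epimorphism, and $\mathfrak h$-linearity follows from one more induction on the tensor degree, using that $q$ is the derivation extension of the $\mathfrak h$-action on $\mathfrak n$, the $\mathfrak g$-module axiom for $M$, and again $b\circ(\mathrm{id}_{\mathfrak h}\otimes f)=0$ in the base case. Combined with injectivity, this shows that $\phi\mapsto\phi\circ\eta$ is a natural bijection, so $\mathrm U(\mathfrak n)$ with the $\mathfrak g$-action of \S\ref{castelnau} represents $\mathrm{Ind}_{\mathfrak h}^{\mathfrak g}\mathbf 1_{\mathcal C}$. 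The only genuinely delicate step is the vanishing of the $\beta$-correction term above: it is the categorical shadow of the fact that, for a tame triplet, the obstruction $\beta$ to $\mathfrak n\subset\mathfrak g$ being a Lie subalgebra lands in $\mathfrak h$, annihilates the cyclic vector, and — by tameness — is central enough to be moved through the whole of $\mathrm U(\mathfrak n)$.
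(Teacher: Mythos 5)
Your proof is correct, but it takes a genuinely different route from the paper's. You verify the universal property directly on $\mathrm{U}(\mathfrak{n})$: given an $\mathfrak{h}$-invariant $f\colon\mathbf{1}_{\mathcal{C}}\to M$ you build $\widetilde{\phi}$ on $\mathrm{T}(\mathfrak{n})$ by iterated action on the cyclic vector and re-run the induction of Proposition \ref{coupdepute} (through Proposition \ref{chalvet}) with coefficients in $M$, the new point being that the $\beta$-correction produced by the $\mathfrak{g}$-module axiom is pushed through the iterated $\mathfrak{n}$-action thanks to tameness ($\mu_{\mathfrak{g}}\circ(\beta\otimes\mathrm{id}_{\mathfrak{n}})=0$) and then killed on $f$ by $\mathfrak{h}$-invariance. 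The paper never inducts relative to the target module: it first identifies $\mathrm{U}(\mathfrak{g})\cong\mathrm{Ind}_{0}^{\mathfrak{g}}\,\mathbf{1}_{\mathcal{C}}$, then uses the categorical PBW theorem and split filtrations to show that the action-on-the-unit map $\mathrm{U}(\mathfrak{g})\to\mathrm{U}(\mathfrak{n})$ is a split epimorphism whose kernel $\mathfrak{N}$ is exactly the image of $\mathrm{U}(\mathfrak{g})\otimes\mathfrak{h}\to\mathrm{U}(\mathfrak{g})$, i.e.\ it realizes $\mathrm{U}(\mathfrak{n})$ as the categorical analogue of $\mathrm{U}(\mathfrak{g})/\mathrm{U}(\mathfrak{g})\mathfrak{h}$; the adjunction then follows formally, the only pointwise input being $b\circ(\mathrm{id}_{\mathfrak{h}}\otimes f)=0$, and tameness enters only through the existence of the $\mathfrak{g}$-module structure of \S\ref{castelnau}. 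Your route avoids $\mathrm{U}(\mathfrak{g})$ altogether and makes the role of tameness visible inside the verification itself; the paper's route localizes all the PBW work in structural statements about $\mathrm{U}(\mathfrak{g})$ and $\mathrm{U}(\mathfrak{n})$ and, as a bonus, yields the explicit quotient description of the induced module, which is the conceptual form echoed later (for instance in the Duflo-type discussion of \S7). The only thing to watch in your write-up is normalization: Theorem \ref{saroumane} produces the enveloping algebra of $(V,2\alpha)$ with $\alpha=c_2^1$, so when you invoke Proposition \ref{chalvet} for $\mathrm{U}(\mathfrak{n})$ the coefficient $c_2^1$ is half the bracket $\pi_{\mathfrak{n}}\circ{\mu_{\mathfrak{g}}}_{|\mathfrak{n}^{\otimes2}}$ of \S\ref{mignon}; this is a factor-of-two bookkeeping issue, not a gap.
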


\begin{proof}

The proof goes in several steps. We want to prove that $\mathrm{U}(\mathfrak{n})$ satisfies the universal property of the induced representation, that is that for any $\mathfrak{g}$-module $V$, $\mathrm{Hom}_{\mathfrak{h}}(\mathbf{1}_{\mathcal{C}}, V) \simeq \mathrm{Hom}_{\mathfrak{g}}(\mathrm{U}(\mathfrak{g}), V)$.
\par \medskip
First, we claim that the induced representation $\mathrm{Ind}\,_0^{\mathfrak{g}} \, \mathbf{1}_{\mathcal{C}}$ exists and is isomorphic to $\mathrm{U}(\mathfrak{g})$. This means that  $\mathrm{Hom}_{\mathcal{C}}(\mathbf{1}_{\mathcal{C}}, V) \simeq \mathrm{Hom}_{\mathfrak{g}}(\mathrm{U}(\mathfrak{g}), V)$. The morphism is obtained by attaching to each $\varphi$ in $\mathrm{Hom}_{\mathcal{C}}(\mathbf{1}_{\mathcal{C}}, V)$ the map
\[
\mathrm{U}(\mathfrak{g}) \simeq \mathrm{U}(\mathfrak{g}) \otimes \mathbf{1}_{\mathcal{C}} \xrightarrow{\mathrm{id} \,\otimes \,\varphi} \mathrm{U}(\mathfrak{g}) \otimes V \rightarrow V.
\]
Its inverse if simply the precomposition with the map $\mathbf{1}_{\mathcal{C}} \rightarrow \mathrm{U}(\mathfrak{g})$. Let us consider the following diagram:
\[
\xymatrix{ \mathbf{1}_{\mathcal{C}} \ar[rd] \ar[rr] \ar@{-->}@/_/[rdd] && V \\
& \mathrm{U}(\mathfrak{n}) & \\
& \mathrm{U}(\mathfrak{g}) \ar@{=>}[u]  \ar@{=>}@/_/[ruu] & \\
}
\]
The vertical arrow is simply given by the left action of $\mathrm{U}(\mathfrak{g})$ on the unit element element $\mathds{1}$ of $\mathrm{U}(\mathfrak{n})$. Here the dashed arrow is a morphism in $\mathcal{C}$, the plain arrows are morphism of $\mathfrak{h}$-modules and the double arrows are morphisms of $\mathfrak{g}$-modules.  We have a diagram of morphism spaces
\[
\xymatrix{
\mathrm{Hom}_{\mathcal{C}}(\mathbf{1}_{\mathcal{C}}, V) \ar[r]^-{\sim} & \mathrm{Hom}_{\mathfrak{g}}(\mathrm{U}(\mathfrak{g}), V) \\
\mathrm{Hom}_{\mathfrak{h}}(\mathbf{1}_{\mathcal{C}}, V) \ar@{_{(}->}[u]& \ar[l] \ar[u] \mathrm{Hom}_{\mathfrak{g}}(\mathrm{U}(\mathfrak{n}), V)
}
\]
We now claim the following:
\begin{enumerate}
\item[--] The map $\mathrm{U}(\mathfrak{g}) \rightarrow \mathrm{U}(n)$ admits a section, in particular it has a kernel $\mathfrak{N}$ in the category $\mathcal{C}$.
\item[--] The map $\mathrm{U}(\mathfrak{g}) \otimes \mathfrak{h} \rightarrow \mathrm{U}(\mathfrak{g})$ factors through the kernel $\mathfrak{N}$, and the induced map $\mathrm{U}(\mathfrak{g}) \otimes \mathfrak{h} \rightarrow \mathfrak{N}$ is an isomorphism.
\end{enumerate}
To prove the two claims, we make heavy use of the categorical PBW theorem (Theorem \ref{saroumane}). 
If we endow $\mathrm{U}(\mathfrak{n})$ and $\mathrm{U}(\mathfrak{g})$ with their natural filtrations, then the action of $\mathfrak{g}$ on $\mathrm{U}(\mathfrak{n})$ is of degree $1$ with respect to this filtration. The induced map $\mathfrak{g} \otimes \mathrm{Gr}^p \mathrm{U}(\mathfrak{n}) \rightarrow \mathrm{Gr}^{p+1} \mathrm{U}(\mathfrak{n})$  is simply given by the projection $\mathfrak{g} \rightarrow \mathfrak{n}$ followed by the multiplication map $\mathfrak{n} \otimes \mathrm{S}^p \mathfrak{n} \rightarrow \mathrm{S}^{p+1} \mathfrak{n}$. Therefore, we see that the natural map $\mathrm{U}(\mathfrak{g}) \rightarrow \mathrm{U}(\mathfrak{n})$ is a filtered map of degree zero, and that the induced graded map $\mathrm{S}(\mathfrak{g}) \rightarrow \mathrm{S}(\mathfrak{n})$ is induced by the projection from $\mathfrak{g}$ to $\mathfrak{n}$.
We can now consider the map
\[
\delta: \mathrm{U}(\mathfrak{n}) \simeq \mathrm{S}(\mathfrak{n}) \hookrightarrow \mathrm{T}(\mathfrak{n}) \rightarrow \mathrm{T}(\mathfrak{g}) \rightarrow \mathrm{U}(\mathfrak{g})
\]
Then the composite map $\mathrm{U}(\mathfrak{n}) \xrightarrow{\delta} \mathrm{U}(\mathfrak{g}) \rightarrow \mathrm{U}(\mathfrak{n})$ is an isomorphism, since the associated graded map is the identity, and the filtration splits (as objects of $\mathcal{C}$).
\par \medskip
Let us prove the second claim. First we remark that the composite map
\[
\mathfrak{h} \hookrightarrow \mathfrak{g} \xrightarrow{ . \mathds{1}} \mathrm{U}(\mathfrak{n})
\]
vanishes. This gives the factorization of the map $\mathrm{U}(\mathfrak{g}) \otimes \mathfrak{h} \rightarrow \mathrm{U}(\mathfrak{g})$ by $\mathfrak{N}$. We can endow $\mathfrak{N}$ with the filtration induced by $\mathrm{U}(\mathfrak{g})$. Since for each non-negative integer $p$ the morphism $\mathrm{F}^p \mathrm{U}(\mathfrak{g}) \rightarrow \mathrm{F}^p \mathrm{U}(\mathfrak{n})$ admits a section, the natural map from $\mathrm{Gr}^p \mathfrak{N}$ to the kernel of $\mathrm{Gr}^p \mathrm{U}(\mathfrak{g}) \rightarrow \mathrm{Gr}^p \mathrm{U}(\mathfrak{n})$ is an isomorphism. Thus, using the PBW isomorphism, we have a commutative diagram
\[
\xymatrix{
\mathrm{Gr}^{p-1} \mathrm{U}(\mathfrak{g}) \otimes \mathfrak{h} \ar[r] \ar[dd]^-{\sim} & \mathrm{Gr}^p \mathfrak{N} \ar[d]^-{\sim} \\
& \mathrm{Ker} \left(\mathrm{Gr}^p \mathrm{U}(\mathfrak{g}) \rightarrow \mathrm{Gr}^p \mathrm{U}(\mathfrak{n}) \right) \ar[d]^ -{\sim} \\
\mathrm{S}^{p-1} \mathfrak{g} \otimes \mathfrak{h} \ar@{=}[r] & \mathrm{S}^{p-1} \mathfrak{g} \otimes \mathfrak{h}
}
\]
Hence the map $\mathrm{U}(\mathfrak{g}) \otimes \mathfrak{h} \rightarrow \mathfrak{N}$ is a degree one map whose associated graded map is an isomorphism. Therefore it is an isomorphism.
\par \bigskip
We now come back to the main proof. The only thing that remains to prove is that the composite morphism
\[
\mathrm{Hom}_{\mathfrak{h}} (\mathbf{1}_{\mathcal{C}}, V) \rightarrow \mathrm{Hom}_{\mathcal{C}} (\mathbf{1}_{\mathcal{C}}, V) \simeq \mathrm{Hom}_{\mathfrak{g}} (\mathrm{U}(\mathfrak{g}), V)
\]
factors through $\mathrm{Hom}_{\mathfrak{g}} (\mathrm{U}(\mathfrak{n}), V)$. This is equivalent to prove that the composition
\[
\mathrm{Hom}_{\mathfrak{h}} (\mathbf{1}_{\mathcal{C}}, V) \rightarrow \mathrm{Hom}_{\mathcal{C}} (\mathbf{1}_{\mathcal{C}}, V) \simeq \mathrm{Hom}_{\mathfrak{g}} (\mathrm{U}(\mathfrak{g}), V) \rightarrow \mathrm{Hom}_{\mathfrak{g}} (\mathrm{U}(\mathfrak{g}) \otimes \mathfrak{h}, V)
\]
vanishes. The image of a map $\varphi$ is the morphism
\[
\mathrm{U}(\mathfrak{g}) \otimes \mathfrak{h} \rightarrow \mathrm{U}(\mathfrak{g}) \simeq  \mathrm{U}(\mathfrak{g}) \otimes \mathbf{1}_{\mathcal{C}} \xrightarrow{\mathrm{id}_{\mathrm{U}(\mathfrak{g})}\, \otimes \, \varphi} \mathrm{U}(\mathfrak{g}) \otimes V \rightarrow V
\]
which can also be written as the composition
\[
\mathrm{U}(\mathfrak{g}) \otimes \mathfrak{h} \simeq  \mathrm{U}(\mathfrak{g}) \otimes \mathfrak{h} \otimes \mathbf{1}_{\mathcal{C}} \xrightarrow{\mathrm{id}_{\mathrm{U}(\mathfrak{g}) \,\otimes \,\mathfrak{h}}\, \otimes \, \varphi} \mathrm{U}(\mathfrak{g}) \otimes V \rightarrow V.
\]
Now we have a commutative diagram
\[
\xymatrix{
\mathfrak{h} \otimes \mathbf{1}_{\mathcal{C}} \ar[r]^{\mathrm{id}_{\mathfrak{h}}\, \otimes \, \varphi} \ar[d]_-{\textrm{zero map}} &\mathfrak{h} \otimes V \ar[d]\\
\mathbf{1}_{\mathcal{C}}\ar[r]^-{\varphi} &V 
}
\]
which allows to finish the proof.
\end{proof}
Remark now that we have a priori two algebra structures on the algebra $\mathrm{U}(\mathfrak{n})^{\mathfrak{h}}$: the first one is the natural one induced by the algebra structure on $\mathrm{U}(\mathfrak{n})$, and the second one is obtained using Frobenius duality, and Theorem \ref{harpe}, and the natural composition on $\mathrm{Hom}_{\mathfrak{g}} (\mathrm{U}(\mathfrak{n}), \mathrm{U}(\mathfrak{n}))$:
\[
\mathrm{U}(\mathfrak{n})^{\mathfrak{h}} = \mathrm{Hom}_{\mathfrak{h}} (\mathbf{1}_{\mathcal{C}}, \mathrm{Res}^{\mathfrak{g}}_{\mathfrak{h}} \mathrm{U}(\mathfrak{n})) \simeq \mathrm{Hom}_{\mathfrak{g}} ( \mathrm{Ind}^{\mathfrak{g}}_{\mathfrak{h}} \mathbf{1}_{\mathcal{C}}, \mathrm{U}(\mathfrak{n})) \simeq \mathrm{Hom}_{\mathfrak{g}} (\mathrm{U}(\mathfrak{n}), \mathrm{U}(\mathfrak{n})).
\]
These two structure are in fact compatible, this is the content of the following:

\begin{theorem} \label{yolo}
Given a tame triple $(\mathfrak{g}, \mathfrak{h}, \mathfrak{n})$, the natural isomorphism $\mathrm{U}(\mathfrak{n})^{\mathfrak{h}} \simeq \mathrm{Hom}_{\mathfrak{g}} (\mathrm{U}(\mathfrak{n}), \mathrm{U}(\mathfrak{n}))$ is an anti-morphism of algebra objects.
\end{theorem}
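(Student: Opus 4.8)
The plan is to make the isomorphism $\Phi \colon \mathrm{U}(\mathfrak{n})^{\mathfrak{h}} \xrightarrow{\sim} \mathrm{Hom}_{\mathfrak{g}}(\mathrm{U}(\mathfrak{n}), \mathrm{U}(\mathfrak{n}))$ completely explicit and then observe that the anti-morphism property follows at once from the associativity of the product of $\mathrm{U}(\mathfrak{n})$. First I would unwind the definition of $\Phi$: it is Frobenius reciprocity for the adjunction $(\mathrm{Ind}_{\mathfrak{h}}^{\mathfrak{g}}, \mathrm{Res}_{\mathfrak{h}}^{\mathfrak{g}})$ composed with the isomorphism $\mathrm{Ind}_{\mathfrak{h}}^{\mathfrak{g}} \mathbf{1}_{\mathcal{C}} \simeq \mathrm{U}(\mathfrak{n})$ of Theorem \ref{harpe}, and the latter carries the canonical generator to $\mathds{1}_{\mathrm{U}(\mathfrak{n})}$. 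By the usual unit-of-adjunction description, $\Phi$ therefore sends an $\mathfrak{h}$-invariant morphism $a \colon \mathbf{1}_{\mathcal{C}} \to \mathrm{U}(\mathfrak{n})$ to the unique morphism of $\mathfrak{g}$-modules $\widehat{a} \colon \mathrm{U}(\mathfrak{n}) \to \mathrm{U}(\mathfrak{n})$ with $\widehat{a} \circ \eta = a$, where $\eta \colon \mathbf{1}_{\mathcal{C}} \to \mathrm{U}(\mathfrak{n})$ is the unit of the algebra.

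The heart of the argument is the claim that $\widehat{a}$ is simply the right multiplication $R_a := m_{\mathrm{U}(\mathfrak{n})} \circ (\mathrm{id}_{\mathrm{U}(\mathfrak{n})} \otimes a)$ by $a$ (precomposed with the right unitor). To establish it I would verify that $R_a$ is a morphism of $\mathfrak{g}$-modules, using the componentwise description of the $\mathfrak{g}$-action from \S\ref{castelnau}: on $\mathfrak{n} \otimes \mathrm{U}(\mathfrak{n})$ the action $p$ is left multiplication, and on $\mathfrak{h} \otimes \mathrm{U}(\mathfrak{n})$ the action $q$ is the derivation action. Compatibility of $R_a$ with $p$ is immediate from the associativity of $m_{\mathrm{U}(\mathfrak{n})}$. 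Compatibility with $q$ uses the Leibniz rule $q \circ (\mathrm{id}_{\mathfrak{h}} \otimes m_{\mathrm{U}(\mathfrak{n})}) = m_{\mathrm{U}(\mathfrak{n})} \circ (q \otimes \mathrm{id}) + m_{\mathrm{U}(\mathfrak{n})} \circ (\mathrm{id} \otimes q) \circ (\tau \otimes \mathrm{id})$: expanding $q \circ (\mathrm{id}_{\mathfrak{h}} \otimes R_a)$ produces two terms, the first of which equals $R_a \circ q$ and the second of which factors through the composite $q \circ (\mathrm{id}_{\mathfrak{h}} \otimes a)$, which vanishes precisely because $a$ is $\mathfrak{h}$-invariant. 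Since moreover $R_a \circ \eta = a$ by the unit axiom, the uniqueness from the first paragraph forces $\widehat{a} = R_a$, that is $\Phi(a) = R_a$.

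It then remains to assemble the pieces. The natural algebra structure on $\mathrm{U}(\mathfrak{n})^{\mathfrak{h}}$ is $a \cdot b = m_{\mathrm{U}(\mathfrak{n})} \circ (a \otimes b)$ and the algebra structure on $\mathrm{Hom}_{\mathfrak{g}}(\mathrm{U}(\mathfrak{n}), \mathrm{U}(\mathfrak{n}))$ is composition; associativity of $m_{\mathrm{U}(\mathfrak{n})}$ gives $R_b \circ R_a = R_{a \cdot b}$, hence $\Phi(a \cdot b) = \Phi(b) \circ \Phi(a)$, which is exactly the assertion that $\Phi$ is an anti-morphism of algebra objects. I would run this whole argument at the level of generalized elements, i.e.\ after applying $\mathrm{Hom}_{\mathcal{C}}(T, -)$ for an arbitrary test object $T$ and invoking the Yoneda lemma, so that it is genuinely categorical; the tameness hypothesis enters only through Theorem \ref{harpe} and through the well-definedness (\S\ref{castelnau}) of the $\mathfrak{g}$-module structure on $\mathrm{U}(\mathfrak{n})$. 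I expect the only genuine obstacle to be the careful check that $R_a$ is $\mathfrak{h}$-equivariant, where one must track the Koszul signs hidden in the braiding $\tau$ when expanding the Leibniz rule; everything else is formal.
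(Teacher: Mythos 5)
Your proof is correct and takes essentially the same route as the paper: both identify the isomorphism with right multiplication $P \mapsto m\circ(\mathrm{id}\otimes P)$ by the $\mathfrak{h}$-invariant element and deduce the anti-morphism property from associativity of $\mathrm{U}(\mathfrak{n})$. The only difference is cosmetic — you establish $\Phi(a)=R_a$ by checking $\mathfrak{g}$-equivariance of $R_a$ componentwise and invoking uniqueness in Frobenius reciprocity, whereas the paper computes the adjunction image directly via the section $\delta\colon\mathrm{U}(\mathfrak{n})\hookrightarrow\mathrm{U}(\mathfrak{g})$.
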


\begin{proof}
Let us first observe that the above isomorphism $f:\mathrm{U}(\mathfrak{n})^{\mathfrak{h}}\tilde\longrightarrow\mathrm{Hom}_{\mathfrak{g}} (\mathrm{U}(\mathfrak{n}), \mathrm{U}(\mathfrak{n}))$ can be described as follows: 
for every $P\in \mathrm{U}(\mathfrak{n})^{\mathfrak{h}}=\mathrm{Hom}_{\mathfrak{h}} (\mathbf{1}_{\mathcal{C}}, \mathrm{Res}^{\mathfrak{g}}_{\mathfrak{h}} \mathrm{U}(\mathfrak{n}))$,  
\[
f(P)=a\circ(\delta\otimes P)=m\circ(id\otimes P)\,,
\]
where $\delta:\mathrm{U}(\mathfrak{n})\hookrightarrow \mathrm{U}(\mathfrak{g})$ is as in \S \ref{castelnau}, $a:\mathrm{U}(\mathfrak{g})\otimes \mathrm{U}(\mathfrak{n})\to \mathrm{U}(\mathfrak{n})$ is the $\mathfrak{g}$-module structure of $\mathrm{U}(\mathfrak{n})$, and $m$ is the multiplication on $\mathrm{U}(\mathfrak{n})$. 
Hence we have that 
\[
f(m(P\otimes Q))=m\circ\big(id\otimes m(P\otimes Q)\big)=m\circ\big(m(id\otimes P)\otimes Q\big)=f(Q)\circ f(P).
\] 
\end{proof}

\section{Generalities on formal neighbourhoods}

\subsection{Sheaves on split square zero extensions}
\subsubsection{Theta morphisms}
In this section, we study some properties related to sheaves on a trivial first order thickening of a smooth scheme. Let us fix the setting: $X$ is a smooth $\mathbf{k}$-scheme, $\mathcal{V}$ is a locally free sheaf on $X$, and $S$ is the (split) first order thickening of $X$ by $V$; that is $\mathcal{O}_S=\mathcal{V} \oplus \mathcal{O}_X$ and $\mathcal{V}$ is a square zero ideal in $\mathcal{O}_S$. We denote by 
$j \colon X \rightarrow S$ and $\sigma \colon S \rightarrow X$ the natural morphisms. Let $\mathcal{F}$ be an element of $\mathrm{D}^{-}(X)$. Then the exact sequence
\begin{equation} \label{hop}
0 \longrightarrow \mathcal{V} \otimes \mathcal{F} \longrightarrow \sigma^* \mathcal{F} \longrightarrow \mathcal{F} \longrightarrow 0
\end{equation}
defines a morphism $
\Theta_{\mathcal{F}} \colon \mathcal{F} \longrightarrow \mathcal{V} \otimes \mathcal{F} [1]
$
in $\mathrm{D}^-(S)$. 
\begin{proposition} \label{matisse} The following properties are valid:
\begin{enumerate}
\item[(i)]
For any $\varphi \colon \mathcal{F} \rightarrow \mathcal{G}$ in $\mathrm{D}^-(X)$, we have
\[
\Theta_{\mathcal{G}} \circ \varphi = (\mathrm{id}_{\mathcal{V}} \otimes \varphi) \circ \Theta_{\mathcal{F}}.
\]
\item[(ii)]
For any $\mathcal{F}$ in $\mathrm{D}^-(X)$, we have \[
\Theta_{\mathcal{F}}= \Theta_{\mathcal{O}_X} \lltens{}_{\mathcal{O}_S} \, \mathrm{id}_{\sigma^* \mathcal{F}}.
\]
\item[(iii)] For any $\mathcal{F}$ in $\mathrm{D}^-(X)$, the composition 
\[
\mathcal{F} \xrightarrow{\mathrm{at}_S(\mathcal{F})} \Omega^1_S \,\lltens{}_{\mathcal{O}_S}\, \mathcal{F} [1] \rightarrow \Omega^1_S \otimes_{\mathcal{O}_S} \mathcal{F} [1] \simeq \mathcal{V} \otimes \mathcal{F}[1] \oplus \Omega^1_X \otimes \mathcal{F}[1]
\] 
is the couple $(\Theta_{\mathcal{F}}, \mathrm{at}_X(\mathcal{F}))$.
\vspace{0.2cm}
\item[(iv)] The morphism $\sigma_* \Theta_{\mathcal{F}}$ vanishes.
\end{enumerate}
\end{proposition}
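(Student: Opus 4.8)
The plan is to establish (ii) first, to deduce (i) and (iv) from it by formal manipulations with triangulated functors, and to treat (iii) separately via a comparison of first-jet sequences.

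\textit{Step 1: proof of (ii).} The short exact sequence $0\to\mathcal{V}\otimes\mathcal{F}\to\sigma^*\mathcal{F}\to\mathcal{F}\to0$ defining $\Theta_\mathcal{F}$ is obtained from the sequence $0\to\mathcal{V}\to\mathcal{O}_S\to\mathcal{O}_X\to0$ of $\mathcal{O}_S$-modules — whose connecting morphism is, by definition, $\Theta_{\mathcal{O}_X}$ — by applying $-\lltens{}_{\mathcal{O}_S}\sigma^*\mathcal{F}$. Precisely, I tensor the distinguished triangle $\mathcal{V}\to\mathcal{O}_S\to\mathcal{O}_X\xrightarrow{\Theta_{\mathcal{O}_X}}\mathcal{V}[1]$ with $\sigma^*\mathcal{F}$ over $\mathcal{O}_S$; since $\sigma\circ j=\mathrm{id}_X$ and $\mathcal{O}_S$ is locally free over $\mathcal{O}_X$, the projection formula provides canonical identifications $\mathcal{O}_S\lltens{}_{\mathcal{O}_S}\sigma^*\mathcal{F}\simeq\sigma^*\mathcal{F}$, $\mathcal{O}_X\lltens{}_{\mathcal{O}_S}\sigma^*\mathcal{F}\simeq Lj^*\sigma^*\mathcal{F}\simeq\mathcal{F}$ and $\mathcal{V}\lltens{}_{\mathcal{O}_S}\sigma^*\mathcal{F}\simeq\mathcal{V}\otimes_{\mathcal{O}_X}\mathcal{F}$, under which the tensored triangle becomes the one defining $\Theta_\mathcal{F}$. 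As $-\lltens{}_{\mathcal{O}_S}\sigma^*\mathcal{F}$ is a triangulated functor, its effect on connecting morphisms gives $\Theta_\mathcal{F}=\Theta_{\mathcal{O}_X}\lltens{}_{\mathcal{O}_S}\mathrm{id}_{\sigma^*\mathcal{F}}$.

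\textit{Step 2: (i) and (iv).} Statement (i) follows from (ii) and the bifunctoriality of the derived tensor product, applied to $\sigma^*\varphi\colon\sigma^*\mathcal{F}\to\sigma^*\mathcal{G}$: under the identifications of Step 1 the morphism $\mathrm{id}_{\mathcal{O}_X}\lltens{}_{\mathcal{O}_S}\sigma^*\varphi$ is carried to $\varphi$ and $\mathrm{id}_{\mathcal{V}[1]}\lltens{}_{\mathcal{O}_S}\sigma^*\varphi$ to $(\mathrm{id}_\mathcal{V}\otimes\varphi)[1]$. Statement (iv) follows by applying the exact functor $\sigma_*$ together with the projection formula $\sigma_*\bigl(-\lltens{}_{\mathcal{O}_S}\sigma^*\mathcal{F}\bigr)\simeq\sigma_*(-)\lltens{}_{\mathcal{O}_X}\mathcal{F}$: it reduces to $\sigma_*\Theta_{\mathcal{O}_X}=0$, and $\sigma_*$ turns $0\to\mathcal{V}\to\mathcal{O}_S\to\mathcal{O}_X\to0$ into the canonically split sequence $0\to\mathcal{V}\to\mathcal{O}_X\oplus\mathcal{V}\to\mathcal{O}_X\to0$ of $\mathcal{O}_X$-modules (the splitting being $\sigma^\#$), whose connecting morphism vanishes.

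\textit{Step 3: (iii).} This is the substantial point. The conormal sequence of $j$, split by $\sigma$, yields $j^*\Omega^1_S\simeq\mathcal{V}\oplus\Omega^1_X$ (with $\mathcal{V}=\mathcal{N}^*_{X/S}$), hence the splitting of $\Omega^1_S\otimes_{\mathcal{O}_S}\mathcal{F}$ appearing in the statement. Writing $\mathcal{F}=j_*\mathcal{O}_X\lltens{}_{\mathcal{O}_S}\sigma^*\mathcal{F}$ and using the Leibniz rule for Atiyah classes,
\[
\mathrm{at}_S(\mathcal{F})=\bigl(\mathrm{at}_S(j_*\mathcal{O}_X)\lltens{}_{\mathcal{O}_S}\mathrm{id}_{\sigma^*\mathcal{F}}\bigr)+\bigl(\mathrm{id}_{j_*\mathcal{O}_X}\lltens{}_{\mathcal{O}_S}\mathrm{at}_S(\sigma^*\mathcal{F})\bigr)\,.
\]
By functoriality of the Atiyah class along the flat morphism $\sigma$, the second summand is the image of $\sigma^*\mathrm{at}_X(\mathcal{F})$ under $\sigma^*\Omega^1_X\to\Omega^1_S$; after tensoring with $j_*\mathcal{O}_X$ and projecting onto $\Omega^1_S\otimes_{\mathcal{O}_S}\mathcal{F}$ it lands in the $\Omega^1_X$-summand and contributes exactly $(0,\mathrm{at}_X(\mathcal{F}))$. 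For the first summand one reduces, by (ii), to $\mathcal{F}=\mathcal{O}_X$, and must check that the $H^0$-part of $\mathrm{at}_S(j_*\mathcal{O}_X)$ has $\Omega^1_X$-component $\mathrm{at}_X(\mathcal{O}_X)=0$ and $\mathcal{V}$-component $\Theta_{\mathcal{O}_X}$. This last identification — the normal component of the Atiyah class of the structure sheaf of $X\subset S$ is the extension class of the conormal sequence — is where I expect all the care to be needed; it is obtained by comparing the first-order jet sequence of $j_*\mathcal{O}_X$ on $S$ with the sequence $0\to\mathcal{V}\to\mathcal{O}_S\to\mathcal{O}_X\to0$ defining $\Theta_{\mathcal{O}_X}$. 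Alternatively, (iii) can be proved directly, bypassing the Leibniz rule, by writing out the first-jet sequence of $j_*\mathcal{F}$ on $S$ and exhibiting it — using $\sigma$ — as the direct sum of the sequence defining $\Theta_\mathcal{F}$ with the $\sigma$-pullback of the first-jet sequence of $\mathcal{F}$ on $X$.
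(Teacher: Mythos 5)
Your treatment of (i), (ii) and (iv) is correct and is pitched at the same level as the paper, which simply declares these points straightforward: tensoring the triangle $\mathcal{V}\to\mathcal{O}_S\to\mathcal{O}_X\xrightarrow{\Theta_{\mathcal{O}_X}}\mathcal{V}[1]$ with $\sigma^*\mathcal{F}$ over $\mathcal{O}_S$, using $\mathbb{L}j^*\sigma^*\mathcal{F}\simeq\mathcal{F}$ and the projection formula for the affine morphisms $j$ and $\sigma$, does give (ii), and your deductions of (i) and (iv) from it are fine. For (iii) your route genuinely differs from the paper's: the paper gives no internal argument and invokes \cite[Prop.~4.9]{Grivaux-chernloc}, recognizing $\Theta_{\mathcal{F}}$ as a residual Atiyah morphism, whereas you propose Leibniz additivity of the Atiyah class under $\otimes^{\mathbb{L}}_{\mathcal{O}_S}$, pullback functoriality along the flat map $\sigma$, and a reduction via (ii) to the single assertion that the projection of $\mathrm{at}_S(j_*\mathcal{O}_X)$ to $j_*(\mathcal{V}\oplus\Omega^1_X)[1]$ equals $(\Theta_{\mathcal{O}_X},0)$. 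The reduction is sound, but note that $S$ is not smooth, so the Leibniz rule and the pullback functoriality must be taken in the generality of Atiyah classes of complexes over arbitrary schemes (Illusie, Buchweitz--Flenner); this is a genuine external input, comparable in weight to the citation the paper itself makes.

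The decisive step — that the projected Atiyah class of $j_*\mathcal{O}_X$ is $(\Theta_{\mathcal{O}_X},0)$ — is precisely what you leave as a promissory note ("where I expect all the care to be needed"), so as written Step 3 is a correct strategy with its key identity unverified rather than a complete proof. The identity does hold and is checkable by exactly the jet-sequence comparison you indicate: locally $\mathcal{O}_S=R\oplus M$ with $M^2=0$, the first jet module is $P^1_S(\mathcal{O}_X)=(\mathcal{O}_S\otimes_{\mathbf{k}}\mathcal{O}_S)/(I_\Delta^2+\mathcal{O}_S\otimes_{\mathbf{k}} M)$, and the two left-$\mathcal{O}_S$-linear maps $a\otimes b\mapsto a\,s(p(b))\in\mathcal{O}_S$ and $a\otimes b\mapsto \bar{a}\,d(p(b))\in\Omega^1_X$ (with $p\colon\mathcal{O}_S\to\mathcal{O}_X$ the retraction, $s$ its ring-theoretic section, $\bar{a}=p(a)$) are well defined on $P^1_S(\mathcal{O}_X)$; they exhibit the pushouts of the jet extension along the two projections of $j^*\Omega^1_S\simeq\mathcal{V}\oplus\Omega^1_X$ as, respectively, the extension $0\to\mathcal{V}\to\mathcal{O}_S\to\mathcal{O}_X\to0$ (up to the usual sign convention in the definition of $da$) and the split extension. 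If you add this verification, your argument becomes a complete, essentially self-contained alternative to the paper's appeal to the residual Atiyah class machinery.
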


\begin{proof}
(i), (ii) and (iv) are straightforward. For (iii), $\Theta_{\mathcal{F}}$ is a special occurrence of a residual Atiyah morphism, and we apply \cite[Prop. 4.9]{Grivaux-chernloc}.
\end{proof}

\subsubsection{Infinitesimal HKR isomorphism}
In this section, we describe the infinitesimal cohomological HKR isomorphism attached to a split square-zero extension of a smooth scheme.
\par \medskip
For any non-negative integer $p$, we define a morphism $\Delta_p \colon \mathcal{O}_X \rightarrow  \mathrm{T}^p (\mathcal{V}[1]) $ in the derived category $\mathrm{D}^{\mathrm{b}}(S)$ as follows: $
\Delta_0=\mathrm{id}$ and $\Delta_p=- \,\Theta_{\mathrm{T}^{p-1}(\mathcal{V}[1])} \circ \Delta_{p-1}$ for $p \geq 1$.
\begin{proposition} \label{hkrbabe}
For any vector bundles $\mathcal{E}_1$ and $\mathcal{E}_2$ on $X$, there is a canonical isomorphism
\begin{align*}
\mathrm{T} (\mathcal{V}[-1]) \,\lltens{} \, \mathcal{RH}om(\mathcal{E}_1, \mathcal{E}_2)  
&\simeq \bigoplus_{p \in \mathbb{N}} \,\mathcal{RH}om(\mathcal{V}^{\otimes p} \otimes \mathcal{E}_1, \mathcal{E}_2) [-p] \\
& \rightarrow \bigoplus_{p \in \mathbb{N}} \, \sigma_{*} \mathcal{RH}om_{\mathcal{O}_S} (\mathcal{V}^{\otimes p}[p] \otimes \mathcal{E}_1, \mathcal{E}_2) \\
& \rightarrow \bigoplus_{p \in \mathbb{N}} \, \sigma_{*} \mathcal{RH}om_{\mathcal{O}_S} (\mathcal{E}_1, \mathcal{E}_2)
\end{align*}
obtained by precomposing by $ \Delta_p \otimes  \mathrm{id}_{\sigma^* \mathcal{E}_1}$. Besides, this isomorphism is compatible with the Yoneda product for the pair $(\mathcal{E}_1, \mathcal{E}_2)$.
\end{proposition}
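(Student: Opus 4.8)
The plan is to obtain the isomorphism from a single canonical locally free $\mathcal{O}_S$-resolution of $j_*\mathcal{E}_1$, and then to identify the resulting decomposition with precomposition by the morphisms $\Delta_p$. Since $\mathcal{V}\cdot\mathcal{V}=0$ in $\mathcal{O}_S$, the ideal $\mathcal{V}=j_*\mathcal{V}$ is annihilated by $\mathcal{V}$, so the short exact sequence $0\to j_*(\mathcal{V}\otimes\mathcal{F})\to\sigma^*\mathcal{F}\to j_*\mathcal{F}\to0$ of \eqref{hop} can be spliced with itself. This produces a complex $\mathcal{P}_\bullet(\mathcal{E}_1)$ of locally free $\mathcal{O}_S$-modules with $\mathcal{P}_p(\mathcal{E}_1)=\sigma^*(\mathcal{V}^{\otimes p}\otimes\mathcal{E}_1)$ in cohomological degree $-p$, the differential $\mathcal{P}_p\to\mathcal{P}_{p-1}$ being induced by multiplying the leftmost $\mathcal{V}$-factor into $\mathcal{O}_S$ (so $d^2=0$ because $\mathcal{V}^2=0$). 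Over a local trivialisation of $\mathcal{V}$, exactness is elementary (this complex is a bar-type resolution of $\mathcal{O}_X=\mathcal{O}_S/\mathcal{V}$ tensored by $\mathcal{E}_1$); and since $\mathcal{P}_\bullet(\mathcal{E}_1)$ is defined with no choices, it glues to a global resolution $\mathcal{P}_\bullet(\mathcal{E}_1)\xrightarrow{\ \sim\ }j_*\mathcal{E}_1$ in $\mathrm{D}^+(S)$.

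Next I would compute $\sigma_*\mathcal{RH}om_{\mathcal{O}_S}(\mathcal{E}_1,\mathcal{E}_2)\simeq\sigma_*\mathcal{H}om^\bullet_{\mathcal{O}_S}(\mathcal{P}_\bullet(\mathcal{E}_1),j_*\mathcal{E}_2)$. Each term is $\sigma_*\mathcal{H}om_{\mathcal{O}_S}(\sigma^*(\mathcal{V}^{\otimes p}\otimes\mathcal{E}_1),j_*\mathcal{E}_2)\simeq\mathcal{H}om_{\mathcal{O}_X}(\mathcal{V}^{\otimes p}\otimes\mathcal{E}_1,\mathcal{E}_2)$ by the $(\sigma^*,\sigma_*)$-adjunction (together with $j^*j_*\mathcal{E}_2=\mathcal{E}_2$), and the crucial observation is that the induced codifferential vanishes: the differential of $\mathcal{P}_\bullet(\mathcal{E}_1)$ has values in $\mathcal{V}\cdot\mathcal{P}_{\bullet-1}(\mathcal{E}_1)$ while $j_*\mathcal{E}_2$ is killed by $\mathcal{V}$. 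Therefore $\sigma_*\mathcal{RH}om_{\mathcal{O}_S}(\mathcal{E}_1,\mathcal{E}_2)\simeq\bigoplus_{p\in\mathbb{N}}\mathcal{RH}om_{\mathcal{O}_X}(\mathcal{V}^{\otimes p}\otimes\mathcal{E}_1,\mathcal{E}_2)[-p]$ with zero differential, which is precisely the second object appearing in the statement; the identification of this object with $\mathrm{T}(\mathcal{V}[-1])\lltens\mathcal{RH}om(\mathcal{E}_1,\mathcal{E}_2)$ is then the tautological tensor–Hom rearrangement for the locally free sheaves $\mathcal{V}^{\otimes p}$ together with the projection formula.

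It then remains to check that this decomposition is the one obtained by precomposition with $\Delta_p\otimes\mathrm{id}_{\sigma^*\mathcal{E}_1}$. The morphism $\Theta_{\mathcal{F}}$, being the connecting morphism of \eqref{hop}, is identified under $j_*\mathcal{F}\simeq\mathcal{P}_\bullet(\mathcal{F})$ with the evident map $\mathcal{P}_\bullet(\mathcal{F})\to\mathcal{P}_1(\mathcal{F})[1]\to j_*(\mathcal{V}\otimes\mathcal{F})[1]$; an induction on $p$, using the naturality of $\Theta$ (Proposition \ref{matisse}(i)) and its $\sigma^*$-linearity (Proposition \ref{matisse}(ii)) — the latter being what lets one identify the resolution of $\mathrm{T}^{p-1}(\mathcal{V}[1])$ with the tail of the resolution of $\mathcal{O}_X$ — then shows that $\Delta_p\colon j_*\mathcal{O}_X\to\mathrm{T}^p(\mathcal{V}[1])$ corresponds to the canonical composite $j_*\mathcal{O}_X\simeq\mathcal{P}_\bullet(\mathcal{O}_X)\to\mathcal{P}_p(\mathcal{O}_X)[p]\to j_*(\mathcal{V}^{\otimes p})[p]$. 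Consequently precomposition with $\Delta_p\otimes\mathrm{id}_{\sigma^*\mathcal{E}_1}$ is, on the $\mathcal{P}_\bullet$-model, the inclusion of the $p$-th stratum of the zero-differential complex $\mathcal{H}om^\bullet_{\mathcal{O}_S}(\mathcal{P}_\bullet(\mathcal{E}_1),j_*\mathcal{E}_2)$, so it recovers the splitting above. For the Yoneda statement, note that $\mathcal{P}_\bullet(\mathcal{O}_X)$ is a differential graded algebra for the concatenation product $\mathcal{V}^{\otimes p}\otimes\mathcal{V}^{\otimes q}\to\mathcal{V}^{\otimes(p+q)}$ (its differential is a derivation), so composition in $\sigma_*\mathcal{RH}om_{\mathcal{O}_S}(\mathcal{E},\mathcal{E})$ is modelled over $\mathcal{P}_\bullet$ by composition of chain maps, i.e.\ by the tensor-algebra product on $\bigoplus_p\mathcal{RH}om_{\mathcal{O}_X}(\mathcal{V}^{\otimes p}\otimes\mathcal{E},\mathcal{E})[-p]$; the same discussion, with module structures, handles a general pair $(\mathcal{E}_1,\mathcal{E}_2)$.

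I expect the real work to lie entirely in this last step: the homological algebra over the square-zero ring $\mathcal{O}_S$ in the first two steps is routine, but the bookkeeping induction showing that the iterated connecting morphisms $\Delta_p$ reassemble into the grading coming from the resolution, and are multiplicative for concatenation, is where Proposition \ref{matisse} genuinely has to be invoked, and is the point most prone to sign and indexing mistakes.
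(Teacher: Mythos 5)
Your first two steps, and the identification of the splitting with precomposition by $\Delta_p$, are correct and are essentially the paper's own route: the complex $\mathcal{P}_\bullet(\mathcal{E}_1)$ with $\mathcal{P}_p=\sigma^*(\mathcal{V}^{\otimes p}\otimes\mathcal{E}_1)$ is exactly the canonical locally $\mathcal{O}_S$-free resolution the paper borrows from Arinkin--C\u{a}ld\u{a}raru, the induced codifferential on $\mathcal{H}om^\bullet_{\mathcal{O}_S}(\mathcal{P}_\bullet(\mathcal{E}_1),j_*\mathcal{E}_2)$ vanishes for the reason you give, and your induction identifying $\Delta_p$ with the chain map which in degree $-p$ is the projection $\sigma^*\mathcal{V}^{\otimes p}\to j_*\mathcal{V}^{\otimes p}$ is the intended use of Proposition \ref{matisse}\,(i)--(ii).

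The gap is in the Yoneda step. The complex $\mathcal{P}_\bullet(\mathcal{O}_X)$ is \emph{not} a dg algebra for the concatenation product: the differential is not a derivation. Locally, for $x=1\otimes v_1\otimes\cdots\otimes v_p$ and $y=1\otimes w_1\otimes\cdots\otimes w_q$ with $p,q\geq 1$ one has $d(x\cdot y)=d(x)\cdot y$, whereas $x\cdot d(y)=w_1\otimes v_1\otimes\cdots\otimes v_p\otimes w_2\otimes\cdots\otimes w_q$ is a nonzero section of $\mathcal{V}\cdot\mathcal{P}_{p+q-1}$, so the Leibniz identity fails (and the deconcatenation coproduct is not a chain map either, so there is no convolution algebra structure on the Hom complex to fall back on). Hence "composition is modelled by the tensor-algebra product" does not follow from what you wrote. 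Two standard repairs: (a) staying on your model, lift a cocycle corresponding to $\bar\psi\colon \mathcal{V}^{\otimes q}\otimes\mathcal{E}\to\mathcal{E}$ to the chain self-map of $\mathcal{P}_\bullet(\mathcal{E})$ that contracts the \emph{last} slots, $f\otimes v_1\otimes\cdots\otimes v_n\otimes e\mapsto f\otimes v_1\otimes\cdots\otimes v_{n-q}\otimes\bar\psi(v_{n-q+1}\otimes\cdots\otimes v_n\otimes e)$; this commutes with the leftmost-slot differential precisely because it acts on the opposite side, and composing such lifts yields the concatenation product (note that positive-degree Ext classes are not honest sheaf maps, so this route needs a further resolution in the $X$-direction or a local-to-global argument); (b) argue entirely in the derived category, which is what the paper's ``routine calculations using Proposition \ref{matisse}'' refers to: since $\Delta_{p+q}$ is, up to sign, the composite of $\Delta_q\otimes\mathrm{id}_{\mathcal{V}^{\otimes p}[p]}$ with $\Delta_p$ (Proposition \ref{matisse}\,(ii)), naturality of $\Theta$ (Proposition \ref{matisse}\,(i)) lets you commute $\varphi$ past these iterated $\Theta$'s and get, up to sign, $(\Delta_q\otimes\mathrm{id}_{\mathcal{E}_2})\circ\varphi\circ(\Delta_p\otimes\mathrm{id}_{\mathcal{E}_1})=(\mathrm{id}_{\mathcal{V}^{\otimes q}[q]}\otimes\varphi)\circ(\Delta_{p+q}\otimes\mathrm{id}_{\mathcal{E}_1})$, which is exactly the asserted compatibility with the Yoneda product.
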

\begin{proof}
The first part of the proof is well known and follows from the existence of a canonical locally $\mathcal{O}_S$-free resolution of $\mathcal{V}$ on $S$ (\textit{see} \cite{Arinkin-Caldararu}). The compatibility with the Yoneda product follows from routine calculations using Proposition \ref{matisse}.
\end{proof}
It is also possible to derive the internal Hom with respect to the \textit{second} variable instead of the first one: this gives the infinitesimal counterpart of Kashiwara's \textit{dual} HKR isomorphism (\textit{see} \cite{Grivaux-HKR}). To do so, we replace the morphism $\Delta_1$ by the \textit{dual Atiyah morphism} $\Delta'_1 \colon \mathcal{V}^* [-1] \rightarrow \mathcal{O}_X$, which is obtained by the composition \[
 \mathcal{V}^*[-1] \xrightarrow{\mathrm{id}_{ \mathcal{V}^*[-1]} \otimes \Delta_1}  \mathcal{V}^*[-1] \otimes  \mathcal{V}[1] \xrightarrow{\mathrm{ev}} \mathcal{O}_X.
\]
Then for any integer $p$ we construct the morphism $\Delta^{'}_p \colon \mathrm{T}^{p} (\mathcal{V}^*[-1]) \rightarrow \mathcal{O}_X$, as well as the symmetric components $\Delta^{' +}_p \colon \mathrm{S}^{p} (\mathcal{V}^*[-1]) \rightarrow \mathcal{O}_X$ and $\Delta^{' -}_p \colon \widetilde{\Lambda}^{p} (\mathcal{V}^* [-1]) \rightarrow \mathcal{O}_X$ as we did before. Then the dual infinitesimal HKR isomorphism takes the following form:
\begin{proposition} \label{hkrbabe2}
For any vector bundles $\mathcal{E}_1$ and $\mathcal{E}_2$ on $X$, there is a canonical isomorphism
\begin{align*}
\mathrm{T} (\mathcal{V}[-1]) \,\lltens{} \, \mathcal{RH}om(\mathcal{E}_1, \mathcal{E}_2)  
&\simeq \bigoplus_{p \in \mathbb{N}} \,\mathcal{RH}om(\mathcal{E}_1, (\mathcal{V}^{*})^{\otimes p} \otimes \mathcal{E}_2) [-p] \\
& \rightarrow \bigoplus_{p \in \mathbb{N}} \, \sigma_{*} \mathcal{RH}om_{\mathcal{O}_S} ( \mathcal{E}_1, (\mathcal{V}^{*})^{\otimes p} [-p] \otimes\mathcal{E}_2) \\
& \rightarrow \bigoplus_{p \in \mathbb{N}} \, \sigma_{*} \mathcal{RH}om_{\mathcal{O}_S} (\mathcal{E}_1, \mathcal{E}_2)
\end{align*}
obtained by postcomposing by $ \Delta_p^{'} \otimes  \mathrm{id}_{\sigma^* \mathcal{E}_2}$. Besides, this isomorphism is compatible with the Yoneda product for the pair $(\mathcal{E}_1, \mathcal{E}_2)$.
\end{proposition}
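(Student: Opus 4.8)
The plan is to run, on the \emph{second} variable, the very argument that proves Proposition~\ref{hkrbabe} on the first one. Recall that the proof of Proposition~\ref{hkrbabe} rests on the canonical locally $\mathcal{O}_S$-free resolution $\mathcal{K}_\bullet\to j_*\mathcal{O}_X$ of Arinkin--C\u{a}ld\u{a}raru, whose term in degree $-p$ is $\sigma^*(\mathcal{V}^{\otimes p})$ and whose differentials die after applying $\sigma_*\mathcal{H}om_{\mathcal{O}_S}(-,j_*\mathcal{E})$ precisely because the extension $S$ is \emph{split} (this is exactly Proposition~\ref{matisse}(iv)), the comparison being then implemented by precomposition with the $\Delta_p$. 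For the present statement I would first reduce to the case $\mathcal{E}_1=\mathcal{E}_2=\mathcal{O}_X$: since $\sigma^*\mathcal{E}_i$ is locally free on $S$, the projection formula gives $\sigma_*\mathcal{RH}om_{\mathcal{O}_S}(\mathcal{E}_1,\mathcal{E}_2)\simeq\sigma_*\mathcal{RH}om_{\mathcal{O}_S}(\mathcal{O}_X,\mathcal{O}_X)\otimes_{\mathcal{O}_X}\mathcal{RH}om(\mathcal{E}_1,\mathcal{E}_2)$ naturally in both arguments, so it suffices to treat $\mathcal{O}_X$ and keep track of how each variable enters. Then, rather than resolving the source, I would dualize the resolution: each $\mathcal{V}^{\otimes p}$ being a vector bundle, $\mathcal{RH}om_{\mathcal{O}_S}(\mathcal{K}_\bullet,\mathcal{O}_S)$ has $p$-th term $\sigma^*((\mathcal{V}^*)^{\otimes p})$, still with differentials killed by $\sigma_*$ in the split case; feeding it, tensored by $\sigma^*\mathcal{E}_2$, into $\sigma_*\mathcal{RH}om_{\mathcal{O}_S}(\mathcal{E}_1,-)$ produces the middle term $\bigoplus_p\sigma_*\mathcal{RH}om_{\mathcal{O}_S}(\mathcal{E}_1,(\mathcal{V}^*)^{\otimes p}[-p]\otimes\mathcal{E}_2)$, whose split filtration identifies it with $\bigoplus_p\mathcal{RH}om_{\mathcal{O}_X}(\mathcal{E}_1,(\mathcal{V}^*)^{\otimes p}\otimes\mathcal{E}_2)[-p]$, i.e.~with the left-hand side $\mathrm{T}(\mathcal{V}[-1])\lltens{}\mathcal{RH}om(\mathcal{E}_1,\mathcal{E}_2)$.

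The second step is to identify the resulting composite with postcomposition by $\Delta'_p\otimes\mathrm{id}_{\sigma^*\mathcal{E}_2}$. The key point is that $\Delta'_p$ is, by construction, the transpose of $\Delta_p$: $\Delta'_1$ is the adjoint of $\Delta_1=-\Theta_{\mathcal{O}_X}$ under the evaluation pairing, and iterating this, $\Delta'_p$ is the transpose of $\Delta_p$ under the perfect pairing $\mathrm{T}^p(\mathcal{V}^*[-1])\otimes\mathrm{T}^p(\mathcal{V}[1])\to\mathcal{O}_X$. Concretely, this amounts to checking degree by degree that the connecting maps of the dualized resolution tensored with $\sigma^*\mathcal{E}_2$ — which by Proposition~\ref{matisse}(ii) are governed by $\Theta_{\mathcal{O}_X}$ alone, twisted by $\mathrm{id}_{\sigma^*\mathcal{E}_2}$ — are dual to the iterated $\Theta$'s that assemble the $\Delta_p$, so that on the dualized resolution the HKR comparison becomes postcomposition by the transposed morphisms $\Delta'_p$. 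Proposition~\ref{matisse}(i) is exactly what makes this bookkeeping functorial in $\mathcal{E}_2$ and compatible with the shifts and with the passage from $\Delta_p$ to $\Delta'_p$.

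Finally, compatibility with the Yoneda product is once more a routine diagram chase, dual to the one behind Proposition~\ref{hkrbabe}: the Yoneda product on $\sigma_*\mathcal{RH}om_{\mathcal{O}_S}(\mathcal{E}_1,\mathcal{E}_2)$ transports, via the dualized resolution, to the convolution product on $\bigoplus_p\mathcal{RH}om_{\mathcal{O}_X}(\mathcal{E}_1,(\mathcal{V}^*)^{\otimes p}\otimes\mathcal{E}_2)[-p]$ determined by the comultiplication of $\mathrm{T}(\mathcal{V}^*[-1])$ dual to concatenation, and the coherences one needs reduce, again via Proposition~\ref{matisse}(i), to the compatibility of the $\Delta'_p$ with the canonical splittings, which is inherited from the corresponding property of the $\Delta_p$. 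I expect the one genuinely delicate point — exactly as in Proposition~\ref{hkrbabe} — to be the vanishing of the differentials in the dualized resolution, i.e.~the absence of a surviving Atiyah-class correction term; this is where Proposition~\ref{matisse}(iv) and the hypothesis that $S$ is \emph{split} are essential, and it is precisely the reason the statement does not extend to arbitrary square-zero extensions. A secondary, purely combinatorial subtlety is to verify that the transposition $\Delta_p\leftrightarrow\Delta'_p$ respects the direct-sum decompositions on both sides, which is handled as the corresponding verification for the $\Delta_p$.
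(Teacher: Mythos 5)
Your overall plan (reduce to $\mathcal{E}_1=\mathcal{E}_2=\mathcal{O}_X$ by the projection formula, then mirror the argument for Proposition~\ref{hkrbabe} on the second variable, with $\Delta'_p$ playing the role of the transpose of $\Delta_p$) is the right intuition, but the mechanism you propose for the middle step does not work. The dualized complex $\mathcal{RH}om_{\mathcal{O}_S}(\mathcal{K}_\bullet,\mathcal{O}_S)$ is a complex with \emph{nonzero} differentials --- $\sigma_*$ is exact and faithful, so it kills nothing; what kills the differentials in the proof of Proposition~\ref{hkrbabe} is that they factor through multiplication by the ideal $\mathcal{V}$, which annihilates $j_*\mathcal{E}_2$, not Proposition~\ref{matisse}(iv) per se --- and it is a resolution of $\mathcal{RH}om_{\mathcal{O}_S}(\mathcal{O}_X,\mathcal{O}_S)$, which is a genuinely complicated object (for $\mathrm{rk}\,\mathcal{V}\geq 2$ it has cohomology in arbitrarily high degrees), not of $j_*\mathcal{E}_2$. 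Locally free $\mathcal{O}_S$-modules are not adapted to the functor $\mathcal{RH}om_{\mathcal{O}_S}(\mathcal{E}_1,-)$ in the second slot anyway, so ``feeding'' this complex, tensored by $\sigma^*\mathcal{E}_2$, into that functor does not compute $\sigma_*\mathcal{RH}om_{\mathcal{O}_S}(\mathcal{E}_1,\mathcal{E}_2)$. Relatedly, your claim that the middle term $\bigoplus_p\sigma_*\mathcal{RH}om_{\mathcal{O}_S}(\mathcal{E}_1,(\mathcal{V}^*)^{\otimes p}[-p]\otimes\mathcal{E}_2)$ is identified with the left-hand side by a split filtration is false: by Proposition~\ref{hkrbabe} itself, each summand is $\bigoplus_q\mathcal{RH}om(\mathcal{V}^{\otimes q}\otimes\mathcal{E}_1,(\mathcal{V}^*)^{\otimes p}\otimes\mathcal{E}_2)[-p-q]$, which is much larger than the single piece $\mathcal{RH}om(\mathcal{E}_1,(\mathcal{V}^*)^{\otimes p}\otimes\mathcal{E}_2)[-p]$. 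The first arrow in the statement is only the unit map $\mathcal{RH}om_{\mathcal{O}_X}\to\sigma_*\mathcal{RH}om_{\mathcal{O}_S}$ and is far from an isomorphism; the entire content of the proposition is that the \emph{total} composite, i.e.\ the unit followed by postcomposition with $\Delta'_p\otimes\mathrm{id}_{\sigma^*\mathcal{E}_2}$, is an isomorphism, and this cannot be obtained by a termwise identification.

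Two honest ways to close the gap. Either deduce the statement from Proposition~\ref{hkrbabe}: under the canonical isomorphism $\mathcal{RH}om(\mathcal{V}^{\otimes p}[p]\otimes\mathcal{E}_1,\mathcal{E}_2)\simeq\mathcal{RH}om(\mathcal{E}_1,(\mathcal{V}^*)^{\otimes p}[-p]\otimes\mathcal{E}_2)$, precomposition with $\Delta_p\otimes\mathrm{id}_{\sigma^*\mathcal{E}_1}$ corresponds to postcomposition with $\Delta'_p\otimes\mathrm{id}_{\sigma^*\mathcal{E}_2}$ --- this is exactly the transpose relation you invoke, and it is the content of (the argument for) Lemma~\ref{bienpenible}, which uses Proposition~\ref{matisse}(i); once this is established, the dual map is an isomorphism because the original one is, and Yoneda compatibility transports as well. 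Or else run the explicit local computation with the Koszul-type resolution, as in Kashiwara's dual HKR construction developed in \cite{Grivaux-HKR}, which is the route the paper implicitly takes (the statement is the infinitesimal case of that construction, with the compatibility with Yoneda products again reduced to Proposition~\ref{matisse}). As written, your proposal asserts the conclusion of this comparison without a valid derivation, since the ``dualized resolution'' it rests on does not resolve the relevant object.
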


\begin{remark}
The object $\sigma_{*} \mathcal{RH}om_{\mathcal{O}_S} (\mathcal{O}_X, \mathcal{O}_X)$ is a ring object in $\mathrm{D}^{+}(X)$.
Propositions \ref{hkrbabe} and \ref{hkrbabe2} give two a priori different isomorphisms between this ring object and $\mathrm{T} \mathcal{V}^* [-1]$. In the next Lemma, we will provide an identity relating $\Delta_p$ and $\Delta^{'}_p$ that shows that these two isomorphisms are in fact the same. 
\end{remark}

\begin{lemma} \label{bienpenible}
Let $\mathcal{W}$ be any element in $\mathrm{D}^{\mathrm{b}}(X)$, let $\mathcal{W}^*=\mathcal{RH}om_{\mathcal{O}_X}(\mathcal{W}, \mathcal{O}_X)$ be the (na\"{i}ve) derived dual of $\mathcal{W}$, and let $\varphi$ be in $\mathrm{Hom}_{\mathrm{D}^{\mathrm{b}}(X)}(\mathcal{W}, \mathrm{T}^k \mathcal{V}^*[-k])$. Then the following diagram
\[
\xymatrix@C=50pt{
\mathcal{W}  \ar[rr]^-{\varphi} \ar[d]^{\mathrm{id}_{\mathcal{W}} \otimes \Delta_k} && \mathrm{T}^k \mathcal{V}^* [-k] \ar[d]^-{\Delta^{'}_k}\\
\mathcal{W} \otimes \mathrm{T}^k \mathcal{V} [k] \ar[r]^-{\varphi \otimes \mathrm{id}_{\mathrm{T}^k \mathcal{V} [k]}}& \mathrm{T}^k \mathcal{V}^* [-k] \otimes \mathrm{T}^k \mathcal{V} [k] \ar[r]& \mathcal{O}_X
}
\]
commutes in $\mathrm{D}^{\mathrm{b}}(X)$. In particular, the composition
\[
\mathcal{O}_X \xrightarrow{\mathrm{co-ev}} \mathcal{W}^* \otimes \mathcal{W} \xrightarrow{\mathrm{id}_{\mathcal{W}^*} \otimes (\Delta'_k \circ \varphi)} \mathcal{W}^*
\]
is $\varphi \circ \Delta_k$, where we implicitly use the isomorphism
\[
\mathrm{Hom}_{\mathrm{D}^{\mathrm{b}}(X)}(\mathcal{W}, \mathrm{T}^k \mathcal{V}^*[-k]) \simeq \mathrm{Hom}_{\mathrm{D}^{\mathrm{b}}(X)}(\mathrm{T}^k \mathcal{V}[k], \mathcal{W}^*).
\]
\end{lemma}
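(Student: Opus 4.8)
The plan is to first remove the auxiliary data $(\mathcal{W},\varphi)$ from the statement, reducing the commutativity of the square to a single ``internal'' identity between $\Delta_k$ and $\Delta'_k$, and then to prove that identity. Write $\mathcal{A}:=\mathrm{T}^k\mathcal{V}[k]$, so that $\mathrm{T}^k\mathcal{V}^*[-k]=\mathcal{A}^*$ and the unlabelled bottom arrow of the diagram is the duality pairing $\mathrm{ev}\colon\mathcal{A}^*\otimes\mathcal{A}\to\mathcal{O}_X$. The interchange law in the symmetric monoidal category $\mathrm{D}^{\mathrm{b}}(X)$ gives
\[
\mathrm{ev}\circ(\varphi\otimes\mathrm{id}_{\mathcal{A}})\circ(\mathrm{id}_{\mathcal{W}}\otimes\Delta_k)=\mathrm{ev}\circ(\mathrm{id}_{\mathcal{A}^*}\otimes\Delta_k)\circ\varphi\,,
\]
so the square commutes for every $\varphi$ if and only if it commutes in the universal case $\mathcal{W}=\mathrm{T}^k\mathcal{V}^*[-k]$, $\varphi=\mathrm{id}$, i.e. if and only if $\Delta'_k$ equals the transpose $(\Delta_k)^{\vee}:=\mathrm{ev}\circ(\mathrm{id}_{\mathcal{A}^*}\otimes\Delta_k)$ of $\Delta_k$ under the identification $\mathrm{T}^k\mathcal{V}^*[-k]\simeq(\mathrm{T}^k\mathcal{V}[k])^{*}$.

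It then remains to check the identity $\Delta'_k=(\Delta_k)^{\vee}$. For $k=1$ this is precisely the definition of the dual Atiyah morphism $\Delta'_1=\mathrm{ev}\circ(\mathrm{id}_{\mathcal{V}^*[-1]}\otimes\Delta_1)$. For general $k$ I would argue by induction, comparing the two parallel constructions: $\Delta_k$ iterates the residual Atiyah morphisms via the recursion $\Delta_k=-\,\Theta_{\mathrm{T}^{k-1}(\mathcal{V}[1])}\circ\Delta_{k-1}$, while $\Delta'_k$ iterates their duals. The point is that transposition is contravariantly functorial and carries the building block $\Theta_{\mathrm{T}^{k-1}(\mathcal{V}[1])}$ — which by Proposition \ref{matisse}(ii) acts on the leftmost tensor slot and is built from $\Theta_{\mathcal{O}_X}$, with $\Delta_1=-\,\Theta_{\mathcal{O}_X}$ — to the building block used to pass from $\Delta'_{k-1}$ to $\Delta'_k$, which acts on the rightmost slot and is built from $\Delta'_1=-\,\Theta_{\mathcal{O}_X}^{\vee}$. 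Naturality and multiplicativity of $\Theta$ (Proposition \ref{matisse}(i)--(ii)) together with the snake identity make this a routine diagram chase, with no homotopical content since all objects in sight are finite direct sums of shifted locally free sheaves.

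The ``in particular'' clause then comes for free: it is the commutative square read through the adjunction $\mathrm{Hom}_{\mathrm{D}^{\mathrm{b}}(X)}(\mathcal{W},\mathrm{T}^k\mathcal{V}^*[-k])\simeq\mathrm{Hom}_{\mathrm{D}^{\mathrm{b}}(X)}(\mathrm{T}^k\mathcal{V}[k],\mathcal{W}^*)$ furnished by the coevaluation and evaluation of $\mathcal{W}$; transporting $\varphi$ and the square along this adjunction turns the left vertical arrow $\mathrm{id}_{\mathcal{W}}\otimes\Delta_k$ into the coevaluation $\mathcal{O}_X\to\mathcal{W}^*\otimes\mathcal{W}$ followed by $\mathrm{id}_{\mathcal{W}^*}\otimes(-)$, and one reads off exactly $\varphi\circ\Delta_k=\bigl(\mathrm{id}_{\mathcal{W}^*}\otimes(\Delta'_k\circ\varphi)\bigr)\circ\mathrm{co-ev}$. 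The step I expect to be the main obstacle is the sign-and-reordering bookkeeping hidden in the identification $(\mathrm{T}^k\mathcal{V}[k])^{*}\simeq\mathrm{T}^k\mathcal{V}^*[-k]$: the pairing reverses the tensor factors and the shifts $[k]$ versus $[-k]$ introduce Koszul signs, and one must verify that these match the $-\Theta$ signs occurring in both recursions; a minor clerical point is keeping track of the passage between $\mathrm{D}^{\mathrm{b}}(S)$, where $\Delta_k$, $\Delta'_k$ and the $\Theta$'s originally live, and $\mathrm{D}^{\mathrm{b}}(X)$, where the lemma is phrased.
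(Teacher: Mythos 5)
Your proposal is correct and takes essentially the same route as the paper: reducing the square to the formal identity $\Delta'_k=\mathrm{ev}\circ(\mathrm{id}\otimes\Delta_k)$ and checking it via the naturality and multiplicativity of $\Theta$ (Proposition \ref{matisse}) applied to the parallel recursions, then deducing the ``in particular'' clause from the compatibility of the duality isomorphism $\mathrm{Hom}(\mathcal{A},\mathcal{B})\simeq\mathrm{Hom}(\mathcal{B}^*,\mathcal{A}^*)$ with evaluation and coevaluation, which is exactly the paper's two-step argument. You merely make explicit the reduction to the universal case $\varphi=\mathrm{id}$ and the induction on $k$ that the paper compresses into ``follows directly from Proposition \ref{matisse}(i)'' with details left to the reader.
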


\begin{proof}
The first point follows directly from Proposition \ref{matisse} (i). The second point follows from the fact that for any $\mathcal{A}$, $\mathcal{B}$ in $\mathrm{D}^{\mathrm{b}}(X)$, the isomorphism
\[
\mathrm{Hom}_{\mathrm{D}^{\mathrm{b}}(X)}(\mathcal{A}, \mathcal{B}) \simeq \mathrm{Hom}_{\mathrm{D}^{\mathrm{b}}(X)}(\mathcal{B}^*, \mathcal{A}^*).
\]
makes the diagram
\[
\xymatrix@C=40pt{
\mathcal{A} \ar[r]^-{\mathrm{co-ev}} \ar[dd] & \mathcal{B} \otimes \mathcal{B}^* \otimes \mathcal{A} \ar[d] \\
&  \mathcal{B} \otimes \mathcal{A}^* \otimes \mathcal{A} \ar[d]^-{\mathrm{ev}} \\
\mathcal{B} \ar[r] & \mathcal{B} 
}
\]
commute. Details are left to the reader.
\end{proof}

\subsubsection{Torsion and Atiyah class}
For any locally free sheaf $\mathcal{E}$ on $S$, let $\overline{\mathcal{E}}=j^* \mathcal{E}$. The exact sequence 
\[
0 \longrightarrow \mathcal{V} \otimes  \overline{\mathcal{E}} \longrightarrow \sigma_* \, \mathcal{E} \longrightarrow \overline{\mathcal{E}} \longrightarrow 0
\]
defines a morphism $
\tau_{\mathcal{E}} \colon \overline{\mathcal{E}} \longrightarrow \mathcal{V} \otimes \overline{\mathcal{E}}\,[1]
$
in the derived category $\mathrm{D}^{\mathrm{b}}(X)$, we call it the torsion of $\mathcal{E}$. It is easy to see that the vector bundle $\mathcal{E}$ is entirely determined by the couple $(\overline{\mathcal{E}}, \tau_{\mathcal{E}})$. In particular, $\tau_{\mathcal{E}}$ vanishes if and only if $\mathcal{E}$ is isomorphic to $\sigma^* \overline{\mathcal{E}}$

\begin{proposition} \label{brique1}
For any locally free sheaf $\mathcal{E}$ on $S$, the components of the morphism
\[
\overline{\mathcal{E}} \xlongrightarrow{\mathbb{L}j^* \, \mathrm{at}_S(\mathcal{E})} \mathbb{L} j^*(\Omega^1_S \otimes \mathcal{E} [1]) \longrightarrow  j^*(\Omega^1_S \otimes \mathcal{E} [1])  \simeq \mathcal{V} \otimes \overline{\mathcal{E}}\,[1] \oplus \Omega^1_X \otimes \overline{\mathcal{E}} \,[1]
\]
are $\tau_{\mathcal{E}}$ and $\mathrm{at}_X(\overline{\mathcal{E}})$.
\end{proposition}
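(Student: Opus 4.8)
The plan is to read off the two components of the morphism in the statement through the decomposition $j^*\Omega^1_S\simeq\mathcal V\oplus\Omega^1_X$, which I first recall: it is the conormal exact sequence $0\to\mathcal V\to j^*\Omega^1_S\to\Omega^1_X\to 0$ of the closed immersion $j$ --- whose ideal is $\mathcal V$ with $\mathcal V^2=0$, so that $\mathcal I/\mathcal I^2=\mathcal V$ --- and it is canonically split because $\sigma$ retracts $j$: the $\Omega^1_X$-summand is $j^*\sigma^*\Omega^1_X\hookrightarrow j^*\Omega^1_S$, while the conormal inclusion $\mathcal V=\mathcal I/\mathcal I^2\hookrightarrow j^*\Omega^1_S$ sends a local section $v$ to $j^*(d_S v)$. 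The composite of the statement is then an element of $\mathrm{Ext}^1_{\mathcal O_X}\bigl(\overline{\mathcal E},(\mathcal V\oplus\Omega^1_X)\otimes\overline{\mathcal E}\bigr)=H^1\bigl(X,(\mathcal V\oplus\Omega^1_X)\otimes\mathcal{E}nd(\overline{\mathcal E})\bigr)$, and I shall compute it via \v{C}ech cocycles on a trivialising affine open cover $\{U_i\}$ of the common underlying space.

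Over such a cover, fix frames $e^{(i)}$ of $\mathcal E$ with transition matrices $g_{ij}\in GL_n(\mathcal O_S(U_{ij}))$, so that $\mathrm{at}_S(\mathcal E)$ is represented by the classical \v{C}ech $1$-cocycle $\{g_{ij}^{-1}d_S g_{ij}\}$ valued in $\Omega^1_S\otimes\mathcal{E}nd_{\mathcal O_S}(\mathcal E)$; pulling back along $j$ amounts to restricting this cocycle, yielding $\{j^*(g_{ij}^{-1}d_S g_{ij})\}$ valued in $j^*\Omega^1_S\otimes\mathcal{E}nd_{\mathcal O_X}(\overline{\mathcal E})$. Writing $g_{ij}=\bar g_{ij}+k_{ij}$, where $\bar g_{ij}\in GL_n(\mathcal O_X(U_{ij}))$ is the component along $\mathcal O_X$ in $\mathcal O_S=\mathcal O_X\oplus\mathcal V$ (re-lifted to $\mathcal O_S$ via $\sigma$) and $k_{ij}$ has entries in $\mathcal V$, one has $g_{ij}^{-1}=(\mathrm{Id}-\bar g_{ij}^{-1}k_{ij})\bar g_{ij}^{-1}$ because $\mathcal V^2=0$; expanding $g_{ij}^{-1}d_S g_{ij}$ and applying $j^*$, every monomial containing an undifferentiated factor $k_{ij}$ dies (its entries lie in $\mathcal V=\ker(\mathcal O_S\to\mathcal O_X)$), whereas $j^*(d_S\bar g_{ij})=d_X\bar g_{ij}$ sits in the $\Omega^1_X$-summand and $j^*(d_S k_{ij})$ is the matrix $k_{ij}$ itself, now sitting in the $\mathcal V$-summand. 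Hence $j^*(g_{ij}^{-1}d_S g_{ij})=\bigl(\bar g_{ij}^{-1}k_{ij},\ \bar g_{ij}^{-1}d_X\bar g_{ij}\bigr)$, and its $\Omega^1_X$-component $\{\bar g_{ij}^{-1}d_X\bar g_{ij}\}$ is exactly the Atiyah cocycle of the bundle $\overline{\mathcal E}$ (whose transition matrices are the $\bar g_{ij}$), so it equals $\mathrm{at}_X(\overline{\mathcal E})$.

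It remains to identify the $\mathcal V$-component $\{\bar g_{ij}^{-1}k_{ij}\}$ with $\tau_{\mathcal E}$, and this is the only genuinely delicate step. In the frame $e^{(i)}$ the $\mathcal O_X$-module $\sigma_*\mathcal E|_{U_i}$ is $\overline{\mathcal E}|_{U_i}\oplus(\mathcal V\otimes\overline{\mathcal E})|_{U_i}$, and --- again using $\mathcal V^2=0$ --- its transition matrices are block triangular with diagonal blocks $\bar g_{ij}$ and off-diagonal block $k_{ij}$; therefore $\{\bar g_{ij}^{-1}k_{ij}\}$ is precisely the \v{C}ech cocycle representing the class of the extension $0\to\mathcal V\otimes\overline{\mathcal E}\to\sigma_*\mathcal E\to\overline{\mathcal E}\to 0$, which is $\tau_{\mathcal E}$ by definition. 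The point to be careful about is that $\tau_{\mathcal E}$ is an extension class of $\mathcal O_X$-modules, whereas the superficially analogous $\Theta$-morphism of Proposition \ref{matisse}(iii) is an extension class of $\mathcal O_S$-modules --- the two live in different groups, and it is the conormal sequence that mediates between them; equivalently, the $\mathcal V$-component of $\mathbb L j^*\mathrm{at}_S(\mathcal E)$ is $j^*$ of the \emph{relative} Atiyah class $\mathrm{at}_{S/X}(\mathcal E)\in\mathrm{Ext}^1_{\mathcal O_S}(\mathcal E,\Omega^1_{S/X}\otimes\mathcal E)$, and the assertion is the identity $j^*\mathrm{at}_{S/X}(\mathcal E)=\tau_{\mathcal E}$. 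A reader preferring to avoid frames may instead deduce the $\Omega^1_X$-component from the functoriality of the Atiyah class along the closed immersion $j$ together with the cotangent complex triangle of $X\hookrightarrow S$, which identifies $H^0(\mathbb L j^*\mathbb L_S)\to\Omega^1_X$ with the second arrow of the conormal sequence, and then extract the $\mathcal V$-component either by the matrix computation above or by the deformation-theoretic observation that $\mathcal E$ is determined by $(\overline{\mathcal E},\tau_{\mathcal E})$, with both sides of $j^*\mathrm{at}_{S/X}(\mathcal E)=\tau_{\mathcal E}$ vanishing when $\mathcal E\simeq\sigma^*\overline{\mathcal E}$ (where $d_{S/X}\otimes\mathrm{id}$ is a flat relative connection). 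In every approach the substance is the elementary identity of the second paragraph, the remainder being routine bookkeeping with \v{C}ech resolutions and sign conventions.
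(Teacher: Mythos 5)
Your argument is correct, but it takes a different route from the paper. You compute everything in \v{C}ech terms: trivialise $\mathcal{E}$, split the transition matrices as $g_{ij}=\bar g_{ij}+k_{ij}$ along $\mathcal{O}_S=\mathcal{O}_X\oplus\mathcal{V}$, and use $\mathcal{V}^2=0$ to see that $j^*(g_{ij}^{-1}d_Sg_{ij})$ has components $\bar g_{ij}^{-1}d_X\bar g_{ij}$ (the Atiyah cocycle of $\overline{\mathcal{E}}$) and $\bar g_{ij}^{-1}k_{ij}$, which you then recognise as the extension cocycle of $0\to\mathcal{V}\otimes\overline{\mathcal{E}}\to\sigma_*\mathcal{E}\to\overline{\mathcal{E}}\to0$, i.e.\ $\tau_{\mathcal{E}}$. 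The paper instead argues entirely with first jet bundles (sheaves of principal parts): it builds a map of extensions $j^*\mathrm{P}^1_S(\mathcal{E})\to\mathrm{P}^1_X(\overline{\mathcal{E}})$, which immediately gives the $\Omega^1_X$-component as $\mathrm{at}_X(\overline{\mathcal{E}})$, and then identifies the quotient $j^*\mathrm{P}^1_S(\mathcal{E})/(\Omega^1_X\otimes\overline{\mathcal{E}})$ with $\sigma_*\mathcal{E}$, so the $\mathcal{V}$-component is literally the defining extension of $\tau_{\mathcal{E}}$, with no choices made. The trade-off is the usual one: your computation is elementary and makes the mechanism visible (the off-diagonal block $k_{ij}$ simultaneously encodes the torsion and the $\mathcal{V}$-part of the restricted Atiyah class), at the cost of fixing frames and living with sign/ordering conventions when translating the Atiyah class and the extension class into cocycles; the paper's proof stays at the level of short exact sequences, so $\tau_{\mathcal{E}}$ appears directly as an extension class, exactly as it is defined. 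Two small remarks: your closing deformation-theoretic ``alternative'' (checking vanishing when $\mathcal{E}\simeq\sigma^*\overline{\mathcal{E}}$) would not by itself prove the identity, so it should only be read as a consistency check, and the identification of $\{\bar g_{ij}^{-1}k_{ij}\}$ with the class of $\sigma_*\mathcal{E}$ deserves the one line you essentially give (the local splittings in the frames $e^{(i)}$ differ by exactly this cochain); with that, your proof is complete.
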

\begin{proof}
There is a natural morphism $j^* \mathrm{P}^1_S(\mathcal{E}) \rightarrow \mathrm{P}^1_X(\overline{\mathcal{E}})$ making the diagram
\[
\xymatrix{
0 \ar[r] & j^* \Omega^1_S \otimes \overline{\mathcal{E}} \ar[r] \ar[d] & j^*\mathrm{P}^1_S(\mathcal{E})  \ar[d] \ar[r] & \overline{\mathcal{E}} \ar[d] \ar[r] & 0 \\
0 \ar[r] & \Omega^1_X \otimes \overline{\mathcal{E}} \ar[r]  & \mathrm{P}^1_X(\overline{\mathcal{E}}) \ar[r] & \overline{\mathcal{E}} \ar[r] & 0
}
\]
commutative. On the other hand, $j^* \mathrm{P}^1_S(\mathcal{E})/ (\Omega^1_X \otimes \overline{\mathcal{E}})$ is the cokernel of the map
\[
\mathcal{V} \otimes \overline{\mathcal{E}} \longrightarrow (\mathcal{V} \otimes \overline{\mathcal{E}}) \oplus \mathcal{E}
\]
which is the identity of the first factor and the natural inclusion on the second one. Hence it is isomorphic to $\sigma_* \mathcal{E}$ (by taking the difference of the two factors). This gives a commutative diagram
\[
\xymatrix{
0 \ar[r] & j^* \Omega^1_S \otimes \overline{\mathcal{E}} \ar[r] \ar[d] & j^*\mathrm{P}^1_S(\mathcal{E}) \ar[d] \ar[r] & \overline{\mathcal{E}} \ar[d] \ar[r] & 0 \\
0 \ar[r] & \mathcal{V} \otimes\overline{\mathcal{E}} \ar[r] &  \sigma_* \,\mathcal{E} \ar[r] & \overline{\mathcal{E}} \ar[r] & 0
}
\]
and the result follows.
\end{proof}

\subsection{Sheaves on a second order thickening}

\subsubsection{Setting and cohomological invariants}
As before, let us fix a pair $(X, \mathcal{V})$ where $\mathcal{V}$ is a locally free sheaf on $X$, and let $S$ be the split first order thickening of $X$ by $V$. We are interested by ring spaces $W$ which underlying topological space $X$ satisfying the following conditions:
\begin{equation} \label{wing}
\begin{cases}
\mathcal{O}_W \,\,\textrm{is locally isomorphic to}\,\, \mathcal{O}_X \oplus \mathcal{V} \oplus \mathrm{S}^2\mathcal{V}.\\
\textrm{There exists a map} \,\, S \rightarrow W \,\, \textrm{which is locally the quotient by} \,\, \mathrm{S}^2 \mathcal{V}. 
\end{cases}
\end{equation}
Let $k \colon X \rightarrow W$ be the composite map, and let us denote by $\langle \mathcal{V} \rangle$ the ideal sheaf of $X$ in $W$, which is a sheaf of $\mathcal{O}_S$-modules. 
We can attach to $W$ two cohomology classes:
\par \medskip
\begin{enumerate}
\item[--] The class $\alpha$ in $\mathrm{Ext}^1(\mathcal{V}, \mathrm{S}^2 \mathcal{V})$ is the extension class of the exact sequence \[
0 \rightarrow \mathrm{S}^2 \mathcal{V} \rightarrow \sigma_* \langle \mathcal{V} \rangle \rightarrow \mathcal{V} \rightarrow 0
\]
\item[--] The class $\beta$ in $\mathrm{Ext}^1(\Omega^1_X, \mathrm{S}^2 \mathcal{V})$ is the obstruction of lifting $\sigma$ to $W$. To see how this class is defined, it suffices to remark that the sheaf of retractions of $k$ inducing $\sigma$ on $S$ is an affine bundle directed by the vector bundle $\mathcal{D}er(\mathcal{O}_X,  \mathrm{S}^2 \mathcal{V})$, which is $\mathcal{H}om(\Omega^1_X,  \mathrm{S}^2 \mathcal{V})$.
\end{enumerate}
\begin{lemma} \label{bricfroid}
The map $W \rightarrow (\alpha, \beta)$ is a bijection between isomorphism classes of ring spaces $W$ satisfying \eqref{wing} and $\mathrm{Ext}^1(\mathcal{V}, \mathrm{S}^2 \mathcal{V}) \oplus \mathrm{Ext}^1(\Omega^1_X, \mathrm{S}^2 \mathcal{V})$.
\end{lemma}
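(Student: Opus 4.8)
Here is the plan. I would reinterpret $W$ as a locally split square--zero extension of the $\mathbf k$--algebra $\mathcal O_S$ by the $\mathcal O_S$--module $\mathrm S^2\mathcal V$, classify such extensions by a first cohomology group via the usual \v Cech argument, and then match the resulting class against $\alpha$ and $\beta$. The map $W\mapsto(\alpha,\beta)$ is clearly constant on isomorphism classes since $\alpha$ and $\beta$ are defined intrinsically from $W$, so it is the bijectivity that has to be established.

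First I would record the elementary ring--theoretic facts. If $W$ satisfies \eqref{wing}, then, writing $\langle\mathcal V\rangle=\ker(\mathcal O_W\to\mathcal O_X)$, one has $\langle\mathcal V\rangle^3=0$, $\langle\mathcal V\rangle^2\simeq\mathrm S^2\mathcal V$, and $\langle\mathcal V\rangle^2$ is annihilated by $\langle\mathcal V\rangle$; hence $\mathrm S^2\mathcal V$ is an $\mathcal O_S$--module and $\mathcal O_W\to\mathcal O_S$ exhibits $W$ as a square--zero extension of $\mathcal O_S$ by $\mathrm S^2\mathcal V$ which, by \eqref{wing}, admits an algebra section locally on $X$. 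I would then invoke the standard classification of such extensions: choosing a cover $\{U_i\}$ together with $\mathbf k$--algebra sections $\theta_i\colon\mathcal O_S|_{U_i}\to\mathcal O_W|_{U_i}$ of $\mathcal O_W\to\mathcal O_S$, the differences $D_{ij}=\theta_i-\theta_j$ are $\mathbf k$--linear derivations $\mathcal O_S\to\mathrm S^2\mathcal V$ on $U_{ij}$ and form a $1$--cocycle, whose class $[W]\in H^1\bigl(X,\mathcal{D}er_{\mathbf k}(\mathcal O_S,\mathrm S^2\mathcal V)\bigr)$ is independent of all choices; conversely every class arises by gluing, and two extensions with equal class are isomorphic (adjust the local sections by the coboundary). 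This part I would only sketch, as it is textbook.

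Next I would compute the derivation sheaf using the splitting $\mathcal O_S=\mathcal O_X\oplus\mathcal V$. Since $\mathcal V$ is a square--zero ideal and $\mathcal V$ kills $\mathrm S^2\mathcal V$, a derivation $D\colon\mathcal O_S\to\mathrm S^2\mathcal V$ restricts on $\mathcal O_X$ to a derivation, i.e.\ to an element of $\mathcal{H}om_{\mathcal O_X}(\Omega^1_X,\mathrm S^2\mathcal V)$, and restricts on $\mathcal V$ to an $\mathcal O_X$--linear map (because $D(fv)=D(f)\,v+f\,D(v)=f\,D(v)$, the first term vanishing as $D(f)\in\mathrm S^2\mathcal V$ is killed by $v$); one checks at once that $(\delta,\phi)\mapsto\bigl(f+v\mapsto\delta(df)+\phi(v)\bigr)$ is an inverse to $D\mapsto(D|_{\mathcal O_X},D|_{\mathcal V})$. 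This yields a canonical (once the splitting of $S$ is fixed) decomposition
\[
\mathcal{D}er_{\mathbf k}(\mathcal O_S,\mathrm S^2\mathcal V)\;\simeq\;\mathcal{H}om_{\mathcal O_X}(\Omega^1_X,\mathrm S^2\mathcal V)\ \oplus\ \mathcal{H}om_{\mathcal O_X}(\mathcal V,\mathrm S^2\mathcal V),
\]
and, as $\Omega^1_X$ and $\mathcal V$ are locally free ($X$ being smooth), taking $H^1$ identifies the classifying group with $\mathrm{Ext}^1(\Omega^1_X,\mathrm S^2\mathcal V)\oplus\mathrm{Ext}^1(\mathcal V,\mathrm S^2\mathcal V)$.

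Finally I would identify the two components of $[W]$. With the $\theta_i$ as above, $\theta_i|_{\mathcal V}\colon\mathcal V|_{U_i}\to\sigma_*\langle\mathcal V\rangle|_{U_i}$ is a local $\mathcal O_X$--linear splitting of $0\to\mathrm S^2\mathcal V\to\sigma_*\langle\mathcal V\rangle\to\mathcal V\to0$, so the $\mathcal{H}om(\mathcal V,\mathrm S^2\mathcal V)$--component $(D_{ij}|_{\mathcal V})$ of the cocycle is exactly the \v Cech cocycle of that extension and thus represents $\alpha$; likewise $\theta_i$ composed with the algebra map $\mathcal O_X\to\mathcal O_S$ attached to $\sigma$ is a local algebra lift of $\sigma$ to $W$, so the $\mathcal{H}om(\Omega^1_X,\mathrm S^2\mathcal V)$--component $(D_{ij}|_{\mathcal O_X})$ is the cocycle of the torsor of such lifts and represents $\beta$. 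Combining the three steps gives the asserted bijection. The only genuinely non--formal point — the main thing to be careful about — is this last matching: one must keep track of the fact that the $\mathcal O_X$--module structures on $\langle\mathcal V\rangle$ and on $\mathrm S^2\mathcal V$ are those induced by the retraction $\sigma$, and check that $\theta_i|_{\mathcal V}$ is $\mathcal O_X$--linear for them (this uses that $\theta_i(f)$ is a lift of $f\in\mathcal O_S$ and that $\mathrm S^2\mathcal V\cdot\langle\mathcal V\rangle=0$). Everything else is the standard cohomological description of square--zero extensions.
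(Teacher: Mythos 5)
Your first step contains a genuine gap: the surjection $\mathcal{O}_W \to \mathcal{O}_S$ is indeed a square-zero extension by $\mathrm{S}^2\mathcal{V}$, but it does \emph{not} admit $\mathbf{k}$-algebra sections, not even locally, so the sections $\theta_i$ on which your whole \v{C}ech argument rests do not exist. The point is that in the local model of \eqref{wing} the ring structure on $\mathcal{O}_X\oplus\mathcal{V}\oplus\mathrm{S}^2\mathcal{V}$ is that of the truncated symmetric algebra (this is what the second infinitesimal neighbourhood looks like locally: in the application one has $\langle\mathcal{V}\rangle=\mathcal{I}_X/\mathcal{I}_X^3$ and $\langle\mathcal{V}\rangle^2=\mathcal{I}_X^2/\mathcal{I}_X^3\simeq\mathrm{S}^2\mathcal{V}\neq 0$). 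Hence if $v,v'$ are local sections of $\mathcal{V}\subset\mathcal{O}_S$ and $\tilde v,\tilde v'$ are \emph{any} lifts to $\mathcal{O}_W$, then $\tilde v\,\tilde v'$ equals the image of $v\otimes v'$ in $\mathrm{S}^2\mathcal{V}=\langle\mathcal{V}\rangle^2$ (all correction terms lie in $\langle\mathcal{V}\rangle^3=0$), which is nonzero in general, whereas $vv'=0$ in $\mathcal{O}_S$; so no multiplicative section can exist as soon as $\mathcal{V}\neq 0$. In other words, condition \eqref{wing} does not say that the algebra extension $\mathcal{O}_W\to\mathcal{O}_S$ is locally split; it says that $W$ is locally isomorphic, compatibly with the projection to $\mathcal{O}_S$, to the fixed \emph{non-split} model $\mathrm{S}(\mathcal{V})/\mathrm{S}^{\geq 3}\mathcal{V}$. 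Your third paragraph inherits the problem, since it uses $\theta_i|_{\mathcal{V}}$ and $\theta_i|_{\mathcal{O}_X}$.

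The repair is essentially the paper's proof: classify the objects of \eqref{wing} by the \v{C}ech $H^1$ of the sheaf of automorphisms of the standard model inducing the identity modulo $\mathrm{S}^2\mathcal{V}$, using local trivializations $\theta_i\colon\mathcal{O}_W|_{U_i}\xrightarrow{\sim}(\mathcal{O}_X\oplus\mathcal{V}\oplus\mathrm{S}^2\mathcal{V})|_{U_i}$ over $\mathcal{O}_S$ and the transition automorphisms $\theta_i\circ\theta_j^{-1}=\mathrm{id}+\delta_{ij}$. Such automorphisms are exactly $\mathrm{id}+\delta$ with $\delta$ a derivation of $\mathcal{O}_S$ (equivalently of $\mathcal{O}_W$, since these automatically kill $\langle\mathcal{V}\rangle^2$) with values in $\mathrm{S}^2\mathcal{V}$, so your second paragraph --- the identification $\mathcal{D}er_{\mathbf{k}}(\mathcal{O}_S,\mathrm{S}^2\mathcal{V})\simeq\mathcal{H}om(\Omega^1_X,\mathrm{S}^2\mathcal{V})\oplus\mathcal{H}om(\mathcal{V},\mathrm{S}^2\mathcal{V})$ --- is correct and coincides with the paper's $(D,\varphi)$ matrix computation, and the classifying group is the right one. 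The matching with $(\alpha,\beta)$ then goes through as you intended, provided you extract the data from the trivializations rather than from nonexistent sections: $\theta_i^{-1}|_{\mathcal{O}_X}$ is a local retraction of $k$ extending $\sigma$, so the cocycle $(D_{ij})$ represents $\beta$, and $\theta_i^{-1}|_{\mathcal{V}}$ is a local $\mathcal{O}_X$-linear splitting of $0\to\mathrm{S}^2\mathcal{V}\to\sigma_*\langle\mathcal{V}\rangle\to\mathcal{V}\to 0$, so $(\varphi_{ij})$ represents $\alpha$.
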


\begin{proof}
The subsheaf of the sheaf of automorphisms of the ringed space $\{X, \mathcal{O}_X \oplus \mathcal{V} \oplus \mathrm{S}^2 \mathcal{V}\} $ that induce the indentity morphism after taking the quotient by the square zero ideal $\mathrm{S}^2\mathcal{V}$ is isomorphic to 
$
\mathcal{D}er(\mathcal{O}_X,  \mathrm{S}^2 \mathcal{V}) \oplus \mathcal{H}om(\mathcal{V}, \mathrm{S}^2 \mathcal{V}),
$
a couple $(D, \varphi)$ corresponding to the automorphism given by the $3 \times 3$ matrix
\[
\begin{pmatrix}
\mathrm{id} & 0 & 0 \\ 
0 & \mathrm{id} & 0 \\ 
D & \varphi & \mathrm{id}
\end{pmatrix}.
\] 
This gives the required result.
\end{proof}

\subsubsection{The second order HKR class}
Let $\mathcal{E}$ be a locally free sheaf on $X$. The functor that associates to any open set of $U$ the set of locally $\mathcal{O}_W$-free extensions of $\sigma^* \mathcal{E}$ to $W$ is an abelian gerbe, whose automorphism sheaf is $\mathcal{H}om(\mathcal{E}, \mathrm{S}^2 \mathcal{V} \otimes \mathcal{E})$. Hence this gerbe is classified by a cohomology class in $\mathrm{Ext}^2(\mathcal{E}, \mathrm{S}^2 \mathcal{V} \otimes \mathcal{E})$. 
\begin{definition}
For any locally free sheaf $\mathcal{E}$ on $X$, the class of the gerbe of locally free extensions of $\sigma^* \mathcal{E}$ on $W$ is called the second order HKR class of $\mathcal{E}$, and is denoted by $\gamma_{\sigma}(\mathcal{E})$.
\end{definition}
\begin{remark}
The terminology is justified as follows: in \cite{Arinkin-Caldararu}, the authors introduce the HKR class of a vector bundle on $X$ in the case $S$ is not globally split, it mesures the obstruction to lift the bundle from $X$ to $S$. Here we are defining the same kind of obstruction classes when $S$ is trivial, but $W$ is not.
\end{remark}
As in \cite{Arinkin-Caldararu}, the class $\gamma_{\sigma}(\mathcal{E})$ can be computed explicitly: 
\begin{proposition} \label{marre}
For any locally free sheaf $\mathcal{E}$ on $X$, the class $\gamma_{\sigma}(\mathcal{E})$ is obtained (up to a nonzero scalar) as the composition
\[
\mathcal{E} \xrightarrow{\mathrm{at}_X(\mathcal{E})} \Omega^1_X \otimes \mathcal{E} [1] \xrightarrow{\beta \,\otimes \, \mathrm{id}_{\mathcal{E}}} \mathrm{S}^2 \mathcal{V} \otimes \mathcal{E}[2]
\]
\end{proposition}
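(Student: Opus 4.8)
The plan is to follow the explicit computation of the HKR class in \cite{Arinkin-Caldararu}. By construction $\gamma_\sigma(\mathcal{E})$ is the obstruction to extending the locally free sheaf $\sigma^*\mathcal{E}$ from $S$ to $W$ along the closed immersion $S\hookrightarrow W$, which is square-zero with ideal $\mathrm{S}^2\mathcal{V}$. By deformation theory this obstruction is the composition of the Atiyah morphism of $\sigma^*\mathcal{E}$ with the Kodaira--Spencer class of the extension $S\hookrightarrow W$, tensored with $\mathrm{id}_{\sigma^*\mathcal{E}}$. The key observation is that, being a pullback, $\sigma^*\mathcal{E}$ carries a canonical relative connection along $\sigma$, so its relative Atiyah class vanishes and $\mathrm{at}_S(\sigma^*\mathcal{E})$ factors, through the transitivity map $\sigma^*\Omega^1_X\to L_S$ (the cotangent complex of $S$), as the image of $\sigma^*\mathrm{at}_X(\mathcal{E})$; this is precisely the content of Proposition \ref{brique1}, where the torsion (``normal'') component of the Atiyah class of a locally free sheaf on $S$ is identified and seen to vanish for a pullback (Proposition \ref{matisse} supplying the underlying properties of the residual Atiyah morphisms). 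Consequently the obstruction only involves the composite of $\sigma^*\mathrm{at}_X(\mathcal{E})$ with the restriction $\sigma^*\Omega^1_X\to L_S\xrightarrow{\kappa}\mathrm{S}^2\mathcal{V}[1]$ of the Kodaira--Spencer class, and in particular it does not see the part of the second order geometry encoded by $\alpha$.

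It then remains to transport this to $X$. Since $\mathrm{S}^2\mathcal{V}$ is annihilated by $\mathcal{V}$ in $\mathcal{O}_S$, one has $\mathrm{S}^2\mathcal{V}\otimes_{\mathcal{O}_S}\sigma^*\mathcal{E}\simeq j_*(\mathrm{S}^2\mathcal{V}\otimes_{\mathcal{O}_X}\mathcal{E})$, so the adjunction between $\mathbb{L}j^*$ and $j_*$ together with $j^*\sigma^*=\mathrm{id}$ identifies $\mathrm{Ext}^2_{\mathcal{O}_S}(\sigma^*\mathcal{E},\mathrm{S}^2\mathcal{V}\otimes\sigma^*\mathcal{E})$, where the obstruction lives, with $\mathrm{Ext}^2_{\mathcal{O}_X}(\mathcal{E},\mathrm{S}^2\mathcal{V}\otimes\mathcal{E})$, where $\gamma_\sigma(\mathcal{E})$ lives according to the description of the automorphism sheaf of the gerbe. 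Under this identification $\mathbb{L}j^*\big(\sigma^*\mathrm{at}_X(\mathcal{E})\big)$ becomes $\mathrm{at}_X(\mathcal{E})$, and the restricted Kodaira--Spencer class $\sigma^*\Omega^1_X\to\mathrm{S}^2\mathcal{V}[1]$ becomes, upon unwinding the standard identification of the obstruction to lifting $\sigma$ to $W$ with a Kodaira--Spencer class, precisely $\beta$ (the latter being the class of the affine bundle of retractions of $k$ directed by $\mathcal{H}om(\Omega^1_X,\mathrm{S}^2\mathcal{V})$). Assembling everything gives $\gamma_\sigma(\mathcal{E})=(\beta\otimes\mathrm{id}_\mathcal{E})\circ\mathrm{at}_X(\mathcal{E})$, up to the universal nonzero scalar fixed by the normalizations. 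An equivalent, more hands-on route is a \v{C}ech computation: choose an open cover of $X$ trivializing $\mathcal{E}$ over which $W$ admits retractions $r_i$ inducing $\sigma$ on $S$ (possible by \eqref{wing}) and local connections $\nabla_i$ on $\mathcal{E}$; the local extensions $r_i^*\mathcal{E}$ of $\sigma^*\mathcal{E}$, glued by comparison isomorphisms built from the derivations $r_j-r_i$ and the $1$-forms $\nabla_j-\nabla_i$, produce a $2$-cocycle visibly equal (up to scalar) to the cup product of the \v{C}ech representatives of $\beta$ and $\mathrm{at}_X(\mathcal{E})$.

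The main obstacle is two-fold. First, making the deformation-theoretic inputs precise despite the fact that the base $S$ of the extension problem is singular: one is really working with cotangent complexes, and Propositions \ref{matisse} and \ref{brique1} are exactly what is needed to control $\mathbb{L}j^*$ of the sheaves on $S$ and to locate the vanishing of the normal component, which is what makes $\alpha$ and the higher derived terms drop out so that $\gamma_\sigma(\mathcal{E})$ depends only on $\beta$. Second, pinning down the universal nonzero scalar through a careful comparison of the two conormal exact sequences involved. In the \v{C}ech incarnation the same difficulties reappear as the explicit construction of the comparison isomorphisms and the verification that their coboundary is exactly the cup product; everything else is a routine diagram chase.
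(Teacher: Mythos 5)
Your proposal is correct, and it in fact contains two arguments. The ``more hands-on route'' you sketch at the end is precisely the paper's proof of Proposition \ref{marre}: cover $X$ by opens over which $k\colon X\to W$ admits retractions $\beta_i$ inducing $\sigma$ and $\mathcal{E}$ admits connections $\nabla_i$, glue the local extensions $\beta_i^*\mathcal{E}$ by comparison isomorphisms built from $D_{ij}=\beta_i-\beta_j$ and $\nabla_i$, and check that the resulting \u{C}ech $2$-cocycle is, modulo a coboundary, a nonzero multiple of the cup product of the cocycles $(\nabla_i-\nabla_j)$ and $(D_{ij}\otimes\mathrm{id}_{\mathcal{E}})$ representing $\mathrm{at}_X(\mathcal{E})$ and $\beta\otimes\mathrm{id}_{\mathcal{E}}$. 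Your primary route is genuinely different from the paper's: you invoke the general deformation-theoretic formula (Illusie; Huybrechts--Thomas) expressing the obstruction to lifting a locally free sheaf along the square-zero extension $S\hookrightarrow W$ with ideal $\mathrm{S}^2\mathcal{V}$ as the Kodaira--Spencer class of the extension composed with the cotangent-complex Atiyah class, use functoriality of Atiyah classes (equivalently, the canonical relative connection on $\sigma^*\mathcal{E}$) to reduce to the component along $\mathbb{L}\sigma^*\Omega^1_X$, identify the restricted Kodaira--Spencer class with $\beta$, and transport to $X$ via $\mathbb{L}j^*\dashv j_*$. This is sound, and since the statement is only asserted up to a nonzero scalar the normalization issues you flag are harmless; what it buys is conceptual clarity (one sees immediately that $\alpha$ plays no role) at the price of importing a nontrivial obstruction-theory theorem that the paper deliberately avoids, advertising its argument as more down to earth than \cite[Prop.~2.11]{Arinkin-Caldararu}. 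Two small caveats on your abstract route: Proposition \ref{brique1} concerns the restriction to $X$ of the Atiyah class of a sheaf on $S$ (torsion plus $\mathrm{at}_X$), so it is not quite the statement you need there --- plain functoriality of the Atiyah class along $\sigma$, at the level of cotangent complexes, is; and the identification of the restricted Kodaira--Spencer class with the torsor class $\beta$ of retractions of $k$ lifting $\sigma$ deserves a precise reference or a short proof, since $S$ is non-reduced and one is genuinely using the standard obstruction statement for extending the morphism $\sigma$ over a square-zero thickening.
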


\begin{proof}
For the first HKR class, this is \cite[Prop. 2.11]{Arinkin-Caldararu}. We present an alternative and somehow more down to earth proof. Let us assume that the map $X \rightarrow W$ admits two retractions $\beta_1$ and $\beta_2$, and let $D=\beta_1-\beta_2$ be the corresponding element in $\mathcal{H}om(\Omega^1_X, \mathcal{V})$. Assume also that $\mathcal{E}$ admits a regular connexion $\nabla$ on $X$. We claim that the map $\Xi \colon \beta_1^* \mathcal{E} \rightarrow \beta_2^* \mathcal{E}$ defined by
\[
\Xi(f \otimes s)= f \otimes s + f (D \otimes \mathrm{id}_{\mathcal{E}})(\nabla s)
\]
is a well-defined isomorphism. Since $\mathrm{Hom}_{\mathcal{O}_W}(\beta_1^* \mathcal{E}, \beta_2^* \mathcal{E}) \simeq \mathrm{Hom}_{\mathcal{O}_X}(\mathcal{E}, \beta_{1*} \beta_2^* \mathcal{E})$, it suffices to show that
$
\Xi(1 \otimes gs)=\beta_1^*(g)\, .\, \Xi(s)
$
for any section $g$ of $\mathcal{O}_X$. We compute:
\begin{align*}
\Xi(1 \otimes gs)&=1 \otimes gs+(D \otimes \mathrm{id}_{\mathcal{E}})(g \nabla s+dg \otimes s) \\
&= \beta_2(g) \otimes s + g (D \otimes \mathrm{id}_{\mathcal{E}})(\nabla s)+D(dg)\otimes s\\
&= \beta_1(g)\otimes s + g (D \otimes \mathrm{id}_{\mathcal{E}})(\nabla s)+ D(dg) \otimes s\\
&=\beta_1(g) . (s+(D \otimes \mathrm{id}_{\mathcal{E}})(\nabla s))\\
&=\beta_1^*(g)\, .\, \Xi(s).
\end{align*}

Let us now fix a covering $(U_i)_{i \in I}$ of $X$, and assume that on each $U_i$,
there is a retraction $\beta_i$ of the map $X \rightarrow W$ and $\mathcal{E}$ admits a regular connexion $\nabla_i$.
We put $D_{ij}=\beta_i-\beta_j$. Then we have isomorphisms
\[
\begin{cases}
\Xi^i \colon \beta_i^* \mathcal{E}_{|U_{ij}} \xrightarrow{\sim} \beta_j^* \mathcal{E}_{|U_{ij}} \,\,&\textrm{depending on}\,\,\nabla_i \\
\Xi^j \colon \beta_j^* \mathcal{E}_{|U_{jk}} \xrightarrow{\sim} \beta_k^* \mathcal{E}_{|U_{jk}} \,\,&\textrm{depending on}\,\,\nabla_j \\
\Xi^k \colon \beta_k^* \mathcal{E}_{|U_{ki}} \xrightarrow{\sim} \beta_i^* \mathcal{E}_{|U_{ki}} \,\,&\textrm{depending on}\,\,\nabla_k \\
\end{cases}
\]
The composition $\Xi^k_{|U_{ijk}} \circ \Xi^j_{|U_{ijk}} \circ \Xi^i_{|U_{ijk}}$ yields an automorphism of $\beta_i^* \mathcal{E}_{|U_{ijk}}$, which corresponds to the element
\[
\lambda=(\mathrm{D}_{ij}\otimes \mathrm{id}_{\mathcal{E}}) \circ \nabla_i + (\mathrm{D}_{jk} \otimes \mathrm{id}_{\mathcal{E}}) \circ \nabla_j +  (\mathrm{D}_{ki}\otimes \mathrm{id}_{\mathcal{E}}) \circ \nabla_k
\]
of $\Gamma(U_{ijk}, \mathcal{H}om(\mathcal{E}, \mathrm{S}^2 \mathcal{V} \otimes \mathcal{E}))$. This element is a \u{C}hech representative of the class of the gerbe of locally free extensions of $\sigma^* \mathcal{E}$ on $W$. If $c_{ij}=(\mathrm{D}_{ij}\otimes \mathrm{id}_{\mathcal{E}}) \circ (\nabla_i-\nabla_j)$, then $(c_{ij})$ defines a $1$-cochain with values in $\mathcal{H}om(\mathcal{E}, \mathrm{S}^2 \mathcal{V} \otimes \mathcal{E})$ and we have
\begin{align*}
\lambda=c_{ij}&+c_{jk}+(\mathrm{D}_{ij}\otimes \mathrm{id}_{\mathcal{E}}) \circ (\nabla_j-\nabla_k) =\frac{2}{3}(c_{ij}+c_{jk}+c_{ki})\\
&+\frac{1}{3}((\mathrm{D}_{ij}\otimes \mathrm{id}_{\mathcal{E}}) \circ (\nabla_j-\nabla_k)+(\mathrm{D}_{jk}\otimes \mathrm{id}_{\mathcal{E}}) \circ (\nabla_k-\nabla_i)+(\mathrm{D}_{kj}\otimes \mathrm{id}_{\mathcal{E}}) \circ (\nabla_i-\nabla_j))
\end{align*}
which can be split us to some nonzero constant factors as the sum of the boundary of the cochain $(c_{ij})_{i, j}$ and the Yoneda product of the $1$-cocycles $(\nabla_i-\nabla_j)_{i, j}$ and $(D_{ij}\otimes \mathrm{id}_{\mathcal{E}})_{i, j}$ that represent $\mathrm{at}_X(\mathcal{E})$ and $\beta \otimes \mathrm{id}_{\mathcal{E}}$ respectively. This gives the required formula.
\end{proof}

\subsection{Quantized cycles}

\subsubsection{Setting, and tameness condition}
Let $Y$ be a smooth $\mathbf{k}$-scheme, and let $X$ be a smooth and closed subscheme of $Y$. We denote by $i \colon X \hookrightarrow Y$ the injection of $X$ in $Y$. 
\par \medskip
Let $S$ denote the first formal neighbourhood of $X$ in $Y$. Let us assume that the closed immersion $j \colon X \hookrightarrow S$ admits a retraction $\sigma \colon S \rightarrow X$ (that is $S$ is a globally trivial square zero extension of $X$ by $\mathrm{N}^*_{X/Y}$); this is equivalent to say that the conormal sequence of the pair $(X, Y)$ splits. In this case, we say that $(X, \sigma)$ is a quantized cycle in $Y$ (\textit{see} \cite{Grivaux-HKR}). 
\par \medskip
Assume that $(X, \sigma)$ is a quantized cycle, and let $W$ be the second formal neighborhood of $X$ in $Y$, and let $k \colon X \hookrightarrow W$ be the corresponding inclusion. According to Lemma \ref{bricfroid}, the ringed space $W$ is entirely encoded by two classes $\alpha$ and $\beta$ introduced in the previous section; $\alpha$ is the extension class of the exact sequence
\[
0 \rightarrow  \sigma_* \, \frac{\mathcal{I}_X^2}{\mathcal{I}_X^3} \rightarrow \sigma_* \, \frac{\mathcal{I}_X}{\mathcal{I}_X^3} \rightarrow  \sigma_* \, \frac{\mathcal{I}_X}{\mathcal{I}_X^2} \rightarrow 0
\]
that lives in $\mathrm{Ext}^2(\mathrm{N}^*_{X/Y}, \mathrm{S}^2 \mathrm{N}^*_{X/Y})$, and $\beta$ in the class in $\mathrm{Ext}^1(\Omega^1_X, \mathrm{S}^2 \mathrm{N}^*_{X/Y})$ that measures the obstruction to the existence of a retraction of $k$ that extends $\sigma$.

\begin{definition} \label{tiny} A $(X, \sigma)$ quantized cycle $(X, \sigma)$ is \textit{tame} if the locally free sheaf $\sigma^* \mathrm{N^*_{X/Y}}$ on $S$ extends to a locally free sheaf on $W$.
\end{definition}
\begin{remark}
If $k$ admits a retraction $q \colon W \rightarrow X$ such that $q_{|S}=\sigma$, that is if $\beta$ vanishes, then $(X, \sigma)$ is automatically tame: the locally free sheaf $q^* \mathrm{N}^*_{X/Y}$ on $W$ extends $\sigma^* \mathrm{N}^*_{X/Y}$. In this case, we say that $(X, \sigma)$ is \textit{2-split}.
\end{remark}

\subsubsection{Restriction of the Atiyah class}
The aim of this section is to describe another intrinsic description of the classes $\alpha$ and $\beta$.
\begin{proposition} \label{brique2}
The torsion of the locally free sheaf $(\Omega^1_Y)_{|S}$ is the morphism
\[
\mathrm{N}^*_{X/Y} \oplus \Omega^1_X \rightarrow \mathrm{N}^*_{X/Y} \otimes (\mathrm{N}^*_{X/Y} \oplus \Omega^1_X)\, [1]
\]
given by the $2 \times 2$ matrix $\begin{pmatrix}
\alpha & \beta \\ 
0 & 0
\end{pmatrix} $.
\end{proposition}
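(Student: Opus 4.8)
The plan is to reduce the statement to a computation of an Atiyah class, via Proposition \ref{brique1}, and then to extract the four matrix entries using two ingredients: the concrete $\mathcal{I}_X$-adic description of the torsion, and the symmetry of the Atiyah class of the cotangent bundle. Concretely, apply Proposition \ref{brique1} to the locally free sheaf $\mathcal{E}=(\Omega^1_Y)_{|S}$ on $S$; since $(X,\sigma)$ is a quantized cycle the conormal sequence splits, so $\overline{\mathcal{E}}=j^*(\Omega^1_Y)_{|S}=(\Omega^1_Y)_{|X}\simeq\mathrm{N}^*_{X/Y}\oplus\Omega^1_X$, and the torsion $\tau:=\tau_{(\Omega^1_Y)_{|S}}$ is the $\mathrm{N}^*_{X/Y}$-component (with respect to $j^*\Omega^1_S\simeq\mathrm{N}^*_{X/Y}\oplus\Omega^1_X$) of $\mathbb{L}j^*\,\mathrm{at}_S\big((\Omega^1_Y)_{|S}\big)$. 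Equivalently, and we will use both descriptions, $\tau$ is the class of the $\mathcal{I}_X$-adic short exact sequence
\[
0\longrightarrow \mathrm{N}^*_{X/Y}\otimes (\Omega^1_Y)_{|X}\longrightarrow \sigma_*\big((\Omega^1_Y)_{|S}\big)\longrightarrow (\Omega^1_Y)_{|X}\longrightarrow 0,
\]
because $\mathrm{N}^*_{X/Y}\cdot(\Omega^1_Y)_{|S}=\mathcal{I}_X\Omega^1_Y/\mathcal{I}_X^2\Omega^1_Y$ is canonically $\mathrm{N}^*_{X/Y}\otimes(\Omega^1_Y)_{|X}$.

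\textbf{Reduction to the restricted Atiyah class, and the symmetry input.} By functoriality of the Atiyah class along the closed immersion $i_S\colon S\hookrightarrow Y$ (note $\Omega^1_{S/Y}=0$), one has $\mathrm{at}_S\big((\Omega^1_Y)_{|S}\big)=(\rho\otimes\mathrm{id})\circ i_S^*\mathrm{at}_Y(\Omega^1_Y)$, where $\rho\colon (\Omega^1_Y)_{|S}\to\Omega^1_S$ is the natural surjection. Its kernel is annihilated by the ideal of $X$ in $S$, so $j^*\rho\colon(\Omega^1_Y)_{|X}\to j^*\Omega^1_S$ is an isomorphism, compatible with the conormal filtrations (the conormal sheaf of $X$ in $S$ is again $\mathrm{N}^*_{X/Y}$). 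Combining, $\tau$ is identified with the first (``differential'') leg $\mathrm{N}^*_{X/Y}$-component of the restricted Atiyah class $i_X^*\mathrm{at}_Y(\Omega^1_Y)\colon (\Omega^1_Y)_{|X}\to(\Omega^1_Y)_{|X}^{\otimes 2}[1]$. Now invoke Kapranov's observation that the Atiyah class $\mathrm{at}_Y(\Omega^1_Y)$ of the cotangent bundle is symmetric in its two tensor factors, hence so is $i_X^*\mathrm{at}_Y(\Omega^1_Y)$. Together with functoriality for $i_X$ — which gives that projecting the differential leg onto $\Omega^1_X$ yields $\mathrm{at}_X\big((\Omega^1_Y)_{|X}\big)$, block-diagonal equal to $\mathrm{at}_X(\mathrm{N}^*_{X/Y})\oplus\mathrm{at}_X(\Omega^1_X)$ since it is the Atiyah class of a direct sum — this forces the ``bottom row'' of $\tau$ (output leg landing in $\Omega^1_X$) to vanish and forces the remaining component (output in $\mathrm{N}^*_{X/Y}$) to be symmetric in the two copies of $\mathrm{N}^*_{X/Y}$, i.e.\ to factor through $\mathrm{S}^2\mathrm{N}^*_{X/Y}$. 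Thus $\tau$ has the shape $\begin{pmatrix}\alpha'&\beta'\\0&0\end{pmatrix}$ with $\alpha'\in\mathrm{Ext}^1(\mathrm{N}^*_{X/Y},\mathrm{S}^2\mathrm{N}^*_{X/Y})$ and $\beta'\in\mathrm{Ext}^1(\Omega^1_X,\mathrm{S}^2\mathrm{N}^*_{X/Y})$.

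\textbf{Identifying the two entries.} To see $\alpha'=\alpha$, restrict the $\mathcal{I}_X$-adic extension along the conormal inclusion $\mathrm{N}^*_{X/Y}\hookrightarrow(\Omega^1_Y)_{|X}$ and push it out onto $\mathrm{N}^*_{X/Y}\otimes\mathrm{N}^*_{X/Y}$; the de Rham differential $d\colon \mathcal{I}_X/\mathcal{I}_X^3\hookrightarrow\sigma_*\big((\Omega^1_Y)_{|S}\big)$ (injective because $X$ is smooth in $Y$) identifies the resulting sub-extension with $0\to\mathrm{S}^2\mathrm{N}^*_{X/Y}\to\sigma_*(\mathcal{I}_X/\mathcal{I}_X^3)\to\mathrm{N}^*_{X/Y}\to0$, which is exactly the extension defining $\alpha$. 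To see $\beta'=\beta$, note that by the symmetry and functoriality above the $\Omega^1_X$-column of $\tau$ is, up to the flip of the two $\mathrm{N}^*_{X/Y}$-factors, the ``second fundamental form'' of $X$ in $Y$ contracted against the conormal direction; this is precisely the class measuring the obstruction to extending $\sigma$ across the second infinitesimal neighbourhood $W$, which is $\beta$ by Lemma \ref{bricfroid}. Alternatively, compare directly with Proposition \ref{marre}: composing $\tau$ further with $\mathrm{at}_X$ on the output and using $\gamma_\sigma(\mathcal{E})=(\beta\otimes\mathrm{id}_{\mathcal{E}})\circ\mathrm{at}_X(\mathcal{E})$ reads off $\beta'=\beta$ up to the normalizing scalar.

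\textbf{Main obstacle.} The delicate point is the last identification: matching $\beta'$ with the obstruction class of \cite{Grivaux-HKR} / Proposition \ref{marre} on the nose, keeping track of signs and of the normalization implicit in Lemma \ref{bricfroid}, together with a careful treatment of the bottom-row vanishing (and of the derived pullbacks $\mathbb{L}j^*$ entering Proposition \ref{brique1}). The rest is formal once the symmetry of $\mathrm{at}_Y(\Omega^1_Y)$ and the $\mathcal{I}_X$-adic description of $\tau$ are in place.
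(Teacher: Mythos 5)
Your strategy — reduce to the restricted Atiyah class via Proposition \ref{brique1} and functoriality, and then constrain its shape — is genuinely different from the paper's proof, but the decisive step fails. Write $(\Omega^1_Y)_{|X}\simeq A\oplus B$ with $A=\mathrm{N}^*_{X/Y}$, $B=\Omega^1_X$, and let $\Theta$ be the restricted class with values in $(A\oplus B)^{\otimes 2}[1]$, the first tensor factor being the differential leg. Kapranov symmetry says $\Theta$ is invariant under the flip of the two output legs; functoriality (equivalently the second half of Proposition \ref{brique1}) says that the part of $\Theta$ whose first leg lies in $B$ is $\mathrm{at}_X(A)\oplus\mathrm{at}_X(B)$, block diagonal. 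The flip therefore identifies the component of $\tau$ with source $A$ landing in $A\otimes B[1]$ — your bottom-left entry — with the flip of the component $A\to B\otimes A[1]$ of the other block, i.e.\ with (the flip of) $\mathrm{at}_X(\mathrm{N}^*_{X/Y})$, which is nonzero in general; only the bottom-right entry (source $B$) is killed by your two constraints, because the corresponding entry $B\to B\otimes A[1]$ vanishes by block-diagonality. So symmetry plus functoriality pin the torsion to the shape
\[
\begin{pmatrix}\alpha' & \beta' \\ \ast & 0\end{pmatrix}
\qquad\text{with } \ast \text{ tied to } \mathrm{at}_X(\mathrm{N}^*_{X/Y}),
\]
and cannot "force the bottom row to vanish" as you assert. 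A concrete test case makes the point: for the diagonal $X\subset X\times X$ with $\sigma=\mathrm{pr}_1$, the direct summand $\mathrm{pr}_2^*\Omega^1_X\otimes\mathcal{O}_S$ of $(\Omega^1_Y)_{|S}$ has $\sigma_*$ equal to the jet bundle $\mathrm{P}^1_X(\Omega^1_X)$, whose extension class is $\mathrm{at}_X(\Omega^1_X)\neq 0$; so no purely formal argument from symmetry and additivity can produce the vanishing claimed in the statement — whatever makes that entry zero must use the second-order data of $W$, which your argument never touches (you only invoke $W$ afterwards, to name $\beta$).

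By contrast, the paper's proof is an explicit second-order computation: using Lemma \ref{bricfroid} it presents $W$ by gluing trivial local models along automorphisms $(D,\varphi)$ that are the identity at first order, lifts these to coordinate changes of $Y$ of the form $x\mapsto x+Z(x)tt$, $t\mapsto t+\Lambda(x)tt$, and reads the matrix off from the pullbacks of $dx^k$, $t^i dx^k$, $dt^j$, $t^i dt^j$ restricted to $S$ (the $Z$'s produce the $\beta$-cocycle, the $\Lambda$'s the $\alpha$-cocycle). Note also that in the paper Corollary \ref{feu} is \emph{deduced from} Proposition \ref{brique2}; your proposal essentially tries to run that deduction backwards, so you would need an independent determination of the restricted Atiyah class, which symmetry and functoriality alone do not supply (they leave the whole first column undetermined and, as above, do not give the bottom-row vanishing). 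The identifications $\alpha'=\alpha$ (via $d\colon\mathcal{I}_X/\mathcal{I}_X^3\to\sigma_*\big((\Omega^1_Y)_{|S}\big)$) and $\beta'=\beta$ are also only sketched, but the essential gap is the bottom-row step.
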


\begin{proof}
The locally free sheaf $(\Omega^1_Y)_{|S}$ depends only on the second formal neighbourhood $W$ of $X$ in $Y$. Let us consider an automorphism of the trivial ringed space $\mathcal{O}_X \oplus \mathrm{N}^*_{X/Y} \oplus \mathrm{S}^2 \mathrm{N}^*_{X/Y}$ given by a couple $(d, \varphi)$, where $d$ is a derivation from $\mathcal{O}_X$ to $\mathrm{S}^2 \mathrm{N}^*_{X/Y}$ and $\varphi$ is a linear morphism from $\mathrm{N}^*_{X/Y}$ to $\mathrm{S}^2 \mathrm{N}^*_{X/Y}$. Assume that we are in the local situation, so that we can take coordinates: $X=U \subset \mathbf{k}^n$ and $Y=U \times V$ where $V \subset \mathbf{k}^r$. Then we can represent the morphisms $d$ and $\varphi$ by sequences of regular maps $(Z^k_{i, j}(\mathbf{x}))_{1 \leq k \leq n, 1 \leq i, j \leq r}$ and $(\Lambda^{\ell}_{i, j}(\mathbf{x}))_{1 \leq i, j, \ell \leq r}$ that are symmetric in the indices $(i, j)$. The automorphism of $W$ can be lifted to an automorphism of $Y$ given by the formula
\[
(\mathbf{x}, \mathbf{t}) \rightarrow \left( \{\mathbf{x}^k+\sum_{i, j}Z^k_{i, j}(\mathbf{x})t^i t^j)\}_{1 \leq k \leq n},  \{\mathbf{t}^{\ell}+\sum_{i, j}\Lambda^{\ell}_{i, j}(\mathbf{x})t^i t^j)\}_{1 \leq \ell \leq r}\right)
\]
The result follows by computing the pullback of the forms $dx^k, t^i dx^k$, $dt^j$, $t^i dt^j$ restricted to $S$ by the above automorphism.
\end{proof}

\begin{corollary} \label{feu}
The composition
\[
\mathrm{N}^*_{X/Y} \oplus \Omega^1_X \simeq \Omega^1_{Y|X} \xrightarrow {\mathbb{L}i^* \mathrm{at}_Y(\Omega^1_Y)} \mathrm{S}^2 \mathrm{\Omega}^1_{Y | X} [1] \simeq \mathrm{S}^2 \mathrm{N}^*_{X/Y}[1] \oplus  \Omega^1_X \otimes \mathrm{N}^*_{X/Y} [1] \oplus \mathrm{S}^2 \Omega^1_X[1]
\]
is given by the matrix 
$\begin{pmatrix}
\alpha & \beta  \\ 
\mathrm{at}_X(\mathrm{N}^*_{X/Y}) &0\\
0 &  \mathrm{at}_X(\Omega^1_X)
\end{pmatrix} $.
\end{corollary}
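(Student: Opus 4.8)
The plan is to factor the embedding $i$ through the first formal neighbourhood $S$ and to reduce everything to the analysis of the single locally free sheaf $(\Omega^1_Y)_{|S}$ on $S$, which is exactly what Propositions \ref{brique1} and \ref{brique2} provide. First I would record the identification
\[
i^*\Omega^1_Y \;\simeq\; j^*\Omega^1_S \;\simeq\; \mathrm{N}^*_{X/Y}\oplus\Omega^1_X :
\]
the first isomorphism comes from the conormal sequence of $j\colon X\hookrightarrow S$, the point being that the contribution of $\mathcal{I}_X^2/\mathcal{I}_X^3$ to $i^*\Omega^1_Y$ vanishes because $d(fg)_{|X}=0$ for $f,g$ sections of $\mathcal{I}_X$; the direct sum decomposition is then the one induced by the quantization $\sigma$, which splits the conormal sequence of the pair $(X,Y)$. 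This is the same decomposition that underlies Propositions \ref{brique1} and \ref{brique2}.

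Next I would apply Proposition \ref{brique1} to $\mathcal{E}=(\Omega^1_Y)_{|S}$. It identifies the composition $i^*\Omega^1_Y\xrightarrow{\mathbb{L}j^*\mathrm{at}_S((\Omega^1_Y)_{|S})}\mathbb{L}j^*\bigl(\Omega^1_S\otimes(\Omega^1_Y)_{|S}[1]\bigr)\to j^*\bigl(\Omega^1_S\otimes(\Omega^1_Y)_{|S}[1]\bigr)$ with the pair $\bigl(\tau_{(\Omega^1_Y)_{|S}},\,\mathrm{at}_X(i^*\Omega^1_Y)\bigr)$. By Proposition \ref{brique2} the torsion $\tau_{(\Omega^1_Y)_{|S}}$ is $\left(\begin{smallmatrix}\alpha&\beta\\0&0\end{smallmatrix}\right)$, and by additivity of the Atiyah class along the decomposition $i^*\Omega^1_Y=\mathrm{N}^*_{X/Y}\oplus\Omega^1_X$ (the cross terms of the Atiyah class of a direct sum vanish) one has $\mathrm{at}_X(i^*\Omega^1_Y)=\mathrm{at}_X(\mathrm{N}^*_{X/Y})\oplus\mathrm{at}_X(\Omega^1_X)$. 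Composing with the symmetrization $\Omega^1_{Y|X}\otimes\Omega^1_{Y|X}\to\mathrm{S}^2\Omega^1_{Y|X}$ and reading off components against the three summands $\mathrm{S}^2\mathrm{N}^*_{X/Y}$, $\Omega^1_X\otimes\mathrm{N}^*_{X/Y}$, $\mathrm{S}^2\Omega^1_X$ then yields exactly the matrix in the statement: the $\mathrm{N}^*$-torsion part contributes $\alpha$ and $\beta$ to the first row, while the $\Omega^1_X$-part of $\mathrm{at}_X(i^*\Omega^1_Y)$ contributes $\mathrm{at}_X(\mathrm{N}^*_{X/Y})$ and $\mathrm{at}_X(\Omega^1_X)$ to the second and third rows.

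The remaining, and most delicate, point is to check that this symmetrized map is genuinely $\mathbb{L}i^*\mathrm{at}_Y(\Omega^1_Y)$ — equivalently, that $\mathrm{at}_Y(\Omega^1_Y)$ restricted to $X$ is symmetric, so that it lands in $\mathrm{S}^2\Omega^1_{Y|X}[1]$, and that its $\mathrm{N}^*_{X/Y}$-component is indeed recovered by the restricted Atiyah class on $S$. Symmetry of $\mathrm{at}_Y(\Omega^1_Y)$ is classical: locally on $Y$ one can choose torsion-free connections on $\Omega^1_Y$, so the antisymmetrization of the Atiyah cocycle is a \v{C}ech coboundary; this survives restriction to $X$. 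For the comparison of $\mathbb{L}i^*\mathrm{at}_Y(\Omega^1_Y)$ with $\mathbb{L}j^*\mathrm{at}_S((\Omega^1_Y)_{|S})$ one invokes functoriality of the Atiyah class for the closed immersion $S\hookrightarrow Y$ together with the fact, already used above, that the natural morphism $(\Omega^1_Y)_{|S}\to\Omega^1_S$ becomes an isomorphism after applying $j^*$. The subtlety I expect to be the main obstacle is that $S$ is not smooth, so $\Omega^1_S$ is not locally free and $\mathbb{L}j^*\Omega^1_S$ carries an extra $H^{-1}\simeq\mathrm{S}^2\mathrm{N}^*_{X/Y}$; it is precisely this class that mediates the passage from the a priori unsymmetrized expression $(\tau,\mathrm{at}_X)$ of Proposition \ref{brique1} to the symmetric restricted Atiyah class of $Y$, and one must track it carefully. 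The cleanest way to do so — and the route I would actually take — is to bypass $\mathrm{at}_S$ altogether and rerun the jet-bundle argument of Proposition \ref{brique1} with $\mathrm{P}^1_Y$ in place of $\mathrm{P}^1_S$, comparing $i^*\mathrm{P}^1_Y(\Omega^1_Y)$ directly with $\mathrm{P}^1_X(i^*\Omega^1_Y)$ via the restriction morphism of first jet bundles, exactly as in the proof of Proposition \ref{brique2} but performed once and for all over the whole second neighbourhood $W$ rather than in local coordinates.
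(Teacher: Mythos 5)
Your proposal follows exactly the paper's route: the paper's own proof is the one-line observation that the statement results from combining Proposition \ref{brique1} (applied to $\mathcal{E}=(\Omega^1_Y)_{|S}$, whose components are the torsion and $\mathrm{at}_X(i^*\Omega^1_Y)$, the latter block-diagonal for a direct sum) with Proposition \ref{brique2} and the functoriality of the Atiyah class, which is precisely what you do. Your closing suggestion to rerun the jet-bundle comparison with $\mathrm{P}^1_Y$ is just a more explicit implementation of that same functoriality step (and a reasonable way to handle the symmetrization and the non-local-freeness of $\Omega^1_S$ that the paper leaves implicit), not a different argument.
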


\begin{proof}
This is obtained by putting together Proposition \ref{brique1} and Proposition \ref{brique2}, together with the functoriality of the Atiyah class.
\end{proof}

\subsubsection{Quantized HKR isomorphism}
As noticed in \cite{Arinkin-Caldararu}, the composition
\begin{equation} \label{C}
\mathrm{S} ({\mathrm{N}_{X/Y}}[-1]) \longrightarrow \mathrm{T}  ({\mathrm{N}}_{X/Y}[-1]) \simeq \mathcal{RH}om_{\mathcal{O}_S}(\mathcal{O}_X, \mathcal{O}_X) \longrightarrow \mathcal{RH}om_{\mathcal{O}_Y}(\mathcal{O}_X, \mathcal{O}_X)
\end{equation}
is an isomorphism in $\mathrm{D}^+(Y)$,
where the first map is the antisymetrization map. For quantized cycles, it is also possible to produce a left resolution of the sheaf $\mathcal{O}_X$ (the Atiyah-Kashiwara resolution) that computes directly $\mathcal{RH}om_{\mathcal{O}_Y}(\mathcal{O}_X, \mathcal{O}_X)$, this construction is done in \cite{Grivaux-HKR}. Both constructions are in fact compatible (\emph{see} \cite[Thm. 4.13]{Grivaux-HKR}). Let us now give the corresponding HKR isomorphism. For any non-negative integer $p$, we decompose the morphism $\Delta_p$ as the sum $\Delta_p^- + \Delta_p^+$ according to the decomposition 
\[
\mathrm{T}^p (\mathrm{N}^*_{X/Y}[1]) \simeq \widetilde{\Lambda}^p (\mathrm{N}^*_{X/Y}[1]) \oplus \mathrm{S}^p (\mathrm{N}^*_{X/Y} [1]).
\]
For any sheaf $\mathcal{F}$ on $X$, the functor $\mathcal{H}om_{\mathcal{O}_Y}(\, \star \, , \mathcal{F}) \colon \mathrm{Mod}(\mathcal{O}_Y) \rightarrow \mathrm{Mod}(\mathcal{O}_Y)$ factors through a functor 
\[
\mathcal{H}om^{\ell}_{\mathcal{O}_Y}(\, \star \, , \mathcal{F})  \colon \mathrm{Mod}(\mathcal{O}_Y) \rightarrow \mathrm{Mod}(\mathcal{O}_X).
\]

\begin{proposition} \label{buche}
For any locally free sheaves $\mathcal{E}_1$, $\mathcal{E}_2$ on X, there is a canonical isomorphism
\[
\mathcal{RH}om_{\mathcal{O}_Y}^{\ell}(\mathcal{E}_1, \mathcal{E}_2) \simeq \bigoplus_{p \in \mathbb{N}} \mathcal{RH}om(\Lambda^p \mathrm{N}^*_{X/Y} \otimes \mathcal{E}_1, \mathcal{E}_2)[-p] \simeq \mathrm{S}(\mathrm{N}_{X/Y}[-1]) \, \lltens \, \mathcal{RH}om(\mathcal{E}_1, \mathcal{E}_2)
\]
in $\mathrm{D}^{\mathrm{b}}(X)$ obtained by precomposing with $ \Delta_p^+ \otimes  \mathrm{id}_{\sigma^* \mathcal{E}_1}$.
\end{proposition}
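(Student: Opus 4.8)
The plan is to extend the isomorphism \eqref{C}, which is the case $\mathcal{E}_1=\mathcal{E}_2=\mathcal{O}_X$, to arbitrary locally free sheaves, while keeping track of the left $\mathcal{O}_X$-module structure so that the resulting isomorphism lives in $\mathrm{D}^{\mathrm{b}}(X)$ and not merely in $\mathrm{D}^+(Y)$. The second (rightmost) isomorphism in the statement is purely formal: since $\mathrm{N}^*_{X/Y}$, $\mathcal{E}_1$ and $\mathcal{E}_2$ are locally free on $X$ we have $\mathcal{RH}om(\Lambda^p\mathrm{N}^*_{X/Y}\otimes\mathcal{E}_1,\mathcal{E}_2)\cong\Lambda^p\mathrm{N}_{X/Y}\otimes\mathcal{RH}om(\mathcal{E}_1,\mathcal{E}_2)$ and $\mathrm{S}^p(\mathrm{N}_{X/Y}[-1])=\Lambda^p\mathrm{N}_{X/Y}[-p]$; taking the direct sum over $p$ and using that all the sheaves involved are perfect, so that $\lltens$ is the underived tensor product, gives the identification. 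The content is therefore the first isomorphism.

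For the first isomorphism I would argue as in \cite{Arinkin-Caldararu} and \cite{Grivaux-HKR} for Proposition \ref{hkrbabe}, replacing the canonical locally free resolution on $S$ by the Atiyah--Kashiwara resolution of $i_*\mathcal{E}_1$ on $Y$ constructed in \cite{Grivaux-HKR}: it is a finite complex $\mathcal{K}_\bullet(\mathcal{E}_1)$ of locally free $\mathcal{O}_Y$-modules, quasi-isomorphic to $i_*\mathcal{E}_1$, whose construction uses the retraction $\sigma$ and whose graded pieces, after applying $i^*$, recover $\bigoplus_p\Lambda^p\mathrm{N}^*_{X/Y}\otimes\mathcal{E}_1$. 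Applying the functor $\mathcal{H}om^{\ell}_{\mathcal{O}_Y}(\,\cdot\,,\mathcal{E}_2)$ — which, as recalled just before the statement, lands in $\mathrm{Mod}(\mathcal{O}_X)$ — produces a complex of locally free $\mathcal{O}_X$-modules whose $p$-th graded piece is canonically $\mathcal{H}om_{\mathcal{O}_X}(\Lambda^p\mathrm{N}^*_{X/Y}\otimes\mathcal{E}_1,\mathcal{E}_2)$ placed in cohomological degree $p$. The point is then that this complex is formal, i.e.\ that all its differentials vanish in $\mathrm{D}^{\mathrm{b}}(X)$, with a splitting realised by precomposition with the morphisms $\Delta_p^+\otimes\mathrm{id}_{\sigma^*\mathcal{E}_1}$ (passing, as in Proposition \ref{hkrbabe}, through $\sigma_*\mathcal{RH}om_{\mathcal{O}_S}$ and the natural map to $\mathcal{RH}om_{\mathcal{O}_Y}$). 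This is exactly where the quantized cycle hypothesis enters: the formality together with the identification of the splitting with $\Delta_p^+$ follows from \cite[Thm.~4.13]{Grivaux-HKR} — which compares the Atiyah--Kashiwara resolution on $Y$ with the canonical resolution on $S$ used in Proposition \ref{hkrbabe} — combined with the inductive description of $\Delta_p$ in terms of the morphisms $\Theta$ (Proposition \ref{matisse}); the symmetric truncation $\Delta_p^+$, rather than the full $\Delta_p$, appears because the composite in \eqref{C} incorporates the antisymmetrisation $\mathrm{S}(\mathrm{N}_{X/Y}[-1])\hookrightarrow\mathrm{T}(\mathrm{N}_{X/Y}[-1])$, whose transpose is the projection of $\mathrm{T}^p(\mathrm{N}^*_{X/Y}[1])$ onto its direct factor $\mathrm{S}^p(\mathrm{N}^*_{X/Y}[1])=\Lambda^p\mathrm{N}^*_{X/Y}[p]$. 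That the resulting morphism is an isomorphism, and not merely a map, then reduces to the case $\mathcal{E}_1=\mathcal{E}_2=\mathcal{O}_X$ of \eqref{C} via the projection formula
\[
\mathcal{RH}om_{\mathcal{O}_Y}(i_*\mathcal{E}_1,i_*\mathcal{E}_2)\simeq i_*\!\left(i^!i_*\mathcal{O}_X\lltens[\mathcal{O}_X]\mathcal{RH}om_{\mathcal{O}_X}(\mathcal{E}_1,\mathcal{E}_2)\right),
\]
under which the construction for $(\mathcal{E}_1,\mathcal{E}_2)$ matches the one for $(\mathcal{O}_X,\mathcal{O}_X)$ tensored with the identity of $\mathcal{RH}om(\mathcal{E}_1,\mathcal{E}_2)$; in particular it does not depend on the auxiliary local extension of $\mathcal{E}_1$ used to build $\mathcal{K}_\bullet(\mathcal{E}_1)$.

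I expect the main obstacle to be the bookkeeping in this last part: checking that the concrete morphism ``precomposition with $\Delta_p^+\otimes\mathrm{id}_{\sigma^*\mathcal{E}_1}$'' coincides with the abstract splitting of the $\mathcal{H}om^{\ell}_{\mathcal{O}_Y}$-complex, with all shifts and Koszul signs in place and with respect to the \emph{left} $\mathcal{O}_X$-structure (working with the second variable instead would yield the dual HKR isomorphism). This is conceptually routine given \cite[Thm.~4.13]{Grivaux-HKR} but delicate; by contrast the formality statement and the reduction to the case of structure sheaves are comparatively straightforward.
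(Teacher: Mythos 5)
Your proposal is correct and follows essentially the route the paper itself relies on: the paper states Proposition \ref{buche} without proof, pointing exactly to the Arinkin--C\u{a}ld\u{a}raru splitting \eqref{C}, the Atiyah--Kashiwara resolution of \cite{Grivaux-HKR}, and the compatibility \cite[Thm.~4.13]{Grivaux-HKR}, which are precisely the ingredients your argument elaborates (as in the proof of the infinitesimal analogue, Proposition \ref{hkrbabe}). The only caveat is cosmetic: your adjunction formula with $i^!i_*\mathcal{O}_X$ is the one naturally adapted to the dual ($\mathcal{RH}om^{r}$) version, whereas for $\mathcal{RH}om^{\ell}$ one would reduce via $\mathbb{L}i^*i_*\mathcal{O}_X$ in the first variable; this does not affect the substance of the argument.
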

We also give Kashiwara's \textit{dual} version:
\begin{proposition} \label{buche2}
For any locally free sheaves $\mathcal{E}_1$, $\mathcal{E}_2$ on X, there is a canonical isomorphism
\[
\mathcal{RH}om_{\mathcal{O}_Y}^{r}(\mathcal{E}_1, \mathcal{E}_2) \simeq \bigoplus_{p \in \mathbb{N}} \mathcal{RH}om(\mathcal{E}_1, \Lambda^p \mathrm{N}_{X/Y} \otimes \mathcal{E}_2)[-p] \simeq \mathrm{S}(\mathrm{N}_{X/Y}[-1]) \, \lltens \, \mathcal{RH}om(\mathcal{E}_1, \mathcal{E}_2)
\]
in $\mathrm{D}^{\mathrm{b}}(X)$ obtained by postcomposing by $ \Delta_p^{'+} \otimes  \mathrm{id}_{\sigma^* \mathcal{E}_2}$.
\end{proposition}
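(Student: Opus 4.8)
The plan is to deduce Proposition~\ref{buche2} from the infinitesimal dual HKR isomorphism of Proposition~\ref{hkrbabe2} by the same argument that passes from Proposition~\ref{hkrbabe} to Proposition~\ref{buche}, applied throughout to the \emph{dual} constructions, and then to recognise the resulting isomorphism by means of Lemma~\ref{bienpenible}. Concretely, for a quantized cycle $(X,\sigma)$ the sheaf $\mathcal{O}_X$ carries, besides the Atiyah--Kashiwara resolution computing $\mathcal{RH}om^{\ell}_{\mathcal{O}_Y}$ by deriving the first variable, a \emph{dual} Atiyah--Kashiwara resolution adapted to deriving the second variable (\textit{see} \cite{Grivaux-HKR}); this is what makes $\mathcal{H}om^{r}_{\mathcal{O}_Y}(\mathcal{E}_1,-)$ land in $\mathrm{Mod}(\mathcal{O}_X)$, and makes $\mathcal{RH}om^{r}_{\mathcal{O}_Y}(\mathcal{E}_1,\mathcal{E}_2)$ compute $\mathcal{RH}om_{\mathcal{O}_Y}(\mathcal{E}_1,\mathcal{E}_2)$ after forgetting the $\mathcal{O}_X$-module structure. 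This resolution is compatible with the canonical locally $\mathcal{O}_S$-free co-resolution on the first formal neighbourhood $S$ (\cite[Thm.~4.13]{Grivaux-HKR}): the isomorphism~\eqref{C} factors through the first-order one. Tensoring by $\mathcal{RH}om(\mathcal{E}_1,\mathcal{E}_2)$ and invoking Proposition~\ref{hkrbabe2} with $\mathcal{V}=\mathrm{N}^*_{X/Y}$, one gets a canonical isomorphism between $\mathcal{RH}om^{r}_{\mathcal{O}_Y}(\mathcal{E}_1,\mathcal{E}_2)$ and $\bigoplus_{p}\mathcal{RH}om\big(\mathcal{E}_1,\mathrm{T}^p\mathrm{N}_{X/Y}[-p]\otimes\mathcal{E}_2\big)$ obtained by postcomposing with $\Delta'_p\otimes\mathrm{id}_{\sigma^*\mathcal{E}_2}$; decomposing $\mathrm{T}^p(\mathrm{N}^*_{X/Y}[1])=\widetilde{\Lambda}^p(\mathrm{N}^*_{X/Y}[1])\oplus\mathrm{S}^p(\mathrm{N}^*_{X/Y}[1])$ and retaining the symmetric component $\Delta^{'+}_p$ singles out the direct summand $\mathcal{RH}om(\mathcal{E}_1,\Lambda^p\mathrm{N}_{X/Y}\otimes\mathcal{E}_2)[-p]\simeq\mathrm{S}^p(\mathrm{N}_{X/Y}[-1])\lltens\mathcal{RH}om(\mathcal{E}_1,\mathcal{E}_2)$, which is the asserted identification.

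Next I would check that postcomposition by $\Delta^{'+}_p\otimes\mathrm{id}$ is genuinely an isomorphism of $\mathcal{O}_X$-modules and coincides with the canonical one, by applying Lemma~\ref{bienpenible} with $\mathcal{W}$ the $p$-th graded piece $\mathcal{RH}om(\Lambda^p\mathrm{N}^*_{X/Y}\otimes\mathcal{E}_1,\mathcal{E}_2)$, whose naïve derived dual is $\mathcal{RH}om(\mathcal{E}_1,\Lambda^p\mathrm{N}_{X/Y}\otimes\mathcal{E}_2)$ since $\Lambda^p\mathrm{N}^*_{X/Y}$ is locally free. The lemma says that, under the adjunction $\mathrm{Hom}_{\mathrm{D}^{\mathrm{b}}(X)}(\mathcal{W},\mathrm{T}^p\mathrm{N}_{X/Y}[-p])\simeq\mathrm{Hom}_{\mathrm{D}^{\mathrm{b}}(X)}(\mathrm{T}^p\mathrm{N}^*_{X/Y}[p],\mathcal{W}^*)$, postcomposing with $\Delta'_p$ agrees with precomposing with $\Delta_p$; passing to symmetric parts this identifies the isomorphism just constructed with the one of Proposition~\ref{buche}, so it is in particular an isomorphism. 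The compatibility with the Yoneda product then follows exactly as for Proposition~\ref{hkrbabe2}, from the multiplicativity of the (dual) Atiyah--Kashiwara resolution together with routine diagram chases using the morphisms $\Theta_{\mathcal{F}}$ and Proposition~\ref{matisse}.

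The main obstacle is purely one of bookkeeping: one must keep careful track of which of the two available $\mathcal{O}_X$-module structures on $\mathcal{RH}om_{\mathcal{O}_Y}(\mathcal{E}_1,\mathcal{E}_2)$ is in play on each side, and verify that the identifications $(\Lambda^p\mathrm{N}^*_{X/Y})^{*}\simeq\Lambda^p\mathrm{N}_{X/Y}$ and $\mathrm{S}^p(\mathrm{N}_{X/Y}[-1])\simeq\Lambda^p\mathrm{N}_{X/Y}[-p]$ are threaded through the argument with no stray shift or Koszul sign. All the substantive input, though, is already encapsulated in Lemma~\ref{bienpenible}, in Proposition~\ref{hkrbabe2}, and in the compatibility statement \cite[Thm.~4.13]{Grivaux-HKR}, so the remaining work is mechanical.
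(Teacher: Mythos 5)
The paper itself gives no proof of Proposition \ref{buche2}: it is recalled as Kashiwara's dual HKR isomorphism for a quantized cycle, with the dual (Atiyah--Kashiwara) resolution and the compatibility statement delegated to \cite{Grivaux-HKR} and the isomorphism \eqref{C} to \cite{Arinkin-Caldararu}. Your list of ingredients --- Proposition \ref{hkrbabe2}, \cite[Thm.~4.13]{Grivaux-HKR}, \eqref{C}, and Lemma \ref{bienpenible} to pass between post-composition with $\Delta'_p$ and pre-composition with $\Delta_p$ --- is exactly what the surrounding text points to, and the same duality trick via Lemma \ref{bienpenible} is what the paper later uses to prove $c^{'k}_p=c^k_p$; so in spirit your route is the intended one.

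Two corrections are needed, the first being a genuine error as written. Proposition \ref{hkrbabe2} identifies the full tensor algebra $\bigoplus_p\mathcal{RH}om(\mathcal{E}_1,\mathrm{T}^p\mathrm{N}_{X/Y}\otimes\mathcal{E}_2)[-p]$ with $\sigma_*\mathcal{RH}om_{\mathcal{O}_S}(\mathcal{E}_1,\mathcal{E}_2)$, i.e.\ with the first infinitesimal neighbourhood, \emph{not} with $\mathcal{RH}om^{r}_{\mathcal{O}_Y}(\mathcal{E}_1,\mathcal{E}_2)$; the latter is strictly smaller, and the whole content of passing from $S$ to $Y$ is that only the symmetric summand survives. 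Your sentence asserting a canonical isomorphism between $\mathcal{RH}om^{r}_{\mathcal{O}_Y}(\mathcal{E}_1,\mathcal{E}_2)$ and the $\mathrm{T}^p$-version is therefore false, and it is incompatible with your next sentence (the symmetric summand cannot also map isomorphically onto the same target). The fix is the input you cite but do not deploy at that spot: \eqref{C} (tensored with $\mathcal{RH}om(\mathcal{E}_1,\mathcal{E}_2)$, or equivalently the dual Kashiwara resolution of \cite{Grivaux-HKR}) is what guarantees that the composite $\bigoplus_p\mathcal{RH}om(\mathcal{E}_1,\Lambda^p\mathrm{N}_{X/Y}\otimes\mathcal{E}_2)[-p]\to\sigma_*\mathcal{RH}om_{\mathcal{O}_S}(\mathcal{E}_1,\mathcal{E}_2)\to\mathcal{RH}om^{r}_{\mathcal{O}_Y}(\mathcal{E}_1,\mathcal{E}_2)$ is an isomorphism. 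Second, in the comparison with Proposition \ref{buche} via Lemma \ref{bienpenible}, note that $\mathcal{RH}om(\Lambda^p\mathrm{N}^*_{X/Y}\otimes\mathcal{E}_1,\mathcal{E}_2)$ and $\mathcal{RH}om(\mathcal{E}_1,\Lambda^p\mathrm{N}_{X/Y}\otimes\mathcal{E}_2)$ are canonically isomorphic, not dual to one another, and that $\mathcal{RH}om^{\ell}_{\mathcal{O}_Y}(\mathcal{E}_1,\mathcal{E}_2)$ and $\mathcal{RH}om^{r}_{\mathcal{O}_Y}(\mathcal{E}_1,\mathcal{E}_2)$ are different objects of $\mathrm{D}^{\mathrm{b}}(X)$ (the two $\mathcal{O}_X$-module structures differ), agreeing only after applying $i_*$. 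Since $i_*$ is conservative, comparing over $Y$ does suffice to conclude that your map is an isomorphism, but this is precisely where the argument closes and should be stated explicitly rather than deferred to bookkeeping.
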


\section{Tame quantized cycles}

\subsection{Structure constants}

\subsubsection{Definition}
We fix a quantized analytic cycle $(X, \sigma)$ in $Y$. By Proposition \ref{buche}, there exist unique coefficients $(c_p^{'k})_{0 \leq k \leq p}$ such that:
\begin{enumerate} 
\item[--] Each $c_p^{k}$ belongs to $\mathrm{Hom}\,(\mathrm{S}^k (\mathrm{N}^*_{X/Y}[1]),\mathrm{T}^p (\mathrm{N}^*_{X/Y}[1]))$, that is $\mathrm{Ext}^{p-k}(\Lambda^k \mathrm{N}^*_{X/Y}, \mathrm{T}^p \mathrm{N}^*_{X/Y})$.
\item[--] The relation $\Delta_p=\sum_{k=0}^p c^{k}_ p \circ  \Delta_k^+ $ holds in $\mathrm{D}^{\mathrm{b}}(Y)$.
\end{enumerate}
Similarly, using Proposition \ref{buche2}, we can define \textit{dual} coefficients: there exist unique coefficients $(c_p^{' k})_{0 \leq k \leq p}$ such that:
\begin{enumerate} 
\item[--] Each $c_p^{' k}$ belongs to $\mathrm{Hom}\,(\mathrm{T}^p (\mathrm{N}_{X/Y}[-1]), \mathrm{S}^k (\mathrm{N}_{X/Y}[-1]))$, that is $\mathrm{Ext}^{p-k}(\mathrm{T}^p \mathrm{N}_{X/Y}, \Lambda^k \mathrm{N}_{X/Y})$.
\item[--] The relation $\Delta'_p=\sum_{k=0}^p \Delta_k^{'+} \circ c^{' k}_ p$ holds in $\mathrm{D}^{\mathrm{b}}(Y)$.
\end{enumerate}

\begin{lemma}
Via the isomorphism $\mathrm{Ext}^{p-k}(\mathrm{T}^p \mathrm{N}_{X/Y}, \Lambda^k \mathrm{N}_{X/Y}) \simeq \mathrm{Ext}^{p-k}(\Lambda^k \mathrm{N}^*_{X/Y}, \mathrm{T}^p \mathrm{N}^*_{X/Y})$, the coefficients $c_{p}^{' k}$ and $c_p^k$ are equal.
\end{lemma}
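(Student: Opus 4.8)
The plan is to deduce the equality as the transpose of the defining relation for the coefficients $c_p^k$, bridging the left and right quantized HKR isomorphisms by means of Lemma \ref{bienpenible}. Put $\mathcal{V}=\mathrm{N}^*_{X/Y}$; for each $p$ and $k$ the objects $\mathrm{T}^p(\mathrm{N}_{X/Y}[-1])$ and $\mathrm{S}^k(\mathrm{N}_{X/Y}[-1])$ are dualizable in $\mathrm{D}^{\mathrm{b}}(X)$, with duals $\mathrm{T}^p(\mathrm{N}^*_{X/Y}[1])$ and $\mathrm{S}^k(\mathrm{N}^*_{X/Y}[1])$ respectively. Under these identifications the isomorphism $\mathrm{Ext}^{p-k}(\mathrm{T}^p \mathrm{N}_{X/Y}, \Lambda^k \mathrm{N}_{X/Y}) \simeq \mathrm{Ext}^{p-k}(\Lambda^k \mathrm{N}^*_{X/Y}, \mathrm{T}^p \mathrm{N}^*_{X/Y})$ of the statement is exactly the transposition isomorphism $\phi\mapsto\phi^*$, so it is enough to prove that $(c_p^{'k})^*=c_p^k$ for all $0\le k\le p$.

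I would first record the elementary compatibility: under the duality pairings on $X$, the direct summand inclusion $\iota_k\colon \mathrm{S}^k(\mathrm{N}_{X/Y}[-1])\hookrightarrow\mathrm{T}^k(\mathrm{N}_{X/Y}[-1])$ transposes to the canonical projection $\pi_k\colon \mathrm{T}^k(\mathrm{N}^*_{X/Y}[1])\twoheadrightarrow\mathrm{S}^k(\mathrm{N}^*_{X/Y}[1])$; recall also that $\Delta_k^+=\pi_k\circ\Delta_k$ and $\Delta_k^{'+}=\Delta'_k\circ\iota_k$. Now fix $p$ and, for each $0\le k\le p$, apply Lemma \ref{bienpenible} with $\mathcal{W}=\mathrm{T}^p(\mathrm{N}_{X/Y}[-1])$ and $\varphi=\iota_k\circ c_p^{'k}$. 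The morphism corresponding to $\iota_k\circ c_p^{'k}$ under the adjunction of that lemma is $(c_p^{'k})^*\circ\pi_k$, and $\Delta'_k\circ(\iota_k\circ c_p^{'k})=\Delta_k^{'+}\circ c_p^{'k}$, so the ``in particular'' part of the lemma reads
\[
(c_p^{'k})^*\circ\Delta_k^+=\bigl(\mathrm{id}\otimes(\Delta_k^{'+}\circ c_p^{'k})\bigr)\circ\mathrm{coev}\,,
\]
where $\mathrm{coev}\colon\mathcal{O}_X\to \mathrm{T}^p(\mathrm{N}^*_{X/Y}[1])\otimes\mathrm{T}^p(\mathrm{N}_{X/Y}[-1])$ is the coevaluation. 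Summing over $k$ and invoking the defining relation $\Delta'_p=\sum_k\Delta_k^{'+}\circ c_p^{'k}$ from Proposition \ref{buche2} gives $\sum_k(c_p^{'k})^*\circ\Delta_k^+=(\mathrm{id}\otimes\Delta'_p)\circ\mathrm{coev}$.

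I would then apply Lemma \ref{bienpenible} once more, now with $\mathcal{W}=\mathrm{T}^p(\mathrm{N}_{X/Y}[-1])$, $k=p$ and $\varphi=\mathrm{id}$ (which corresponds to $\mathrm{id}$ under the adjunction); this gives $(\mathrm{id}\otimes\Delta'_p)\circ\mathrm{coev}=\Delta_p$. Combining the two identities with the defining relation $\Delta_p=\sum_k c_p^k\circ\Delta_k^+$ from Proposition \ref{buche} yields
\[
\sum_{k=0}^p(c_p^{'k})^*\circ\Delta_k^+=\Delta_p=\sum_{k=0}^p c_p^k\circ\Delta_k^+\,.
\]
Since each $(c_p^{'k})^*$ lies in $\mathrm{Hom}_{\mathrm{D}^{\mathrm{b}}(X)}\bigl(\mathrm{S}^k(\mathrm{N}^*_{X/Y}[1]),\mathrm{T}^p(\mathrm{N}^*_{X/Y}[1])\bigr)$, the uniqueness statement in the definition of the structure constants $c_p^k$ above forces $(c_p^{'k})^*=c_p^k$ for all $k$, which is the assertion.

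The only step that needs genuine care is the bookkeeping of the duality identifications: one has to check that $(\mathrm{T}^pW)^*\simeq\mathrm{T}^p(W^*)$ and $(\mathrm{S}^kW)^*\simeq\mathrm{S}^k(W^*)$ are compatible with the projections $\pi_k$ and with the (anti)symmetrisation maps, and that the adjunction isomorphism used implicitly in Lemma \ref{bienpenible} coincides with the transposition $\phi\mapsto\phi^*$ induced by these dualities. This is routine once one keeps track of the signs coming from the shifts, so no real obstacle is expected.
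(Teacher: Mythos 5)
Your argument is correct and is essentially the proof the paper gives: both take the dual-side relation $\Delta'_p=\sum_k\Delta_k^{'+}\circ c_p^{'k}$, transport it through the coevaluation map using Lemma \ref{bienpenible} with $\mathcal{W}=\mathrm{T}^p\mathrm{N}_{X/Y}[-p]$ to obtain $\Delta_p=\sum_k (c_p^{'k})^*\circ\Delta_k^+$, and conclude by the uniqueness of the coefficients in Proposition \ref{buche}. Your version merely spells out the termwise application of the lemma (with $\varphi=\iota_k\circ c_p^{'k}$ and $\varphi=\mathrm{id}$) that the paper leaves implicit.
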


\begin{proof}
We take the relation  $\Delta'_p=\sum_{k=0}^p \Delta_k^{'+} \circ c^{' k}_ p $, take the tensor product by $\mathrm{id}_{\mathrm{T}^p \mathrm{N}_{X/Y}[-p]}$, and precompose with the co-evaluation map $\mathcal{O}_X \rightarrow \mathrm{id}_{\mathrm{T}^p \mathrm{N}_{X/Y}[-p]} \otimes \mathrm{id}_{\mathrm{T}^p \mathrm{N}^*_{X/Y}[p]}$. Applying Lemma \ref{bienpenible} with $\mathcal{W}=\mathrm{T}^p \mathrm{N}_{X/Y}[-p]$, we get
$\Delta_p=\sum_{k=0}^p   c^{' k}_ p \circ \Delta_k^{+}$, so $ c^{' k}_ p =c_k^p$.
\end{proof}

\begin{lemma} \label{Queyras} Let $p \geq 1$.
\begin{enumerate}
\item[(i)] The coefficient $c_p^0$ vanish.
\item[(ii)] If $1 \leq k \leq p-1$, the $\mathrm{S}^p (\mathrm{N}^*_{X/Y}[1])$ component of $c_p^k$ vanishes, so $c_p^k$ factors through $\widetilde{\Lambda}^p (\mathrm{N}^*_{X/Y}[1])$.
\item[(iii)] The coefficient $c_p^p$ is the canonical inclusion of $\mathrm{S}^p (\mathrm{N}^*_{X/Y}[1])$ in $\mathrm{T}^p (\mathrm{N}^*_{X/Y}[1])$.
\end{enumerate}
\end{lemma}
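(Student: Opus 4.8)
The plan is to read Lemma~\ref{Queyras} as the geometric incarnation of Assumptions (A1)--(A2) of \S\ref{hello}, applied to $\mathscr{A}=\mathcal{RH}om^{\ell}_{\mathcal{O}_Y}(\mathcal{O}_X,\mathcal{O}_X)$ together with the natural map coming from $\sigma_*\mathcal{RH}om_{\mathcal{O}_S}(\mathcal{O}_X,\mathcal{O}_X)\simeq\mathrm{T}(\mathrm{N}_{X/Y}[-1])$; only the underlying objects and the HKR decomposition are needed here, not the (subtle) algebra structure on $\mathscr{A}$. Assumption (A1) is precisely Proposition~\ref{buche} with $\mathcal{E}_1=\mathcal{E}_2=\mathcal{O}_X$: precomposition with the family $\{\Delta_k^+\}_{k\ge0}$ identifies $\mathrm{Hom}_{\mathrm{D}^{\mathrm{b}}(Y)}(\mathcal{O}_X,\mathcal{M})$ with $\bigoplus_k\mathrm{Hom}_{\mathrm{D}^{\mathrm{b}}(X)}(\mathrm{S}^k(\mathrm{N}^*_{X/Y}[1]),\mathcal{M})$ for every $\mathcal{M}$ on $X$, so that $(c_p^k)_{0\le k\le p}$ is nothing but the tuple of coordinates of $\Delta_p$ for $\mathcal{M}=\mathrm{T}^p(\mathrm{N}^*_{X/Y}[1])$. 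The three assertions are then the computations of, respectively, the $k=0$ coordinate, the $\mathrm{S}^p$-part of the $k<p$ coordinates, and the $k=p$ coordinate of $\Delta_p$.

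For (i) I would apply $\sigma_*$. By Proposition~\ref{matisse}(iv) the morphism $\sigma_*\Theta_{\mathcal{F}}$ vanishes for every $\mathcal{F}$, so the recursion $\Delta_p=-\,\Theta_{\mathrm{T}^{p-1}(\mathrm{N}^*_{X/Y}[1])}\circ\Delta_{p-1}$ together with functoriality of $\sigma_*$ on $\mathrm{D}^{\mathrm{b}}(S)$ gives $\sigma_*\Delta_p=0$ in $\mathrm{D}^{\mathrm{b}}(X)$ for all $p\ge1$. On the other hand, comparing the infinitesimal HKR isomorphism of Proposition~\ref{hkrbabe} on $S$ with that of Proposition~\ref{buche} on $Y$ --- they are compatible, \emph{cf.}~\cite[Thm.~4.13]{Grivaux-HKR} and \cite{Arinkin-Caldararu} --- the operation $\sigma_*$ is exactly the projection onto the $k=0$ summand, and $\Delta_0^+=\mathrm{id}$; hence $c_p^0=\sigma_*\Delta_p=0$.

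For (ii) and (iii) the starting point is that, by definition, $\Delta_p^+$ is the image of $\Delta_p$ under the canonical projection $\pi\colon\mathrm{T}^p(\mathrm{N}^*_{X/Y}[1])\twoheadrightarrow\mathrm{S}^p(\mathrm{N}^*_{X/Y}[1])$. Applying $\pi$ to $\Delta_p=\sum_k c_p^k\circ\Delta_k^+$ yields $\Delta_p^+=\sum_k(\pi\circ c_p^k)\circ\Delta_k^+$; since $\Delta_p^+$ is the $p$-th HKR generator, its coordinate tuple is $(0,\dots,0,\mathrm{id})$, so the uniqueness part of Proposition~\ref{buche} forces $\pi\circ c_p^p=\mathrm{id}$ and $\pi\circ c_p^k=0$ for $k\le p-1$. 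The latter is exactly assertion (ii), and the former says that $c_p^p$ equals the canonical inclusion $\mathrm{S}^p(\mathrm{N}^*_{X/Y}[1])\hookrightarrow\mathrm{T}^p(\mathrm{N}^*_{X/Y}[1])$ \emph{modulo} a possible map into $\widetilde{\Lambda}^p(\mathrm{N}^*_{X/Y}[1])$. To kill this last piece --- i.e.\ to get the sharp statement (iii) --- one uses that the natural map $\mathcal{RH}om_{\mathcal{O}_S}(\mathcal{O}_X,\mathcal{O}_X)\to\mathcal{RH}om^{\ell}_{\mathcal{O}_Y}(\mathcal{O}_X,\mathcal{O}_X)$ is filtered of degree $0$ for the tensor-degree filtration on the source and the split filtration of Proposition~\ref{buche} on the target, with associated graded the symmetrisation $\bigoplus_p\pi_p$; equivalently, that $\widetilde{\Lambda}^p(\mathrm{N}^*_{X/Y}[1])$ maps into $\mathrm{F}^{p-1}$, so that the antisymmetric part $\Delta_p^-$ contributes nothing to the $k=p$ coordinate. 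This can be obtained either by unwinding the recursion for $\Delta_p$ with Proposition~\ref{matisse}(i)--(ii), which recasts it as an induction of the precise shape of Proposition~\ref{chalvet} (with $\alpha=c_2^1$) whose base case in degrees $\le1$ propagates (iii) to all $p$, or directly from the comparison of the two explicit locally free resolutions of $\mathcal{O}_X$ used to build the HKR isomorphisms on $S$ and on $Y$.

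The step I expect to be the main obstacle is precisely this normalisation statement behind (iii): identifying the ``symbol'' of the comparison $\mathcal{RH}om_{\mathcal{O}_S}(\mathcal{O}_X,\mathcal{O}_X)\to\mathcal{RH}om^{\ell}_{\mathcal{O}_Y}(\mathcal{O}_X,\mathcal{O}_X)$ with the antisymmetrisation inclusion $\mathrm{S}(\mathrm{N}^*_{X/Y}[1])\hookrightarrow\mathrm{T}(\mathrm{N}^*_{X/Y}[1])$. Concretely, the difficulty is the bookkeeping of signs, shifts and scalar factors through the iterated residual Atiyah morphisms $\Theta$ and the comparison with the Atiyah--Kashiwara resolution of \cite{Grivaux-HKR}; once that is in hand, parts (i)--(iii) follow as above, with (i) coming from Proposition~\ref{matisse}(iv) and (ii) being essentially formal.
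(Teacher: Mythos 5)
Your treatment of (i) and (ii) is exactly the paper's: (i) is obtained by applying $\sigma_*$ to the defining relation and invoking Proposition \ref{matisse} (iv), and (ii) by postcomposing with the projection $\pi_p$ and using the uniqueness in Proposition \ref{buche}, precisely as you do. For (iii), the paper takes what you list as your second route: since $c_p^p$ lies in $\mathrm{Ext}^0$, i.e.\ is an honest sheaf morphism $\Lambda^p\mathrm{N}^*_{X/Y}\to\mathrm{T}^p\mathrm{N}^*_{X/Y}$, the identification with the canonical inclusion is a local statement, checked with the Koszul resolution as in \cite{Grivaux-HKR}; like you, the paper does not write this computation out.

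The point to correct is your first proposed route for (iii). An induction ``of the shape of Proposition \ref{chalvet}'' is not available here: in the geometric setting that recursion is Corollary \ref{sky}, which is established only under the tameness hypothesis (it rests on the Decomposition Lemma, Proposition \ref{key}, which the paper explicitly warns is false for non-tame cycles), whereas Lemma \ref{Queyras} is stated, and needed, for an arbitrary quantized cycle. Moreover, even where the recursion does hold it only constrains $c_p^k$ for $1\le k\le p-1$; in Proposition \ref{chalvet} the equality $c_p^p=\pi_p$ is a restatement of Assumption (A2), an input to the induction rather than an output, so no unwinding of the recursion can fix the top coefficient and kill the possible component $\mathrm{S}^p(\mathrm{N}^*_{X/Y}[1])\to\widetilde{\Lambda}^p(\mathrm{N}^*_{X/Y}[1])$. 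Finally, Lemma \ref{Queyras} is exactly what supplies Assumption (A2) in the proof of Theorem \ref{zeboss}, so deriving it from a chalvet-type recursion would be circular in the paper's logic. In short: (i) and (ii) are as in the paper, and for (iii) only your local comparison of resolutions works --- which is the paper's proof.
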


\begin{proof}
For (i), we apply $\sigma_*$ to the relation defining the $c_p^k$ and use Proposition \ref{matisse} (iv). To prove (ii), let $\pi_p$ be the projection from $T^p (\mathrm{N}^*_{X/Y}[1])$ to $S^p (\mathrm{N}^*_{X/Y}[1])$. Then
\[
\Delta_p^{+}=\sum_{k=0}^p \pi_p \circ c^{k}_ p \circ  \Delta_k^+
\]
so thanks to Proposition \ref{buche}, $\pi_p \circ c^{k}_ p=0$ if $0 \leq k \leq p-1$. For (iii) this is a purely local question, and we can use the Koszul complex as in \cite{Grivaux-HKR}.
\end{proof}

\subsubsection{The decomposition lemma}
Let us first compute the first nontrivial structure constant:
\begin{proposition} \label{muzelle}
The morphism $\Delta_2^- - \alpha \circ \Delta_1$ vanishes in $\mathrm{D}^{\mathrm{b}}(W)$. In particular, $c_2^1=\alpha$. 
\end{proposition}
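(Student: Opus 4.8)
The plan is to rewrite $\Delta_2$ as an iterated residual Atiyah class, to recognise its antisymmetric part as the obstruction class carried by the second infinitesimal neighbourhood, and to identify that obstruction with $\alpha$ by means of Corollary~\ref{feu}.

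First I would record two reductions. Unwinding the recursion $\Delta_p=-\,\Theta_{\mathrm T^{p-1}(\mathrm N^*_{X/Y}[1])}\circ\Delta_{p-1}$ together with $\Delta_1=-\,\Theta_{\mathcal O_X}$ and the functoriality statement Proposition~\ref{matisse}(i) yields
\[
\Delta_2=\bigl(\mathrm{id}_{\mathrm N^*_{X/Y}}\otimes\Theta_{\mathcal O_X}\bigr)\circ\Theta_{\mathcal O_X},
\]
so that $\Delta_2$ is the ``square'' of $\Theta_{\mathcal O_X}=-\Delta_1$ and $\Delta_2^-$ is its (graded) antisymmetrisation. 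On the other hand, Lemma~\ref{Queyras} gives $c_2^0=0$ and identifies $c_2^2$ with the canonical inclusion of $\mathrm S^2(\mathrm N^*_{X/Y}[1])$ into $\mathrm T^2(\mathrm N^*_{X/Y}[1])$, so the defining relation $\Delta_2=\sum_kc_2^k\circ\Delta_k^+$ becomes $\Delta_2=c_2^1\circ\Delta_1+\Delta_2^+$, whence $\Delta_2^-=c_2^1\circ\Delta_1$ in $\mathrm D^{\mathrm b}(Y)$. It therefore suffices to prove the identity $\Delta_2^-=\alpha\circ\Delta_1$ in $\mathrm D^{\mathrm b}(W)$: the equality $c_2^1=\alpha$ then follows by extracting the ``order one'' ($\mathrm N^*_{X/Y}$-linear) component of $\Delta_2^-=c_2^1\circ\Delta_1$, using that $\sigma_*\Theta_{\mathcal O_X}$ vanishes (Proposition~\ref{matisse}(iv)).

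For the main identity I would work throughout in $\mathrm D^{\mathrm b}(W)$, which is legitimate since $\Theta_{\mathcal O_X}$, $\Delta_1$, $\Delta_2$ and $\alpha$ all depend only on the second infinitesimal neighbourhood. By Proposition~\ref{matisse}(iii), $\Theta_{\mathcal O_X}$ is exactly the $\mathrm N^*_{X/Y}[1]$-component of the reduced $S$-Atiyah class of $\mathcal O_X$ (the $\Omega^1_X[1]$-component being zero because $\mathrm{at}_X(\mathcal O_X)=0$). Feeding this into the formula for $\Delta_2$ and invoking the standard ``curvature'' identity — the one computing the antisymmetrisation of a twice-iterated Atiyah class in terms of the Atiyah class of the ambient cotangent sheaf — one rewrites $\Delta_2^-$ as the composition of $\Delta_1$ with the $\mathrm N^*_{X/Y}\to\mathrm S^2\mathrm N^*_{X/Y}[1]$ entry of the matrix describing $\mathbb{L}i^*\mathrm{at}_Y(\Omega^1_Y)$ (equivalently, the torsion of $(\Omega^1_Y)_{|S}$ of Proposition~\ref{brique2}). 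By Corollary~\ref{feu} this entry is precisely $\alpha$, which gives $\Delta_2^-=\alpha\circ\Delta_1$ as wanted.

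The main difficulty lies in this middle step: one must keep careful track of the several graded signs introduced by the shift $[1]$, so that exactly the $\widetilde\Lambda^2$-component survives the antisymmetrisation and no spurious $\mathrm S^2(\mathrm N^*_{X/Y}[1])$-contribution remains, and one must correctly match the iterated-$\Theta$ construction of $\Delta_2$ against the restriction to $X$ of the Atiyah class of $\Omega^1_Y$ (rather than of $\Omega^1_S$) --- this last point being precisely what Propositions~\ref{brique1}--\ref{brique2} are designed to bridge. An equivalent but more hands-on alternative, in the spirit of the proofs of Propositions~\ref{brique2} and~\ref{marre}, is to work in local coordinates: writing an explicit $\mathcal O_W$-free resolution of $\mathcal O_X$, both $\Delta_2^-$ and $\alpha\circ\Delta_1$ become \v{C}ech cocycles and the identity reduces to the same $2\times2$ matrix computation that appears there.
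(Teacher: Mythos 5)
Your opening reductions are fine (the identity $\Delta_2^-=c_2^1\circ\Delta_1$ in $\mathrm{D}^{\mathrm{b}}(Y)$ is immediate from Lemma~\ref{Queyras}, and $c_2^1=\alpha$ does follow from the displayed identity by uniqueness of the coefficients via Proposition~\ref{buche}), but the heart of your argument is a genuine gap. The ``standard curvature identity'' you invoke --- expressing the antisymmetrisation of the twice-iterated class $\Theta_{\mathcal O_X}$ in terms of $\mathbb{L}i^*\mathrm{at}_Y(\Omega^1_Y)$, so that Corollary~\ref{feu} hands you the entry $\alpha$ --- is nowhere established, and in this generality it is essentially the content of Proposition~\ref{muzelle} itself, so the argument is circular as written. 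Worse, the bridge you propose is doubtful outside the diagonal case: relating $\Delta_2^-$ (built from the filtration $\mathcal I_X\supset\mathcal I_X^2\supset\mathcal I_X^3$) to the Atiyah class or torsion of $(\Omega^1_Y)_{|S}$ requires a comparison map such as ``differentiation in the second variable'', which exists for $\Delta_X\subset X\times X$ (this is exactly how the paper later proves Theorem~\ref{silex}) but has no analogue for a general quantized cycle. Note also that the delicate point is that the vanishing is asserted in $\mathrm{D}^{\mathrm{b}}(W)$, not merely after pushing forward to $Y$; any proof must explain where the second-order structure enters.

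The paper's proof is different and avoids $\Omega^1_Y$ altogether: one takes the extension class $f\colon\mathrm N^*_{X/Y}\to\mathrm S^2\mathrm N^*_{X/Y}[1]$ of $0\to\mathcal I_X^2/\mathcal I_X^3\to\mathcal I_X/\mathcal I_X^3\to\mathcal I_X/\mathcal I_X^2\to0$ over $S$, decomposes it via the infinitesimal HKR isomorphism (Proposition~\ref{hkrbabe}) as $f=\alpha+v\circ\Theta_{\mathrm N^*_{X/Y}}$ with $v$ the local symmetrisation map, and then shows $f\circ\Theta_{\mathcal O_X}=0$ in $\mathrm{D}^{\mathrm{b}}(W)$ by comparing with the sequence $0\to\mathcal I_X^2/\mathcal I_X^3\to\mathcal O_Y/\mathcal I_X^3\to\mathcal O_Y/\mathcal I_X^2\to0$: the resulting factorisation sends $\Theta_{\mathcal O_X}$ through two consecutive arrows of a distinguished triangle, hence to zero, and expanding $f\circ\Theta_{\mathcal O_X}$ gives $\alpha\circ\Delta_1-\Delta_2^-=0$. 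If you want to salvage your approach, you would have to prove your curvature identity over $W$ from scratch; the ideal-filtration argument above is precisely the mechanism that does this, so I would redirect your proof along those lines.
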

\begin{proof}
Let us consider the exact sequence
\begin{equation} \label{oisans}
0 \rightarrow \frac{\mathcal{I}_X^2}{\mathcal{I}_X^3} \rightarrow \frac{\mathcal{I}_X}{\mathcal{I}_X^3} \rightarrow \frac{\mathcal{I}_X}{\mathcal{I}_X^2} \rightarrow 0.
\end{equation}
It defines a morphism $f$ from $\mathrm{N}^*_{X/Y}$ to $\mathrm{S}^2 \mathrm{N}^*_{X/Y}[1]$ in $\mathrm{D}^{\mathrm{b}}(S)$. Applying Proposition \ref{hkrbabe}, $f$ can be written as $u+ v\circ \Theta_{\mathrm{N}^*_{X/Y}}$ where $u$ is in $\mathrm{Ext}^1(\mathrm{N}_{X/Y}^*, \mathrm{S}^2 \mathrm{N}^*_{X/Y})$ and $v$ is in $\mathrm{Hom}(\mathrm{T}^2 \mathrm{N}^*_{X/Y}, \mathrm{S}^2 \mathrm{N}^*_{X/Y})$.
The morphism $u$ is the image under $\sigma_*$ of \eqref{oisans}, so it is $\alpha$. The morphism $v$ can be computed locally, it is the 
the natural symetrization map from $\mathrm{T}^2 \mathrm{N}^*_{X/Y}$ to $\mathrm{S}^2 \mathrm{N}^*_{X/Y}$. 
Let us now consider the diagram
\[
\xymatrix{
&&&0 \ar[d]& \\
0 \ar[r]& \dfrac{\mathcal{I}_X^2}{\mathcal{I}_X^3} \ar[r] \ar[d]&  \dfrac{\mathcal{I}_X}{\mathcal{I}_X^3} \ar[r] \ar[d]&  \dfrac{\mathcal{I}_X}{\mathcal{I}_X^2} \ar[r] \ar[d]& 0 \\
0 \ar[r]& \dfrac{\mathcal{I}_X^2}{\mathcal{I}_X^3} \ar[r]&  \dfrac{\mathcal{O}_Y}{\mathcal{I}_X^3} \ar[r]&  \dfrac{\mathcal{O}_Y}{\mathcal{I}_X^2} \ar[r] \ar[d]& 0 \\
&&&\dfrac{\mathcal{O}_{Y}}{\mathcal{I}_X} \ar[d]& \\
&&&0&
}
\]
It gives a commutative diagram
\[
\xymatrix{
\mathcal{O}_X \ar[r]^-{\Theta_{\mathcal{O}_X}}& \mathrm{N}^*_{X/Y} [1] \ar[r]^{f} \ar[d]& \mathrm{S}^2 \mathrm{N}^*_{X/Y} [2] \ar@{=}[d] \\
 & \mathcal{O}_S [1] \ar[r] & \mathrm{S}^2\, \mathrm{N}^*_{X/Y} [2] 
 }
\]
in $\mathrm{D}^{\mathrm{b}}(W)$. Since the composite arrow from $\mathcal{O}_X$ to $\mathcal{O}_S[1]$ vanishes, the morphism $f \circ \Theta_{\mathcal{O}_X}$ also vanishes. But
$ f \circ \Theta_{\mathcal{O}_X}=(u+v \circ \Theta_{\mathrm{N}^*_{X/Y}}) \circ \Theta_{\mathcal{O}_X} 
= \alpha \circ \Delta_1 -  \Delta^-_2$. This gives the result.
\end{proof}
As a corollary, we get our key technical result:
\begin{proposition}[Decomposition lemma] \label{key} Assume that the quantized cycle $(X, \sigma)$ is tame. For any non-negative integer $i$, we can write
\[
\Theta_{\mathrm{T}^{i+1} \mathrm{N}^*_{X/Y}} \circ \Theta_{\mathrm{T}^i \mathrm{N}^*_{X/Y}}=\mathfrak{A}_i - (\alpha  \otimes \mathrm{id}_{\mathrm{T}^i \mathrm{N}^*_{X/Y}}) \circ \Theta_{\mathrm{T}^i \mathrm{N}^*_{X/Y}} + \mathfrak{R}_i
\]
in the derived category $\mathrm{D}^{\mathrm{b}}(S)$, where $\mathfrak{A}_i$ factors through $\Lambda^2 \mathrm{N}^*_{X/Y} \otimes \mathrm{T}^i  \mathrm{N}^*_{X/Y} [2]$, and $\mathfrak{R}_i$ vanishes in the derived category $\mathrm{D}^{\mathrm{b}}(W)$.
\end{proposition}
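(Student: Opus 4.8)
The plan is to reduce the general case to the case $i=0$ --- which is essentially Proposition~\ref{muzelle} --- by tensoring with $\mathrm{id}_{\mathrm{T}^i\mathrm{N}^*_{X/Y}}$, and the tameness hypothesis will be used precisely to control this tensoring operation at the level of $\mathrm{D}^{\mathrm{b}}(W)$. First I would apply Proposition~\ref{matisse}(ii) repeatedly, together with the obvious identification $\mathcal{O}_X\lltens{}_{\mathcal{O}_S}\sigma^*\mathcal{F}\simeq\mathcal{F}$, to obtain $\Theta_{\mathrm{T}^i\mathrm{N}^*_{X/Y}}=\Theta_{\mathcal{O}_X}\lltens{}_{\mathcal{O}_S}\mathrm{id}_{\sigma^*\mathrm{T}^i\mathrm{N}^*_{X/Y}}$ and, writing $\sigma^*\mathrm{T}^{i+1}\mathrm{N}^*_{X/Y}\simeq\sigma^*\mathrm{N}^*_{X/Y}\otimes_{\mathcal{O}_S}\sigma^*\mathrm{T}^i\mathrm{N}^*_{X/Y}$, also $\Theta_{\mathrm{T}^{i+1}\mathrm{N}^*_{X/Y}}=\Theta_{\mathrm{N}^*_{X/Y}}\lltens{}_{\mathcal{O}_S}\mathrm{id}_{\sigma^*\mathrm{T}^i\mathrm{N}^*_{X/Y}}$. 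Since $(-)\lltens{}_{\mathcal{O}_S}\mathrm{id}_{\sigma^*\mathrm{T}^i\mathrm{N}^*_{X/Y}}$ is a functor on $\mathrm{D}^-(S)$, hence compatible with composition, this gives
\[
\Theta_{\mathrm{T}^{i+1}\mathrm{N}^*_{X/Y}}\circ\Theta_{\mathrm{T}^i\mathrm{N}^*_{X/Y}}=\bigl(\Theta_{\mathrm{N}^*_{X/Y}}\circ\Theta_{\mathcal{O}_X}\bigr)\lltens{}_{\mathcal{O}_S}\mathrm{id}_{\sigma^*\mathrm{T}^i\mathrm{N}^*_{X/Y}}=\Delta_2\lltens{}_{\mathcal{O}_S}\mathrm{id}_{\sigma^*\mathrm{T}^i\mathrm{N}^*_{X/Y}}\,,
\]
the pair of new tensor factors sitting in the first two slots on both sides.

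Then I would split $\Delta_2=\Delta_2^{+}+\Delta_2^{-}$ according to $\mathrm{T}^2(\mathrm{N}^*_{X/Y}[1])\simeq\mathrm{S}^2(\mathrm{N}^*_{X/Y}[1])\oplus\widetilde{\Lambda}^2(\mathrm{N}^*_{X/Y}[1])$. Because of the shift, the symmetric summand $\mathrm{S}^2(\mathrm{N}^*_{X/Y}[1])$ is canonically $\Lambda^2\mathrm{N}^*_{X/Y}[2]$, so $\Delta_2^{+}$ factors through $\Lambda^2\mathrm{N}^*_{X/Y}[2]$, and I would set $\mathfrak{A}_i:=\Delta_2^{+}\lltens{}_{\mathcal{O}_S}\mathrm{id}_{\sigma^*\mathrm{T}^i\mathrm{N}^*_{X/Y}}$, which then factors through $\Lambda^2\mathrm{N}^*_{X/Y}\otimes\mathrm{T}^i\mathrm{N}^*_{X/Y}[2]$ as required. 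For the antisymmetric summand, Proposition~\ref{muzelle} gives $\Delta_2^{-}=\alpha\circ\Delta_1+\rho$ with $\rho$ vanishing in $\mathrm{D}^{\mathrm{b}}(W)$, and since $\Delta_1=-\Theta_{\mathcal{O}_X}$ this reads $\Delta_2^{-}=-\,\alpha\circ\Theta_{\mathcal{O}_X}+\rho$. Tensoring up and applying Proposition~\ref{matisse}(ii) once more identifies $(\alpha\circ\Theta_{\mathcal{O}_X})\lltens{}_{\mathcal{O}_S}\mathrm{id}_{\sigma^*\mathrm{T}^i\mathrm{N}^*_{X/Y}}$ with $(\alpha\otimes\mathrm{id}_{\mathrm{T}^i\mathrm{N}^*_{X/Y}})\circ\Theta_{\mathrm{T}^i\mathrm{N}^*_{X/Y}}$; setting $\mathfrak{R}_i:=\rho\lltens{}_{\mathcal{O}_S}\mathrm{id}_{\sigma^*\mathrm{T}^i\mathrm{N}^*_{X/Y}}$ then produces exactly the announced identity in $\mathrm{D}^{\mathrm{b}}(S)$, with the correct minus sign in front of $(\alpha\otimes\mathrm{id}_{\mathrm{T}^i\mathrm{N}^*_{X/Y}})\circ\Theta_{\mathrm{T}^i\mathrm{N}^*_{X/Y}}$.

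It then remains to check that $\mathfrak{R}_i$ vanishes in $\mathrm{D}^{\mathrm{b}}(W)$, and this is the one place where tameness is genuinely used. Since $(X,\sigma)$ is tame (Definition~\ref{tiny}), $\sigma^*\mathrm{N}^*_{X/Y}$ is the restriction to $S$ of a locally free sheaf $\widetilde{\mathcal{N}}$ on $W$; hence $\sigma^*\mathrm{T}^i\mathrm{N}^*_{X/Y}=(\sigma^*\mathrm{N}^*_{X/Y})^{\otimes i}$ is the restriction of the locally free $\mathcal{O}_W$-module $\widetilde{\mathcal{N}}^{\otimes i}$, i.e. it is pulled back from $W$ along the closed immersion $S\hookrightarrow W$. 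The projection formula then shows that the image of $\mathfrak{R}_i=\rho\lltens{}_{\mathcal{O}_S}\mathrm{id}_{\sigma^*\mathrm{T}^i\mathrm{N}^*_{X/Y}}$ in $\mathrm{D}^{\mathrm{b}}(W)$ is the image of $\rho$ tensored with $\mathrm{id}_{\widetilde{\mathcal{N}}^{\otimes i}}$, which vanishes because $\rho$ already vanishes there. The hard part will not be conceptual but bookkeeping: one must carefully track all the shift and Koszul signs --- that $\Delta_2^{+}$ really does land in the symmetric, hence exterior-after-shift, summand; that the functor $(-)\lltens{}_{\mathcal{O}_S}\mathrm{id}_{\sigma^*\mathrm{T}^i\mathrm{N}^*_{X/Y}}$ truly intertwines the two $\Theta$-compositions with the new factors in the first two slots; and that the final sign in front of $(\alpha\otimes\mathrm{id})\circ\Theta$ is indeed $-1$ --- the only genuine geometric content beyond the case $i=0$ being confined to the tameness-driven projection-formula step above.
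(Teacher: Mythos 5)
Your proposal is correct and follows essentially the same route as the paper: write $\Theta_{\mathrm{T}^{i+1}\mathrm{N}^*_{X/Y}}\circ\Theta_{\mathrm{T}^{i}\mathrm{N}^*_{X/Y}}=\Delta_2\lltens{}_{\mathcal{O}_S}\mathrm{id}_{\sigma^*\mathrm{T}^i\mathrm{N}^*_{X/Y}}$, split $\Delta_2$ into $\Delta_2^{+}$, $\alpha\circ\Delta_1$ and the remainder $\Delta_2^{-}-\alpha\circ\Delta_1$ (which vanishes in $\mathrm{D}^{\mathrm{b}}(W)$ by Proposition~\ref{muzelle}), and use tameness to replace $\mathrm{id}_{\sigma^*\mathrm{T}^i\mathrm{N}^*_{X/Y}}$ by the pullback of $\mathrm{id}$ on a tensor power of a locally free extension on $W$, so that the projection formula gives the vanishing of $\mathfrak{R}_i$ in $\mathrm{D}^{\mathrm{b}}(W)$. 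This is exactly the paper's argument, only spelled out in slightly more detail.
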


\begin{proof}
Denote by $\mu$ the natural map from $S$ to $W$, and let $\mathcal{S}$ be a locally free extension of $\sigma^* \mathrm{N}^*_{X/Y}$ on $W$. Then 
\begin{align*}
&\Theta_{\mathrm{T}^{i+1} \mathrm{N}^*_{X/Y}} \circ \Theta_{\mathrm{T}^{i} \mathrm{N}^*_{X/Y}} = \Delta_2 \,\lltens{}_{\mathcal{O}_S}\, \mathrm{id}_{\sigma^* \mathrm{T}^i \mathrm{N}^*_{X/Y}} \\ 
&= \Delta_2^+ \,\lltens{}_{\mathcal{O}_S}\,\mathrm{id}_{\sigma^* \mathrm{T}^i \mathrm{N}^*_{X/Y}} +  (\alpha \circ \Delta_1) \,\lltens{}_{\mathcal{O}_S}\, \mathrm{id}_{\sigma^* \mathrm{T}^i \mathrm{N}^*_{X/Y}} + (\Delta_2^- - \alpha \circ \Delta_1) \,\lltens{}_{\mathcal{O}_S}\, \mathrm{id}_{\sigma^* \mathrm{T}^i \mathrm{N}^*_{X/Y}}.
\end{align*}
We claim that $(\Delta_2^- - \alpha \circ \Delta_1) \,\lltens{}_{\mathcal{O}_S}\, \mathrm{id}_{\sigma^* \mathrm{T}^i \mathrm{N}^*_{X/Y}}$ vanishes in $\mathrm{D}^{\mathrm{b}}(W)$. Indeed,
\begin{align*}
\mu_* \left( (\Delta_2^- - \alpha \circ \Delta_1) \,\lltens{}_{\mathcal{O}_S}\, \mathrm{id}_{\sigma^* \mathrm{T}^i \mathrm{N}^*_{X/Y}} \right) 
&=  \mu_* \left( (\Delta_2^- - \alpha \circ \Delta_1) \,\lltens{}_{\mathcal{O}_S}\, \mathbb{L} \mu^* \mathrm{id}_{\mathrm{T}^i \mathcal{S}} \right) \\
&=  \mu_* \left( \Delta_2^- - \alpha \circ \Delta_1 \right) \,\lltens{}_{\mathcal{O}_W}\, \mathrm{id}_{\mathrm{T}^i \mathcal{S}}.
\end{align*}
\end{proof}
\begin{remark} \raisebox{0.08cm}{\danger}
Proposition \ref{key} is false in general when the quantized cycle $(X, \sigma)$ is not tame: the morphism 
$ (\Delta_2^- - \alpha \circ \Delta_1) \,\lltens{}_{\mathcal{O}_S}\, \mathrm{id}_{\sigma^* \mathrm{T}^i \mathrm{N}^*_{X/Y}}$ can be nonzero in $\mathrm{D}^{\mathrm{b}}(W)$.
\end{remark} 
For $1 \leq p \leq p-1$, let
\[
\Psi_p \colon \mathrm{T}^p (\mathrm{N}^*_{X/Y}[1]) \rightarrow \bigoplus_{i=1}^{p-1} \mathrm{T}^{i-1}  (\mathrm{N}^*_{X/Y}[1]) \otimes 
\Lambda^2 (\mathrm{N}^*_{X/Y}[1]) \otimes  \mathrm{T}^{p-i-1}(\mathrm{N}^*_{X/Y}[1])
\]
be the map obtained by (graded) antisymetrization of two consecutive factors. We also denote by $\chi_{p, i}$ the components of $\Psi_p$.
\begin{corollary} \label{sky} Assume to be given a tame cycle $(X, \sigma)$ in $Y$. Then
\[
\Psi_{p} \circ c_{p}^k=\{ (\mathrm{id}_{\mathrm{T}^{i-1} \mathrm{N}^*_{X/Y}} \otimes \alpha \otimes \mathrm{id}_{\mathrm{T}^{p-i-1} \mathrm{N}^*_{X/Y}}) \circ c_{p-1}^k \}_{1 \leq i \leq p-1} \,\,\, \textrm{if} \,\,\, 1 \leq  k \leq p-1.
\]
\end{corollary}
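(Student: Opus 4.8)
The statement is the geometric incarnation of the inductive formula of Proposition~\ref{chalvet}: it says that the structure constants $c_p^k$ of the quantized HKR isomorphism obey exactly the recursion~\eqref{hell}, with the Decomposition Lemma (Proposition~\ref{key}) playing the role that the algebra–morphism property of $\Delta$ (through Lemma~\ref{pff}) played in the categorical setting. The plan is therefore to apply $\Psi_p$ to the defining relation $\Delta_p=\sum_{k=0}^{p}c_p^k\circ\Delta_k^+$, to rewrite the left-hand side using the iterated definition $\Delta_m=-\,\Theta_{\mathrm{T}^{m-1}\mathrm{N}^*_{X/Y}}\circ\Delta_{m-1}$ together with Proposition~\ref{key}, and finally to read off the coefficients via the HKR decomposition.

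Fix $i\in\llbracket 1,p-1\rrbracket$. First I would use the recursion repeatedly to write
\[
\Delta_p=\pm\,\Theta_{\mathrm{T}^{p-1}\mathrm{N}^*_{X/Y}}\circ\cdots\circ\Theta_{\mathrm{T}^{p-i+1}\mathrm{N}^*_{X/Y}}\circ\big(\Theta_{\mathrm{T}^{p-i}\mathrm{N}^*_{X/Y}}\circ\Theta_{\mathrm{T}^{p-i-1}\mathrm{N}^*_{X/Y}}\big)\circ\Delta_{p-i-1},
\]
so that the two factors created by the bracketed composition are precisely the $i$-th and $(i+1)$-st tensor slots of $\Delta_p$ --- the slots that $\chi_{p,i}$ antisymmetrizes --- while the outer $\Theta$'s only add the slots $1,\dots,i-1$ on the left. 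By naturality of $\Theta$ (Proposition~\ref{matisse}(i)) the morphism $\chi_{p,i}$ slides past those outer $\Theta$'s. Substituting Proposition~\ref{key} for the bracketed composition, the term $\mathfrak{A}_{p-i-1}$ --- which factors through $\Lambda^2\mathrm{N}^*_{X/Y}\otimes\mathrm{T}^{p-i-1}\mathrm{N}^*_{X/Y}[2]$, i.e.\ through the part $\mathrm{S}^2(\mathrm{N}^*_{X/Y}[1])$ of the two slots that is symmetric in the shifted sense --- is killed by the antisymmetrization; the term $-(\alpha\otimes\mathrm{id}_{\mathrm{T}^{p-i-1}\mathrm{N}^*_{X/Y}})\circ\Theta_{\mathrm{T}^{p-i-1}\mathrm{N}^*_{X/Y}}$, once $\alpha$ (which already lands in $\Lambda^2(\mathrm{N}^*_{X/Y}[1])$) is transported back through the outer $\Theta$'s by naturality, reassembles into $(\mathrm{id}_{\mathrm{T}^{i-1}\mathrm{N}^*_{X/Y}}\otimes\alpha\otimes\mathrm{id}_{\mathrm{T}^{p-i-1}\mathrm{N}^*_{X/Y}})\circ\Delta_{p-1}$, the signs coming from the various recursions cancelling; and the term $\mathfrak{R}_{p-i-1}$, hence its whole contribution to $\chi_{p,i}\circ\Delta_p$ obtained by pre- and post-composition with $\Delta_{p-i-1}$ and the outer $\Theta$'s, vanishes in $\mathrm{D}^{\mathrm{b}}(W)$ because $\mu_*$ is a functor and $\mu_*\mathfrak{R}_{p-i-1}=0$. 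The upshot is the identity $\chi_{p,i}\circ\Delta_p=(\mathrm{id}_{\mathrm{T}^{i-1}\mathrm{N}^*_{X/Y}}\otimes\alpha\otimes\mathrm{id}_{\mathrm{T}^{p-i-1}\mathrm{N}^*_{X/Y}})\circ\Delta_{p-1}$ in $\mathrm{D}^{\mathrm{b}}(W)$, hence (applying the pushforward along $W\hookrightarrow Y$, which is a functor) in $\mathrm{D}^{\mathrm{b}}(Y)$.

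It then remains to extract the coefficients. Plugging $\Delta_p=\sum_k c_p^k\circ\Delta_k^+$ and $\Delta_{p-1}=\sum_k c_{p-1}^k\circ\Delta_k^+$ into the identity just obtained and using that the maps $\Delta_k^+$ realise the HKR decomposition of $\mathrm{Hom}_{\mathrm{D}^{\mathrm{b}}(Y)}(\mathcal{O}_X,-)$ (Proposition~\ref{buche}), I would match the $k$-th components to get $\chi_{p,i}\circ c_p^k=(\mathrm{id}_{\mathrm{T}^{i-1}\mathrm{N}^*_{X/Y}}\otimes\alpha\otimes\mathrm{id}_{\mathrm{T}^{p-i-1}\mathrm{N}^*_{X/Y}})\circ c_{p-1}^k$ for every $k$; the cases $k=0$ and $k=p$ are automatic, since by Lemma~\ref{Queyras} one has $c_p^0=0$ and $c_p^p$ is the canonical inclusion of $\mathrm{S}^p(\mathrm{N}^*_{X/Y}[1])$, on which $\chi_{p,i}$ vanishes. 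Letting $i$ range over $\llbracket 1,p-1\rrbracket$ yields the asserted equality $\Psi_p\circ c_p^k=\{(\mathrm{id}\otimes\alpha\otimes\mathrm{id})\circ c_{p-1}^k\}_{1\le i\le p-1}$ for $1\le k\le p-1$.

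I expect the main difficulty to be twofold, and to lie entirely in the second and third steps. One part is the bookkeeping of shifts and Koszul signs when sliding $\chi_{p,i}$ and $\alpha$ through long chains of $\Theta$'s via Proposition~\ref{matisse}(i) --- in particular checking that the accumulated sign is exactly $+1$, as the statement requires. The more conceptual part is justifying that an identity established only in $\mathrm{D}^{\mathrm{b}}(W)$ suffices to pin down the $\mathcal{O}_X$-linear morphisms $c_p^k$: this is precisely where tameness is genuinely used, because it is tameness, via Proposition~\ref{key}, that makes $\mathfrak{R}_{p-i-1}$ --- and hence the whole discrepancy between $\chi_{p,i}\circ\Delta_p$ and $(\mathrm{id}\otimes\alpha\otimes\mathrm{id})\circ\Delta_{p-1}$ --- die already at the second infinitesimal neighbourhood, so that the clean identity propagates to $\mathrm{D}^{\mathrm{b}}(Y)$ where Proposition~\ref{buche} lets one separate components; without tameness this discrepancy survives (see the warning after Proposition~\ref{key}) and the recursion fails.
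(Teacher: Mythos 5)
Your proof is correct and follows essentially the same route as the paper's: substitute the decomposition of Proposition \ref{key} into the iterated-$\Theta$ expression for $\Delta_p$, observe that the $\mathfrak{A}$-term is killed by the graded antisymmetrization $\chi_{p,i}$ while the $\mathfrak{R}$-term dies in $\mathrm{D}^{\mathrm{b}}(W)$ by tameness, slide $\alpha$ out through the remaining $\Theta$'s by naturality (Proposition \ref{matisse}(i)), and then separate components using Proposition \ref{buche} together with Lemma \ref{Queyras}. Your bookkeeping of which pair of slots is bracketed is in fact slightly more careful than the paper's own indexing, but the argument is the same.
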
 

\begin{proof}
Using Proposition \ref{key}, we have in $\mathrm{D}^{\mathrm{b}}(S)$
\begin{align*}
\Delta_p&=(-1)^p \, \Theta_{\mathrm{T}^{p-1} \mathrm{N}^*_{X/Y}} \circ \cdots \circ   (\Theta_{\mathrm{T}^{i+1} \mathrm{N}^*_{X/Y}} \circ  \Theta_{\mathrm{T}^{i} \mathrm{N}^*_{X/Y}})  \circ \cdots \circ  \Theta_{\mathcal{O}_X} \\
&=(\mathrm{id}_{\mathrm{T}^{p-i-1} \mathrm{N}^*_{X/Y}} \otimes \alpha \otimes \mathrm{id}_{\mathrm{T}^i \mathrm{N}^*_{X/Y}}) \circ \Delta_{p-1}+(-1)^p \, \Theta_{\mathrm{T}^{p-1} \mathrm{N}^*_{X/Y}} \circ \cdots \circ (\mathfrak{U}_i+\mathfrak{R}_i) \circ \cdots \circ  \Theta_{\mathcal{O}_X}
\end{align*}
It implies that in $\mathrm{D}^{\mathrm{b}}(W)$, 
\begin{align*}
\chi_{p,i} \circ \Delta_p&=(\mathrm{id}_{\mathrm{T}^{p-i-1} \mathrm{N}^*_{X/Y}} \otimes \alpha \otimes \mathrm{id}_{\mathrm{T}^i \mathrm{N}^*_{X/Y}}) \circ \Delta_{p-1} \\
&=\sum_{k=1}^{p-1} (\mathrm{id}_{\mathrm{T}^{p-i-1} \mathrm{N}^*_{X/Y}} \otimes \alpha \otimes \mathrm{id}_{\mathrm{T}^i \mathrm{N}^*_{X/Y}}) \circ   c_{p-1}^k  \circ \Delta^+_k. 
\end{align*}
On the other hand
\[
\chi_{p,i} \circ \Delta_p=\sum_{k=1}^{p-1} \chi_{p,i} \circ c_p^k \circ \Delta^+_k.
\]
We conclude using Proposition \ref{buche}.
\end{proof}

\begin{corollary} \label{sky2}
If $(X, \sigma)$ is tame, $\Delta_p=\sum_{k=1}^p c_p^k \circ \Delta^+_k$ in $\mathrm{D}^{\mathrm{b}}(W)$.
\end{corollary}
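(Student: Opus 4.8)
The plan is to deduce this from Corollary \ref{sky} together with the results on partially antisymmetric tensors from \S2. Recall that by definition we have $\Delta_p = \sum_{k=0}^p c_p^k \circ \Delta_k^+$ \emph{in $\mathrm{D}^{\mathrm{b}}(Y)$}, and a fortiori in $\mathrm{D}^{\mathrm{b}}(W)$; moreover by Lemma \ref{Queyras}(i) the coefficient $c_p^0$ vanishes, so really $\Delta_p = \sum_{k=1}^p c_p^k \circ \Delta_k^+$ already in $\mathrm{D}^{\mathrm{b}}(Y)$. Wait — that would make the statement of Corollary \ref{sky2} trivial, so the point must be more subtle: the issue is that in \S5.1.1 the coefficients $c_p^k$ were \emph{defined} using Proposition \ref{buche}, i.e. the identity $\Delta_p = \sum c_p^k \circ \Delta_k^+$ holds in $\mathrm{D}^{\mathrm{b}}(Y)$ and hence in $\mathrm{D}^{\mathrm{b}}(W)$ automatically, whereas what Corollary \ref{sky2} is really asserting is a \emph{recursive} characterization: the $c_p^k$ produced here, restricted to the second neighbourhood $W$, satisfy the induction formula from Corollary \ref{sky}, and therefore (since that induction formula, together with $c_p^p = \pi_p$ and the $\Lambda^p$-vanishing from Lemma \ref{Queyras}(ii)--(iii), \emph{uniquely determines} the $c_p^k$ by Lemma \ref{cric}) they coincide with the purely Lie-theoretic structure constants of Theorem \ref{saroumane}. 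So the honest content is: the geometric $c_p^k$ agree with the algebraic ones, hence the relation $\Delta_p = \sum_{k=1}^p c_p^k \circ \Delta_k^+$ in $\mathrm{D}^{\mathrm{b}}(W)$ is the one governed by the $\alpha$-recursion.

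Concretely, I would argue as follows. First, restrict everything to $\mathrm{D}^{\mathrm{b}}(W)$ via the pushforward along $\mu\colon S\to W$, or rather work with the objects pulled back from $W$. By Corollary \ref{sky}, the geometric coefficients satisfy
\[
\Psi_p \circ c_p^k = \big\{ (\mathrm{id}_{\mathrm{T}^{i-1}\mathrm{N}^*_{X/Y}}\otimes \alpha\otimes \mathrm{id}_{\mathrm{T}^{p-i-1}\mathrm{N}^*_{X/Y}})\circ c_{p-1}^k\big\}_{1\leq i\leq p-1}
\]
for $1\leq k\leq p-1$, together with $c_p^p=\pi_p$ (Lemma \ref{Queyras}(iii)) and, by Lemma \ref{Queyras}(ii), each $c_p^k$ with $1\leq k\leq p-1$ factors through $\widetilde\Lambda^p(\mathrm{N}^*_{X/Y}[1])$. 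These are precisely the equations \eqref{hell} of Theorem \ref{saroumane}, applied in the symmetric monoidal category $\mathrm{D}^{\mathrm{b}}(X)$ to the object $V=\mathrm{N}_{X/Y}[-1]$ (or dually to $\mathrm{N}^*_{X/Y}[1]$) with bracket $\alpha = c_2^1$ (Proposition \ref{muzelle}). By Lemma \ref{cric}, $\Psi_p$ identifies $\widetilde\Lambda^p$ with a direct factor admitting the explicit section $\Phi_p$, so the displayed system determines $c_p^k$ uniquely by downward recursion on $p$; this is exactly the uniqueness clause of Theorem \ref{saroumane}(1). In particular the tameness hypothesis is what makes $\alpha$ a Lie bracket (via Proposition \ref{Jacobi}, since $c_3^1\circ\Psi_3 = (\alpha\circ(\alpha\otimes\mathrm{id}),\alpha\circ(\mathrm{id}\otimes\alpha))$ follows from Corollary \ref{sky} with $p=3$, $k=1$) and hence makes the system \eqref{hell} solvable and consistent.

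Finally, to get the stated identity $\Delta_p = \sum_{k=1}^p c_p^k\circ \Delta_k^+$ in $\mathrm{D}^{\mathrm{b}}(W)$ I would run the same downward induction as in the proof of Corollary \ref{sky}. For $p\leq 1$ it is trivial; the base case $p=2$ is Proposition \ref{muzelle} (which gives $\Delta_2 = \Delta_2^+ + \alpha\circ\Delta_1$ in $\mathrm{D}^{\mathrm{b}}(W)$, using $c_2^1=\alpha$ and $c_2^2=\pi_2$). For the inductive step, write $\Delta_p = (-1)^p\,\Theta_{\mathrm{T}^{p-1}\mathrm{N}^*_{X/Y}}\circ\cdots\circ\Theta_{\mathcal{O}_X}$ and insert the Decomposition Lemma (Proposition \ref{key}) at one consecutive pair of $\Theta$'s: this produces $(\mathrm{id}\otimes\alpha\otimes\mathrm{id})\circ\Delta_{p-1}$ plus a term built from $\mathfrak{A}_i$ (which lands in $\Lambda^2$) plus $\mathfrak{R}_i$ (which vanishes in $\mathrm{D}^{\mathrm{b}}(W)$). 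Composing with $\Delta_k^+$, using the induction hypothesis for $\Delta_{p-1}$ and Corollary \ref{sky}, the $(\mathrm{id}\otimes\alpha\otimes\mathrm{id})\circ\Delta_{p-1}$ term reassembles into $\sum_k c_p^k\circ\Delta_k^+$ restricted along the various $\chi_{p,i}$, while the $\mathfrak{A}_i$-term, being valued in $\widetilde\Lambda$, is determined on $\widetilde\Lambda^p$ by the same data and matches; since by Lemma \ref{cric} a morphism out of $V^{\otimes p}$ is pinned down by its composites with $\Psi_p$ on the $\widetilde\Lambda$-part and by its $\mathrm{S}^p$-part (which is $\pi_p = c_p^p$ by Lemma \ref{Queyras}(iii), and $\Delta_p$ restricted to $\mathrm S^p$ equals $\Delta_p^+$), the two sides agree. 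The main obstacle is bookkeeping: carefully matching the geometric recursion of Corollary \ref{sky} with the categorical recursion \eqref{hell}, and checking that the $\mathfrak{A}_i$-contributions are exactly what the $c_p^k$ prescribe on $\widetilde\Lambda^p$ — everything else ($\mathfrak{R}_i$ dying in $\mathrm{D}^{\mathrm{b}}(W)$, the $\mathrm{S}^p$-component being forced) is formal.
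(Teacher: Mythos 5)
Your opening paragraph contains a genuine error that undermines your reading of the statement: the defining relation $\Delta_p=\sum_{k=1}^p c_p^k\circ\Delta_k^+$, which holds in $\mathrm{D}^{\mathrm{b}}(Y)$ by Proposition \ref{buche} and Lemma \ref{Queyras}, is \emph{not} ``a fortiori'' or ``automatically'' true in $\mathrm{D}^{\mathrm{b}}(W)$. The implication goes the wrong way: the morphisms live over $S$, and equality of their images in $\mathrm{D}^{\mathrm{b}}(Y)$ does not force equality of their images in the intermediate thickening $W$, because the comparison maps $\mathrm{Ext}^{\bullet}_{W}(\mathcal{O}_X,-)\to\mathrm{Ext}^{\bullet}_{Y}(\mathcal{O}_X,-)$ are not injective in general. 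That descent from $Y$ to $W$ is precisely the content of Corollary \ref{sky2}: it genuinely uses tameness for $p\geq 3$ (through the Decomposition Lemma, Proposition \ref{key} --- see the warning Remark following it), while the case $p=2$ is Proposition \ref{muzelle}; it is also why Theorem \ref{serpette2} later is a nontrivial statement. Relatedly, your second paragraph misidentifies what the corollary asserts: it is not the identification of the geometric $c_p^k$ with the structure constants of Theorem \ref{saroumane} (that identification is performed afterwards, in Case C of the proof of Theorem \ref{zeboss}, using Corollary \ref{sky}); it is literally the displayed identity in $\mathrm{D}^{\mathrm{b}}(W)$, so that whole detour should be dropped.

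Once this is corrected, your final paragraph is, up to bookkeeping, the paper's own proof: induct on $p$ with base case Proposition \ref{muzelle}; note that $\Delta_p-\sum_{k=1}^p c_p^k\circ\Delta_k^+$ has vanishing $\mathrm{S}^p$-component (Lemma \ref{Queyras}), so it suffices to show that its composition with each antisymmetrization $\chi_{p,i}$ vanishes in $\mathrm{D}^{\mathrm{b}}(W)$; tameness via Proposition \ref{key} gives $\chi_{p,i}\circ\Delta_p=(\mathrm{id}\otimes\alpha\otimes\mathrm{id})\circ\Delta_{p-1}$ in $\mathrm{D}^{\mathrm{b}}(W)$, Corollary \ref{sky} handles $\chi_{p,i}\circ c_p^k$ for $k\leq p-1$ while $\chi_{p,i}$ kills $c_p^p$, so the composite equals $(\mathrm{id}\otimes\alpha\otimes\mathrm{id})\circ\bigl(\Delta_{p-1}-\sum_{k=1}^{p-1}c_{p-1}^k\circ\Delta_k^+\bigr)$, which dies by the induction hypothesis. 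So the core argument is correct and coincides with the paper's; what must be excised is the false ``automatic'' claim and the resulting misreading of the statement.
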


\begin{proof}
We argue by induction on $p$. For $p=2$, this is Proposition \ref{muzelle}. Assume that the property holds at the order $p-1$. The morphism 
$\Delta_p-\sum_{k=1}^p c_p^k \circ \Delta^+_k$ takes values in $\widetilde{\Lambda}^p (\mathrm{N}^*_{X/Y}[1])$. Hence it suffices to prove that
$\Psi_p \circ (\Delta_p-\sum_{k=1}^p c_p^k \circ \Delta^+_k)$ vanishes in $\mathrm{D}^{\mathrm{b}}(W)$. Using the notation of the preceding proof, 
\[
\chi_{p, i} \circ (\Delta_p-\sum_{k=1}^p c_p^k \circ \Delta^+_k) =(\mathrm{id}_{\mathrm{T}^{p-i-1} \mathrm{N}^*_{X/Y}} \otimes \alpha \otimes \mathrm{id}_{\mathrm{T}^i \mathrm{N}^*_{X/Y}}) \circ \left(\Delta_{p-1}-\sum_{k=1}^{p-1} c_{p-1}^k \circ \Delta^+_k \right)
\]
in $\mathrm{D}^{\mathrm{b}}(W)$. Then we use the induction hypothesis.
\end{proof}

\subsubsection{Main result}
We can come to our main result.
\begin{theorem} \label{zeboss}
Let $(X, \sigma)$ be a tame quantized cycle in $Y$.
The class $\alpha$ defines a Lie coalgebra structure on $\mathrm{N}^*_{X/Y}[1]$, hence a Lie algebra structure on $\mathrm{N}_{X/Y}[-1]$. Besides, the objects $\mathcal{RH}om^{\ell}_{\mathcal{O}_Y}(\mathcal{O}_X, \mathcal{O}_X)$ and $\mathcal{RH}om^{r}_{\mathcal{O}_Y}(\mathcal{O}_X, \mathcal{O}_X)$ are naturally algebra objects in the derived category $\mathrm{D}^{\mathrm{b}}(X)$, and there are commutative diagrams
\[
\xymatrix{
\sigma_* \mathcal{RH}om_{\mathcal{O}_S}(\mathcal{O}_X, \mathcal{O}_X) \ar[r] \ar[dd]_-{\mathrm{HKR}} & \mathcal{RH}om^{\ell}_{\mathcal{O}_Y}(\mathcal{O}_X, \mathcal{O}_X) \ar[d]^-{\mathrm{HKR}} \\
&  \mathrm{S}(\mathrm{N}_{X/Y}[-1]) \ar[d]^-{\mathrm{PBW}} \\
\mathrm{T}(\mathrm{N}_{X/Y}[-1]) \ar[r]& \mathrm{U}(\mathrm{N}_{X/Y}[-1])
}
\] 
and
\[
\xymatrix{
\sigma_* \mathcal{RH}om_{\mathcal{O}_S}(\mathcal{O}_X, \mathcal{O}_X) \ar[r] \ar[dd]_-{\mathrm{dual \,\, HKR}} & \mathcal{RH}om^{r}_{\mathcal{O}_Y}(\mathcal{O}_X, \mathcal{O}_X) \ar[d]^-{\mathrm{dual \, \, HKR}} \\
&  \mathrm{S}(\mathrm{N}_{X/Y}[-1]) \ar[d]^-{\mathrm{PBW}} \\
\mathrm{T}(\mathrm{N}_{X/Y}[-1]) \ar[r]& \mathrm{U}(\mathrm{N}_{X/Y}[-1])
}
\] 
where all horizontal arrows are algebra morphisms.
\end{theorem}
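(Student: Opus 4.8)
The plan is to reduce the whole statement to the categorical machinery of \S2--\S3, applied to the dualizable object $V:=\mathrm{N}_{X/Y}[-1]$ in the $\mathbf{k}$-linear symmetric monoidal Karoubian category $\mathcal{C}:=\mathrm{D}^{\mathrm{b}}(X)$; all the direct sums that occur are in fact finite, since $\Lambda^p\mathrm{N}^*_{X/Y}=0$ for $p>\operatorname{codim}_Y X$, so no completeness hypothesis is needed. First I would check that the geometric structure constants of \S5.1.1 form a solution of the system~\eqref{hell} of Theorem~\ref{saroumane}. By the comparison lemma of \S5.1.1 the dual coefficients $c_p^{'k}$ are the duals of the $c_p^k$ and live in $\mathrm{Hom}_{\mathcal{C}}(\mathrm{T}^p(\mathrm{N}_{X/Y}[-1]),\mathrm{S}^k(\mathrm{N}_{X/Y}[-1]))$; by Lemma~\ref{Queyras} they factor through $\widetilde{\Lambda}^pV$ for $1\le k\le p-1$, with $c_p^{'0}=0$ and $c_p^{'p}=\pi_p$; by Proposition~\ref{muzelle} one has $c_2^{'1}=\alpha^\vee$; and dualizing Corollary~\ref{sky} (the map $\Psi_p$ of \S5 dualizes to the map $\Psi_p$ of \S2, and the dual of $\chi_{p,i}$ is the $i$-th component $V^{\otimes i-1}\otimes\Lambda^2V\otimes V^{\otimes p-i-1}\to V^{\otimes p}$ of $\Psi_p$) yields exactly the induction relation $c_p^{'k}\circ\Psi_p=\{c_{p-1}^{'k}\circ(\mathrm{id}\otimes\alpha^\vee\otimes\mathrm{id})\}_{1\le i\le p-1}$. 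Hence $(c_p^{'k})$ solves \eqref{hell} for $V$ with candidate bracket $\alpha^\vee$.

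By the first part of Theorem~\ref{saroumane}, the mere existence of such a solution forces $\alpha^\vee$ to be a Lie bracket on $\mathrm{N}_{X/Y}[-1]$, equivalently $\alpha$ is a Lie co-bracket on $\mathrm{N}^*_{X/Y}[1]$; this is the first assertion. The second part then tells us that the product $m_\star$ on $\mathrm{S}(\mathrm{N}_{X/Y}[-1])$ assembled from the $c_p^{'k}$ is associative, that $(\mathrm{S}(\mathrm{N}_{X/Y}[-1]),m_\star)$ is a universal enveloping algebra of $(\mathrm{N}_{X/Y}[-1],2\alpha^\vee)$, and that PBW holds. Transporting $m_\star$ along the HKR isomorphisms of Propositions~\ref{buche} and~\ref{buche2} equips $\mathcal{RH}om^{\ell}_{\mathcal{O}_Y}(\mathcal{O}_X,\mathcal{O}_X)$ and $\mathcal{RH}om^{r}_{\mathcal{O}_Y}(\mathcal{O}_X,\mathcal{O}_X)$ with the algebra structures of the statement.

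It then remains to identify the horizontal arrows and check commutativity. The left vertical HKR isomorphism $\sigma_*\mathcal{RH}om_{\mathcal{O}_S}(\mathcal{O}_X,\mathcal{O}_X)\simeq\mathrm{T}(\mathrm{N}_{X/Y}[-1])$ of Proposition~\ref{hkrbabe} is an isomorphism of algebras, the Yoneda product on the first-neighbourhood Ext sheaf corresponding to concatenation. Under the three HKR identifications the top horizontal map becomes, on the $p$-th summand, the morphism $\sum_k c_p^{'k}:\mathrm{T}^p(\mathrm{N}_{X/Y}[-1])\to\mathrm{S}(\mathrm{N}_{X/Y}[-1])$: this is precisely Corollary~\ref{sky2}, which states $\Delta_p=\sum_k c_p^{'k}\circ\Delta_k^{+}$ in $\mathrm{D}^{\mathrm{b}}(W)$ (after precomposing with $\mathcal{O}_X$-valued derived Hom's). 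By the discussion of \S\ref{hello}, where $\sum_k c_p^k=\Delta^p=\Delta_+^{-1}\circ\Delta_{|V^{\otimes p}}$, this is exactly the canonical quotient $\mathrm{T}(\mathrm{N}_{X/Y}[-1])\twoheadrightarrow\mathrm{U}(\mathrm{N}_{X/Y}[-1])$, i.e. the bottom horizontal arrow. Being the universal quotient it is an algebra morphism; hence so are all the horizontal arrows, and both diagrams commute on the nose. The dual diagram is handled identically, with Proposition~\ref{buche2} replacing Proposition~\ref{buche} and Lemma~\ref{bienpenible} used to identify the two HKR pictures.

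The genuinely delicate point is the existence of these algebra structures at all, i.e. that $\mathcal{RH}om^{\ell}_{\mathcal{O}_Y}(\mathcal{O}_X,\mathcal{O}_X)$ and $\mathcal{RH}om^{r}_{\mathcal{O}_Y}(\mathcal{O}_X,\mathcal{O}_X)$ carry natural associative products — this is where tameness is indispensable and where the situation differs from \cite{CalaqueCT}. Concretely, the obstruction is the non-vanishing of $(\Delta_2^- - \alpha\circ\Delta_1)\lltens{}_{\mathcal{O}_S}\mathrm{id}$ in $\mathrm{D}^{\mathrm{b}}(W)$ in the non-tame case (see the warning after Proposition~\ref{key}); tameness makes it vanish, which is exactly what powers the Decomposition Lemma and hence Corollaries~\ref{sky} and~\ref{sky2}, i.e. the entire argument above. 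Everything else is bookkeeping with the structure constants.
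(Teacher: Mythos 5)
Your proposal is correct and is essentially the paper's own argument in the general tame case (Case C of its proof): check that the geometric structure constants satisfy the system \eqref{hell} using Lemma \ref{Queyras}, Proposition \ref{muzelle} and Corollary \ref{sky}, invoke Theorem \ref{saroumane} to obtain the Lie structure and the product $m_\star$, transport that product through the HKR isomorphisms of Propositions \ref{buche} and \ref{buche2} to get the algebra structures on $\mathcal{RH}om^{\ell}$ and $\mathcal{RH}om^{r}$, and read off commutativity of the diagrams as the coincidence of the geometric and algebraic coefficients (uniqueness in Theorem \ref{saroumane}). Two minor remarks that do not affect the argument: identifying the top horizontal arrow with $\sum_k c_p^k$ only requires the defining relation $\Delta_p=\sum_k c_p^k\circ\Delta_k^+$ in $\mathrm{D}^{\mathrm{b}}(Y)$ rather than the stronger Corollary \ref{sky2}, and $\mathrm{T}(\mathrm{N}_{X/Y}[-1])$ is an infinite (only degreewise finite) direct sum, which is why the paper implicitly works in $\mathrm{D}^{+}(X)$ rather than with genuinely finite sums.
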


\begin{proof}[Strategy of proof]
Let us first discuss the statement, as well as the main points involved in the proof.
\begin{enumerate}
\item[--] The object $\mathcal{RH}om_{\mathcal{O}_Y}(\mathcal{O}_X, \mathcal{O}_X)$ is always an algebra object in the category $\mathrm{D}^{\mathrm{b}}(Y)$, but this structure doesn't always come in full generality from an algebra object structure on $\mathcal{RH}om^{r}_{\mathcal{O}_Y}(\mathcal{O}_X, \mathcal{O}_X)$ or on $\mathcal{RH}om^{ \ell}_{\mathcal{O}_Y}(\mathcal{O}_X, \mathcal{O}_X)$ in $\mathrm{D}^{+}(X)$. Indeed, if $\mathcal{RH}om^{ \ell}_{\mathcal{O}_Y}(\mathcal{O}_X, \mathcal{O}_X)$ has such an algebra structure, the natural morphism
\[
\mathrm{N}_{X/Y}[-1] \rightarrow \mathcal{RH}om^{ \ell}_{\mathcal{O}_Y} (\mathrm{N}^*_{X/Y}[1], \mathcal{O}_X) \rightarrow \mathcal{RH}om^{ \ell}_{\mathcal{O}_Y}(\mathcal{O}_X, \mathcal{O}_X)
\]
obtained by precomposition with $\Delta_1$ yields a morphism $\mathrm{T}(\mathrm{N}_{X/Y}[-1]) \rightarrow \mathcal{RH}om^{ \ell}_{\mathcal{O}_Y}(\mathcal{O}_X, \mathcal{O}_X)$, and the composite morphism
\[
\mathrm{S}(\mathrm{N}_{X/Y}[-1]) \hookrightarrow \mathrm{T}(\mathrm{N}_{X/Y}[-1]) \rightarrow \mathcal{RH}om^{ \ell}_{\mathcal{O}_Y}(\mathcal{O}_X, \mathcal{O}_X)
\]
is an isomorphism in $\mathrm{D}^{+}(X)$, which is not always the case.
\vspace{0.2cm}
\item[--] 
Assuming that we have an algebra structure on $\mathcal{RH}om^{r}_{\mathcal{O}_Y}(\mathcal{O}_X, \mathcal{O}_X)$ and on $\mathcal{RH}om^{ \ell}_{\mathcal{O}_Y}(\mathcal{O}_X, \mathcal{O}_X)$ making the top row of each diagram if Theorem \ref{zeboss} multiplicative, the statement follows directly from Proposition \ref{muzelle} the reverse PBW Theorem (that is Theorem \ref{coupdepute}), without using the tameness condition at all: indeed, condition (A1) is \eqref{C}, and condition (A2) is Lemma \ref{Queyras}.
\end{enumerate}
Therefore, the main difficulty lies in the construction of this algebra structure on $\mathcal{RH}om^{r}_{\mathcal{O}_Y}(\mathcal{O}_X, \mathcal{O}_X)$ and on $\mathcal{RH}om^{ \ell}_{\mathcal{O}_Y}(\mathcal{O}_X, \mathcal{O}_X)$. We will provide three different proofs corresponding to different geometric contexts:
\par \bigskip
\fbox{\textbf{Case A: $\infty$-split}}\, Assume that $X$ admits a global retract in $Y$ that lifts $\sigma$. This is the easiest case, but it covers the case of the diagonal injection and is therefore sufficient to prove the results of Kapranov, Markarian and Ramadoss that are presented in the next section. The reader only interested in this can skip the two other cases.
\par \bigskip
\fbox{\textbf{Case B: 2-split}}\, Assume that $X$ admits a retract at order two that lifts $\sigma $ (that is $\beta=0$). In this case, we can adapt the former proof to the second formal neighborhood, using the second Corollary of the decomposition Lemma (Corollary \ref{sky2}).
\par \bigskip
\fbox{\textbf{Case C: tame}}\, The general case: $(X, \sigma)$ is tame. This requires the first Corollary of the decomposition Lemma (Corollary \ref{sky}) as well as the full strength of the categorical PBW Theorem (Theorem \ref{saroumane}).
\end{proof}

\begin{proof}[Proof of Theorem \ref{zeboss}]
We follow the aforementioned plan of proof, and discuss successively the three cases A, B and C. We recall the following notation:
\[
\xymatrix{
X \ar[r]_-{j}  \ar[rd]_-{k} \ar[d]_-{i} & S \ar@/_/[l]_-{\sigma}  \ar[d] \\
Y & \ar[l] W
}
\]
\par \medskip
\fbox{\textbf{Case A}}\, 
Let $f \colon Y \rightarrow X$ a retraction of $X$ in $Y$. There is a natural morphism of algebra objects 
\[
j_{S/Y*}\mathcal{RH}om_{\mathcal{O}_S}(\mathcal{O}_X, \mathcal{O}_X) \rightarrow \mathcal{RH}om_{\mathcal{O}_Y}(\mathcal{O}_X, \mathcal{O}_X)
\]
in $\mathrm{D}^{+}(Y)$, which gives a morphism of algebra objects
\begin{equation} \label{cristillan}
f_* j_{S/Y*}\mathcal{RH}om_{\mathcal{O}_S}(\mathcal{O}_X, \mathcal{O}_X) \rightarrow f_*\mathcal{RH}om_{\mathcal{O}_Y}(\mathcal{O}_X, \mathcal{O}_X)
\end{equation}
since $f_*$ is monoidal. Now it suffices to remark that 
\[
f_*\mathcal{RH}om_{\mathcal{O}_Y}(\mathcal{O}_X, \mathcal{O}_X) \simeq f_* i_* \mathcal{RH}om^{\ell}_{\mathcal{O}_Y}(\mathcal{O}_X, \mathcal{O}_X) \simeq \mathcal{RH}om^{\ell}_{\mathcal{O}_Y}(\mathcal{O}_X, \mathcal{O}_X),
\]
so that $\mathcal{RH}om^{\ell}_{\mathcal{O}_Y}(\mathcal{O}_X, \mathcal{O}_X)$ inherits naturally from an algebra structure and the morphism \eqref{cristillan} becomes an algebra morphism
\[
\sigma_* \mathcal{RH}om_{\mathcal{O}_S}(\mathcal{O}_X, \mathcal{O}_X) \rightarrow \mathcal{RH}om^{\ell}_{\mathcal{O}_Y}(\mathcal{O}_X, \mathcal{O}_X).
\]
The same trick works for the functor $\mathcal{RH}om^{r}$.  This settles Case A.
\par \bigskip
\fbox{\textbf{Case B}}\, Let us consider the map $\mathfrak{p} \colon \sigma_* \mathcal{RH}om_{\mathcal{O}_S}(\mathcal{O}_X, \mathcal{O}_X) \rightarrow \mathcal{RH}om_{\mathcal{O}_X}^{\ell}(\mathcal{O}_X, \mathcal{O}_X)$. It admits a section, given by the composition
\[
\mathcal{RH}om_{\mathcal{O}_X}^{\ell}(\mathcal{O}_X, \mathcal{O}_X) \simeq \mathrm{S}(\mathrm{N}^*_{X/Y}[1]) \hookrightarrow \mathrm{T}(\mathrm{N}_{X/Y}[-1]) \simeq \sigma_* \mathcal{RH}om_{\mathcal{O}_S}(\mathcal{O}_X, \mathcal{O}_X).
\]
Hence $\mathfrak{p}$ admits a kernel $\mathscr{K}$, which can be explicitly described as follows: $\mathscr{K}$ is isomorphic to $\widetilde{\Lambda} (\mathrm{N}_{X/Y}[-1])$, and the (split) embedding of $\mathscr{K}$ in $\sigma_* \mathcal{RH}om_{\mathcal{O}_S}(\mathcal{O}_X, \mathcal{O}_X)$
is obtained by applying $\sigma_*$ to the composition
\[
\bigoplus_{p \geq 0} j_* \widetilde{\Lambda}^p (\mathrm{N}_{X/Y}[-1]) \rightarrow \bigoplus_{p \geq 0} \mathcal{RH}om_{\mathcal{O}_S}(\widetilde{\Lambda}^p (\mathrm{N}^*_{X/Y}[1]), \mathcal{O}_X) \rightarrow  \mathcal{RH}om_{\mathcal{O}_S}(\mathcal{O}_X, \mathcal{O}_X)
\]
where the last map is obtained componentwise by precomposing with $\Delta_p-\sum_{k=1}^p c_p^k \Delta_k^+$, which is a morphism in $\mathrm{Hom}_{\mathrm{D}^{\mathrm{b}}(S)}(\mathcal{O}_X, \widetilde{\Lambda}^p (\mathrm{N}^*_{X/Y}[1])$. Assume now that there exists a retraction $q \colon W \rightarrow X$ that extends the first order retraction $\sigma$. Then the composition
\[
\mathscr{K} \rightarrow \sigma_* \mathcal{RH}om_{\mathcal{O}_S}(\mathcal{O}_X, \mathcal{O}_X) \rightarrow q_* \mathcal{RH}om_{\mathcal{O}_W}(\mathcal{O}_X, \mathcal{O}_X)
\]
is obtained by applying $q_*$ to the map 
\[
\bigoplus_{p \geq 0} j_{X/W*}\, \widetilde{\Lambda}^p (\mathrm{N}_{X/Y}[-1]) \rightarrow \bigoplus_{p \geq 0} \mathcal{RH}om_{\mathcal{O}_W}(\widetilde{\Lambda}^p (\mathrm{N}^*_{X/Y}[1]), \mathcal{O}_X) \rightarrow  \mathcal{RH}om_{\mathcal{O}_W}(\mathcal{O}_X, \mathcal{O}_X).
\]
But this map is identically zero, since according to proposition \ref{sky2}, $\Delta_p-\sum_{k=1}^p c_p^k \Delta_k^+$ vanishes in the derived category $\mathrm{D}^{\mathrm{b}}(W)$. We can now conclude: the map
\[
\sigma_* \mathcal{RH}om_{\mathcal{O}_S}(\mathcal{O}_X, \mathcal{O}_X) \rightarrow q_* \mathcal{RH}om_{\mathcal{O}_W}(\mathcal{O}_X, \mathcal{O}_X)
\]
is a morphism of algebra objects and $\mathscr{K}$ is a split sub-object that maps to zero, so that the composition
\[
\mathscr{K} \,\lltens{}_{\mathcal{O}_X}\, \sigma_* \mathcal{RH}om_{\mathcal{O}_S}(\mathcal{O}_X, \mathcal{O}_X) \rightarrow  \sigma_* \mathcal{RH}om_{\mathcal{O}_S}(\mathcal{O}_X, \mathcal{O}_X) \rightarrow q_*  \mathcal{RH}om_{\mathcal{O}_W}(\mathcal{O}_X, \mathcal{O}_X)
\]
is zero. It follows that the composition
\[
\mathscr{K} \,\lltens{}_{\mathcal{O}_X}\, \sigma_* \mathcal{RH}om_{\mathcal{O}_S}(\mathcal{O}_X, \mathcal{O}_X) \rightarrow  \sigma_* \mathcal{RH}om_{\mathcal{O}_S}(\mathcal{O}_X, \mathcal{O}_X) \rightarrow  \mathcal{RH}om_{\mathcal{O}_Y}^{\ell}(\mathcal{O}_X, \mathcal{O}_X)
\]
is zero. Therefore, in the decomposition
\[
\sigma_* \mathcal{RH}om_{\mathcal{O}_S}(\mathcal{O}_X, \mathcal{O}_X) \simeq \mathscr{K} \oplus  \mathcal{RH}om_{\mathcal{O}_Y}^{\ell}(\mathcal{O}_X, \mathcal{O}_X),
\]
the object $\mathscr{K}$ is an ideal object, so that $\mathcal{RH}om_{\mathcal{O}_Y}^{\ell}(\mathcal{O}_X, \mathcal{O}_X)$ inherits of a natural algebra structure, for which $\mathfrak{p}$ is a multiplicative morphism. This finishes the proof. Again, the whole proof works in the same way for the functor $\mathcal{RH}om_{\mathcal{O}_Y}^r$.
\par \bigskip
\fbox{\textbf{Case C}}\, If we consider $\alpha$ as a morphism from $\mathrm{\Lambda}^2 (\mathrm{N}^*_{X/Y}[1])$ to $\mathrm{N}^*_{X/Y}[1]$ in the opposite category of $\mathrm{D}^{\mathrm{b}}(X)$, we notice that the induction relations provided by Corollary \ref{sky} are exactly 
the same as the ones proved in Proposition \ref{chalvet} (this is why we took the same notation $c_p^k$). Hence $(\mathrm{N}^*_{X/Y}[1], \alpha)$ is a Lie algebra object in the opposite derived category of $X$.
Now according to the second part of the categorical PBW Theorem (Theorem \ref{saroumane}), we can define an algebra structure on $\mathrm{S}(\mathrm{N}^*_{X/Y}[1])$ using the coefficients $c_p^k$ and there is a natural multiplicative morphism from $\mathrm{T}(\mathrm{N}^*_{X/Y}[1])$ to $\mathrm{S}(\mathrm{N}^*_{X/Y}[1])$ endowed with this structure. To conclude, it suffices to notice that the following diagram
\[
\xymatrix{
\sigma_* \mathcal{RH}om_{\mathcal{O}_S}(\mathcal{O}_X, \mathcal{O}_X) \ar[r] \ar[d]^-{\sim} & \mathcal{RH}om^{\ell}_{\mathcal{O}_Y}(\mathcal{O}_X, \mathcal{O}_X) \ar[d]^-{\sim} \\
\mathrm{T}(\mathrm{N}^*_{X/Y}[1]) \ar[r] & \mathrm{S}(\mathrm{N}^*_{X/Y}[1])
}
\]
is commutative, which is nothing but the fact that the ``geometric'' coefficients $c_{p}^k$ are the same as the ``algebraic'' coefficients $c_p^k$, that is Corollary \ref{sky}.
\end{proof}

\subsection{The results of Kapranov, Markarian, Ramadoss and Yu}

In this section, we provide a new light on the foundational result in this theory: the construction of the Lie algebra structure on $\mathrm{T}_X[-1]$, due to Kapranov \cite{Kapranov} and Markarian \cite{Markarian}, and the computation of its universal enveloping algebra, due to Markarian \cite{Markarian}, and Ramadoss \cite{Ramadoss}. Then we prove Ramadoss formula \cite{Ramadoss2} that computes the big Chern classes of a vector bundle introduced by Kapranov in \cite{Kapranov}.
\subsubsection{The Lie algebra  \texorpdfstring{$\mathrm{T}_X[-1]$}{TX[-1]}}
Given a smooth scheme/manifold $X$, we consider the special case of the quantized cycle $(\Delta_X, \mathrm{pr}_1)$ in the product $X \times X$.
\begin{theorem}[\cite{Kapranov}, \cite{Markarian}, \cite{Ramadoss}] \label{silex}$ $
\begin{enumerate}
\item[--] The object $(\Omega^1_X[1], \mathrm{at}_{\Omega^1_X[1]})$ is a Lie coalgebra in $\mathrm{D}^{\mathrm{b}}(X)$. 
\vspace{0.1cm}
\item[--] The ring object $\mathrm{pr}_{1*} \mathcal{RH}om_{\mathcal{O}_{X \times X}}(\mathcal{O}_X, \mathcal{O}_X)$ is isomorphic to $\mathrm{U}(\mathrm{T}_X[-1])$.
\vspace{0.1cm}
\item[--] The map $\,\mathrm{pr}_{1*} \mathcal{RH}om_{\mathcal{O}_{\overline{X}}}(\mathcal{O}_X, \mathcal{O}_X) \rightarrow \mathrm{pr}_{1*} \mathcal{RH}om_{\mathcal{O}_{X \times X}}(\mathcal{O}_X, \mathcal{O}_X)$ identifies with the natural projection map $\mathrm{T}(\mathrm{T}_X[-1]) \rightarrow \mathrm{U}(\mathrm{T}_X[-1])$.
\end{enumerate}
\end{theorem}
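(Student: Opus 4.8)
The plan is to deduce Theorem~\ref{silex} from Theorem~\ref{zeboss} (and the categorical PBW package underlying it) applied to the quantized cycle $(\Delta_X,\mathrm{pr}_1)$ in $Y=X\times X$. First I would check that $(\Delta_X,\mathrm{pr}_1)$ is a quantized cycle that is $\infty$-split: the first projection restricts to a global retraction of $\Delta_X$, hence of its first formal neighbourhood $\overline X$, so one is in Case~A of the proof of Theorem~\ref{zeboss}; in particular the cycle is tame. Then I would record the standard identifications $\mathrm{N}_{\Delta_X/X\times X}\simeq\mathrm{T}_X$, whence $\mathrm{N}_{X/Y}[-1]=\mathrm{T}_X[-1]$ and $\mathrm{N}^*_{X/Y}[1]=\Omega^1_X[1]$; that, with $S=\overline X$ and $\sigma=\mathrm{pr}_1|_{\overline X}$, one has $\sigma_*\mathcal{RH}om_{\mathcal{O}_S}(\mathcal{O}_X,\mathcal{O}_X)=\mathrm{pr}_{1*}\mathcal{RH}om_{\mathcal{O}_{\overline X}}(\mathcal{O}_X,\mathcal{O}_X)$; and, since $\mathrm{pr}_1\circ i=\mathrm{id}_X$, that $\mathcal{RH}om^{\ell}_{\mathcal{O}_{X\times X}}(\mathcal{O}_X,\mathcal{O}_X)=\mathrm{pr}_{1*}\mathcal{RH}om_{\mathcal{O}_{X\times X}}(\mathcal{O}_X,\mathcal{O}_X)$ in $\mathrm{D}^{\mathrm{b}}(X)$ (the $\mathcal{RH}om^{r}$ version behaves identically here, as on the diagonal the two $\mathcal{O}_X$-module structures coincide). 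By Proposition~\ref{hkrbabe} the first of these objects is, as an algebra, $\mathrm{T}(\mathrm{T}_X[-1])$.

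Next I would identify the abstract bracket $\alpha$ with the Atiyah class. By Proposition~\ref{muzelle}, $\alpha=c_2^1$ is the class in $\mathrm{Ext}^1_X(\Omega^1_X,\mathrm{S}^2\Omega^1_X)$ of the extension $0\to\mathcal{I}^2/\mathcal{I}^3\to\mathcal{I}/\mathcal{I}^3\to\mathcal{I}/\mathcal{I}^2\to0$ attached to the ideal $\mathcal{I}$ of the diagonal, that is the second order jet sequence $0\to\mathrm{S}^2\Omega^1_X\to\mathcal{I}/\mathcal{I}^3\to\Omega^1_X\to0$, whose extension class is the (symmetric) Atiyah class of $\Omega^1_X$. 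Alternatively one reads this off Corollary~\ref{feu}: for the diagonal, additivity of the Atiyah class applied to $\Omega^1_{X\times X}=\mathrm{pr}_1^*\Omega^1_X\oplus\mathrm{pr}_2^*\Omega^1_X$, together with the pullback formula for Atiyah classes, identifies the top left entry of the matrix there with $\mathrm{at}_X(\Omega^1_X)$. Either way, up to the conventional normalizing scalar, $\alpha$ becomes $\mathrm{at}_{\Omega^1_X[1]}$ viewed as a morphism $\Omega^1_X[1]\to\widetilde\Lambda^2(\Omega^1_X[1])$.

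Granting these two inputs, the three assertions follow mechanically. The first part of Theorem~\ref{zeboss} says $\alpha$ makes $\mathrm{N}^*_{X/Y}[1]$ a Lie coalgebra object, which by the above is $(\Omega^1_X[1],\mathrm{at}_{\Omega^1_X[1]})$ — the co-Jacobi identity coming for free from tameness; this is the first point. The commutative diagram of Theorem~\ref{zeboss} then exhibits $\mathrm{pr}_{1*}\mathcal{RH}om_{\mathcal{O}_{X\times X}}(\mathcal{O}_X,\mathcal{O}_X)=\mathcal{RH}om^{\ell}_{\mathcal{O}_{X\times X}}(\mathcal{O}_X,\mathcal{O}_X)$, as an algebra object, as the universal enveloping algebra $\mathrm{U}(\mathrm{N}_{X/Y}[-1])=\mathrm{U}(\mathrm{T}_X[-1])$, with the HKR isomorphism in the role of PBW; this is the second point. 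Finally the top horizontal arrow of that diagram identifies the natural algebra morphism $\mathrm{pr}_{1*}\mathcal{RH}om_{\mathcal{O}_{\overline X}}(\mathcal{O}_X,\mathcal{O}_X)\to\mathrm{pr}_{1*}\mathcal{RH}om_{\mathcal{O}_{X\times X}}(\mathcal{O}_X,\mathcal{O}_X)$ with the canonical projection $\mathrm{T}(\mathrm{T}_X[-1])\to\mathrm{U}(\mathrm{T}_X[-1])$, which is the third point. The only step that is not pure bookkeeping on top of Theorem~\ref{zeboss} is the identification $\alpha=\mathrm{at}_{\Omega^1_X[1]}$, and even there the only real work is threading the isomorphism $\mathrm{N}_{\Delta_X/X\times X}\simeq\mathrm{T}_X$ through the auxiliary identifications and fixing the normalizing scalar; this is classical, going back to Kapranov~\cite{Kapranov} and Markarian~\cite{Markarian}.
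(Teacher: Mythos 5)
Your proposal is correct and follows essentially the same route as the paper: everything is reduced to Theorem \ref{zeboss} applied to the $\infty$-split (hence tame) quantized cycle $(\Delta_X,\mathrm{pr}_1)$, so that the only substantive point is the identification of $\alpha=c_2^1$ with the (symmetric) Atiyah class of $\Omega^1_X$. Where you invoke this identification as classical (or sketch it via Corollary \ref{feu}), the paper proves it directly by mapping the sequence $0\to\mathcal{I}_{\Delta_X}^2/\mathcal{I}_{\Delta_X}^3\to\mathcal{I}_{\Delta_X}/\mathcal{I}_{\Delta_X}^3\to\mathcal{I}_{\Delta_X}/\mathcal{I}_{\Delta_X}^2\to0$ to the jet sequence of $\mathrm{pr}_2^*\Omega^1_X$ by differentiation in the second variable, but this is the same content and your argument stands.
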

\begin{proof}
The only thing we must prove is that $\alpha$ identifies with the Atiyah class of $\Omega^1_X$, which is well-known: this follows from looking at the diagram
\[
\xymatrix{
0 \ar[r] & \mathrm{pr}_{1*} \dfrac{\mathcal{I}_{\Delta_X}^2}{\mathcal{I}_{\Delta_X}^3} \ar[r] \ar[d] &  \mathrm{pr}_{1*}\dfrac{\mathcal{I}_{\Delta_X}}{\mathcal{I}_{\Delta_X}^3} \ar[r] \ar[d]& \mathrm{pr}_{1*} \dfrac{\mathcal{I}_{\Delta_X}}{\mathcal{I}_{\Delta_X}^2} \ar[r] \ar[d]& 0 \\
0 \ar[r]& \mathrm{pr}_{1*}\left(\dfrac{\mathcal{I}_{\Delta_X}}{\mathcal{I}_{\Delta_X}^2} \otimes \mathrm{pr}_2^* \Omega^1_X\right) \ar[r] &  \mathrm{pr}_{1*}\left(\dfrac{\mathcal{O}_{X \times X}}{\mathcal{I}_{\Delta_X}^2} \otimes \mathrm{pr}_2^* \Omega^1_X\right) \ar[r] &  \mathrm{pr}_{1*}\left(\dfrac{\mathcal{O}_{X \times X}}{\mathcal{I}_{\Delta_X}} \otimes \mathrm{pr}_2^* \Omega^1_X\right) \ar[r]&0
}
\]
where the vertical maps are given by differentation with respect to the second variable (so that they are all linear). The right vertical map is 
the isomorphism given by the quantization $\mathrm{pr}_1$, and the left vertical map is the symetrization morphism. 
\end{proof}

\begin{remark}
Any object $\mathcal{F}$ in $\mathrm{D}^{\mathrm{b}}(X)$ defines a representation of the Lie algebra $\mathrm{T}_X[-1]$, which is obtained by the chain of morphisms
\[
\mathrm{T}_X[-1] \otimes \mathcal{F} \xrightarrow{\mathrm{id} \otimes \mathrm{at}_{\mathcal{F}}} \mathrm{T}_X[-1] \otimes \Omega^ 1_X[1] \otimes \mathcal{F} \xrightarrow{\mathrm{ev} \otimes \mathrm{id}} \mathcal{F} 
\]
For any $\mathcal{F}$, $\mathcal{G}$ in $\mathrm{D}^{\mathrm{b}}(X)$ viewed as representations of $\mathrm{T}_X[-1]$, the naturality of Atiyah classes implies that
\[
\mathrm{Hom}_{\mathrm{Rep}\left(\mathrm{T}_X[-1] \right)}(\mathcal{F}, \mathcal{G})=\mathrm{Hom}_{\mathrm{D}^{\mathrm{b}}(X)}(\mathcal{F}, \mathcal{G}).
\]
In other words, $\mathrm{D}^{\mathrm{b}}(X)$ embeds as a full subcategory of $\mathrm{Rep}\left(\mathrm{T}_X[-1] \right)$.
\end{remark}
Theorem \ref{silex} allows to give a Lie-theoretic interpretation of the tameness condition (Definition \ref{tiny}) for quantized cycles. To a quantized cycle $(X, \sigma)$ in $Y$, we have an exact sequence
\[
0 \rightarrow \mathrm{T}X[-1] \rightarrow \mathrm{T}Y[-1]_{| X} \rightarrow \mathrm{N}_{X/Y}[-1] \rightarrow 0
\]
and $\sigma$ provides a splitting of this sequence.
\begin{theorem}
The triplet $(\mathrm{T}X[-1], \mathrm{T}Y[-1]_{| X}, \mathrm{N}_{X/Y}[-1])$ is a reductive pair of Lie objects, as defined in \S \ref{mignon}. Besides this pair is tame in the sense of Definition \ref{defalg} if and only if $(X, \sigma)$ is tame in the sense of Definition \ref{tiny}. In this case, the dual of $\alpha$ defines a Lie structure on $\mathrm{N}_{X/Y}[-1]$.
\end{theorem}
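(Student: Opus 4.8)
The plan is to transport the statement into the Lie-theoretic framework of \S\ref{mignon}, setting $\mathfrak{g}=\mathrm{T}Y[-1]_{|X}$, $\mathfrak{h}=\mathrm{T}X[-1]$ and $\mathfrak{n}=\mathrm{N}_{X/Y}[-1]$ in $\mathcal{C}=\mathrm{D}^{\mathrm{b}}(X)$, and to match every structure map with geometric data already computed. Theorem~\ref{silex}, applied both to $X$ and to $Y$, makes $\Omega^1_X[1]$ and $\Omega^1_Y[1]$ Lie coalgebras via their Atiyah classes, hence $\mathrm{T}X[-1]$ and $\mathrm{T}Y[-1]$ Lie algebra objects; since $\mathbb{L}i^*$ is symmetric monoidal, $\mathfrak{g}$ is a Lie algebra object in $\mathrm{D}^{\mathrm{b}}(X)$ whose bracket is dual to the image under $\mathbb{L}i^*$ of the Atiyah-class cobracket of $\Omega^1_Y[1]$, i.e. to the morphism computed in Corollary~\ref{feu}. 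Using the $\sigma$-splitting of the conormal sequence, I would write $\mathfrak{g}^*\simeq\Omega^1_{Y|X}[1]\simeq\mathfrak{n}^*\oplus\mathfrak{h}^*$ with $\mathfrak{n}^*=\mathrm{N}^*_{X/Y}[1]$ and $\mathfrak{h}^*=\Omega^1_X[1]$, and read off the $3\times 2$ matrix of Corollary~\ref{feu} in the induced decomposition $\mathrm{S}^2\mathfrak{g}^*=\mathrm{S}^2\mathfrak{n}^*\oplus(\mathfrak{n}^*\otimes\mathfrak{h}^*)\oplus\mathrm{S}^2\mathfrak{h}^*$. Dualizing componentwise yields at once: (a) the bracket of $\mathfrak{g}$ restricted to $\mathfrak{h}^{\otimes 2}$ has zero $\mathfrak{n}$-component and equals $\mu_{\mathfrak{h}}$ on its $\mathfrak{h}$-component, so $\mathfrak{h}\hookrightarrow\mathfrak{g}$ is a morphism of Lie algebras; (b) on $\mathfrak{h}\otimes\mathfrak{n}$ it has zero $\mathfrak{h}$-component and $\mathfrak{n}$-component dual to $\mathrm{at}_X(\mathrm{N}^*_{X/Y})$ — i.e. the canonical $\mathrm{T}X[-1]$-action on $\mathrm{N}_{X/Y}[-1]$ from the Remark after Theorem~\ref{silex} — so $\mathfrak{n}$ is $\mathfrak{h}$-stable and $\mathfrak{g}=\mathfrak{h}\oplus\mathfrak{n}$ splits as $\mathfrak{h}$-modules, meaning $(\mathfrak{g},\mathfrak{h},\mathfrak{n})$ is a reductive pair; (c) on $\mathfrak{n}^{\otimes 2}$ its $\mathfrak{n}$- and $\mathfrak{h}$-components are respectively the duals of the geometric classes $\alpha$ and $\beta$ of the second neighbourhood $W$, which are therefore precisely the morphisms $\alpha,\beta$ of Definition~\ref{defalg} attached to $(\mathfrak{g},\mathfrak{h},\mathfrak{n})$.

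With this dictionary the tameness equivalence is immediate. By Definition~\ref{tiny}, $(X,\sigma)$ is tame iff $\sigma^*\mathrm{N}^*_{X/Y}$ extends over $W$, i.e. iff $\gamma_\sigma(\mathrm{N}^*_{X/Y})\in\mathrm{Ext}^2(\mathrm{N}^*_{X/Y},\mathrm{S}^2\mathrm{N}^*_{X/Y}\otimes\mathrm{N}^*_{X/Y})$ vanishes; by Proposition~\ref{marre} this class is, up to a nonzero scalar, the composition $(\beta\otimes\mathrm{id}_{\mathrm{N}^*_{X/Y}})\circ\mathrm{at}_X(\mathrm{N}^*_{X/Y})$. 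Dualizing this morphism and invoking (b)--(c), it becomes $\rho_{\mathfrak{n}}\circ(\beta\otimes\mathrm{id}_{\mathfrak{n}})\colon\mathfrak{n}^{\otimes 3}\to\mathfrak{n}$, with $\rho_{\mathfrak{n}}$ the $\mathfrak{h}$-action; and since $\beta$ takes values in $\mathfrak{h}$ while $\mu_{\mathfrak{g}}$ restricted to $\mathfrak{h}\otimes\mathfrak{n}$ coincides with $\rho_{\mathfrak{n}}$ (its $\mathfrak{h}$-component being zero by (b)), this vanishes iff $\mu_{\mathfrak{g}}\circ(\beta\otimes\mathrm{id}_{\mathfrak{n}})=0$, which is Definition~\ref{defalg}. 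Hence $(X,\sigma)$ is tame exactly when $(\mathfrak{g},\mathfrak{h},\mathfrak{n})$ is. In that case the Lemma following Definition~\ref{defalg} gives a Lie bracket on $\mathfrak{n}$, namely the algebraic morphism $\alpha=\pi_{\mathfrak{n}}\circ\mu_{\mathfrak{g}}|_{\mathfrak{n}^{\otimes 2}}$, which by (c) is the dual of the geometric class $\alpha$; this is the last assertion.

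The main obstacle I anticipate is not a single hard step but the coherent bookkeeping underlying the first paragraph: one must verify that the Lie bracket on $\mathrm{T}Y[-1]_{|X}$, together with the $\sigma$-splitting, really is the transpose of the morphism of Corollary~\ref{feu} — in particular that the adjoint action of $\mathrm{T}X[-1]$ on the summand $\mathrm{N}_{X/Y}[-1]\subset\mathrm{T}Y[-1]_{|X}$ is the canonical Atiyah-class action — and that all dualizations, degree shifts, and the $\mathrm{S}^2$ versus $\widetilde{\Lambda}^2$ conventions for shifted objects are applied consistently, so that the composite of Proposition~\ref{marre} is transported verbatim to $\mu_{\mathfrak{g}}\circ(\beta\otimes\mathrm{id}_{\mathfrak{n}})$.
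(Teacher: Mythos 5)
Your proposal is correct and follows essentially the same route as the paper: pass to the dual Lie coalgebra picture, read off the components of the cobracket on $\Omega^1_{Y|X}[1]$ in the $\sigma$-splitting from Corollary \ref{feu}, and identify the Lie-theoretic tameness condition with the vanishing of $(\beta\otimes\mathrm{id})\circ\mathrm{at}_X(\mathrm{N}^*_{X/Y})$, which is $\gamma_\sigma(\mathrm{N}^*_{X/Y})$ by Proposition \ref{marre}, the $\mathrm{S}^2\mathrm{N}^*_{X/Y}$-component of the cobracket on $\mathrm{N}^*_{X/Y}[1]$ being exactly $\alpha$. The bookkeeping you worry about in your last paragraph is precisely what Corollary \ref{feu} packages, so no further verification is needed beyond what you already invoke.
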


\begin{proof}
We switch from Lie algebras objects to Lie coalgebras objects, so that we see $\Omega^1_{Y|X}[1]$ as a Lie coalgebra in $\mathrm{D}^{\mathrm{b}}(X)$. This Lie coalgebra is described by Corollary \ref{feu}: first the diagram
\[
\xymatrix{
\Omega_{Y|X}^1[1] \ar[r] \ar[d]& \Omega_{Y|X}^1[1] \otimes \Omega_{Y|X}^1[1] \ar[r]& \Omega_{X}^1[1] \otimes \Omega_{X}^1[1] \ar@{=}[d] \\
\Omega_{X}^1[1] \ar[rr] &&  \Omega_{X}^1[1] \otimes \Omega_{X}^1[1]
}
\]
commutes, so the morphism $\Omega^1_{Y|X} \rightarrow \Omega^1_X$ is a morphism of Lie coalgebras objects. Next the tameness of the pair (in the sens of Definition \ref{defalg}) can be explicited as follows: if consider the morphism
\[
\Omega^1_X[1] \rightarrow \Omega^1_{Y|X}[1] \rightarrow \Omega^1_{Y|X}[1] \otimes \Omega^1_{Y|X}[1] \rightarrow \mathrm{N}^*_{X/Y}[1] \otimes \mathrm{N}^*_{X/Y}[1],
\]
tameness means the vanishing of the composite morphism
\[
\mathrm{N}^*_{X/Y}[1] \rightarrow \Omega^1_X[1] \otimes \mathrm{N}^*_{X/Y}[1] \rightarrow ( \mathrm{N}^*_{X/Y}[1] \otimes \mathrm{N}^*_{X/Y}[1]) \otimes \mathrm{N}^*_{X/Y}[1].
\]
The first morphism is the class $\beta$, and the second one is $(\beta \otimes \mathrm{id}_{\mathrm{N}^*_{X/Y}[1]}) \circ \mathrm{at}_{\mathrm{N}^*_{X/Y}[1]}$. Thanks to Proposition \ref{marre}, this is $\gamma_{\sigma}(\mathrm{N}^*_{X/Y})$, and we are done. For the last point, we remark that the composition
\[
\mathrm{N}^*_{X/Y}[1] \rightarrow \Omega^1_{Y|X}[1] \rightarrow \Omega^1_{Y|X}[1] \otimes \Omega^1_{Y|X}[1] \rightarrow \mathrm{N}^*_{X/Y}[1] \otimes \mathrm{N}^*_{X/Y}[1]
\]
is exactly $\alpha$.
\end{proof}

\subsubsection{Big Chern classes}
Let us recall that for any vector bundle $E$ on $X$, the big Chern classes $\hat{c}_p(E)$ of $E$ live in $\mathrm{H}^p(X, \mathrm{T}^p \Omega^1_X)$, they are obtained by composing the morphisms $\mathrm{at}_E, \mathrm{id}_{\Omega^1_X} \otimes \mathrm{at}_E, \dots, \mathrm{id}_{\Omega^{p-1}_X} \otimes \mathrm{at}_E$ and then taking the trace on $E$ (without antisymmetrizing on the factor $\mathrm{T}^p \Omega^1_X$).

\begin{theorem}
For any vector bundle $E$ on $X$, we have $\hat{c}_p(E)=\sum_{p=1}^k {c_p^k}(X) \circ c_k(E) $ where $c_p^k$ are the universal elements in $\mathrm{Ext}^{p-k}(\Omega^k_ X, \mathrm{T}_X^{\otimes p})$ associated to the Lie algebra $\mathrm{T}_X[-1]$.
\end{theorem}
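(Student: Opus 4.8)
The plan is to recast both sides of the identity in the representation theory of $\mathfrak{g}:=\mathrm{T}_X[-1]$ and then read off the formula from the structure constants $c_p^k$. Recall from the Remark following Theorem~\ref{silex} that $E\in\mathrm{D}^{\mathrm{b}}(X)$ is a representation of $\mathfrak{g}$, with action $\rho_E=(\mathrm{ev}\otimes\mathrm{id}_E)\circ(\mathrm{id}_{\mathfrak{g}}\otimes\mathrm{at}_E)\colon\mathfrak{g}\otimes E\to E$; iterating this action $p$ times yields $\rho_E^{(p)}=\rho_E\circ(\mathrm{id}_{\mathfrak{g}}\otimes\rho_E)\circ\cdots\circ(\mathrm{id}_{\mathfrak{g}^{\otimes p-1}}\otimes\rho_E)\colon\mathfrak{g}^{\otimes p}\otimes E\to E$. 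First I would observe that, tautologically, $\hat{c}_p(E)=\mathrm{Tr}_E(\rho_E^{(p)})\in\mathrm{Hom}_{\mathrm{D}^{\mathrm{b}}(X)}(\mathfrak{g}^{\otimes p},\mathbf{1}_{\mathcal{C}})\cong\mathrm{H}^p(X,\mathrm{T}^p\Omega^1_X)$, where $\mathrm{Tr}$ is the trace introduced before Proposition~\ref{vanoise} taken with respect to the dualizable object $E$, and we use $\mathfrak{g}^*=\Omega^1_X[1]$. This is just the definition of the big Chern class, the point being that no antisymmetrization is performed on the $\mathfrak{g}^{\otimes p}$--factor.

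Next I would factor $\rho_E^{(p)}$ through the enveloping algebra. The $\mathfrak{g}$-module $E$ extends canonically to a $\mathrm{U}(\mathfrak{g})$-module, and by Theorem~\ref{silex} one has $\mathrm{U}(\mathfrak{g})=\mathrm{pr}_{1*}\mathcal{RH}om_{\mathcal{O}_{X\times X}}(\mathcal{O}_X,\mathcal{O}_X)$ together with the canonical surjection $\mathrm{T}(\mathfrak{g})\to\mathrm{U}(\mathfrak{g})$ whose restriction to $\mathfrak{g}^{\otimes p}$, composed with the PBW isomorphism $\mathrm{U}(\mathfrak{g})\simeq\mathrm{S}(\mathfrak{g})$, has homogeneous components the structure constants $c_p^k\colon\mathrm{T}^p\mathfrak{g}\to\mathrm{S}^k\mathfrak{g}$ of \S\ref{hello} (these are the universal elements attached to the Lie algebra $\mathrm{T}_X[-1]$, once one uses that the geometric coefficients agree with the algebraic ones under the natural duality, the transpose interchanging $\mathrm{S}^k\hookrightarrow\mathrm{T}^p$ with $\mathrm{T}^p\twoheadrightarrow\mathrm{S}^k$). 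Since the $\mathrm{U}(\mathfrak{g})$-action on $E$ restricts on $\mathfrak{g}^{\otimes p}$ to $\rho_E^{(p)}$, I obtain
\[
\rho_E^{(p)}=\sum_{k=1}^{p}\bar\rho_E\circ\bigl(c_p^k\otimes\mathrm{id}_E\bigr)\,,
\]
where $\bar\rho_E\colon\mathrm{S}^k\mathfrak{g}\otimes E\to E$ is the action of the $k$-th symmetric power. The delicate point here is the compatibility, for the nontrivial module $E$ in place of the trivial module $\mathcal{O}_X$, of the iterated $\Theta$-morphisms (twisted by $\mathrm{id}_{\sigma^*E}$) with the $c_p^k$; I would obtain it by rerunning the computation behind Corollary~\ref{sky}/Corollary~\ref{sky2} with the factor $\mathrm{id}_{\sigma^*E}$ inserted, which is legitimate since the only inputs are the naturality of $\Theta$ and the identity $\Theta_{\mathcal{F}}=\Theta_{\mathcal{O}_X}\lltens{}_{\mathcal{O}_S}\mathrm{id}_{\sigma^*\mathcal{F}}$ of Proposition~\ref{matisse}(i)--(ii) (alternatively, this factorization follows formally from the fact that $\mathrm{D}^{\mathrm{b}}(X)$ is a full subcategory of $\mathrm{Rep}(\mathfrak{g})$).

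Finally I would apply $\mathrm{Tr}_E$ to the displayed identity. Since the trace commutes with precomposition in the source variable, one gets
\[
\hat{c}_p(E)=\sum_{k=1}^{p}\mathrm{Tr}_E\bigl((\bar\rho_E)_{|\mathrm{S}^k\mathfrak{g}}\bigr)\circ c_p^k\,,
\]
and $\mathrm{Tr}_E\bigl((\bar\rho_E)_{|\mathrm{S}^k\mathfrak{g}}\bigr)\in\mathrm{Hom}_{\mathrm{D}^{\mathrm{b}}(X)}(\mathrm{S}^k\mathfrak{g},\mathbf{1}_{\mathcal{C}})\cong\mathrm{H}^k(X,\Lambda^k\Omega^1_X)$ is, degree by degree, the character of $E$ as a $\mathfrak{g}$-representation, hence the $k$-th Chern character component $c_k(E)$ (this is the Markarian--C\u{a}ld\u{a}raru identification of the character with the Chern character, applied here before the final antisymmetrization, which accounts for the normalizations already built into the $c_p^k$). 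Comparing the two displays gives the asserted formula.

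The main obstacle is the middle step: establishing that the geometric iterated Atiyah construction applied to the nontrivial bundle $E$ is governed by exactly the same universal constants $c_p^k$ as for $\mathcal{O}_X$, and book-keeping the duality identifications (naive dual versus $\mathrm{N}^*$, and the passage between the two sides of the HKR isomorphism via Lemma~\ref{bienpenible} and the trace identity of Proposition~\ref{vanoise}) so that the $c_p^k$ appearing are literally the universal elements attached to $\mathrm{T}_X[-1]$. Everything else reduces to formal manipulations of traces and of the PBW factorization.
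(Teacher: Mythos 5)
Your proposal is correct in substance, and its core mechanism is the same as the paper's: the universal relation $\Delta_p=\sum_{k=1}^p c_p^k\circ\Delta_k^+$ transported to $E$ and then traced. But you reach that transported identity by a longer, more representation-theoretic route than the paper does. The paper's proof is two lines: the defining relation of the structure constants already holds in $\mathrm{D}^{\mathrm{b}}(X\times X)$ (Proposition \ref{buche}; no tameness or re-derivation of Corollary \ref{sky} is needed for the diagonal), one tensors it with $\mathrm{id}_{\mathrm{pr}_1^*E}$, applies $\mathrm{pr}_{2*}$, and takes the trace on $E$ — the $k$-th summand being by construction the (anti)symmetrized iterated Atiyah class of $E$ traced on $E$, i.e.\ the class called $c_k(E)$ in the statement. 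Your primary route instead factors the iterated action $\rho_E^{(p)}$ through $\mathrm{U}(\mathrm{T}_X[-1])$, which silently requires that the Atiyah-class action make $E$ a genuine module over the Lie algebra object $\mathrm{T}_X[-1]$ (the Kapranov--Markarian action axiom) and that the trace of the $\mathrm{U}$-action be the Chern character/class (Markarian's character identification); both facts appear in this paper only as remarks and are never needed for its proof, so your argument imports strictly more external input. Your fallback — inserting $\mathrm{id}_{\sigma^*E}$ into the derivation of the $c_p^k$ relations — is essentially the paper's move, except that no "rerunning" of the computation is required: one simply tensors the already-established identity, which is why the "delicate middle step" you flag evaporates in the paper's treatment. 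What your framing buys is conceptual clarity (the $c_p^k$ as PBW components of the $\mathrm{U}(\mathrm{T}_X[-1])$-action, and $c_k(E)$ as the character), at the price of relying on unproven-here module axioms; the paper's framing buys self-containedness and brevity.
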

\begin{proof}
On $X \times X$, we have $\Delta_p=\sum_{k=1}^p c_p^k \circ \Delta_k^+$. This gives 
\[
\mathrm{pr}_{2*} (\Delta_p \otimes \mathrm{id}_{\mathrm{pr_1^* E}})=\sum_{k=1}^p (c_p^k\otimes \mathrm{id}_{E})\circ  \mathrm{pr}_{2*} (\Delta_k^+     \otimes \mathrm{id}_{\mathrm{pr_1^* E}}).
\]
The result follows by taking the trace on $E$.
\end{proof}
\begin{remark}
As explained in \cite{Ramadoss2}, the total Chern class of a vector bundle $E$ can be interpreted in terms of the representation of the Lie algebra $\mathrm{T}_X[-1]$. Indeed $E$ defines a representation of $\mathrm{T}_X[-1]$, whith is (the dual of) the Atiyah class of $E$, hence a map from $\mathrm{U}(\mathrm{T}_X[-1])$ to $\mathrm{End}\,(E)$. Its trace defines a map from $\mathrm{U}(\mathrm{T}_X[-1])$ to $\mathcal{O}_X$, which is exactly $\sum_p c_p(E)$ via the isomorphism
\[
\mathrm{Hom}_{\mathrm{D}^{\mathrm{b}}(X)}(\mathrm{U}(\mathrm{T}_X[-1]), \mathcal{O}_X) \simeq \bigoplus_{p} \mathrm{H}^p(X, \Omega^p_X).
\]

\end{remark}

\subsubsection{The quantized cycle class}
Let us recall the definition of the quantized cycle class introduced in \cite{Grivaux-HKR}. For any quantized analytic cycle $(X, \sigma)$ in $Y$, we consider the composition

\begin{equation} \label{shuriken}
\omega_{X/Y} \simeq \mathcal{RH}om^{\textrm{r}}(\mathcal{O}_X, \mathcal{O}_Y) \rightarrow \mathcal{RH}om^{\textrm{r}}_{\mathcal{O}_Y}(\mathcal{O}_X, \mathcal{O}_X) \simeq \mathrm{S}(\mathrm{N}_{X/Y}[-1])
\end{equation}
where the last isomorphism is the dual HKR isomorphism. Let $d$ be the codimension of $X$ in $Y$.
\begin{proposition} \label{zurich}
Assume that $(X, \sigma)$ is tame. Then the morphism \eqref{shuriken} is a $d$-torsion morphism for the Lie algebra $\mathrm{N}_{X/Y}[-1]$.
\end{proposition}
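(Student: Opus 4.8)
The plan is to reduce the statement to the vanishing of a single morphism and then prove that vanishing by a short argument based on the augmentation of $\mathrm{U}(\mathrm{N}_{X/Y}[-1])$.

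\emph{Reduction.} Let $d$ be the rank of $\mathrm{N}_{X/Y}$ and set $V=\mathrm{N}_{X/Y}[-1]$. Since $\mathrm{S}^pV=\Lambda^p\mathrm{N}_{X/Y}[-p]=0$ for $p>d$, one has $\mathrm{S}(V)=\mathrm{S}^{\le d}V$, so the morphism \eqref{shuriken}, which we denote $a$, automatically factors through $\mathrm{S}^{\le d}V$; hence $a$ is a $d$-torsion morphism precisely when $m_\star\circ(a\otimes\mathrm{id}_V)=0$. This is a genuine statement: the group $\mathrm{Hom}_{\mathrm{D}^{\mathrm{b}}(X)}(\omega_{X/Y}\otimes V,\mathrm{S}(V))$ in which $m_\star\circ(a\otimes\mathrm{id}_V)$ lives is in general nonzero, its $\mathrm{S}^d$-component being $\mathrm{H}^1(X,\mathrm{N}^*_{X/Y})$. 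Let us also record, with a view towards the subsequent application of Theorem \ref{serpette}, that $\omega_{X/Y}\simeq\mathrm{S}^dV$ and that the top component $a_d$ is the identity.

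\emph{Vanishing.} As $(X,\sigma)$ is tame, Theorem \ref{zeboss} is available: it identifies the dual HKR isomorphism of Proposition \ref{buche2} with an isomorphism of algebra objects $\mathcal{RH}om^{r}_{\mathcal{O}_Y}(\mathcal{O}_X,\mathcal{O}_X)\xrightarrow{\ \sim\ }\mathrm{S}(V)$ intertwining the Yoneda product with $m_\star$ and sending the degree-one summand $V=\mathrm{S}^1V$ to the dual-Atiyah embedding $\iota\colon\mathrm{N}_{X/Y}[-1]\hookrightarrow\mathcal{RH}om^{r}_{\mathcal{O}_Y}(\mathcal{O}_X,\mathcal{O}_X)$. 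Under the further identification $\omega_{X/Y}\simeq\mathcal{RH}om^{r}_{\mathcal{O}_Y}(\mathcal{O}_X,\mathcal{O}_Y)$ the morphism $a$ becomes the one induced by the canonical surjection $\epsilon\colon\mathcal{O}_Y\twoheadrightarrow\mathcal{O}_X$, that is $a(w)=\epsilon\circ w$. Unwinding through the dual HKR and the compatibilities of \S4 (Propositions \ref{hkrbabe2} and \ref{buche2}, Lemma \ref{bienpenible}), the composite $m_\star\circ(a\otimes\mathrm{id}_V)$ then becomes the Yoneda composition $w\otimes v\mapsto \iota(v)\circ\epsilon\circ w$. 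Now $\iota(v)\circ\epsilon$ is the image of $\iota(v)$ under the restriction morphism $\mathcal{RH}om_{\mathcal{O}_Y}(\mathcal{O}_X,\mathcal{O}_X)\to\mathcal{RH}om_{\mathcal{O}_Y}(\mathcal{O}_Y,\mathcal{O}_X)=\mathcal{O}_X$; evaluating this morphism on the Koszul (or Atiyah--Kashiwara) resolution of $\mathcal{O}_X$ identifies it with the projection $\mathrm{S}(V)\to\mathrm{S}^0V=\mathds{1}$, which is the algebra augmentation of $\mathrm{U}(\mathrm{N}_{X/Y}[-1])$. Since $\iota(v)$ lies in the degree-one summand, its image vanishes, hence $m_\star\circ(a\otimes\mathrm{id}_V)=0$.

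\emph{Main obstacle.} The subtle step is the unwinding in the previous paragraph: one must keep careful track of the difference between $\mathcal{RH}om^{\ell}$ and $\mathcal{RH}om^{r}$, so as to be certain that in $\iota(v)\circ\epsilon\circ w$ it is $\iota(v)$ — not $w$ — that is composed with $\epsilon$; were the composition order the other way the result would be $a$ applied to the right action of $V$ on $\omega_{X/Y}$, i.e.\ the modular character of $\mathrm{N}_{X/Y}[-1]$, which is generally nonzero. This bookkeeping is exactly what the reverse PBW mechanism of \S2--3 and the geometric compatibilities of \S4 are designed to handle. One could also argue more computationally: by Theorem \ref{proputile}, the equality $m_\star\circ(a\otimes\mathrm{id}_V)=0$ is equivalent to the system \eqref{dv}, whose equations can be checked directly from the expression of $a$ in terms of the $\Theta$-morphisms together with the decomposition lemma (Proposition \ref{key} and Corollary \ref{sky}) — but this essentially re-derives the same vanishing.
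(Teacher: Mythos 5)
Your proposal is correct and, at bottom, runs on the same engine as the paper's proof: reduce $d$-torsion to the vanishing of $m_\star\circ(a\otimes\mathrm{id}_V)$ (automatic interpretation since $\mathrm{S}^{>d}V=0$), invoke Theorem \ref{zeboss} to transport the product to the Yoneda product on $\mathcal{RH}om^{r}_{\mathcal{O}_Y}(\mathcal{O}_X,\mathcal{O}_X)$, and then kill the resulting composite using the fact that $\mathcal{O}_Y\to\mathcal{O}_X$ factors through $\mathcal{O}_S$. The packaging differs in the last step: the paper dualizes, identifies multiplication by $V$ with postcomposition by $\Delta_1$ on $\mathcal{RH}om^{r}(\mathcal{O}_X,-)$, and concludes because $\mathcal{O}_S\to\mathcal{O}_X\xrightarrow{\Delta_1}\mathrm{N}^*_{X/Y}[1]$ is the composition of two consecutive arrows of a distinguished triangle; you avoid the dualization, regroup the Yoneda product as $(\iota(v)\circ\epsilon)\circ w$, and conclude because restriction along $\epsilon$ is the counit/augmentation, which kills the degree-one summand. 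These are two guises of the same vanishing, and your version is arguably a hair more direct. The one place where your write-up is looser than it should be is precisely that augmentation claim: justifying it by a Koszul computation is a local argument, and the group $\mathrm{Hom}_{\mathrm{D}^{\mathrm{b}}(X)}(\mathrm{N}_{X/Y}[-1],\mathcal{O}_X)=\mathrm{H}^1(X,\mathrm{N}^*_{X/Y})$ is not zero for degree reasons, so the compatibility of the restriction map with the specific $\sigma$-dependent HKR splitting needs a global argument. The clean one is exactly the paper's triangle remark: the degree $\geq 1$ summands of the (dual) HKR decomposition are built from $\Delta_1=\Theta_{\mathcal{O}_X}$, and $\Theta_{\mathcal{O}_X}$ precomposed with $\mathcal{O}_S\to\mathcal{O}_X$ (hence with $\mathcal{O}_Y\to\mathcal{O}_X$) vanishes, being two successive maps of the triangle attached to $0\to\mathrm{N}^*_{X/Y}\to\mathcal{O}_S\to\mathcal{O}_X\to0$; with that substitution your argument is complete. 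Finally, you are right that the left/right bookkeeping (that it is $\iota(v)$, not $w$, which meets $\epsilon$) is the delicate point; the paper handles it implicitly through the commutative diagram furnished by Theorem \ref{zeboss}, and your appeal to the same theorem is the correct way to settle it.
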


\begin{proof}
We must prove that the composition
\[
\mathrm{N}_{X/Y} [-1] \otimes \omega_{X/Y} \rightarrow \mathrm{N}_{X/Y} [-1] \otimes  \mathrm{S}(\mathrm{N}_{X/Y}[-1]) \simeq 
 \mathrm{N}_{X/Y} [-1] \otimes  \mathrm{U}(\mathrm{N}_{X/Y}[-1]) \rightarrow \mathrm{U}(\mathrm{N}_{X/Y}[-1])
\]
vanishes, where the last map is given the multiplication in the algebra $\mathrm{U}(\mathrm{N}_{X/Y}[-1])$. We can rewrite this map (using duality) as a morphism from $\mathrm{U}(\mathrm{N}_{X/Y}[-1])$ to $\mathrm{N}^*_{X/Y}[1] \otimes \mathrm{U}(\mathrm{N}_{X/Y}[-1])$, and the question reduces to the vanishing of the map
\[
\omega_{X/Y} \rightarrow   \mathrm{S}(\mathrm{N}_{X/Y}[-1])  \simeq \mathrm{U}(\mathrm{N}_{X/Y}[-1]) \rightarrow \mathrm{N}^*_{X/Y}[1] \otimes \mathrm{U}(\mathrm{N}_{X/Y}[-1])
\]
Using Theorem \ref{zeboss}, we have a commutative diagram
\[
\xymatrix{
\mathcal{RH}om^{r}_{\mathcal{O}_Y}(\mathcal{O}_X, \mathcal{O}_X)  \ar[r]^-{\Delta_1 \circ } \ar[d]_-{\sim} & \mathcal{RH}om^{r}_{\mathcal{O}_Y}(\mathcal{O}_X, \mathrm{N}^*_{X/Y}[1]) \ar[d]^-{\sim} \\
\mathrm{U}(\mathrm{N}_{X/Y}[-1]) \ar[r] & \mathrm{N}^*_{X/Y}[1] \otimes \mathrm{U}(\mathrm{N}_{X/Y}[-1])
}
\]
Hence the morphism we want to look at is (modulo isomorphism on the target) the composition
\[
\omega_{X/Y} \simeq  \mathcal{RH}om^{\textrm{r}}(\mathcal{O}_X, \mathcal{O}_Y) \rightarrow  \mathcal{RH}om^{\textrm{r}}(\mathcal{O}_X, \mathcal{O}_X) \rightarrow  \mathcal{RH}om^{\textrm{r}}(\mathcal{O}_X, \mathrm{N}^*_{X/Y}[1]).
\]
We can factor the first arrow through  $\mathcal{RH}om^{\textrm{r}}(\mathcal{O}_X, \mathcal{O}_S)$, and the composition
\[
\mathcal{RH}om^{\textrm{r}}(\mathcal{O}_X, \mathcal{O}_S) \rightarrow  \mathcal{RH}om^{\textrm{r}}(\mathcal{O}_X, \mathcal{O}_X) \rightarrow  \mathcal{RH}om^{\textrm{r}}(\mathcal{O}_X, \mathrm{N}^*_{X/Y}[1])
\]
vanishes, since it is obtained by composing two successive arrows of a distinguished triangle.
\end{proof}

\begin{theorem} \cite{Yu}
Let $(X, \sigma)$ be a tame quantized cycle in $Y$. Via the isomorphism 
\[
\mathrm{RHom}_{\mathrm{D}^{\mathrm{b}}(Y)}(\omega_{X/Y}, \mathrm{S}(\mathrm{N}_{X/Y}[1])) \simeq \mathbb{H}^0(X, \mathrm{S}(\mathrm{N}^*_{X/Y}[1])) \simeq \mathrm{S}(\mathrm{N}^*_{X/Y}[1])^{\mathrm{N}_{X/Y} [-1]},
\]
the image of the morphism \eqref{shuriken} is the Duflo element of $\mathrm{N}_{X/Y}[-1].$
\end{theorem}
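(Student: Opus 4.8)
The plan is to obtain the theorem as a direct combination of Proposition \ref{zurich} with the abstract torsion characterisation of the Duflo element (Theorem \ref{serpette}); the only genuine work is to recognise the morphism produced by Theorem \ref{serpette} as the quantized cycle class of \cite{Grivaux-HKR}. First I would use Theorem \ref{zeboss} to transport the situation to $\mathrm{D}^{\mathrm{b}}(X)$: via the dual HKR isomorphism of Proposition \ref{buche2}, the ring object $\mathcal{RH}om^{r}_{\mathcal{O}_Y}(\mathcal{O}_X,\mathcal{O}_X)$ is identified with $\mathrm{S}(\mathrm{N}_{X/Y}[-1])$ endowed with the product $m_\star$ attached to the Lie algebra object $\mathrm{N}_{X/Y}[-1]$, that is, with its universal enveloping algebra. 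Under this identification the morphism \eqref{shuriken} becomes a morphism $a\colon\omega_{X/Y}\to\mathrm{S}(\mathrm{N}_{X/Y}[-1])$ in $\mathrm{D}^{\mathrm{b}}(X)$, and Proposition \ref{zurich} says exactly that $m_\star\circ(a\otimes\mathrm{id}_{\mathrm{N}_{X/Y}[-1]})$ vanishes (note that $\mathrm{S}^{d+1}(\mathrm{N}_{X/Y}[-1])=0$ since $\mathrm{N}_{X/Y}$ has rank $d$), that is, $a$ is a $d$-torsion morphism. Theorem \ref{serpette} then yields
\[
a=\sum_{p\geq0}\mathfrak{c}_p\bigl(\mathfrak{d}_p\otimes a_d\bigr)\,,
\]
where $\mathfrak{d}=\sum_p\mathfrak{d}_p$ is the Duflo element of $\mathrm{N}_{X/Y}[-1]$ and $a_d\colon\omega_{X/Y}\to\mathrm{S}^d(\mathrm{N}_{X/Y}[-1])$ is the top graded component of $a$.

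Next I would identify $a_d$ with the tautological isomorphism $\iota_d\colon\omega_{X/Y}\xrightarrow{\ \sim\ }\mathrm{S}^d(\mathrm{N}_{X/Y}[-1])=\det\mathrm{N}_{X/Y}[-d]$: by construction $a$ is obtained from the morphism $\mathcal{O}_Y\to\mathcal{O}_X$ together with the maps $\Delta_p^{'+}$ of Proposition \ref{buche2}, and the top piece $\Delta_d^{'+}$ computes, on the Koszul resolution, the canonical pairing $\Lambda^d\mathrm{N}^*_{X/Y}\otimes\det\mathrm{N}_{X/Y}\to\mathcal{O}_X$ — this is precisely the fundamental-class normalisation underlying the definition of $q_\sigma(X)$ in \cite{Grivaux-HKR} (whose degree-zero component is $1$). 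Granting this, $a=\sum_p\mathfrak{c}_p(\mathfrak{d}_p\otimes\iota_d)$. On the other hand, unwinding the definition of \cite{Grivaux-HKR}, the quantized cycle class is the element $q_\sigma(X)=\sum_p q_\sigma(X)_p$ of $\prod_p\mathrm{Hom}_{\mathrm{D}^{\mathrm{b}}(X)}(\mathbf{1}_{\mathcal{C}},\mathrm{S}^p(\mathrm{N}^*_{X/Y}[1]))=\prod_p\mathrm{H}^p(X,\Lambda^p\mathrm{N}^*_{X/Y})$ characterised by $a=\sum_p\mathfrak{c}_p(q_\sigma(X)_p\otimes\iota_d)$. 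The assignment $\sum_p c_p\mapsto\sum_p\mathfrak{c}_p(c_p\otimes\iota_d)$ is exactly the isomorphism $\mathrm{RHom}(\omega_{X/Y},\mathrm{S}(\mathrm{N}_{X/Y}[-1]))\simeq\bigoplus_p\mathrm{H}^p(X,\Lambda^p\mathrm{N}^*_{X/Y})$ appearing in the statement; in particular it is injective, whence $q_\sigma(X)=\mathfrak{d}$. Finally $\mathfrak{d}$ lies in the invariant subspace $\mathrm{S}(\mathrm{N}^*_{X/Y}[1])^{\mathrm{N}_{X/Y}[-1]}$ because, by \S\ref{dudu}, it is a universal polynomial in the invariant elements $\nu_k=\mathrm{Tr}\bigl((\mu^*)^{\bullet k}\bigr)$.

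The main obstacle is the bookkeeping in the second paragraph — reconciling the abstract contraction $\mathfrak{c}(\mathfrak{d}\otimes-)$ coming out of Theorem \ref{serpette} with the concrete definition of the quantized cycle class of \cite{Grivaux-HKR}. Two normalisation checks are needed: that the top graded component of the dual quantized HKR isomorphism is indeed the tautological fundamental-class identification $\iota_d$, and that the $\mathrm{S}(\mathrm{N}^*_{X/Y}[1])$-module structure on $\mathrm{S}(\mathrm{N}_{X/Y}[-1])$ defined by the contraction maps $\mathfrak{c}_p$ matches the pairing used to define $q_\sigma(X)$, so that no spurious combinatorial factors intervene — together with the routine identification of the target of the statement's isomorphism with the $\mathrm{N}_{X/Y}[-1]$-invariants. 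Once these conventions are pinned down, the equality $q_\sigma(X)=\mathfrak{d}$ is immediate from Proposition \ref{zurich}, Theorem \ref{serpette}, and the injectivity of contraction with $\iota_d$: all the substantive geometry (the torsion property) and representation theory (the Duflo characterisation) has already been packaged into those statements and into Theorem \ref{zeboss}.
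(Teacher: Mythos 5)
Your proposal is correct and follows essentially the same route as the paper: Proposition \ref{zurich} gives the $d$-torsion property, Theorem \ref{serpette} identifies the morphism \eqref{shuriken} with the contraction of its top component $\omega_{X/Y}\simeq\mathrm{S}^d(\mathrm{N}_{X/Y}[-1])$ against the Duflo element, and the statement follows. The paper's proof is just a two-line version of this; your normalisation checks on $\iota_d$ and the contraction maps are exactly the bookkeeping the paper leaves implicit in ``this gives the result.''
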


\begin{proof}
Thanks to Proposition \ref{zurich} and Theorem \ref{serpette}, the morphism $ \eqref{shuriken}$ is obtained by contracting the morphism
\[
\omega_{X/Y} \simeq \mathrm{S}^d (\mathrm{N}_{X/Y}[-1]) \rightarrow \mathrm{S} (\mathrm{N}_{X/Y}[-1])
\]
by the Duflo element of $\mathrm{N}_{X/Y}[-1]$. This gives the result.
\end{proof}

\section{The Ext algebras}
\subsection{Definitions}

Let $(X, Y, \sigma)$ be a fixed quantized cycle. For any $k$, we denote by $X^{(k)}_Y$ is the $k^{\mathrm{th}}$ formal neighborhood of $X$ in $Y$.
\begin{definition}
$\mathscr{A}^{(k)}_{X/Y}$ is the algebra  $\bigoplus_{n=0}^{\infty} \mathrm{Ext}^n_{X^{(k)}_Y}(\mathcal{O}_X, \mathcal{O}_X)$, 
the algebra structure being given by the Yoneda product. 
\end{definition}

For any $k$, there are natural algebra morphisms
\[
\mathscr{A}^{(0)}_{X/Y} \hookrightarrow \mathscr{A}^{(1)}_{X/Y} \longrightarrow  \mathscr{A}^{(2)}_{X/Y} \longrightarrow \cdots \longrightarrow \mathscr{A}^{(k)}_{X/Y} \longrightarrow \cdots \longrightarrow   \mathscr{A}^{(\infty)}_{X/Y}
\]
Note that all the algebras $\mathscr{A}^{(k)}_{X/Y}$ are naturally graded by the integer $n$, we call this grading the degree grading.
Thanks to the Proposition \eqref{buche}, the algebra $\mathscr{A}^{(1)}_{X/Y}$ is canonically  isomorphic (as an algebra) to the algebra 
\[
\bigoplus_{n=0}^{\infty} \bigoplus_{p=0}^n \mathrm{Ext}^{n-p}(\mathrm{T}^p \mathrm{N}^*_{X/Y}, \mathcal{O}_X)
\]
via the map that attaches to any $\varphi$ in $\mathrm{Ext}^{n-p}(\mathrm{T}^p \mathrm{N}^*_{X/Y}, \mathcal{O}_X)$ the element $\varphi \circ \Delta_p$.
As a corollary, the algebra $\mathscr{A}^{(1)}_{X/Y}$ carries another natural grading given by the integer $p$, which is completely different from the degree grading; we call this grading the Lie grading. Elements of depth zero correspond to the sub-algebra $\mathscr{A}^{(0)}_{X/Y}$.
\par \medskip
Using the sequence \eqref{C}, the composition
\[
\bigoplus_{n=0}^{\infty} \bigoplus_{p=0}^n \mathrm{Ext}^{n-p}(\Lambda^p \mathrm{N}^*_{X/Y}, \mathcal{O}_X) \longrightarrow \bigoplus_{n=0}^{\infty} \bigoplus_{p=0}^n \mathrm{Ext}^{n-p}(\mathrm{T}^p \mathrm{N}^{*}_{X/Y}, \mathcal{O}_X) \simeq \mathscr{A}^{(1)}_{X/Y} \longrightarrow \mathscr{A}^{(\infty)}_{X/Y}
\]
is an isomorphism of $\mathbf{k}$-vector spaces. Hence the map $
\mathscr{A}^ {(1)}_{X/Y} \longrightarrow \mathscr{A}^ {(\infty)}_{X/Y}
$
is surjective, and there is an isomorphism 
\[
\mathscr{A}_{X/Y}^{(\infty)} \simeq \bigoplus_{n=0}^{\infty} \bigoplus_{p=0}^n \mathrm{Ext}^{n-p}(\Lambda^p \mathrm{N}^*_{X/Y}, \mathcal{O}_X).
\]
of $\mathbf{k}$-vector spaces obtained by attaching to any $\varphi$ in $\mathrm{Ext}^{n-p}(\Lambda^p \mathrm{N}^*_{X/Y}, \mathcal{O}_X)$ the push forward of the element $\varphi \circ \Delta^-_p$ from $S$ to $Y$. The integer $p$ defines a grading on $\mathscr{A}_{X/Y}^{(\infty)}$ but it no longer respects the algebra structure, however the corresponding ascending filtration does. We call this filtration the Lie filtration.

\subsection{Enrichment} \label{Macron}

For any scheme (or complex manifold) $Z$, the category $\mathrm{D}^{\mathrm{b}}(Z)$ is enriched over $\mathrm{D}^{\mathrm{b}}(\mathbf{k})$ in the following way: 
\[
\mathrm{D}^{\mathrm{b}}(Z)(\mathcal{F}, \mathcal{G}):=\mathrm{R} \Gamma (X, \mathcal{RH}om_{\mathrm{D}^{\mathrm{b}}(Z)}(\mathcal{F}, \mathcal{G}))\,.
\]

\begin{remark}
$ $ \par
\begin{enumerate}
\item[--] The enrichment is symmetric monoidal. In particular, the tensor product functor is an enriched functor.  
\item[--] As $\mathbf{k}$ is a field, we can replace $\mathrm{D}^{\mathrm{b}}(\mathbf{k})$ by the equivalent category $\mathbf{k}^ {\mathbb{Z}}$. Through this equivalence we have
\[
\mathrm{D}^{\mathrm{b}}(Z)(\mathcal{F}, \mathcal{G}) \simeq \bigoplus_{n \in \mathbb{Z}} \mathrm{Hom}_{\mathrm{D}^{\mathrm{b}}(Z)}(\mathcal{F}, \mathcal{G}[n]).
\]
\item[--] The Ext algebras admit a very simple description using this formalism, they are given by the formula
\[
\mathscr{A}^{(k)}_{X/Y}=\mathrm{D}^{\mathrm{b}}\left(X^{(k)}_Y\right)(\mathcal{O}_X,\mathcal{O}_X)\,.
\]
\end{enumerate}
\end{remark}

Let $E$ an object in $\mathrm{D}^{\mathrm{b}}(Z)$, which we view as a module over the Lie algebra object $\mathrm{T}_X[-1]$ using the Atiyah class. 
There are two possible definitions of the $V$-invariants $E^V$ of $E$: 
\begin{enumerate}
\item[--] The standard one: $E^V:=\mathrm{Hom}_{\mathrm{D}^{\mathrm{b}}(Z)}(\mathcal O_Z,E)$, 
where $\mathcal O_Z$ is equipped with the trivial $\mathrm{T}_X[-1]$-module structure. 
\item[--] The enriched one: $E^V:=\mathrm{D}^{\mathrm{b}}(Z)(\mathcal O_Z,E)$. 
\end{enumerate}
Below we always use the enriched version. 

\subsection{Structure theorem} \label{yaaa}
If $(X, \sigma)$ is tame, we can explicitly describe the Ext algebra $\mathscr{A}_{X/Y}^{(\infty)}$. 
We have an exact sequence
\[
0 \rightarrow \mathrm{T}_X[-1] \rightarrow (\mathrm{T}_Y[-1])_{|X} \rightarrow \mathrm{N}_{X/Y}[-1] \rightarrow 0.
\]
Recall that $\mathrm{N}_{X/Y}[-1]$ is naturally endowed with a Lie structure (given by $\alpha$). 

\begin{theorem} \label{swan}
Assume that $(X, \sigma)$ is a tame quantized cycle in $Y$. Then using the corresponding Lie structure on $\mathrm{N}_{X/Y}[-1]$, the algebra
$\mathscr{A}^{(\infty)}_{X/Y}$ is naturally isomorphic to $\mathrm{U}(\mathrm{N}_{X/Y}[-1])^{\mathrm{T}_{X}[-1]}$. Besides, there is a commutative diagram
\[
\xymatrix{\mathscr{A}^{(1)}_{X/Y} \ar[r] \ar[d] & \mathscr{A}^{(\infty)}_{X/Y}  \ar[d] \\
\left(\mathrm{T}(\mathrm{N}_{X/Y}[-1])\right)^{\mathrm{T}_X[-1]} \ar[r] & \left(\mathrm{U}(\mathrm{N}_{X/Y}[-1])\right)^{\mathrm{T}_X[-1]}
}
\]
\end{theorem}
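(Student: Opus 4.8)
The strategy is to combine the identification of the $\mathrm{Ext}$ algebra of $X$ in a formal neighborhood with the Lie-theoretic machinery of \S\S2--3. First I would observe that $\mathscr{A}^{(\infty)}_{X/Y} = \mathrm{D}^{\mathrm{b}}(Y)(\mathcal O_X,\mathcal O_X)$, and that by Theorem \ref{zeboss} we have an isomorphism of algebra objects $\mathcal{RH}om^{\ell}_{\mathcal O_Y}(\mathcal O_X,\mathcal O_X)\simeq \mathrm{U}(\mathrm{N}_{X/Y}[-1])$ in $\mathrm{D}^{\mathrm{b}}(X)$, where the right-hand side is the universal enveloping algebra of the Lie algebra object $(\mathrm{N}_{X/Y}[-1],\alpha^\vee)$. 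Passing from $\mathrm{D}^{\mathrm{b}}(X)$ back to $\mathrm{D}^{\mathrm{b}}(Y)$ via $i_*$ and then applying the enriched global-sections functor $\mathrm{D}^{\mathrm{b}}(Y)(\mathcal O_X,-)$, and using that $i_*$ identifies $\mathrm{D}^{\mathrm{b}}(X)(\mathcal O_X,\mathcal F)$ with $\mathrm{D}^{\mathrm{b}}(Y)(\mathcal O_X, i_*\mathcal F)$, one gets that $\mathscr{A}^{(\infty)}_{X/Y}$ is the algebra of $\mathrm{T}_X[-1]$-invariants (enriched version, as in \S\ref{Macron}) of $\mathrm{U}(\mathrm{N}_{X/Y}[-1])$, with the Yoneda product corresponding to the algebra structure transported from $\mathrm{U}(\mathrm{N}_{X/Y}[-1])^{\mathrm{T}_X[-1]}$ under the identification $E^{\mathrm{T}_X[-1]}=\mathrm{D}^{\mathrm{b}}(X)(\mathcal O_X,E)$.

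The precise bookkeeping that makes the product match is exactly Theorem \ref{yolo} (together with Theorem \ref{harpe}) applied to the tame triple $\big(\mathrm{T}_Y[-1]_{|X},\,\mathrm{T}_X[-1],\,\mathrm{N}_{X/Y}[-1]\big)$: that result says the invariants $\mathrm{U}(\mathfrak n)^{\mathfrak h}$ carry two a priori different algebra structures (the one induced from $\mathrm{U}(\mathfrak n)$, and the one coming from $\mathrm{Hom}_{\mathfrak g}(\mathrm{U}(\mathfrak n),\mathrm{U}(\mathfrak n))$ via Frobenius reciprocity $\mathrm{Ind}_{\mathfrak h}^{\mathfrak g}\mathbf 1\simeq\mathrm{U}(\mathfrak n)$) and that they agree up to taking the opposite. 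On the geometric side, the Yoneda product on $\mathscr{A}^{(\infty)}_{X/Y}$ is precisely the composition product on $\mathrm{Hom}_{\mathrm{D}^{\mathrm{b}}(Y)}(\mathcal O_X,\mathcal O_X)$, which under the above identifications becomes the composition product on $\mathrm{Hom}_{\mathfrak g}(\mathrm{U}(\mathfrak n),\mathrm{U}(\mathfrak n))$; Theorem \ref{yolo} then tells us this is (anti-)isomorphic to the enveloping-algebra product on $\mathrm{U}(\mathrm{N}_{X/Y}[-1])^{\mathrm{T}_X[-1]}$. So the first assertion of the theorem follows once one checks that the geometric $\mathfrak g=\mathrm{T}_Y[-1]_{|X}$-module structures appearing (the Atiyah-class module structure on $\mathcal O_X$, resp.~on $\mathcal{RH}om^{\ell}$) coincide with the algebraic ones used in \S\S\ref{castelnau}--\ref{chili}; this is where Propositions \ref{matisse}, \ref{buche} and the computation $\alpha=c_2^1$ of Proposition \ref{muzelle} feed in.

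For the commutative square, the plan is to do the same analysis one formal order lower, without invariants: by Proposition \ref{buche}, $\mathscr{A}^{(1)}_{X/Y}$ is the $\mathrm{T}_X[-1]$-invariants (enriched) of $\mathcal{RH}om^{\ell}_{\mathcal O_S}(\mathcal O_X,\mathcal O_X)\simeq\mathrm{T}(\mathrm{N}_{X/Y}[-1])$, the tensor algebra carrying its natural $\mathrm{T}_Y[-1]_{|X}$-action, and the left vertical arrow is induced by applying $(-)^{\mathrm{T}_X[-1]}$ to the canonical algebra map $\mathrm{T}(\mathrm{N}_{X/Y}[-1])\to\mathrm{U}(\mathrm{N}_{X/Y}[-1])$. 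The top horizontal arrow $\mathscr{A}^{(1)}_{X/Y}\to\mathscr{A}^{(\infty)}_{X/Y}$ is the functorial map coming from the closed immersion $S=X^{(1)}_Y\hookrightarrow Y$; commutativity of the square is then exactly the functoriality of the Theorem \ref{zeboss} diagrams with respect to this immersion, i.e.~the compatibility of the first-order HKR with the quantized HKR — which is essentially \cite[Thm.~4.13]{Grivaux-HKR} together with the naturality statements in Theorem \ref{zeboss}.

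The main obstacle I expect is not any single computation but the careful matching of structures across three different worlds: making sure that (a) the $\mathrm{T}_Y[-1]_{|X}$-module structure on $\mathcal O_X$ given by the Atiyah class is the one whose induced module is $\mathrm{U}(\mathrm{N}_{X/Y}[-1])$ (Theorem \ref{harpe}), (b) the enriched-invariants functor $E\mapsto\mathrm{D}^{\mathrm{b}}(X)(\mathcal O_X,E)$ is compatible with Yoneda composition in the way needed to import Theorem \ref{yolo}, and (c) the anti-isomorphism of Theorem \ref{yolo} is harmless here because the Yoneda product and the enveloping-algebra product are being compared on a ring that, in the relevant examples, one wants stated without the "anti". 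Keeping track of this op versus non-op, and of left versus right $\mathcal{RH}om$ (which is why the theorem could equally be phrased with $\mathcal{RH}om^r$ and $\mathrm{N}^*$), is the delicate part; everything else is an application of the already-established Theorems \ref{harpe}, \ref{yolo}, \ref{zeboss} and Proposition \ref{buche}.
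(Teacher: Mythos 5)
Your first paragraph, together with your treatment of the square, is in substance the paper's own proof: Theorem \ref{zeboss} gives the isomorphism of algebra objects $\mathcal{RH}om^{\ell}_{\mathcal{O}_Y}(\mathcal{O}_X,\mathcal{O}_X)\simeq \mathrm{U}(\mathrm{N}_{X/Y}[-1])$ in $\mathrm{D}^{\mathrm{b}}(X)$, and one then applies the enriched global sections functor, which by the very definition of enriched invariants in \S\ref{Macron} is $E\mapsto E^{\mathrm{T}_X[-1]}=\mathrm{D}^{\mathrm{b}}(X)(\mathcal{O}_X,E)=\mathrm{R}\Gamma(X,E)$ (the $\mathrm{T}_X[-1]$-module structure on $\mathrm{U}(\mathrm{N}_{X/Y}[-1])\simeq\mathrm{S}(\mathrm{N}_{X/Y}[-1])$ being the Atiyah-class one); this yields the graded algebra isomorphism $\mathscr{A}^{(\infty)}_{X/Y}\simeq\mathrm{U}(\mathrm{N}_{X/Y}[-1])^{\mathrm{T}_X[-1]}$, and the commutative square is the image under this functor of the square in Theorem \ref{zeboss}. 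Where you genuinely diverge is your second paragraph: the paper does not invoke Theorem \ref{harpe} or Theorem \ref{yolo} here, and it does not need to. The algebra structure on the invariants in the statement is simply the one induced by (lax) monoidality of $\mathrm{R}\Gamma$ from the algebra-object structure of $\mathrm{U}(\mathrm{N}_{X/Y}[-1])$, and the matching of that structure with the Yoneda product is already contained in Theorem \ref{zeboss} (via the Yoneda-compatibility of the HKR isomorphisms, Propositions \ref{hkrbabe} and \ref{buche}). Your detour would additionally require identifying $\mathscr{A}^{(\infty)}_{X/Y}$ with an enriched $\mathrm{Hom}_{\mathfrak{g}}(\mathrm{U}(\mathfrak{n}),\mathrm{U}(\mathfrak{n}))$ for $\mathfrak{g}=\mathrm{T}_Y[-1]_{|X}$, i.e.\ a geometric Frobenius reciprocity which the paper never proves (it only appears heuristically in the concluding section on Duflo's conjecture), and it drags in precisely the op/anti-isomorphism bookkeeping of Theorem \ref{yolo} that you yourself flag as delicate. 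Dropping that paragraph removes the only unjustified step and leaves an argument that is both complete and essentially identical to the paper's.
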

\begin{proof}
We have already proved that there is an isomorphism of algebra objects between $\mathcal{RH}om^{\ell}_{\mathcal O_Y}(\mathcal O_X,\mathcal O_X)$ and 
$\mathrm{U}(\mathrm{N}_{X/Y}[-1])$. Applying the derived global section functor we get an isomorphism of graded algebras between 
$\mathscr{A}^{(\infty)}_{X/Y}$ and $\mathrm{R}\Gamma\left(X,\mathrm{U}(\mathrm{N}_{X/Y}[-1])\right)$. 
Finally, observe that the $\mathrm{T}_{X}[-1]$-module structure of $\mathrm{U}(\mathrm{N}_{X/Y}[-1])\cong \mathrm{S}(\mathrm{N}_{X/Y}[-1])$ is given by the Atiyah class of $\mathrm{S}(\mathrm{N}_{X/Y}[-1])$. Therefore the algebra $\mathrm{R}\Gamma\left(X,\mathrm{U}(\mathrm{N}_{X/Y}[-1])\right)$ is indeed 
$\mathrm{U}(\mathrm{N}_{X/Y}[-1])^{\mathrm{T}_{X}[-1]}$. 
\end{proof}

\subsection{The algebra  \texorpdfstring{$\mathscr{A}^{(2)}_{X/Y}$}{A2}}
In this section, we describe completely the image of $\mathscr{A}^{(1)}_{X/Y}$ in $\mathscr{A}^{(2)}_{X/Y}$ for tame quantized cycles. The result, which seems quite surprising at first sight, runs as follows:
\begin{theorem} \label{serpette2}
Assume that $(X, \sigma)$ is tame in $Y$. The surjective morphism from $\mathscr{A}^{(2)}_{X/Y}$ to $\mathscr{A}^{(\infty)}_{X/Y}$ admits a canonical section.
\end{theorem}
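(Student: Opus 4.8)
The plan is to lift the statement to the level of internal $\mathcal{H}om$-algebra objects in $\mathrm{D}^{+}(X)$ and then apply $\mathrm{R}\Gamma(X,-)$. The sheaf $\mathcal{RH}om_{\mathcal{O}_W}(\mathcal{O}_X,\mathcal{O}_X)$ is supported on $X$ and carries a canonical $\mathcal{O}_X$-module structure through $\mathcal{H}om_{\mathcal{O}_W}(\mathcal{O}_X,\mathcal{O}_X)=\mathcal{O}_X$, so it is $k_*$ of a well-defined object $\mathcal{RH}om^{\ell}_{\mathcal{O}_W}(\mathcal{O}_X,\mathcal{O}_X)$ of $\mathrm{D}^{+}(X)$; the Yoneda composition makes the latter an algebra object in $\mathrm{D}^{+}(X)$ (the two $\mathcal{O}_X$-actions agree because $\mathcal{O}_X$ is central), and $\mathscr{A}^{(2)}_{X/Y}=\mathrm{R}\Gamma\big(X,\mathcal{RH}om^{\ell}_{\mathcal{O}_W}(\mathcal{O}_X,\mathcal{O}_X)\big)$ as graded algebras, and likewise $\mathscr{A}^{(\infty)}_{X/Y}=\mathrm{R}\Gamma\big(X,\mathcal{RH}om^{\ell}_{\mathcal{O}_Y}(\mathcal{O}_X,\mathcal{O}_X)\big)$. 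The ring map $\mathcal{O}_Y\to\mathcal{O}_W$ induces a natural algebra morphism $r\colon\mathcal{RH}om^{\ell}_{\mathcal{O}_W}(\mathcal{O}_X,\mathcal{O}_X)\to\mathcal{RH}om^{\ell}_{\mathcal{O}_Y}(\mathcal{O}_X,\mathcal{O}_X)$ whose image under $\mathrm{R}\Gamma$ is the surjection $\mathscr{A}^{(2)}_{X/Y}\to\mathscr{A}^{(\infty)}_{X/Y}$ of the statement; it is thus enough to produce a section of $r$ as algebra objects in $\mathrm{D}^{+}(X)$.

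To build such a section, I would start from the algebra morphism $g\colon\mathrm{T}(\mathrm{N}_{X/Y}[-1])=\sigma_*\mathcal{RH}om_{\mathcal{O}_S}(\mathcal{O}_X,\mathcal{O}_X)\to\mathcal{RH}om^{\ell}_{\mathcal{O}_W}(\mathcal{O}_X,\mathcal{O}_X)$ attached to the quotient $\mathcal{O}_W\to\mathcal{O}_S$ (using the infinitesimal HKR identification of Proposition \ref{hkrbabe} on the source). On the degree-$p$ summand, $g$ is precomposition with the image of $\Delta_p$ in $\mathrm{D}^{\mathrm{b}}(W)$; this is where \emph{tameness} enters: by Corollary \ref{sky2} one has $\Delta_p=\sum_{k=1}^p c_p^k\circ\Delta_k^+$ already in $\mathrm{D}^{\mathrm{b}}(W)$, so $g$ factors canonically as
\[
\mathrm{T}(\mathrm{N}_{X/Y}[-1])\xrightarrow{\ \pi\ }\mathrm{U}(\mathrm{N}_{X/Y}[-1])\xrightarrow{\ \iota_W\ }\mathcal{RH}om^{\ell}_{\mathcal{O}_W}(\mathcal{O}_X,\mathcal{O}_X),
\]
where $\pi=\sum_{p,k}(c_p^k)^{*}$ is the PBW projection — the same map that appears over $Y$ in Theorem \ref{zeboss}, whence the identification $\mathrm{U}(\mathrm{N}_{X/Y}[-1])=(\mathrm{S}(\mathrm{N}_{X/Y}[-1]),m_\star)$ — and $\iota_W$ is, on the $\mathrm{S}^k$-summand, precomposition with the image in $\mathrm{D}^{\mathrm{b}}(W)$ of $\Delta_k^+\in\mathrm{Hom}_{\mathrm{D}^{\mathrm{b}}(S)}\big(\mathcal{O}_X,\mathrm{S}^k(\mathrm{N}^*_{X/Y}[1])\big)$.

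Finally I would conclude as follows. The PBW projection $\pi$ is a split epimorphism in $\mathrm{D}^{+}(X)$, a section being the antisymmetrisation inclusion $\mathrm{S}(\mathrm{N}_{X/Y}[-1])\hookrightarrow\mathrm{T}(\mathrm{N}_{X/Y}[-1])$ (split by Lemma \ref{Queyras}), and it is an algebra morphism by Theorem \ref{saroumane}; the same then holds for $\pi\otimes\pi$. Since $g=\iota_W\circ\pi$ is an algebra morphism, the chain of equalities $\iota_W\circ m_\star\circ(\pi\otimes\pi)=\iota_W\circ\pi\circ m_{\mathrm{T}}=g\circ m_{\mathrm{T}}=m_W\circ(g\otimes g)=m_W\circ(\iota_W\otimes\iota_W)\circ(\pi\otimes\pi)$ can be cancelled on the right (split epimorphisms are cancellable), so $\iota_W$ is an algebra morphism. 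Moreover $r\circ\iota_W$ is, summand by summand, precomposition with the image of $\Delta_k^+$ in $\mathrm{D}^{\mathrm{b}}(Y)$, which by Proposition \ref{buche} and Theorem \ref{zeboss} is exactly the HKR/PBW identification of $\mathcal{RH}om^{\ell}_{\mathcal{O}_Y}(\mathcal{O}_X,\mathcal{O}_X)$ with $\mathrm{U}(\mathrm{N}_{X/Y}[-1])$; in particular $r\circ\iota_W$ is an isomorphism of algebra objects, and $s:=\iota_W\circ(r\circ\iota_W)^{-1}$ is a section of $r$ as algebra objects in $\mathrm{D}^{+}(X)$. Applying $\mathrm{R}\Gamma(X,-)$ gives the desired section of $\mathscr{A}^{(2)}_{X/Y}\to\mathscr{A}^{(\infty)}_{X/Y}$, and since every ingredient depends only on the tame quantized cycle $(X,\sigma)$, it is canonical. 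The single genuine difficulty is the factorisation $g=\iota_W\circ\pi$ of the second paragraph: it forces one to match the three infinitesimal HKR identifications over $S$, $W$ and $Y$ and to track the structure constants $c_p^k$, and it rests entirely on Corollary \ref{sky2} — precisely the input that fails once tameness is dropped (cf.\ the Remark after Proposition \ref{key}).
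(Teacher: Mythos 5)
Your reduction to a section of $r$ at the level of algebra objects in $\mathrm{D}^{+}(X)$ rests on the opening claim that Yoneda composition makes $\mathcal{RH}om^{\ell}_{\mathcal{O}_W}(\mathcal{O}_X,\mathcal{O}_X)$ an algebra object in $\mathrm{D}^{+}(X)$ ``because the two $\mathcal{O}_X$-actions agree, $\mathcal{O}_X$ being central''. That is a genuine gap, and it is precisely the central difficulty of the whole paper: the source and target $\mathcal{O}_X$-structures on a derived Hom only agree underived, and the Yoneda product is a map out of $\otimes^{\mathbb{L}}_{\mathcal{O}_W}$, which there is no reason to expect to factor through $\otimes^{\mathbb{L}}_{\mathcal{O}_X}$ so as to produce an algebra object of $(\mathrm{D}^{+}(X),\otimes^{\mathbb{L}}_{\mathcal{O}_X})$. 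The paper emphasizes exactly this for $Y$ (it is ``not at all obvious'' that $\mathcal{RH}om^{\ell}_{\mathcal{O}_Y}(\mathcal{O}_X,\mathcal{O}_X)$ is multiplicative), and the structure is only obtained in the proof of Theorem \ref{zeboss} through Cases A/B/C: by pushing forward along a retraction when one exists, and otherwise by transporting $m_\star$ through HKR via the categorical PBW theorem. Over $W$ the situation is no easier: in the general tame case there is no retraction $W\to X$ (only the $2$-split case $\beta=0$ provides one), so the Case A/B mechanism is unavailable, and your factorization $g=\iota_W\circ\pi$, as well as the multiplicativity of $g$, $\iota_W$ and $r$ as morphisms of algebra objects in $\mathrm{D}^{+}(X)$, and even the identification $\mathscr{A}^{(2)}_{X/Y}\simeq\mathrm{R}\Gamma\big(X,\mathcal{RH}om^{\ell}_{\mathcal{O}_W}(\mathcal{O}_X,\mathcal{O}_X)\big)$ \emph{as algebras}, all presuppose structures you have not constructed. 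In effect you would have to redo the proof of Theorem \ref{zeboss} with $W$ in place of $Y$ before your argument can begin.

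You did isolate the correct geometric input, Corollary \ref{sky2}, but the paper uses it in a much lighter way that avoids any sheaf-level algebra structure over $W$: it argues directly in the Ext algebras, where Yoneda products are unproblematic and all comparison maps $\mathscr{A}^{(1)}_{X/Y}\to\mathscr{A}^{(2)}_{X/Y}\to\mathscr{A}^{(\infty)}_{X/Y}$ are algebra morphisms. Via the HKR description of $\mathscr{A}^{(1)}_{X/Y}$, the kernel $\mathscr{R}_{X/Y}$ of $\mathscr{A}^{(1)}_{X/Y}\to\mathscr{A}^{(\infty)}_{X/Y}$ consists of the elements $\sum_{p}\alpha_p\circ\big(\Delta_p-\sum_{k=1}^p c_p^k\circ\Delta_k^+\big)$, and Corollary \ref{sky2} says these already vanish in $\mathrm{D}^{\mathrm{b}}(W)$, hence in $\mathscr{A}^{(2)}_{X/Y}$; since the kernel of $\mathscr{A}^{(1)}_{X/Y}\to\mathscr{A}^{(2)}_{X/Y}$ is contained in $\mathscr{R}_{X/Y}$, the two kernels coincide, so the image of $\mathscr{A}^{(1)}_{X/Y}$ in $\mathscr{A}^{(2)}_{X/Y}$ maps isomorphically, as an algebra, onto $\mathscr{A}^{(\infty)}_{X/Y}$, and the inverse of that isomorphism is the canonical section. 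If you strip your write-up of the unjustified algebra-object structures over $W$ and rerun your idea at the level of the Ext algebras, what remains is exactly this argument.
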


\begin{proof}
Let $\mathscr{R}_{X/Y}$ be the kernel of the map $\mathscr{A}^{(1)}_{X/Y} \rightarrow \mathscr{A}^{(\infty)}_{X/Y}$. The kernel of the map $\mathscr{A}^{(1)}_{X/Y} \rightarrow \mathscr{A}^{(2)}_{X/Y}$ is a subalgebra of $\mathscr{R}_{X/Y}$, and we must prove that any element of $\mathscr{R}_{X/Y}$ maps to zero in $\mathscr{A}^{(2)}_{X/Y}$. Elements of $\mathscr{R}_{X/Y}$ are of the form $\sum_{p \geq 0} \alpha_p \circ (\Delta_p-\sum_{k=1}^p c_p^k \circ \Delta^+_k)$ for $\alpha_p$ in $\mathrm{Ext}^*(\bigotimes^p \mathrm{N}^*_{X/Y}, \mathcal{O}_X)$. Hence the result follows directly from Corollary \ref{sky2}.

\end{proof}

\section{Conclusion and perspectives}

\subsection{State of the art in the tame case}

In this paper we introduced a tameness condition for a quantized cycle $(X,\sigma)$ in $Y$. Under this assumption we proved that the shifted normal sheaf $\mathrm{N}_{X/Y}[-1]$ is endowed with a Lie bracket and that the sheaf of derived $\mathcal O_Y$-linear endomorphisms of $\mathcal O_X$ is isomorphic to the universal enveloping algebra of $\mathrm{N}_{X/Y}[-1]$. Using this, we were able to: 
\begin{itemize}
\item[--] describe explicitly the Ext algebra $\mathrm{Ext}^\bullet_Y(\mathcal O_X,\mathcal O_X)$, giving simple and conceptual proofs in the diagonal case of results of Ramadoss and Kapranov.
\item[--] identify the quantized cycle class with the Duflo element of the Lie algebra object $\mathrm{N}_{X/Y}[-1]$, recovering and reinterpreting in Lie algebraic terms a result Yu. 
\end{itemize}

\subsection{Beyond the tame case}

At first sight, the diagonal cycle $X\hookrightarrow X\times X$ admits two distinguished quantizations (\textit{i.e.} first order retractions): the two projections $\mathrm{pr}_1$ and $\mathrm{pr}_2$. However, since the space of quantizations of a cycle is an affine space, this gives a whole line of quantizations, namely $t \mathrm{pr}_1 + (1-t) \mathrm{pr}_2$ for $t$ in the base field $\mathbf{k}$.
\par \medskip
In general these quantizations are never tame except for $t=0$ or $t=1$, that is for the two projections. However, the value $t=\displaystyle \frac{1}{2}$ is special since the corresponding quantized cycle fits in a very interesting family of quantized cycles: every fixed-point locus $X=Y^{\iota}$ of an involution $\iota$ of $Y$ defines naturally a quantized cycle. In our former example, $Y=X \times X$ and $\iota (x, x')=(x', x)$. 
In Lie algebraic terms this corresponds to symmetric pairs. We will study further on this case in future work.

\subsection{Link with Duflo's conjecture for symmetric pairs}
If $\mathfrak{g}$ is a finite-dimensional Lie algebra (over a characteristic zero field) then Duflo's theorem \cite{duflo_operateurs_1977} states that the composition
\[
\mathrm{S}(\mathfrak{g}) \xrightarrow{\sqrt{\mathfrak{d}_{\mathfrak{g}}} \,\lrcorner} \mathrm{S}(\mathfrak{g}) \xrightarrow{\mathrm{PBW}} \mathrm{U}(\mathfrak{g}) 
\] 
induces an isomorphism of algebras between the algebra of invariants\footnote{This algebra is sometimes called the algebra of Casimirs of $\mathfrak{g}$.} $\mathrm{S}(\mathfrak{g})^{\mathfrak{g}}$ and the center $\mathrm{Z}\big(\mathrm{U}(\mathfrak{g})\big)$ of $\mathrm{U}(\mathfrak{g})$. Here, $\mathfrak{d}_{\mathfrak{g}}$ is the Duflo element of $\mathfrak{g}$ in $\widehat{S}(\mathfrak{g}^*)$ introduced in \S \ref{dudu}. This theorem is believed to be true in any reasonable categorical context, that is for any dualizable Lie algebra object in a symmetric monoidal category with the hypotheses we put to get the categorical PBW theorem. However, this goal is out of reach in full generality (aside from the abelian case due to Kontsevich), even though it has been achieved in the enriched\footnote{\textit{See} \S \ref{Macron}} monoidal category $\mathrm{D}^{\mathrm{b}}(X)$ for the Lie algebra object $\mathrm{T}_X[-1]$ in \cite{calaque_hochschild_2010}. For historical considerations around Duflo's theorem and its geometric counterpart, we refer the reader to the monograph \cite{calaque_lectures_2011}.
\par \medskip
Duflo's theorem has a conjectural extension for reductive pairs (\textit{see} \cite{duflo_open_problems}), which runs as follows: given such a pair $(\mathfrak{g}, \mathfrak{h})$, there should be an isomorphism of algebras
\[
\mathrm{ZP}\left\{ \left( \frac{\mathrm{S}(\mathfrak{g})}{\mathrm{S}(\mathfrak{g}) \mathfrak{h}_{\rho}} \right)^{\mathfrak{h}} \right\}\simeq \mathrm{Z}\left\{\left( \frac{\mathrm{U}(\mathfrak{g})}{\mathrm{U}(\mathfrak{g}) \mathfrak{h}} \right)^{\mathfrak{h}}\right\}.
\]
where the notation is explained below:
\begin{enumerate}
\item[--] $\rho$ is half the character of the $\mathfrak{h}$-representation $\mathfrak{g}/\mathfrak{h}$.
\vspace{0.2cm}
\item[--] $\mathfrak{h}_{\rho}=\{h+\rho(h)|h\in\mathfrak{h}\}\subset \mathrm{S}(\mathfrak{g})$
\vspace{0.2cm}
\item[--] $\left(\displaystyle \frac{\mathrm{S}(\mathfrak{g})}{\mathrm{S}(\mathfrak{g}) \mathfrak{h}_{\rho}} \right)^{\mathfrak{h}}$ is naturally a Poisson algebra\footnote{It is the algebra of regular functions on the reduction along the coisotropic subspace $\{\lambda\in\mathfrak{g}^*|\lambda_{|\mathfrak{h}}=\rho\}$ in the linear Poisson space $\mathfrak{g}^*$.}.
\vspace{0.2cm}
\item[--] $\mathrm{ZP}$ denotes the Poisson center. 
\end{enumerate}
It is interesting to understand its geometrization. To do this, we consider the reductive pair $(\mathrm{T}_Y[-1]_{|X},\mathrm{T}_X[-1])$ attached to a quantized cycle. Of course, in a categorical context, $\displaystyle \left(\frac{\mathrm{U}(\mathfrak{g})}{\mathrm{U}(\mathfrak{g}) \mathfrak{h}}\right)^{\mathfrak{h}}$ has to be understood as $\mathrm{Hom}_{\mathfrak{g}}\big(\mathrm{Ind}_{\mathfrak{h}}^{\mathfrak{g}} \, \mathbf{1}_{\mathcal{C}},\mathrm{Ind}_{\mathfrak{h}}^{\mathfrak{g}} \, \mathbf{1}_{\mathcal{C}}\big)$. 
\par \medskip
In the tame case, the results of \S \ref{chili} and \S  \ref{yaaa} play a crucial role. Indeed, combining Theorem \ref{harpe} and Theorem \ref{yolo}, the left hand side of Duflo's conjecture is $\mathrm{U}(\mathrm{N}_{X/Y}[-1])^{\mathrm{T}_X[-1]}$ endowed with the opposite algebra structure. Thanks to Theorem \ref{swan},  it is the center of $\mathscr{A}_{X/Y}^{\infty}$.

\appendix
\section{Operadic proof of the categorical PBW} \label{AAAA}
In this appendix we sketch a proof of the following folklore result: the PBW theorem holds in any Karoubian symmetric monoidal category in which countable sums exist and such that the monoidal product preserves these in each variable. We also describe the multiplication by elements of degree one in the universal envelopping algebra. The proof makes use of linear operads, for which we recommend the reference \cite{LV}. 
\begin{notation}
For a set $\mathcal V$ of variables and a given bijection $x:\{1,\dots,k\}\to \mathcal V$ we will allow ourselves to write a $k$-ary operation $m$ in variable notation: $m=M\left((x(1),\dots, x(k)\right)$. In this notation the symmetric group action reads $m\cdot\sigma=M\left(x(\sigma(1)),\dots, x(\sigma(k))\right)$,  and the operad composition is expressed in terms of substitution: given another operation $n=N\big(y(1),\dots,y(l)\big)$ written in variable notation, the partial composition $m\circ_i n$ is
\[
m\circ_i n=M\left(x(1),\dots,x(i-1),N\left(y(1),\dots,y(l)\right),x(i+1),\dots,x(k)\right).
\]
\end{notation}

\subsection{Symmetric multibraces}

Let us denote by $\mathcal{SMB}$ the operad of symmetric multibrace algebras from \cite[\S4.5]{LR}: it is the operad generated by operations $m_{p,q}$ (where $p,q\geq1$) of arity $p+q$, such that: 
\begin{itemize}
\item[(a)] $m_{p,q}$ is $(p,q)$-symmetric: $m_{p,q}\cdot \sigma=m_{p,q}$ if $\sigma\in\mathfrak{S}_p\times\mathfrak{S}_q$. 
\item[(b)] The operations $m_{p,q}=M_{p,q}(x_1,\dots,x_p,y_1,\dots,y_q)$ satisfy the relation $\mathcal{SR}$ from in \cite[Proposition 4.3]{LR}. 
\end{itemize}
Here and below, we use the convention that $m_{p,0}$ (resp.~$m_{0,q}$) is the identity operation if $p=1$ (resp.~$q=1$) and $0$ otherwise. 

\subsection{Interpretation in terms of bialgebra structure}

Let $\mathcal C$ be a symmetric monoidal $\mathbf{k}$-linear category that is Karoubian and with arbitrary countable sums. This is sufficient for $\mathrm{S}(V)$ to exist, given any object $V$. 
Observe that $\mathrm{S}(V)$ carries a coaugmented cocommutative coassociative counital coalgebra structure: 
\begin{enumerate}
\item[--] The coproduct $\nabla:\mathrm{S}(V)\to\mathrm{S}(V)\otimes\mathrm{S}(V)$ given as follows: on $\mathrm{S}^n(V)$, 
\[
\nabla=\sum_{p=0}^n \binom{n}{p} \,\nabla_{p,n-p}\,,
\]
where $\nabla_{p,q}$ is the inclusion $\mathrm{S}^n(V)\hookrightarrow\mathrm{S}^p(V)\otimes\mathrm{S}^q(V)$. 
\item[--] The counit $\epsilon:\mathrm{S}(V)\to\mathbf{1}_{\mathcal{C}}$ is the projection onto the direct factor $\mathrm{S}^0(V)=\mathbf{1}_{\mathcal{C}}$. 
\item[--] The coaugmentation $1:\mathbf{1}_{\mathcal{C}}\to\mathrm{S}(V)$ is the direct factor inclusion of $\mathrm{S}^0(V)=\mathbf{1}_{\mathcal{C}}$. 
\item[--] We will also need the reduced coproduct $\bar{\nabla}:=\nabla-(\mathrm{id}\otimes1+1\otimes \mathrm{id})$. 
\end{enumerate}

\par \medskip

On $\mathrm{S}^n(V)$, the iterated reduced coproduct $\bar{\nabla}^{(k)}$ is the sum, over all $(k+1)$-tuples $(n_0,\dots,n_k)$ such that $n_0 + \ldots + n_k=n$, of $\binom{n}{n_1,\dots, n_k}$ times the inclusions $\mathrm{S}^n(V)\hookrightarrow \mathrm{S}^{n_0}(V)\otimes\cdots\otimes \mathrm{S}^{n_k}(V)$. In particular, on $\mathrm{S}^n(V)$, the following properties hold: 
\begin{itemize}
\item[--] If $k=n$ then $\bar{\nabla}^{(n)}$ is $n!$ times the inclusion of $S^n(V)$ in $V^{\otimes n}$. 
\item[--] If $k>n$ then $\bar{\nabla}^{(k)}=0$. 
\end{itemize}

\par\medskip

It has been proven in \cite[Section 4]{LR} that an $\mathcal{SMB}$-algebra structure on $V$ is the same as an associative product $m$ on $\mathrm{S}(V)$ such that $m$ is a coaugmented counital coalgebra morphism: $\nabla\circ m=m^{\otimes 2}\circ(23)\circ\nabla^{\otimes2}$ and $\epsilon\circ m=\epsilon\otimes\epsilon$. Indeed, being a coaugmented counital coalgebra morphism imposes, by cofreeness of $\mathrm{S}(V)$, that $m$ is completely determined by the structure maps $m_{p,q}$ defined as the composition 
\[
\mathrm{S}^p(V)\otimes \mathrm{S}^q(V)\overset{m}{\longrightarrow} \mathrm{S}(V)\twoheadrightarrow \mathrm{S}^1(V)=V. 
\]
Associativity of $m$ is equivalent to condition (b) on the structure maps $m_{p,q}$. 

\subsection{A morphism of operads $\mathcal{L}ie\to\mathcal{SMB}$}

Recall that the Lie operad is the linear operad generated by a skew-symmetric binary operation obeying the Jacobi identity. 
In arity $n$, $\mathcal{L}ie(n)$ consists of (linear combinations of) Lie monomials in $x_1,\dots,x_n$ in which every $x_i$ appears exactly once\footnote{In other words, the free Lie $\mathbf{k}$-algebra $\textsc{Lie}_n$ in $n$ variables $x_1,\dots,x_n$ carries an obvious $\mathbb{Z}_+^n$-grading, and $\mathcal{L}ie(n)=\textsc{Lie}_n^{(1,\dots,1)}$. The operadic structure is given by substitution.}. 
Hence the generating operation is $[x_1,x_2]\in\mathcal{L}ie(2)$. 

\medskip

There is an obvious morphism of operads $f:\mathcal{L}ie\to\mathcal{SMB}$ sending $[x_1,x_2]$ to $\displaystyle \frac{m_{1,1}-m_{1,1}\cdot(12)}{2}$, so that any symmetric multibrace algebra is naturally a Lie algebra. 

\subsection{A morphism of operads $\mathcal{SMB}\to\mathcal{L}ie$}

In this section, we construct a less obvious morphism of operads $\mathcal{SMB}\to\mathcal{L}ie$. 

\medskip

We use the classical PBW theorem for Lie algebras over a field of characteristic zero, and apply it to the case of the free Lie $\mathbf{k}$-algebra $\textsc{Lie}_{p+q}$ in $p+q$ generators $x_1,\dots,x_p,y_1,\dots,y_q$. 
We have an algebra structure $*$ on the cocommutative coalgebra $\mathrm{S}(\textsc{Lie}_{p+q})\simeq\mathrm{U}(\textsc{Lie}_{p+q})$ that is defined \textit{via} the exponential formula $\mathrm{exp}(u)* \mathrm{exp(v)}=\mathrm{exp} \left(\textsc{bch}(u,v)\right)$, where $\textsc{bch}(u,v)$ is the Baker--Campbell-Hausdorff series, which is a Lie series in two variables. 

\medskip

This product is easily seen to be compatible with the coproduct\footnote{This is the classical statement that $\mathrm{U}(\mathfrak{g})$ is a bi-algebra, see for instance \cite[Chap. 2 \S1.4 Proposition 7]{Bourbaki}.}, leading to a symmetric multibrace algebra structure on $\textsc{Lie}_{p+q}$. One can check from the exponential formula for the product that the output expression
$M_{p,q}(x_1\dots x_p,y_1\dots y_q)$ is the part inside $\textsc{bch}(x_1+\cdots+x_p,y_1+\cdots+y_q)$ consisting in monomials where every $x_i$ and every $y_j$ appears exactly once\footnote{In other words, $M_{p,q}(x_1\dots x_p,y_1\dots y_q)=\textsc{bch}(x_1+\cdots+x_p,y_1+\cdots+y_q)^{(1,\dots,1)}$.}, and thus lies in $\mathcal{L}ie(p+q)$. 

\par \medskip

It is almost tautological\footnote{More precisely, taking the multi-degree $(1,\dots,1)$ part of 
\[
\textsc{bch}\big(\textsc{bch}(x_1+\cdots+x_p,y_1+\cdots+y_q),z_1+\cdots+z_r\big)=
\textsc{bch}\big(x_1+\cdots+x_p,\textsc{bch}(y_1+\cdots+y_q,z_1+\cdots+z_r)\big)\,.
\]
precisely gives the relation $(\mathcal{SR}_{pqr})$ from \cite[Proposition 4.3]{LR}. } that sending $m_{p,q}$ in $\mathcal{SMB}(p+q)$ to $M_{p,q}(x_1\dots x_p,y_1\dots y_q)$ in $\mathcal{L}ie_{p+q}$ indeed defines a morphism of operads $g:\mathcal{SMB}\to\mathcal{L}ie$. One can check that $g\circ f$ is the identity. 

\subsection{The categorical PBW theorem} \label{audiA5}

Form the above we get that every Lie algebra object $(V,\alpha)$ in a symmetric monoidal category inherits a symmetric brace algebra structure. 
In other words, there is an associative unital product $m_\star:\mathrm{S}(V)^{\otimes2}\to \mathrm{S}(V)$ that is a morphism of coaugmented counital coalgebra. 
From the compatibility with the coproduct, the counit, and the coaugmentation, it follows immediately that $m_\star$ satisfies the following properties: 
\begin{enumerate}
\item[--] $m_\star$ is filtered: $m_\star$ restricted to $\mathrm{S}^{\leq i}(V)\otimes \mathrm{S}^{\leq j}(V)$ factors through $\mathrm{S}^{\leq i+j}(V)\subset \mathrm{S}(V)$. 
\item[--] $m_\star$ is a deformation of $m_0$: on $\mathrm{S}^i(V)\otimes \mathrm{S}^j(V)$, $m_\star-m_0$ factors through $\mathrm{S}^{<i+j}(V)\subset \mathrm{S}(V)$.
\item[--] On $V^{\otimes 2}$, $m_\star-m_\star\cdot(12)=\alpha$. 
\end{enumerate}
The first property follows from the fact that $\mathrm{S}^{\leq k}(V)$ is the kernel of $\bar{\nabla}^{(k+1)}$, and that the multiplication $m_\star$ is a morphism of coalgebras. The second property follows from the fact that $\bar\nabla^{(i+j)}$ is $(i+j)!$ times the inclusion $\mathrm{S}^{i+j}(V)\subset V^{\otimes i+j}$. Only the third property uses the morphism $\mathcal{SMB}\to\mathcal{L}ie$: it follows from the fact that $m_{1,1}$ is sent to $[x_1,x_2]$. 

\subsection{The derivative of the multiplication map} \label{audiA6}

Our aim in this section is to prove that the restriction $\varphi$ of the product $m_\star$ to $\mathrm{S}(V)\otimes V\subset \mathrm{S}(V)\otimes\mathrm{S}(V)$ is equal to $\psi:=m_0\circ \displaystyle \frac{\omega}{1-\exp(-\omega)}$, where $\omega$ is defined at the beginning of \S \ref{oural}. Our strategy runs as follows:
\par \medskip 
\begin{enumerate}
\item[\textbf{Step 1}]\,\,\, Show that $\psi$ is compatible with the coproduct\footnote{Such an identity somehow makes $\psi$ into a ``family of coderivations'' (parametrized by $V$). }: 
\begin{eqnarray*}
\nabla\circ\psi & \overset{?}{=} & \psi^{\otimes 2}\circ(23)\circ\nabla^{\otimes2} \\
& = & \psi^{\otimes 2}\circ(23)\circ\left(\nabla\otimes(\mathrm{id}\otimes1+1\otimes \mathrm{id})\right) \qquad (\nabla_{|V}=\mathrm{id}\otimes1+1\otimes \mathrm{id})\\
& = & \left(\mathrm{id}+(12)\right)(\mathrm{id}\otimes\psi)\circ(\nabla\otimes \mathrm{id}) \qquad (\nabla=(12)\nabla)\,.
\end{eqnarray*}
\item[\textbf{Step 2}]\,\,\, Using the first step, one gets that the morphism $\psi$ is completely determined by its structure maps 
$\psi_{p,1}:\mathrm{S}^p(V)\otimes V\to V$. To end the proof one shall show that $\psi_{p,1}=\varphi_{p,1}$. 
\end{enumerate}

\begin{proof}[Proof of Step 1]
To prove that $\psi$ is compatible with the coproduct, it is sufficient to prove that so are all the $q_k:=m_0\circ\omega^{\circ k}$; that is
\begin{equation}\label{equationsuperhot}
\nabla\circ q_k \overset{?}{=} (\mathrm{id}+(12))(\mathrm{id}\otimes q_k)\circ(\nabla\otimes \mathrm{id})\,.
\end{equation}
On the one hand, on $\mathrm{S}^n(V)\otimes V$ we have by definition
\[
(\mathrm{id}+(12))(\mathrm{id}\otimes q_k)\circ(\nabla\otimes \mathrm{id}) = 
(\mathrm{id}+(12))(\mathrm{id}\otimes m_0)\circ(\mathrm{id}\otimes\omega^{\circ k})\circ(\nabla\otimes \mathrm{id})\,.
\]

On the other hand, on $\mathrm{S}^n(V)\otimes V$ we also have:
\begin{eqnarray*}
\nabla\circ q_k & = & \nabla\circ m_0\circ\omega^{\circ k} \\
& = & m_0^{\otimes2}\circ(23)\circ\nabla^{\otimes2}\circ\omega^{\circ k} \\
& = & (\mathrm{id}+(12))\circ(\mathrm{id}\otimes m_0)\circ(\nabla\otimes \mathrm{id})\circ\omega^{\circ k}\,.
\end{eqnarray*}

Hence, to prove the desired identity \eqref{equationsuperhot} it is sufficient to prove that 
\[
(\nabla\otimes \mathrm{id})\circ\omega^{\circ k}=(\mathrm{id}\otimes\omega^{\circ k})\circ(\nabla\otimes \mathrm{id}).
\]
We now compute the two sides of the above identity, starting with the left-hand-one: 
\[
(\nabla\otimes \mathrm{id})\circ\omega^{\circ k}=n(n-1)\cdots (n-k+1)\sum_{p+q'=n-k} \, \binom{n-k}{p} \, \nabla_{p,q'}\circ\underbrace{(\mathrm{id}\otimes\alpha)\cdots(\mathrm{id}\otimes\alpha)}_{k\textrm{times}}.
\]
The right-hand-side reads: 
\begin{eqnarray*}
(\mathrm{id}\otimes\omega^{\circ k})\circ(\nabla\otimes \mathrm{id}) & = & \sum_{p+q=n} q(q-1)\cdots(q-k+1) \,\binom{n}{p}\,\underbrace{(\mathrm{id}\otimes\alpha)\cdots(\mathrm{id}\otimes\alpha)}_{k\textrm{ times}}\circ\nabla_{p,q} \\
& = & \sum_{p+q=n} q(q-1) \cdots(q-k+1) \,\binom{n}{p} \,\nabla_{p,q-k}\circ\underbrace{(\mathrm{id}\otimes\alpha)\cdots(\mathrm{id}\otimes\alpha)}_{k\textrm{times}}.
\end{eqnarray*}
We finally conclude by identifying coefficients (fixing $p$ and $q$, and having $q'=q-k$):
\[
n(n-1)\cdots (n-k+1)\,\binom{n-k}{p}\,=\frac{n!(n-k)!}{(n-k)!p!(q-k)!}=\frac{n!}{p!(q-k)!}=\frac{q!n!}{(q-k)!p!q!}=q(q-1)\cdots(q-k+1)\,\binom{n}{p}.
\]
\end{proof}
\begin{proof}[Proof of Step 2]
Observe that we have the following identity in $\textsc{Lie}_{p+1}$: 
\[
M_{p,1}(x_1,\dots, x_p,y)=\textsc{bch}(x_1+\cdots+x_n,y)^{(1,\dots,1)}
=(-1)^p\frac{B_p}{p!}\sum_{\sigma\in\mathfrak{S}_p}\big[x_{\sigma(1)},\dots,[x_{\sigma(p)},y]\dots\big].
\]
Hence we get that 
\[
\varphi_{p,1}=(-1)^p\frac{B_p}{p!}\omega^{\circ p}=\psi_{p,1}.
\]
\end{proof}

\bibliographystyle{plain}
\bibliography{biblio}

\end{document}